\newcommand{\E}{\mathbb{E}}
\newcommand{\R}{\mathbb{R}}
\newcommand{\I}{\mathbb{I}}
\newtheorem{thm}{Theorem}[section]
\newtheorem{lem}{Lemma}[section]
\DeclareMathOperator{\Expect}{\mathbb{E}}
\DeclareMathOperator{\pro}{\mathbb{P}}
\DeclareMathOperator{\Pro}{\mathbb{P}}
\newtheorem{rem}{Remark}
\newtheorem{cor}[thm]{Corollary}
\newtheorem{theorem}{Theorem}
\newtheorem{remark}[theorem]{Remark}
\newtheorem{assumpt}{Assumption}[section]
\crefname{assumpt}{Assumption}{Assumptions}
\newtheorem{claims}[remark]{Claim}
\newtheorem{property}[lem]{\text{Property}}
\newtheorem{pros}[thm]{\text{Proposition}}
\title{Stability and convergence analysis of AdaGrad for non-convex optimization via novel stopping time-based techniques
}
\author{
  Ruinan Jin $^{1, 3}$ \quad \quad \quad \quad
  Xiaoyu Wang$^{2}$\thanks{The corresponding author is Xiaoyu Wang <maxywang@ust.hk>.} \quad \quad \quad \quad
  Baoxiang Wang$^{1, 3}$ \\
  $^1$The Chinese University of Hong Kong, Shenzhen, China \\
 $^2$The Hong Kong University of Science and Technology, Hong Kong, China \\
  $^3$Vector Institute, Toronto, Canada\\
\texttt{jinruinan@cuhk.edu.cn}  \quad \texttt{maxywang@ust.hk} \\
\quad  \texttt{bxiangwang@cuhk.edu.cn}
}
\begin{document}




\maketitle

\begin{abstract}
Adaptive gradient optimizers (AdaGrad), which dynamically adjust the learning rate based on iterative gradients, have emerged as powerful tools in deep learning. These adaptive methods have significantly succeeded in various deep learning tasks, outperforming stochastic gradient descent. However, despite AdaGrad's status as a cornerstone of adaptive optimization, its theoretical analysis has not adequately addressed key aspects such as asymptotic convergence and non-asymptotic convergence rates in non-convex optimization scenarios. This study aims to provide a comprehensive analysis of AdaGrad and bridge the existing gaps in the literature. We introduce a new stopping time technique from probability theory, which allows us to establish the stability of AdaGrad under mild conditions. We further derive the asymptotically almost sure and mean-square convergence for AdaGrad. In addition, we demonstrate the near-optimal non-asymptotic convergence rate measured by the average-squared gradients in expectation, which is stronger than the existing high-probability results. The techniques developed in this work are potentially of independent interest for future research on other adaptive stochastic algorithms.

\end{abstract}
\keywords{Adaptive gradient method \and Nonconvex optimization \and Asymptotic convergence \and Non-asymptotic convergence \and Global stability}

\section{Introduction}
 
Adaptive gradient methods have achieved remarkable success across various machine learning domains. State-of-the-art adaptive methods like AdaGrad~\citep{duchi2011adaptive}, RMSProp~\citep{RMSProp}, Adam~\citep{Adam},  which automatically adjust the learning rate based on past stochastic gradients, often outperform 
vanilla stochastic gradient descent (SGD) on non-convex optimization~\citep{vaswani2017attention,2013Estimation,2018Canonical,DBLP:conf/iclr/DosovitskiyB0WZ21}. AdaGrad~\citep{duchi2011adaptive,mcmahan2010adaptive} is the first prominent algorithm in this category. This paper investigates the norm version of AdaGrad (known as AdaGrad-Norm), which is a single stepsize adaptation method and is formally described as
\begin{equation}\label{AdaGrad_Norm}\begin{aligned}
S_{n}=S_{n-1}+\big\|\nabla g(\theta_{n},\xi_{n})\big\|^{2},
	\quad \theta_{n+1}=\theta_{n}-\frac{\alpha_0}{\sqrt{S_{n}}}\nabla g(\theta_{n},\xi_{n}),
\end{aligned}\end{equation} 
where $S_{0}$ and  $\alpha_{0}$ are pre-determined positive constants, and the stochastic gradient $\nabla g(\theta_{n},\xi_{n})$ is an estimation of the true gradient $\nabla g(\theta_n)$ with the noise variable $\xi_n$. In recent years, the simplicity and popularity of AdaGrad-Norm have attracted many research studies~\citep{zou2018weighted,ward2020adagrad,defossez2020simple,kavis2022high,faw2022power,wang2023convergence,jin2022convergence}. However, the correlation of step size $\alpha_n = \alpha_0/\sqrt{S_n}$ with current stochastic gradient and all past stochastic gradients poses significant challenges for the theoretical analysis of AdaGrad-Norm,  in both asymptotic
and non-asymptotic contexts. This study aims to address these limitations and provide a comprehensive understanding of the asymptotic and non-asymptotic convergence behaviors of AdaGrad in smooth non-convex optimization.

\subsection{Key Challenges and Contribution}\label{subsection 1.1}

\paragraph{\textbf{Challenges in asymptotic convergence}} Our work focuses on two fundamental criteria: almost sure convergence and mean-square convergence. Almost sure convergence, defined as $\lim_{n \rightarrow \infty} \left\|\nabla g(\theta_n) \right\|=0 \ \  a.s.$, provides a robust guarantee that the algorithm will converge to the critical point with probability 1 during a single run of the stochastic method. In practical scenarios, algorithms are typically executed only once, with the last iterate returned as the output. The asymptotic almost sure convergence of SGD and its momentum variants generally relies on the Robbins-Monro (RM) conditions for the step size $\alpha_n$, i.e. $\sum_{n=1}^{+\infty}\alpha_n=+\infty,\ \sum_{n=1}^{+\infty}\alpha_n^{2}<+\infty$~\citep{robbins1971convergence,li2022unified}. 
Under the $ L$-smoothness assumption, the classic descent lemma for SGD is
\begin{align}\label{descent:sgd}
\E[g(\theta_{n+1}) \mid \mathscr{F}_{n-1}] - g(\theta_{n})\le -\alpha_{n}\|\nabla g(\theta_{n})\|^{2}+\frac{ L\alpha_{n}^{2}}{2}\E\left[\|\nabla g(\theta_{n},\xi_{n})\|^{2} \mid \mathscr{F}_{n-1}\right].
\end{align}
The RM conditions are essential to ensure the summability of the quadratic error in \eqref{descent:sgd}. However, the situation is different for the original AdaGrad-Norm as the quadratic error does not exhibit such summability because $S_n$ could go to infinity \hspace{-1in}
\begin{equation*}\sum_{n=1}^{+\infty} \alpha_n^2\|\nabla g(\theta_{n},\xi_{n})\|^{2} = \sum_{n=1}^{+\infty} S_n^{-1}\|\nabla g(\theta_{n},\xi_{n})\|^{2}= \lim_{n\rightarrow \infty}O(\ln S_{n}) .
\end{equation*}
Moreover, the step size of AdaGrad-Norm is influenced by both the current stochastic gradient and all past stochastic gradients, making the derivation of its almost sure convergence particularly challenging. 

In addition to almost sure convergence, mean-square convergence (MSE) is another critical criterion, formulated by $\lim_{n \rightarrow \infty} \E\left\|\nabla g(\theta_{n}) \right\|^2 = 0$. This criterion assesses the asymptotic averaged behavior of stochastic optimization methods over infinitely many runs. 
Importantly, as in probability theory, mean-square convergence does not imply almost-sure convergence, nor vice versa. The mean-square convergence has been extensively discussed in the literature \citep{li2022unified,bottou2018optimization} for SGD in non-convex settings. Nevertheless, the mean-square convergence of adaptive methods has not been explored, making it a significant and non-trivial study area. 

\paragraph{\textbf{Contribution in asymptotic convergence} } To achieve asymptotic convergence, our first major contribution is demonstrating the stability of the loss function in expectation under mild conditions. We utilize a novel stopping-time partitioning technique to accomplish this.
\begin{lem}\label{stable'} (Informal)
Consider AdaGrad-Norm under appropriate conditions, there exists a constant $\tilde{M} > 0$ such that
\begin{equation}\nonumber
\Expect\Big[\sup_{n \ge 1}g(\theta_{n})\Big] < \tilde{M} < +\infty.
\end{equation}
\end{lem}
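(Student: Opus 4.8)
The plan is to treat $g(\theta_n)$ as an approximate supermartingale via the $L$-smoothness descent inequality and then extract a uniform bound on its running supremum. Setting $\mathscr{F}_{n-1}=\sigma(\xi_1,\dots,\xi_{n-1})$, I would first apply smoothness to the update in \eqref{AdaGrad_Norm} to get
\[
g(\theta_{n+1}) \le g(\theta_n) - \frac{\alpha_0}{\sqrt{S_n}}\big\langle \nabla g(\theta_n),\nabla g(\theta_n,\xi_n)\big\rangle + \frac{L\alpha_0^2}{2S_n}\big\|\nabla g(\theta_n,\xi_n)\big\|^2 .
\]
The difficulty that appears immediately is that $\alpha_n=\alpha_0/\sqrt{S_n}$ is correlated with $\xi_n$ through $S_n=S_{n-1}+\|\nabla g(\theta_n,\xi_n)\|^2$, so conditioning on $\mathscr{F}_{n-1}$ cannot pull out the step size. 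I would therefore insert the predictable surrogate $\alpha_0/\sqrt{S_{n-1}}$, splitting the cross term into a sign-definite drift $-\alpha_0\|\nabla g(\theta_n)\|^2/\sqrt{S_{n-1}}$ (after using $\E[\nabla g(\theta_n,\xi_n)\mid\mathscr{F}_{n-1}]=\nabla g(\theta_n)$), a mean-zero martingale increment, and a remainder bounded through $0\le S_{n-1}^{-1/2}-S_n^{-1/2}\le \tfrac12 S_{n-1}^{-3/2}(S_n-S_{n-1})$.

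Summing from $1$ to $N$ yields a decomposition of the form $g(\theta_{N+1}) \le g(\theta_1) + \mathcal{P}_N - \mathcal{N}_N + M_N$, where $\mathcal{N}_N$ collects the negative drift, $M_N$ is a martingale, and the positive part $\mathcal{P}_N$ is dominated by the telescoped second-order term $\sum_{n\le N}(S_n-S_{n-1})/S_n \le \ln(S_N/S_0)$. This is the crux and the step I expect to be the main obstacle: because $S_n$ may diverge, $\ln(S_N/S_0)$ is not bounded, so $\mathcal{P}_N$ is not a summable compensator and the classical Robbins--Siegmund / almost-supermartingale route does not apply. The idea to tame it is a self-bounding observation, namely that the accumulator obeys $\sum_{n\le N}\|\nabla g(\theta_n,\xi_n)\|^2/\sqrt{S_n}\ge \sqrt{S_N}-\sqrt{S_0}$, which grows faster than $\ln S_N$; once the negative drift $\mathcal{N}_N$ is shown to control a multiple of $\sqrt{S_N}$, the net drift $\mathcal{P}_N-\mathcal{N}_N$ is bounded above by the universal constant $\max_{x>0}\big(c_2\ln(x/S_0)-c_1(\sqrt{x}-\sqrt{S_0})\big)$, uniformly in $N$.

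To make this rigorous when the true-gradient drift and the noise-driven accumulator are misaligned, I would introduce the stopping-time partitioning. Defining the dyadic exit times $\tau_j=\inf\{n:S_n\ge 2^{j}S_0\}$ localizes $S_n$ to $[2^{j}S_0,2^{j+1}S_0)$ on each block $[\tau_j,\tau_{j+1})$, so the step size varies by at most a factor $\sqrt2$ and the iteration reduces to a constant-step SGD recursion on which the surrogate remainder and the martingale increments are uniformly controlled. On the stopped process $g(\theta_{n\wedge\tau_j})$ the second-order term is bounded and the martingale $M_{n\wedge\tau_j}$ is $L^2$-bounded with an explicit constant, so Doob's maximal inequality gives $\E[\sup_n |M_{n\wedge\tau_j}|]<\infty$. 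Combining the block estimates with the self-bounding cancellation produces a bound on $\E[\sup_{n\le\tau_j}g(\theta_n)]$ that does not depend on $j$; since $S_n$ is nondecreasing, $\tau_j\uparrow\infty$ and monotone convergence removes the localization, yielding $\E[\sup_{n\ge1}g(\theta_n)]\le g(\theta_1)+C<\tilde M$.

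The hardest technical point throughout is reconciling the noise with the correlated step size without destroying the cancellation: the surrogate-induced remainder (cubic in the stochastic gradient) and the martingale increments must be absorbed into the $\sqrt{S_N}$ drift, which is where a bounded-variance or affine-growth condition on $\nabla g(\theta_n,\xi_n)$ is needed, and the $L^2$ bounds on $M_{n\wedge\tau_j}$ must be uniform enough that the maximal-inequality constants do not grow with the dyadic level $j$.
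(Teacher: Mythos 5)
Your set-up is the same as the paper's first step (the surrogate step size $\alpha_0/\sqrt{S_{n-1}}$, the martingale-plus-remainder split, and the bound $\mathcal{P}_N\le\ln(S_N/S_0)$ for the telescoped quadratic error all appear in \cref{inequ:sufficient:decrease} and \cref{sufficient:lem}), but the cancellation step at the heart of your argument fails. You need the negative drift $\mathcal{N}_N=\alpha_0\sum_{n\le N}\|\nabla g(\theta_n)\|^2/\sqrt{S_{n-1}}$ to dominate a multiple of $\sqrt{S_N}$ so that $c_2\ln(S_N/S_0)-c_1(\sqrt{S_N}-\sqrt{S_0})$ is uniformly bounded. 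Under the affine noise condition with $\sigma_1>0$ this is impossible: $S_N$ is driven by the \emph{stochastic} gradients while $\mathcal{N}_N$ is driven by the \emph{true} gradients, and these are only aligned when $\sigma_1=0$. If the iterates sit near a critical point, then $\|\nabla g(\theta_n)\|\approx 0$ yet $\E[S_N]$ still grows like $\sigma_1 N$, so $\mathcal{P}_N\sim\ln N$ diverges while $\mathcal{N}_N$ stays bounded; your self-bounding inequality $\sum_n\|\nabla g(\theta_n,\xi_n)\|^2/\sqrt{S_n}\ge\sqrt{S_N}-\sqrt{S_0}$ is true but useless, because nothing forces the drift to pay for that growth. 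The dyadic localization $\tau_j=\inf\{n:S_n\ge 2^jS_0\}$ does not repair this: when $\sigma_1>0$ the process crosses infinitely many dyadic levels almost surely, each block contributes roughly $\ln 2$ of positive drift that remains uncancelled on blocks traversed near critical points, and the block-wise Doob constants accumulate over $j$.

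The missing ingredient is exactly what the paper's stopping times are built on: they are level crossings of the Lyapunov function $\hat g$ (thresholds $\Delta_0$ and $2\Delta_0$ in \cref{stopping_time}), not levels of $S_n$. The point is that the divergent $\ln S_N$ only needs to be controlled on excursions where $\hat g$ is large, and there the non-asymptotically-flat assumption (\cref{ass_g_poi}~\ref{ass_g_poi:i3}) enters through \cref{pro_0}: $\hat g(\theta_n)>\Delta_0$ forces $\|\nabla g(\theta_n)\|^2>\eta$, so on those intervals the conditional noise magnitude is dominated by $(\sigma_0+\sigma_1/\eta)\|\nabla g(\theta_n)\|^2/S_{n-1}$, i.e.\ the $\sigma_1$ part is absorbed into the drift. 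The total of that drift-type quantity is finite in expectation by the weighted descent argument of \cref{lem_su} (multiply the descent inequality by $1/\sqrt{S_{n-1}}$ and telescope), which is the summability your scheme lacks; below the threshold the function value is trivially bounded, and \cref{lem:psi:i1} controls how often excursions reach $2\Delta_0$. Your proposal never invokes \cref{ass_g_poi}~\ref{ass_g_poi:i3}, and without some mechanism converting additive noise into drift on the dangerous intervals, the argument cannot close.
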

To establish the asymptotic convergence for gradient-based methods, it is important to ensure the global stability of the trajectories. Many existing studies on SGD~\citep{ljung1977analysis,benaim2006dynamics,bolte2021conservative} and adaptive methods \citep{barakat2021convergence,JMLR:v25:23-0576} explicitly assumed bounded trajectories, i.e. \(\sup_{n\ge 1}\|\theta_{n}\|<+\infty\) almost surely. However, this assumption is quite stringent, as trajectory stability can only be verified if the algorithm runs through all iterations, which is practically infeasible.  Recent works by  \cite{josz2023lyapunov,xiao2023convergence} have established the stability of SGD under the coercivity condition. In contrast, our result in \cref{stable'} indicates that the trajectories are bounded for AdaGrad-Norm, i.e., \(\sup_{n\ge 1}\|\theta_{n}\|<+\infty\ \text{a.s.}\) given coercivity. To the best of our knowledge, this represents the first demonstration of the stability of an adaptive method, marking a significant advancement in the understanding of adaptive gradient techniques. 


With the stability result established, we adopt a divide-and-conquer approach based on the gradient norm to demonstrate the asymptotic almost-sure convergence for AdaGrad-Norm. Notably, our analysis does not rely on the assumption of the absence of saddle points, which makes an important improvement over the findings of  \cite{jin2022convergence}. Furthermore, we establish the novel mean-square convergence result for AdaGrad-Norm, leveraging the stability discussed in \cref{stable'} alongside the almost sure convergence.  

In addition, we extend the proof techniques developed for AdaGrad to investigate the asymptotic convergence of another adaptive method, RMSProp~\citep{RMSProp}, under a specific choice of hyperparameters. This investigation yields insights into the stability and asymptotic convergence behavior of RMSProp and deepens our understanding of its performance in various optimization scenarios. This also showcases how the techniques developed in this work could be applied to other problems.

\paragraph{\textbf{Challenges in non-asymptotic result}} Our next objective is to explore the non-asymptotic convergence rate, which captures the overall trend of the method during the first $T$ iterations. 
The convergence rate, measured by the expected average-squared gradients, $\frac{1}{T}\sum_{k=1}^{T}\E[\left\|\nabla g(\theta_k) \right\|^2]$, is commonly used as metric in SGD~\citep{Ghadimi-2013,bottou2018optimization}. However, such analyses are rare for adaptive methods that do not assume bounded stochastic gradients. Therefore, our study aims to bridge this gap by providing convergence for AdaGrad-Norm in the expectation sense, without the restrictive assumption of uniform boundedness of stochastic gradients.

\paragraph{\textbf{Contribution in non-asymptotic expected rate}} To address the non-asymptotic convergence rate, we first estimate the expected value of $S_T$ under relaxed conditions, which specifically focuses on the smoothness and affine noise variance conditions (i.e., $\E[\big\|\nabla g(\theta_n,\xi_{n})\big\|^{2} \mid \mathscr{F}_{n-1}]\le \sigma_{0}\big\|\nabla g(\theta_n)\big\|^{2}+\sigma_{1}$, see \cref{ass_noise}~\ref{ass_noise:i2}). 
\begin{lem}\label{lem:st:informal}(Informal) Consider AdaGrad-Norm under appropriate conditions
\begin{align*}
\E(S_{T})=O(T).
\end{align*}
\end{lem}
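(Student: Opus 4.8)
The plan is to reduce the bound on $\E(S_T)$ to a bound on the accumulated true gradients and then close the resulting recursion by a self-bounding argument. Since $S_T = S_0 + \sum_{n=1}^{T}\|\nabla g(\theta_n,\xi_n)\|^2$, taking expectations and invoking the affine noise-variance condition (\cref{ass_noise}~\ref{ass_noise:i2}) gives
\begin{equation*}
\E(S_T) \le S_0 + \sigma_0\sum_{n=1}^{T}\E\|\nabla g(\theta_n)\|^2 + \sigma_1 T,
\end{equation*}
so it suffices to show that $\sum_{n=1}^{T}\E\|\nabla g(\theta_n)\|^2$ is controlled by a sublinear function of $\E(S_T)$ plus an $O(T)$ term.

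Second, I would apply the $L$-smoothness descent lemma to a single step. The core difficulty is that the stepsize $\alpha_0/\sqrt{S_n}$ is not $\mathscr{F}_{n-1}$-measurable, since $S_n$ contains the current stochastic gradient, so I cannot directly condition on the inner-product term $\langle \nabla g(\theta_n),\nabla g(\theta_n,\xi_n)\rangle$. I would therefore split $1/\sqrt{S_n} = 1/\sqrt{S_{n-1}} - (1/\sqrt{S_{n-1}} - 1/\sqrt{S_n})$, use the measurable surrogate $1/\sqrt{S_{n-1}}$ to extract the descent term $-(\alpha_0/\sqrt{S_{n-1}})\|\nabla g(\theta_n)\|^2$ after conditioning (unbiasedness), and treat the remainder as an error term. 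Summing over $n$ and telescoping $g$, which is bounded below, controls the left side by a constant, while the second-order term satisfies the standard telescoping estimates $\sum_{n=1}^T \|\nabla g(\theta_n,\xi_n)\|^2/S_n = O(\ln S_T)$ and $\sum_{n=1}^T \|\nabla g(\theta_n,\xi_n)\|^2/\sqrt{S_n} \le 2\sqrt{S_T}$.

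Third, the pathwise inequality $1/\sqrt{S_{n-1}} \ge 1/\sqrt{S_T}$ converts the surrogate descent into $\frac{1}{\sqrt{S_T}}\sum_{n=1}^T\|\nabla g(\theta_n)\|^2$, producing after total expectation a bound of the schematic form $\sum_n \E\|\nabla g(\theta_n)\|^2 \lesssim \E\big[\sqrt{S_T}\,(C + \ln S_T)\big]$. Substituting into the first display, applying Jensen's inequality $\E\sqrt{S_T} \le \sqrt{\E S_T}$ together with Young's inequality in the form $\sqrt{x}\ln x \le \epsilon x + C_\epsilon$, I can absorb a small multiple of $\E(S_T)$ onto the left-hand side; choosing $\epsilon$ small leaves $(1 - c\epsilon)\E(S_T) \le \text{const} + \sigma_1 T$, that is, $\E(S_T) = O(T)$.

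The hard part will be the error term arising from the mismatch between $1/\sqrt{S_n}$ and $1/\sqrt{S_{n-1}}$: a single large stochastic gradient can make $S_n/S_{n-1}$ blow up in one step, so the naive bound $\|\nabla g(\theta_n,\xi_n)\|^2 \le S_n$ is too lossy and the error can scale linearly in $S_T$, destroying the absorption. Controlling it requires a carefully weighted Young split that pairs the $\|\nabla g(\theta_n)\|^2$ part of the error with the main descent term (so it is absorbed) while keeping its noise part attached to $1/\sqrt{S_n}$ rather than $1/\sqrt{S_{n-1}}$, so that it telescopes to $O(\sqrt{S_T})$ instead of $O(S_T)$. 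Notably, I expect this bound to require only smoothness, lower-boundedness of $g$, and the affine-variance condition, without invoking the stronger coercivity-based stability of \cref{stable'}.
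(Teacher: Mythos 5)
Your overall strategy coincides with the paper's proof of this lemma: the affine-noise self-bounding reduction, the descent step with the measurable surrogate $1/\sqrt{S_{n-1}}$, the weighted Young split that keeps the noise part of the correction attached to $1/\sqrt{S_n}$ (this is exactly the paper's sufficient-decrease lemma, \cref{sufficient:lem}), the pathwise bound $1/\sqrt{S_{n-1}}\ge 1/\sqrt{S_T}$, Jensen's inequality, and the final absorption. You are also right on the last point: the paper's \cref{lem:st} indeed uses only \cref{ass_g_poi}~\ref{ass_g_poi:i}$\sim$\ref{ass_g_poi:i2} and \cref{ass_noise}~\ref{ass_noise:i}$\sim$\ref{ass_noise:i2}, with no coercivity or stability.

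However, there is a genuine gap at your third step. After you multiply the telescoped descent inequality pathwise by $\sqrt{S_T}$, the martingale term does not disappear: schematically you obtain
\[
\sum_{n=1}^{T}\|\nabla g(\theta_n)\|^2 \;\lesssim\; \sqrt{S_T}\Big(C+\ln S_T\Big) \;+\; \sqrt{S_T}\sum_{n=1}^{T}\hat{X}_n,
\]
and upon taking total expectation the cross term $\E\big[\sqrt{S_T}\sum_{n}\hat{X}_n\big]$ is \emph{not} zero, because $\sqrt{S_T}$ is built from the same stochastic gradients as the increments $\hat{X}_n$ and is correlated with them; conditioning and unbiasedness only kill $\E[\sum_n \hat{X}_n]$, not this product. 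Your schematic bound $\sum_n\E\|\nabla g(\theta_n)\|^2\lesssim\E[\sqrt{S_T}(C+\ln S_T)]$ silently drops it, and this cross term is the central difficulty of the lemma. The paper controls it via Cauchy--Schwarz, $\E\big[\sqrt{S_T}\sum_n\hat{X}_n\big]\le\sqrt{\E[S_T]\cdot\sum_n\E[\hat{X}_n^2]}$, and then must prove $\sum_n\E[\hat{X}_n^2]\le \mathcal{O}(\ln^2 T)+\mathcal{O}(\ln\E[S_T])$. Since $\E[\hat{X}_n^2]$ involves $\|\nabla g(\theta_n)\|^2\|\nabla g(\theta_n,\xi_n)\|^2/S_n$, hence (by affine noise and $\|\nabla g\|^2\le 2Lg$) terms of the form $g(\theta_n)\|\nabla g(\theta_n)\|^2/S_{n-1}$, this requires two auxiliary estimates: $\sum_n\E[\|\nabla g(\theta_n)\|^2/\sqrt{S_{n-1}}]=\mathcal{O}(\ln T)$ (\cref{lem8}) and the harder $\sum_n\E[g(\theta_n)\|\nabla g(\theta_n)\|^2/\sqrt{S_{n-1}}]=\mathcal{O}(\ln^2 T)$ (\cref{lem23}), the latter proved by a separate Lyapunov argument on $g^2$ with stopping times. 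Without this machinery the right-hand side you want to absorb is not actually bounded by $\epsilon\,\E[S_T]+\mathcal{O}(T)$, and your final absorption step cannot be completed.
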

Our estimation of $S_T$ in \cref{lem:st:informal} is more precise than that of \cite{wang2023convergence} which only established $\Expect[\sqrt{S_{T}}] = \mathcal{O}(\sqrt{T})$. This refined estimation allows us to achieve a near-optimal (up to $\log$ factor) convergence  $\frac{1}{T}\sum_{k=1}^{T}\E[\left\|\nabla g(\theta_k) \right\|^2] \leq \mathcal{O}(\ln T /\sqrt{T})$. 
To the best of our knowledge, this is the first convergence rate measured by expected average-squared gradients for adaptive methods without uniform boundedness gradient assumption. This result is stronger than the high probability results presented in \cite{faw2022power,wang2023convergence}. Furthermore, we improve the dependence on $1/\delta$ from quadratic to linear in the high-probability $1-\delta$ convergence rate, surpassing the results in \cite{faw2022power,wang2023convergence}.

\subsection{Related Work}

\paragraph{\textbf{Asymptotic convergence of AdaGrad and its variants}} The authors in \cite{jin2022convergence} demonstrated the asymptotic almost sure convergence of AdaGrad-Norm for nonconvex functions. However, their analysis relied on the unrealistic assumption that the loss function contains no saddle points (as noted in item 1 of Assumption 5 of \cite{jin2022convergence})). Since saddle points are common in non-convex scenarios, this significantly limits the practical applicability of their convergence results.
The authors of \cite{li2019convergence} established the almost-sure (the inferior limit) convergence for an AdaGrad variant under the global boundedness of gradient when the loss function is non-convex. The variant in~\cite{li2019convergence} is modified from the original AdaGrad algorithm by replacing the current stochastic gradient with a past one in step size (delayed AdaGrad) and incorporating the higher order of $S_n$ in the adaptive learning rate.  
Note that our focus remains to be on the original AdaGrad without any modifications. 

The study of \cite{gadat2022asymptotic} examined the asymptotic almost sure behavior of a subclass of adaptive gradient methods. However, their analysis involved modifications to the algorithm. For instance, for AdaGrad, they make the step size $\alpha_n$ (conditionally) independent of the current stochastic gradient and enforce that the step size satisfies the Robbins-Monro conditions by decreasing $\alpha_0$ and increasing the mini-batch size.  In \cite{barakat2021convergence}, they obtained the almost sure convergence towards critical points for Adam, under the stability assumption to ensure that the iterates do not explode in the long run.

\paragraph{\textbf{Non-asymptotic convergence of AdaGrad}} The study by \cite{duchi2011adaptive} proved the efficiency of AdaGrad for sparse gradients in convex optimization problems. In \cite{levy2017online}, rigorous convergence results for AdaGrad-Norm were provided specifically for convex minimization problems. However, establishing results for non-convex functions presents significant challenges, particularly due to the dependence of $S_n$ with current and all past stochastic gradients. In the context of non-convex optimization, a line of research \citep{zou2018weighted,zhou2018convergence,chen2019convergence,ward2020adagrad,defossez2020simple,kavis2022high} has explored the non-asymptotic convergence results for AdaGrad and its close variants. For instance, Li and Orabona \citep{li2019convergence} examined the convergence of delayed AdaGrad-Norm for non-convex objectives under a hard threshold $\alpha_0 < \sqrt{S_0}/L$ and sub-Gaussian noise.  Zou et al.~\citep{zou2018weighted} established the convergence for coordinate-wise AdaGrad with either heavy-ball or Nesterov momentum. 
In \cite{ward2020adagrad}, a convergence rate of $\mathcal{O}(\ln T/\sqrt{T})$ was established in high probability for AdaGrad-Norm under conditions of globally bounded gradients. However, these studies typically require that stochastic gradients are uniformly upper bounded~\citep{zou2018weighted,zhou2018convergence,chen2019convergence,ward2020adagrad,defossez2020simple,kavis2022high}. The assumption is often violated in the presence of Gaussian random noise in stochastic gradients and does not hold even for quadratic loss~\citep{wang2023convergence}. Recent works by~\cite{faw2022power,wang2023convergence} have addressed this limitation by removing the assumption of uniform boundedness of stochastic gradients through the use of affine noise variance. Despite this advancement, 
the convergence rates for the original AdaGrad-Norm, as described in~\cite{faw2022power,wang2023convergence}, are derived only in the context of \emph{high probability}. 

\subsection{Organization and Notation}
\paragraph{\noindent{\bf Organization}}The rest of this paper is organized as follows. \cref{sec:prelim} formalizes
the general problem statement and the basic assumptions required in the analysis. 
In \cref{sec:asympt:result}, we present the two asymptotic convergence results for AdaGrad-Norm. Specifically, In \cref{subsec:stability}, we establish the stability properties of AdaGrad-Norm. \cref{subsec:almost:sure} is dedicated to proving the asymptotic almost sure convergence of AdaGrad-Norm, while \cref{sec:mean:convergence} addresses its asymptotic mean-square convergence. In \cref{sec:nonasympt}, we establish the non-asymptotic convergence results for AdaGrad-Norm under affine noise variance and \(L\)-smoothness. In \cref{sec:AdaGrad:coordinate}, we extend our asymptotic results to the RMSProp algorithm with near-optimal hyperparameter configurations. \cref{sec:conclusion} concludes the paper. 

\paragraph{\noindent{\bf Notation}} We use $\I_{X}(x) = 1 $ if $x \in X$ and $\I_{X}(x) = 0$ otherwise to denote the indicator function. Given an objective function $g(\theta)$. We define the critical points set $\Theta^{\ast}:=\{\theta \mid \nabla g(\theta)=0\}$ and the critical value set $g(\Theta^{\ast}):=\{g(\theta) \mid \nabla g(\theta)=0\}$. We use $\E[\cdot]$ denote the expectation on the probability space and $\E[\cdot \mid \mathscr{F}_{n}]$ denote the conditional expectation on the $\sigma$-field $\mathscr{F}_{n}$. For notational convenience, $\E[X^2]$ denotes the expectation on the square of the random variable $X$ and $\E^2[X]$ represents the square of the expectation on the random variable $X$. To make the notation $\sum_{a}^{b}(\cdot)$ consistent, we let $\sum_{a}^{b}(\cdot) \equiv -\sum_{b}^{a}(\cdot)\ (\forall\ b< a).$ The notation $[d]$ denotes the set of the integers $\left\lbrace 1,2,\cdots, d\right\rbrace$.

\section{Problem Setup and Preliminaries}\label{sec:prelim}
Throughout the sequel, we consider the unconstrained non-convex optimization problem
\begin{align}\label{P1}
    \min_{\theta \in \R^d} \,\, g(\theta),
\end{align}
where $g: \R^d \rightarrow \R$ satisfies the following assumptions.
\begin{assumpt}\label{ass_g_poi} The objective function $g(\theta)$ satisfies the following conditions:
\begin{enumerate}[label=\textnormal{(\roman*)},leftmargin=*]
\item\label{ass_g_poi:i} $g(\theta)$ is continuously differentiable and non-negative.
\item\label{ass_g_poi:i2} $\nabla g(\theta)$ is Lipschitz continuous, i.e., $
\big\|\nabla g(\theta)-\nabla g(\theta')\big\|\le L\|\theta-\theta'\| $, for all $\theta, \theta' \in \mathbb{R}^{d}$.
\item\label{ass_g_poi:i3}(\textbf{Only for asymptotic convergence}) $g(\theta)$ is not asymptotically flat, i.e., there exists $\eta>0$ such that $\liminf_{\|\theta\|\rightarrow+\infty} \|\nabla g(\theta)\|^{2}>\eta.$

\end{enumerate}
\end{assumpt}

The conditions $\ref{ass_g_poi:i}\sim\ref{ass_g_poi:i2}$ of  \cref{ass_g_poi}  are standard in most literature on non-convex optimization~\citep{bottou2018optimization}. Note that the non-negativity of $g$ in \cref{ass_g_poi:i} is equivalent to stating that $g$ is bounded from below. 
\cref{ass_g_poi:i3} has been utilized by \cite{mertikopoulos2020almost} to analyze the almost sure convergence of SGD under the step-size that may violate Robbins-Monro conditions. The purpose is to exclude functions such as $g(x)=-e^{-x^{2}}$ or $g(x)=\ln x$, which exhibit near-critical behavior at infinity.  Non-asymptotically flat objectives are common in machine learning, especially with $L_2$ or $L_1$ regularization~\citep{ng2004feature,bishop2006pattern,zhang2004solving,goodfellow2016deep}. Additionally,  \cref{ass_g_poi:i3} is specifically employed for asymptotic convergence and is \textbf{NOT} required for the non-asymptotic convergence rates.

Typical examples of Problem~\eqref{P1} include modern machine
learning, deep learning, and underdetermined inverse problems. In these contexts, obtaining precise gradient information is often impractical. This paper focuses on the stochastic methods through a stochastic first-order oracle (SFO) which takes an input $\theta_n \in \R^d$ and returns a random vector $\nabla g(\theta_n, \xi_n)$ drawn from the probability space $(\Omega, \left\lbrace\mathscr{F}_n \right\rbrace_{n\ge 1}, \mathbb{P})$. The noise sequence $\{\xi_{n}\}$ consists of independent random variables. We denote the $\sigma$-filtration $\mathscr{F}_{n}:=\sigma\{\theta_{1},\xi_{1},\xi_{2},...,\xi_{n}\}$ for $n\geq 1$,  with $\mathscr{F}_{i}:=\{\emptyset,\ \Omega\}$ for $i=0$, and  define $\mathscr{F}_{\infty}:=\bigcup_{n=1}^{+\infty}\mathscr{F}_{n}$. Thus, $\theta_n$ is $\mathscr{F}_n$ measurable for all $ n \geq 0$. 

We make the following assumptions regarding the stochastic gradient oracle.
\begin{assumpt}\label{ass_noise} The stochastic gradient $\nabla g(\theta_n, \xi_n)$ satisfies 
\begin{enumerate}[label=\textnormal{(\roman*)},leftmargin=*]
\item\label{ass_noise:i}  $\E \left[\nabla g(\theta_n, \xi_n) \mid \mathscr{F}_{n-1}\right] = \nabla g(\theta_n)$.
 \item\label{ass_noise:i2} (\textbf{Affine noise variance})   $\E\left[\big\|\nabla g(\theta_n,\xi_{n})\big\|^{2} \mid \mathscr{F}_{n-1}\right]\le \sigma_{0}\big\|\nabla g(\theta_n)\big\|^{2}+\sigma_{1}$, for some constants $\sigma_{0}, \sigma_{1} \geq 0$.
 \item\label{ass_noise:i3}(\textbf{Only for asymptotic convergence}) For any $\theta_n$ satisfying $\|\nabla g(\theta_{n})\|^{2}<D_0$, it holds that $\|\nabla g(\theta_{n},\xi_{n})\|^{2}< D_1$ a.s.. for some constants $D_0, D_1 >0$. 
\end{enumerate}
\end{assumpt}
\cref{ass_noise}~\ref{ass_noise:i} is standard in the theory of SGD and its variants. 
\cref{ass_noise}~\ref{ass_noise:i2} is milder than the typical bounded variance assumption~\citep{li2019convergence} and bounded gradient assumption~\citep{mertikopoulos2020almost,kavis2022high}. \cite{gadat2022asymptotic} requires that the variance of the stochastic gradient asymptotically converge to $0,$ i.e., $\lim_{n\rightarrow+\infty}\Expect_{\xi_{n}}\|\nabla g(\theta_{n},\xi_{n})-\nabla g(\theta_{n})\|^{2}=0,$ which is not satisfied in common settings with a fixed mini-batch size.  We note that \cref{ass_noise}~\ref{ass_noise:i3} only restricts the sharpness of stochastic gradient near the critical points. It is possible to allow $D_0$ to be arbitrarily small (approaching zero) while allowing $D_1$ to be sufficiently large. \cref{ass_noise}~\ref{ass_noise:i3} is only used to demonstrate the asymptotic convergence, which is \textbf{NOT} necessary for the non-asymptotic convergence rate.

\begin{rem}
Under~\cref{ass_g_poi}, the widely used mini-batch stochastic gradient model  satisfies~\cref{ass_noise:i3} of \cref{ass_noise}. Since the near-critical case at infinity is excluded (\cref{ass_g_poi}~\ref{ass_g_poi:i3}), we can identify a sufficiently small $D_0$ such that the near-critical points set $\{\theta \mid \|\nabla g(\theta)\|<D_0\}$ remains bounded. Consequently, when the stochastic gradient is Lipschitz continuous, the mini-batch stochastic gradients will remain within a bounded set, thereby satisfying \cref{ass_noise:i3}.
\end{rem}

\section{Asymptotic Convergence of AdaGrad-Norm}
\label{sec:asympt:result}
This section will establish the two types of asymptotic convergence guarantees including almost sure convergence and mean-square convergence for AdaGrad-Norm in the smooth non-convex setting under~\cref{ass_g_poi,ass_noise}.

By $ L$-smooth property, we have the so-called descent inequality for AdaGrad-Norm
\begin{align}\label{inequ:smooth:inequality}
g(\theta_{n+1})-g(\theta_{n}) & \le -\frac{\alpha_{0}\nabla g(\theta_{n})^{\top}\nabla g(\theta_{n},\xi_{n})}{\sqrt{S_{n}}}+\frac{ L\alpha_{0}^{2}}{2}\cdot\frac{\|\nabla g(\theta_{n},\xi_{n})\|^{2}}{S_{n}}.
\end{align}
We then deal with the correction in AdaGrad-Norm to approximate $S_n$ by the past $S_{n-1}$~\citep{ward2020adagrad,defossez2020simple,faw2022power,wang2023convergence} and the RHS of \cref{inequ:smooth:inequality} can be decomposed as
\begin{align}\label{inequ:sufficient:decrease}
& g(\theta_{n+1})-g(\theta_{n}) \notag\\ 
&\le- \alpha_{0}\E\left(\frac{\nabla g(\theta_{n})^{\top}\nabla g(\theta_{n},\xi_{n})}{\sqrt{S_{n}}} \mid \mathscr{F}_{n-1}\right) +\alpha_{0}\E\left(\frac{\nabla g(\theta_{n})^{\top}\nabla g(\theta_{n},\xi_{n})}{\sqrt{S_{n}}}  \mid \mathscr{F}_{n-1}\right) \notag \\& - \alpha_0\frac{\nabla g(\theta_{n})^{\top}\nabla g(\theta_{n},\xi_{n})}{\sqrt{S_{n}}}  + \frac{ L\alpha_{0}^{2}}{2}\cdot\frac{\|\nabla g(\theta_{n},\xi_{n})\|^{2}}{S_{n}}\notag\\& =  -\alpha_{0}\frac{\|\nabla g(\theta_{n})\|^{2}}{\sqrt{S_{n-1}}} +\alpha_{0}\E\left(\nabla g(\theta_{n})^{\top}\nabla g(\theta_{n},\xi_{n})\left(\frac{1}{\sqrt{S_{n-1}}} - \frac{1}{\sqrt{S_n}}\right) \mid \mathscr{F}_{n-1}\right) \notag\\&+\alpha_{0}\left(\Expect\bigg[\frac{\nabla g(\theta_{n})^{\top}\nabla g(\theta_{n},\xi_{n})}{\sqrt{S_{n}}}\bigg|\mathscr{F}_{n-1}\bigg]-\frac{\nabla g(\theta_{n})^{\top}\nabla g(\theta_{n},\xi_{n})}{\sqrt{S_{n}}}\right)  + \frac{ L\alpha_{0}^{2}}{2}\cdot \frac{\|\nabla g(\theta_{n},\xi_{n})\|^{2}}{S_{n}} \notag \\
& \mathop{\leq}^{(a)} -\alpha_{0}\overbrace{\frac{\|\nabla g(\theta_{n})\|^{2}}{\sqrt{S_{n-1}}}}^{\zeta(n)}+\alpha_{0}\Expect\Bigg[\overbrace{\frac{\|\nabla g(\theta_{n})\|\cdot \|\nabla g(\theta_{n},\xi_{n})\|}{\sqrt{S_{n-1}}}}^{R_n}\cdot \overbrace{\frac{\|\nabla g(\theta_{n},\xi_{n})\|^{2}}{\sqrt{S_{n}}(\sqrt{S_{n-1}}+\sqrt{S_{n}})}}^{\Lambda_n}\Bigg|\mathscr{F}_{n-1}\Bigg]\notag \\
&
+\alpha_{0}\underbrace{\left(\Expect\bigg[\frac{\nabla g(\theta_{n})^{\top}\nabla g(\theta_{n},\xi_{n})}{\sqrt{S_{n}}}\bigg|\mathscr{F}_{n-1}\bigg]-\frac{\nabla g(\theta_{n})^{\top}\nabla g(\theta_{n},\xi_{n})}{\sqrt{S_{n}}}\right)}_{X_n}  + \frac{ L\alpha_{0}^{2}}{2}\cdot \underbrace{\frac{\|\nabla g(\theta_{n},\xi_{n})\|^{2}}{S_{n}}}_{\Gamma_n},
\end{align} 
where for $(a)$ we use the Cauchy-Schwartz inequality, and 
\begin{align}\label{inequ:sn:minus:sn1}
\frac{1}{\sqrt{S_{n-1}}} - \frac{1}{\sqrt{S_{n}}} =  \frac{\|\nabla g(\theta_{n},\xi_{n})\|^{2}}{\sqrt{S_{n-1}}\sqrt{S_{n}}\cdot(\sqrt{S_{n-1}}+\sqrt{S_{n}})}.
\end{align}
In this decomposition, we define the martingale sequence $X_{n}$ and introduce the notations $\zeta(n), R_n, \Lambda_n, \Gamma_n$ to simplify the expression given in \cref{inequ:sufficient:decrease}. Furthermore, we introduce $\hat{g}(\theta_n)$ as the Lyapunov function and $\{ \hat{X}_n, \mathscr{F}_{n}\}_{n\ge 1}$ is a new martingale difference sequence (MDS) to achieve the key sufficient decrease inequality as follows. 
\begin{lem}({\bf Sufficient decrease inequality})\label{sufficient:lem}
Under \cref{ass_g_poi}~\ref{ass_g_poi:i}$\sim$\ref{ass_g_poi:i2} and  \cref{ass_noise}~\ref{ass_noise:i}$\sim$ \ref{ass_noise:i2}, consider the sequence $\{\theta_{n}\}$ generated by AdaGrad-Norm, we have 
\begin{align}
\hat{g}(\theta_{n+1}) - \hat{g}(\theta_n) \leq -\frac{\alpha_{0}}{4}\zeta(n)+C_{\Gamma, 1}\cdot \Gamma_n  + C_{\Gamma, 2}\frac{\Gamma_n}{\sqrt{S_{n}}} + \alpha_0 \hat{X}_{n},
\end{align}
where $\hat{g}(\theta_{n}):=g(\theta_{n})+ \frac{\sigma_{0}\alpha_{0}}{2}\zeta(n)$, $\hat{X}_n = X_n + V_n$, and $V_n$ is defined in \cref{notation:Vn}. The constant terms $C_{\Gamma, 1}, C_{\Gamma, 2}$ are defined in \cref{notation:ghat:const}.
\end{lem}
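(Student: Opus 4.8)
The plan is to start from the decomposition \eqref{inequ:sufficient:decrease}, which already isolates the leading dissipative term $-\alpha_0\zeta(n)$, the martingale term $\alpha_0 X_n$, the curvature term $\tfrac{L\alpha_0^2}{2}\Gamma_n$, and the troublesome correction $\alpha_0\Expect[R_n\Lambda_n\mid\mathscr{F}_{n-1}]$ arising from replacing $\sqrt{S_n}$ by $\sqrt{S_{n-1}}$. All of the real work is in bounding this correction by a strictly controllable fraction of $\zeta(n)$ together with the ``good'' (summable) quantities $\Gamma_n$ and $\Gamma_n/\sqrt{S_n}$, and then packaging the residual into the shifted potential $\hat g$ and the martingale $\hat X_n$.

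First I would record the two elementary pathwise facts that tame $\Lambda_n$: since $\sqrt{S_n}(\sqrt{S_{n-1}}+\sqrt{S_n})\ge S_n$ we have $\Lambda_n\le\Gamma_n$, and since $\|\nabla g(\theta_n,\xi_n)\|^2\le S_n$ we have $\Gamma_n\le1$. Combining $\Lambda_n\le\Gamma_n$ with Young's inequality applied to the product $\|\nabla g(\theta_n)\|\,\|\nabla g(\theta_n,\xi_n)\|$ inside $R_n$ yields a pointwise estimate of the form $R_n\Lambda_n\le \tfrac12\zeta(n)\Gamma_n+(\text{a }\Gamma_n\text{-term})$. Taking $\Expect[\,\cdot\mid\mathscr{F}_{n-1}]$ and invoking the affine noise variance \cref{ass_noise}~\ref{ass_noise:i2}, i.e. $\Expect[\|\nabla g(\theta_n,\xi_n)\|^2\mid\mathscr{F}_{n-1}]\le\sigma_0\|\nabla g(\theta_n)\|^2+\sigma_1$, converts the surviving stochastic gradient into a term proportional to $\sigma_0\|\nabla g(\theta_n)\|^2$, i.e. a further multiple of $\zeta(n)$, plus genuine $\Gamma_n$-contributions. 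The net effect is that the correction inflates the coefficient of $\zeta(n)$ by an amount proportional to $\sigma_0$.

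The key device is then the Lyapunov shift $\hat g(\theta_n)=g(\theta_n)+\tfrac{\sigma_0\alpha_0}{2}\zeta(n)$. Writing $\hat g(\theta_{n+1})-\hat g(\theta_n)=[g(\theta_{n+1})-g(\theta_n)]+\tfrac{\sigma_0\alpha_0}{2}\bigl(\zeta(n+1)-\zeta(n)\bigr)$, the term $-\tfrac{\sigma_0\alpha_0}{2}\zeta(n)$ is tailored to absorb the $\sigma_0$-proportional inflation produced by the correction, so that the surviving coefficient of $\zeta(n)$ is negative and independent of $\sigma_0$; calibrating the Young constant so the net coefficient equals $-\tfrac14$ gives the leading term $-\tfrac{\alpha_0}{4}\zeta(n)$. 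The look-ahead term $\tfrac{\sigma_0\alpha_0}{2}\zeta(n+1)$ is treated by $L$-smoothness (\cref{ass_g_poi}~\ref{ass_g_poi:i2}): from $\|\nabla g(\theta_{n+1})\|^2\le(1+\rho)\|\nabla g(\theta_n)\|^2+(1+\rho^{-1})L^2\|\theta_{n+1}-\theta_n\|^2$ and $\|\theta_{n+1}-\theta_n\|^2=\alpha_0^2\Gamma_n$ one extracts exactly a $\Gamma_n/\sqrt{S_n}$ contribution (the source of the $C_{\Gamma,2}\,\Gamma_n/\sqrt{S_n}$ term), plus a small $\|\nabla g(\theta_n)\|^2/\sqrt{S_n}$ piece metered against the dissipation budget.

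Finally, the inequality obtained so far involves conditional expectations of $\mathscr{F}_n$-measurable quantities (such as $\zeta(n+1)$ and the realized $\Gamma_n$), whereas the statement is pathwise in $\Gamma_n$. I would therefore define $V_n$ in \cref{notation:Vn} as the martingale-difference gap between these realized quantities and their $\mathscr{F}_{n-1}$-conditional expectations, so that $\hat X_n:=X_n+V_n$ satisfies $\Expect[\hat X_n\mid\mathscr{F}_{n-1}]=0$; collecting the $\Gamma_n$ coefficients into $C_{\Gamma,1},C_{\Gamma,2}$ (as in \cref{notation:ghat:const}) then produces the claimed inequality. The main obstacle is the second step: controlling $\Expect[R_n\Lambda_n\mid\mathscr{F}_{n-1}]$ with neither a bounded-gradient nor a higher-moment hypothesis. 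The delicate point is that a careless split leaves a non-summable residue of the form $\sigma_1/\sqrt{S_{n-1}}$; avoiding it forces one to keep the $1/S_n$ factor inside $\Lambda_n\le\Gamma_n$ intact (so that every stochastic-gradient surplus is always metered against $S_n$, yielding $\Gamma_n$ rather than a bare $\|\nabla g(\theta_n,\xi_n)\|^2/\sqrt{S_{n-1}}$) while simultaneously balancing the Young parameter against $\sigma_0$ so that the $\zeta(n)$ coefficient is exactly neutralized by the potential shift.
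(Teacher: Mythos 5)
Your high-level ingredients (the decomposition \eqref{inequ:sufficient:decrease}, the affine noise condition, the Lyapunov shift, smoothness applied to the next-step gradient) are the right ones, but two of your concrete steps fail, and they fail precisely at the points you yourself flag as delicate. First, the Young-first split of $R_n\Lambda_n$: writing $R_n\Lambda_n \le \tfrac12\zeta(n)\Lambda_n + \tfrac{\|\nabla g(\theta_n,\xi_n)\|^2}{2\sqrt{S_{n-1}}}\Lambda_n$ leaves a term that is \emph{quartic} in the noise, namely $\|\nabla g(\theta_n,\xi_n)\|^4$ over $\sqrt{S_{n-1}}\sqrt{S_n}(\sqrt{S_{n-1}}+\sqrt{S_n})$. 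Under \cref{ass_noise}~\ref{ass_noise:i2} you control only second conditional moments, so the only way to reduce this term is to bound one factor of $\|\nabla g(\theta_n,\xi_n)\|^2$ by $S_n$, which collapses it to $\tfrac{\|\nabla g(\theta_n,\xi_n)\|^2}{2\sqrt{S_{n-1}}}$ and hence, after conditioning, to $\tfrac{\sigma_0}{2}\zeta(n)+\tfrac{\sigma_1}{2\sqrt{S_{n-1}}}$ --- exactly the unmetered residue you declare fatal. Calling it ``a $\Gamma_n$-term'' does not make it one: its coefficient $\|\nabla g(\theta_n,\xi_n)\|^2/(2\sqrt{S_{n-1}})$ is an unbounded random variable. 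The paper's order of operations is what avoids this: pull out the deterministic factor $\|\nabla g(\theta_n)\|/\sqrt{S_{n-1}}$, apply Young against the \emph{conditional expectation} $\Expect[\|\nabla g(\theta_n,\xi_n)\|\Lambda_n \mid \mathscr{F}_{n-1}]$ (so only one power of noise rides with $\Lambda_n$), and then apply conditional Cauchy--Schwarz, $\Expect^2[XY\mid\mathscr{F}_{n-1}]\le\Expect[X^2\mid\mathscr{F}_{n-1}]\,\Expect[Y^2\mid\mathscr{F}_{n-1}]$; the $\sigma_1$ surplus then multiplies $\Expect[\Lambda_n^2\mid\mathscr{F}_{n-1}]\le\Expect[\Gamma_n^2\mid\mathscr{F}_{n-1}]\le\Expect[\Gamma_n\mid\mathscr{F}_{n-1}]$ and stays metered.

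Second, and more fundamentally, your bookkeeping of the Lyapunov shift is circular. You spend the shift's $-\tfrac{\sigma_0\alpha_0}{2}\zeta(n)$ to cancel the $\sigma_0$-proportional inflation (implicitly bounding $\Lambda_n^2\le1$), and then control the look-ahead by $\zeta(n+1)\le(1+\rho)\zeta(n)+(1+\rho^{-1})L^2\alpha_0^2\,\Gamma_n/\sqrt{S_n}$. This reintroduces a positive term $\tfrac{\sigma_0\alpha_0}{2}(1+\rho)\zeta(n)$, while the only remaining dissipation is the $\sigma_0$-independent $-\tfrac{\alpha_0}{2}\zeta(n)$; the net coefficient of $\zeta(n)$ is $\tfrac{\alpha_0}{2}\bigl(\sigma_0(1+\rho)-1\bigr)$, which is negative only if $\sigma_0<\tfrac12$ --- and by Jensen's inequality the affine noise condition forces $\sigma_0\ge1$ whenever gradients are unbounded, which is exactly the regime this paper targets. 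No choice of $\rho$, nor of the constant in the shift, repairs this. The missing idea is that the paper never bounds $\zeta(n)\Lambda_n^2$ by $\zeta(n)$ at all: it uses the exact identity from \cref{inequ:sn:minus:sn1}, $\zeta(n)\Lambda_n^2\le\zeta(n)\Lambda_n=\|\nabla g(\theta_n)\|^2\bigl(\tfrac{1}{\sqrt{S_{n-1}}}-\tfrac{1}{\sqrt{S_n}}\bigr)=\bigl(\zeta(n)-\zeta(n+1)\bigr)+\tfrac{\|\nabla g(\theta_{n+1})\|^2-\|\nabla g(\theta_n)\|^2}{\sqrt{S_n}}$, absorbs the telescoping pair \emph{exactly} into $\hat g$ (no inequality, hence no inflation), and applies smoothness only to the gradient-norm difference, with the Young parameter there chosen as $\tfrac{1}{2\sigma_0}$ so that after multiplication by the prefactor $\tfrac{\sigma_0\alpha_0}{2}$ the resulting inflation is $\tfrac{\alpha_0}{4}\zeta(n)$, independent of $\sigma_0$. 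That exact cancellation --- not a $(1+\rho)$-comparison --- is what yields the $-\tfrac{\alpha_0}{4}\zeta(n)$ dissipation for arbitrary $\sigma_0$.
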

\begin{proof}(of~\cref{sufficient:lem})
We first recall \cref{inequ:sufficient:decrease}
\begin{align}\label{adagrad:sufficient:inequ:1}
g(\theta_{n+1})-g(\theta_{n}) & \le - \alpha_{0}\zeta(n)+\alpha_{0}\Expect\left[R_n \Lambda_n \mid \mathscr{F}_{n-1}\right]+\frac{ L\alpha_{0}^{2}}{2}\Gamma_n +\alpha_0 X_{n} .
\end{align} 
Next, we focus on dealing with the second term on the RHS of \cref{adagrad:sufficient:inequ:1} and achieve:
\begin{align}\label{adagrad:sufficient:inequ:2}
& \Expect\left[R_n \Lambda_n \mid \mathscr{F}_{n-1}\right] := \frac{\|\nabla g(\theta_{n})\|}{\sqrt{S_{n-1}}}\cdot\Expect\left[\|\nabla g(\theta_{n},\xi_{n})\|\Lambda_n \mid \mathscr{F}_{n-1}\right]  \notag \\ \mathop{\le}^{(a)} &\frac{\|\nabla g(\theta_{n})\|^{2}}{2\sqrt{S_{n-1}}}+\frac{1}{2\sqrt{S_{n-1}}}\Expect^{2}\left[\|\nabla g(\theta_{n},\xi_{n})\| \Lambda_n \mid \mathscr{F}_{n-1}\right] \notag \\ \mathop{\le}^{(b)} &\frac{\zeta(n)}{2}+\frac{\Expect[\|\nabla g(\theta_{n},\xi_{n})\|^{2}|\mathscr{F}_{n-1}]}{2\sqrt{S_{n-1}}}\cdot\Expect\left[\Lambda_n^2 \mid \mathscr{F}_{n-1}\right] \notag \\ \mathop{\le}^{(c)}
&\frac{\zeta(n)}{2}+\frac{\sigma_{1}\Expect\left[\Lambda_n^2 \mid \mathscr{F}_{n-1}\right]}{2\sqrt{S_{n-1}}} +\frac{\sigma_{0}}{2}\cdot\frac{\|\nabla g(\theta_{n})\|^{2}}{\sqrt{S_{n-1}}}\cdot \Expect\left[\Lambda_n^2 \mid \mathscr{F}_{n-1}\right] \notag \\
 \mathop{\le}^{(d)}
&\frac{\zeta(n)}{2}+\frac{\sigma_{1}}{2\sqrt{S_{0}}}\Gamma_n^2 +\frac{\sigma_{0}}{2}\cdot \zeta(n) \cdot \Lambda_n^2 +V_{n},
\end{align}
where for $(a), (b)$ we use \emph{Cauchy-Schwartz inequality}, $(c)$ is by applying the affine noise variance condition, and $(d)$ is by applying $\Lambda_n \leq \Gamma_n $  and $S_n \geq S_0$ for $(d)$. In the inequality, the martingale sequence $V_n$ is defined as
\begin{align}\label{notation:Vn}
&V_{n}:=\frac{\sigma_{1}}{2\sqrt{S_{0}}}\Big(\Expect\big[ \Gamma_n^2 \mid \mathscr{F}_{n-1}\big]- \Gamma_n^2 \Big)+\frac{\sigma_{0}}{2}\cdot\left(\Expect\left[\zeta(n)\cdot\Lambda_n^2 \mid \mathscr{F}_{n-1}\right]-\zeta(n) \cdot \Lambda_n^2\right).
\end{align}
We then substitute \cref{adagrad:sufficient:inequ:2} into \cref{adagrad:sufficient:inequ:1} and define $\hat{X}_{n}:=X_{n}+V_{n}$
\begin{align}
g(\theta_{n+1})-g(\theta_{n})  \le & - \frac{\alpha_{0}}{2} \zeta(n)+\frac{\alpha_{0}\sigma_{1}}{2\sqrt{S_{0}}}\cdot \Gamma_n^2 +\frac{\sigma_{0}\alpha_{0}}{2}\cdot \zeta(n)\cdot \Lambda_n^2 +\frac{ L\alpha_{0}^{2}}{2}\cdot  \Gamma_n\notag\\&+ \alpha_0 \hat{X}_{n}.
\end{align}
Recalling the definition of $\Lambda_n$ in ~\cref{inequ:sufficient:decrease} and applying $\Lambda_n \leq 1$ and \cref{inequ:sn:minus:sn1}, we have 
\begin{align}\label{inequ:cross:grad:lambda}
\zeta(n)\cdot \Lambda_n^2 & \leq   \frac{\|\nabla g(\theta_{n})\|^{2} \cdot \|\nabla g(\theta_{n}, \xi_n)\|^{2}}{\sqrt{S_{n-1}}\sqrt{S_{n}}(\sqrt{S_{n-1}} + \sqrt{S_n})} = \|\nabla g(\theta_{n})\|^{2} \left(\frac{1}{\sqrt{S_{n-1}}}-\frac{1}{\sqrt{S_{n}}} \right) \notag \\
& = \left(\frac{\|\nabla g(\theta_{n})\|^{2}}{\sqrt{S_{n-1}}}-\frac{\|\nabla g(\theta_{n+1})\|^{2}}{\sqrt{S_{n}}}\right) + \frac{\|\nabla g(\theta_{n+1})\|^{2}-\|\nabla g(\theta_{n})\|^{2}}{\sqrt{S_{n}}}. 
\end{align}
By the smoothness of $g$, we estimate the last term of \cref{inequ:cross:grad:lambda}
\begin{align}\label{grad:norm:diff}
&\|\nabla g(\theta_{n+1})\|^{2}-\|\nabla g(\theta_{n})\|^{2} \notag \\
=&\ (2\|\nabla g(\theta_{n})\|+\|\nabla g(\theta_{n+1})\|-\|\nabla g(\theta_{n})\|)\cdot(\|\nabla g(\theta_{n+1})\|-\|\nabla g(\theta_{n})\|) \notag \\ \mathop{\leq}^{(a)} &\frac{2 L\alpha_{0}\|\nabla g(\theta_{n})\|\cdot\|\nabla g(\theta_{n},\xi_{n})\|}{\sqrt{S_{n}}}+\frac{\alpha_{0}^{2} L^{2}\|\nabla g(\theta_{n},\xi_{n})\|^{2}}{S_{n}} \notag \\
\mathop{\leq}^{(b)} &\frac{1}{2\sigma_0}\left\| \nabla g(\theta_{n})\right\|^2 + 2\sigma_0\alpha_0^2 L^2\frac{\|\nabla g(\theta_{n},\xi_{n})\|^{2}}{S_{n}} + \frac{\alpha_{0}^{2} L^{2}\|\nabla g(\theta_{n},\xi_{n})\|^{2}}{S_{n}},
\end{align}
where $(a)$ uses the smoothness of $g$ such that
\begin{align*}
\|\nabla g(\theta_{n+1})\|-\|\nabla g(\theta_{n})\| \leq \|\nabla g(\theta_{n+1}) - \nabla g(\theta_{n})\| = \alpha_0  L \frac{\left\|\nabla g(\theta_n, \xi_n) \right\|}{\sqrt{S_n}},
\end{align*}
and $(b)$ uses the Cauchy-Schwartz inequality.
Then applying \cref{grad:norm:diff} to \cref{inequ:cross:grad:lambda} yields
\begin{align*}
\zeta(n) \Lambda_n^2 & \leq  \frac{\|\nabla g(\theta_{n})\|^{2}}{\sqrt{S_{n-1}}}-\frac{\|\nabla g(\theta_{n+1})\|^{2}}{\sqrt{S_{n}}} + \frac{\left\| \nabla g(\theta_{n})\right\|^2}{2\sigma_0} + \left( 2\sigma_0 + 1\right)\alpha_{0}^{2} L^{2}\frac{\Gamma_{n}}{\sqrt{S_{n}}}.
\end{align*}
Since $\Gamma_n \leq 1$, by applying the above estimation, the result can be formulated as
\begin{align*}
g(\theta_{n+1})-g(\theta_{n})  \le & -\frac{\alpha_{0}}{4}\zeta(n)+\left(\frac{\alpha_{0}\sigma_{1}}{2\sqrt{S_{0}}}  + \frac{ L\alpha_{0}^{2}}{2}\right)\cdot \Gamma_n  + \frac{\sigma_{0}\left( 2\sigma_0 + 1\right)\alpha_{0}^{3} L^{2}}{2}\frac{\Gamma_{n}}{\sqrt{S_{n}}} \notag \\
&+ \frac{\sigma_{0}\alpha_{0}}{2}\left(\zeta(n) - \zeta(n+1) \right) + \alpha_0 \hat{X}_{n}.
\end{align*}
We further introduce 
\begin{align}\label{notation:ghat:const}
\hat{g}(\theta_n) & = g(\theta_{n}) + \frac{\sigma_{0}\alpha_{0}}{2}\zeta(n), C_{\Gamma,1} = \left(\frac{\alpha_{0}\sigma_{1}}{2\sqrt{S_{0}}}  + \frac{ L\alpha_{0}^{2}}{2}\right);
C_{\Gamma,2}  = \frac{\sigma_{0}\left( 2\sigma_0 + 1\right)\alpha_{0}^{3} L^{2}}{2}
\end{align} to simplify this inequality, which rewrites the inequality to
\begin{align*}
\hat{g}(\theta_{n+1}) - \hat{g}(\theta_n) \leq -\frac{\alpha_{0}}{4}\zeta(n)+C_{\Gamma, 1}\cdot \Gamma_n  + C_{\Gamma, 2}\frac{\Gamma_n}{\sqrt{S_{n}}} + \alpha_0 \hat{X}_{n}.
\end{align*}
\end{proof}

\subsection{The Stability Property of AdaGrad-Norm}\label{subsec:stability}
In this subsection, we will prove the stability of AdaGrad-Norm, which is the foundation for the subsequent asymptotic convergence results, including almost-sure and mean-square convergence. The stability of AdaGrad-Norm is described in the following theorem.
\begin{thm}\label{stable}
If \cref{ass_g_poi,ass_noise} hold, then for AdaGrad-Norm there exists a sufficiently large constant $\tilde{M}>0,$ such that
\begin{equation}\nonumber\begin{aligned}
\Expect\Big[\sup_{n\ge 1}g(\theta_{n})\Big]< \tilde{M}<+\infty,
\end{aligned}\end{equation}
where $\tilde{M}$  depends on the initial state of the algorithm and the constants in assumptions.
\end{thm}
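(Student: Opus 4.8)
The plan is to control the running maximum of the Lyapunov function $\hat g$ from \cref{sufficient:lem} rather than of $g$ directly; since $\hat g(\theta_n)=g(\theta_n)+\tfrac{\sigma_0\alpha_0}{2}\zeta(n)\ge g(\theta_n)\ge 0$, a bound on $\E[\sup_n\hat g(\theta_n)]$ immediately gives the claim. Summing the sufficient-decrease inequality gives, pathwise,
\[
\hat g(\theta_{N+1})+\frac{\alpha_0}{4}\sum_{n=1}^N\zeta(n)\le \hat g(\theta_1)+C_{\Gamma,1}\sum_{n=1}^N\Gamma_n+C_{\Gamma,2}\sum_{n=1}^N\frac{\Gamma_n}{\sqrt{S_n}}+\alpha_0 M_N,
\]
where $M_N=\sum_{n=1}^N\hat X_n$ is a martingale. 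The first thing I would record is that the two error sums are of completely different orders: telescoping $\tfrac{1}{\sqrt{S_{n-1}}}-\tfrac{1}{\sqrt{S_n}}\ge \tfrac12\,\Gamma_n/\sqrt{S_n}$ shows $\sum_n\Gamma_n/\sqrt{S_n}\le 2/\sqrt{S_0}$ is a deterministic constant, whereas $\sum_n\Gamma_n\le\ln(S_N/S_0)$ may diverge. Thus the entire difficulty is concentrated in the single logarithmic term $C_{\Gamma,1}\ln(S_N/S_0)$ (and, as noted below, in the quadratic variation of $M_N$, which is itself only $O(\ln S_N)$).

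The key structural observation I would exploit is that the destabilizing term is self-bounding: taking $\E[\cdot\mid\mathscr F_{n-1}]$ and using $S_n\ge S_{n-1}$, the affine-variance bound of \cref{ass_noise}, and the smoothness consequence $\|\nabla g\|^2\le 2Lg$, one gets $\E[\Gamma_n\mid\mathscr F_{n-1}]\le \sigma_0\zeta(n)/\sqrt{S_{n-1}}+\sigma_1/S_{n-1}$. Hence there is a threshold $\beta=(8C_{\Gamma,1}\sigma_0/\alpha_0)^2$ such that, once $S_{n-1}\ge\beta$, the part of $C_{\Gamma,1}\E[\Gamma_n\mid\mathscr F_{n-1}]$ proportional to $\zeta(n)$ is at most $\tfrac{\alpha_0}{8}\zeta(n)$ and is absorbed into the genuine decrease $-\tfrac{\alpha_0}{4}\zeta(n)$. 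In other words, the relative weight of the destabilizing term decays like $1/\sqrt{S_{n-1}}$, so beyond scale $\beta$ the Lyapunov function acquires a strictly negative drift, up to the residual $\sigma_1/S_{n-1}$ and the summable $\Gamma_n/\sqrt{S_n}$.

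With this in hand I would introduce the stopping times $T_k=\inf\{n:S_n\ge 2^k\beta\}$ and partition the trajectory into the initial phase $\{n:S_n<\beta\}$ and the geometric blocks $B_k=[T_k,T_{k+1})$. On the initial phase $\sum\Gamma_n\le\ln(\beta/S_0)$ is a constant and the quadratic variation of the martingale is likewise bounded, so a Doob maximal inequality bounds the expected running maximum of $\hat g$ up to $T_0$. On each block $B_k$ the telescoping estimates give $\sum_{B_k}\Gamma_n\le\ln 2$ and $\sum_{B_k}\Gamma_n/\sqrt{S_n}\le 2/\sqrt{2^k\beta}$ (geometrically small in $k$), so the martingale increment over $B_k$ has conditional variance controlled by a constant and $\hat g$ obeys, in conditional expectation, an increase bounded by $-\tfrac{\alpha_0}{8}\sum_{B_k}\zeta(n)+C_{\Gamma,1}\ln 2+(\text{summable residuals})$. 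The heart of the argument is then to show that the accumulated negative drift dominates the constant $C_{\Gamma,1}\ln2$ across blocks; here \cref{ass_g_poi} (non-asymptotic flatness) and \cref{ass_noise} (near-critical boundedness of the stochastic gradient) are used to rule out the degenerate scenario in which $\theta$ lingers where the true gradient vanishes while $S$ keeps doubling, yielding a lower bound on $\E[\sum_{B_k}\zeta(n)]$ that grows with $k$ and makes the per-block net increase of $\E[\hat g]$ summable. Combining the initial phase with the sum over blocks, and applying the supermartingale/Doob maximal inequality to the stopped processes to pass from increments to the running maximum, yields $\E[\sup_n\hat g(\theta_n)]\le\tilde M$.

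The main obstacle is precisely the non-summability of $\sum_n\Gamma_n=O(\ln S_N)$ together with the noise-floor residual $\sum_n\sigma_1/S_{n-1}$: neither can be bounded by naive summation, and a global Doob bound on $M_N$ fails because its quadratic variation is also only $O(\ln S_N)$. Everything hinges on the stopping-time partition, which keeps every block-wise quantity bounded while the $1/\sqrt{S}$ decay of the destabilizing coefficient, sharpened by the non-degeneracy assumptions, lets the accumulated decrease outpace the logarithmic growth block by block.
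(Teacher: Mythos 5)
Your reduction to bounding $\E[\sup_n\hat g(\theta_n)]$, the observation that $\sum_n\Gamma_n/\sqrt{S_n}\le 2/\sqrt{S_0}$ while $\sum_n\Gamma_n$ is only $O(\ln S_N)$, and the absorption of the $\sigma_0\zeta(n)/\sqrt{S_{n-1}}$ part of $\E[\Gamma_n\mid\mathscr F_{n-1}]$ into the drift once $S_{n-1}\ge\beta$ are all correct and consistent with the paper's starting point (\cref{sufficient:lem}). The gap is at the heart of your block argument. You partition time by the value of $S_n$ (doubling blocks $B_k$) and claim that \cref{ass_g_poi,ass_noise} ``rule out'' the scenario in which $\theta_n$ lingers near critical points while $S_n$ keeps doubling, thereby yielding a lower bound on $\E\big[\sum_{B_k}\zeta(n)\big]$ that grows with $k$. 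That scenario is not ruled out by the assumptions --- it is entirely consistent with them: with the noise floor $\sigma_1>0$ (or the a.s.\ bound $D_1$ in \cref{ass_noise}~\ref{ass_noise:i3}), the stochastic gradients can keep $S_n$ growing linearly forever while $\|\nabla g(\theta_n)\|\approx 0$, in which case $\sum_{B_k}\zeta(n)$ is arbitrarily small, not growing in $k$. What the assumptions actually provide in that regime is a different statement: by non-asymptotic flatness the near-critical set is bounded (\cref{pro_0}), hence $\hat g(\theta_n)\le \hat C_g$ there, so no drift is needed. Your per-block accounting has no mechanism for this dichotomy: a block defined by $S_n$ can mix both regimes, its length $|B_k|$ is unbounded (the final block is infinite when $S_n$ stays bounded), so the leftover noise-floor residual $\sum_{B_k}\sigma_1/S_{n-1}\le \sigma_1|B_k|/(2^k\beta)$ is not a per-block constant, and the per-block increase of order $C_{\Gamma,1}\ln 2$ need not be compensated. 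Summed over the roughly $\log_2 S_N$ blocks, you are back to the $O(\ln S_N)$ bound you started from, so the argument does not close.

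The paper's resolution is to partition by the value of $\hat g$ rather than of $S_n$: the stopping times in \cref{stopping_time} isolate excursions of $\hat g$ above $\Delta_0=\max\{C_0,\hat C_g\}$, and on every such excursion \cref{pro_0} guarantees $\|\nabla g(\theta_n)\|^2>\eta$ throughout, so all error terms can be dominated by $(\sigma_0+\sigma_1/\eta)\,\|\nabla g(\theta_n,\xi_n)\|^2/S_{n-1}$ restricted to $\{\|\nabla g(\theta_n)\|^2>\eta\}$, whose sum over \emph{all} time --- not per block --- is finite in expectation by \cref{lem_su} (this indicator-restricted, $1/\sqrt{S_{n-1}}$-weighted global bound is precisely the ingredient your sketch lacks); the probability that an excursion reaches level $2\Delta_0$ is then controlled via Markov's inequality in \cref{lem:psi:i1}, while outside excursions $\hat g\le\Delta_0$ trivially. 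Each excursion also starts from a controlled value (at most $\tfrac32\Delta_0$, by \cref{lem:adj:ghat}), which is what lets the increments be summed safely; your $S_n$-based blocks have no analogous control on their starting values. To salvage your approach you would need to import this same value-based dichotomy and a global bound on the error accumulated while $\|\nabla g\|^2>\eta$, at which point you would essentially have reconstructed the paper's proof.
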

To the best of our knowledge, this is the first result that can establish the stability property of the adaptive gradient methods. The finding in \cref{stable} is crucial for demonstrating the asymptotic convergence of AdaGrad-Norm. 

From \cref{stable}, we can conclude that for any given trajectory, the value of the function remains bounded  (\(\sup_{n \ge 1} g(\theta_{n}) < +\infty \)) almost surely. Note that the boundedness of the expected supremum function value $\mathbb{E}[\sup_{n \ge 1} g(\theta_n)] < \infty$ is a stronger form of stability than the almost-sure boundedness of the supremum alone, i.e., $\sup_{n \ge 1} g(\theta_n) < +\infty$ a.s. The latter condition is insufficient to ensure mean-square convergence.

To prove the stability in \cref{stable}, we first need to introduce and prove \cref{lem:adj:ghat} and \cref{pro_0}.
\begin{lem}\label{lem:adj:ghat}
For the Lyapunov function $\hat{g}(\theta_n)$ we have $$
\hat{g}(\theta_{n+1})-\hat{g}(\theta_{n})\leq h(\hat{g}(\theta_{n})),
$$ where $h(x):=\alpha_0\sqrt{2 L}\left(1 + \frac{\sigma_0 L}{2\sqrt{S_0}}\right)\sqrt{x} + \left(1  + \frac{\sigma_0\alpha_0 L}{2\sqrt{S_0}}\right)\frac{ L\alpha_0^2}{2}$ and  
$h(x) < \frac{x}{2}$ for any $x \geq C_0$ with some constants $C_0$.
\end{lem}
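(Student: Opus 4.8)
The statement has two independent parts: the pathwise one-step recursion $\hat g(\theta_{n+1}) - \hat g(\theta_n) \le h(\hat g(\theta_n))$, and the elementary comparison $h(x) < x/2$ for $x \ge C_0$. My plan is to prove the recursion \emph{pathwise}, with no conditional expectations taken. I would therefore not start from \cref{sufficient:lem}, whose remainder $\alpha_0 \hat X_n$ is a martingale-difference term and is not sign-definite along a trajectory, but instead from the raw descent inequality \eqref{inequ:smooth:inequality}. The governing idea is that every factor involving the stochastic gradient can be controlled pathwise by the three normalization facts built into $S_n = S_{n-1} + \|\nabla g(\theta_n,\xi_n)\|^2$: namely $\Gamma_n = \|\nabla g(\theta_n,\xi_n)\|^2/S_n \le 1$, $\|\nabla g(\theta_n,\xi_n)\|/\sqrt{S_n} \le 1$, and $S_n \ge S_0$. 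The target is to reduce the whole increment to something affine in $\|\nabla g(\theta_n)\|$ with a constant remainder, and then to eliminate $\|\nabla g(\theta_n)\|$ in favor of $\hat g(\theta_n)$.

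Concretely, I would split $\hat g(\theta_{n+1}) - \hat g(\theta_n) = [g(\theta_{n+1}) - g(\theta_n)] + \tfrac{\sigma_0 \alpha_0}{2}[\zeta(n+1) - \zeta(n)]$. For the first bracket, Cauchy--Schwarz on the cross term of \eqref{inequ:smooth:inequality} together with $\|\nabla g(\theta_n,\xi_n)\|/\sqrt{S_n} \le 1$ and $\Gamma_n \le 1$ gives $g(\theta_{n+1}) - g(\theta_n) \le \alpha_0 \|\nabla g(\theta_n)\| + \tfrac{L\alpha_0^2}{2}$. For the second bracket I would first use $\sqrt{S_{n-1}} \le \sqrt{S_n}$ to pass to $\zeta(n+1) - \zeta(n) \le (\|\nabla g(\theta_{n+1})\|^2 - \|\nabla g(\theta_n)\|^2)/\sqrt{S_n}$, then insert the smoothness estimate established in step $(a)$ of \eqref{grad:norm:diff} and apply the same normalization facts together with $1/\sqrt{S_n} \le 1/\sqrt{S_0}$; this bounds the second bracket by a constant multiple of $\|\nabla g(\theta_n)\|$ plus a constant. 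Summing the two brackets yields an increment bound of the form $c_1 \|\nabla g(\theta_n)\| + c_2$ with explicit $c_1, c_2 > 0$ depending only on $\alpha_0, L, \sigma_0, S_0$.

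The decisive step is to convert the gradient-norm bound into a bound in the Lyapunov value. Here I would invoke the standard self-bounding inequality for an $L$-smooth function bounded below, $\|\nabla g(\theta)\|^2 \le 2L\,(g(\theta) - \inf g)$, obtained by evaluating the descent lemma at the gradient step $\theta - L^{-1}\nabla g(\theta)$. Since $g \ge 0$ by \cref{ass_g_poi}~\ref{ass_g_poi:i} we have $\inf g \ge 0$, and since $\hat g(\theta_n) = g(\theta_n) + \tfrac{\sigma_0\alpha_0}{2}\zeta(n) \ge g(\theta_n)$, this yields $\|\nabla g(\theta_n)\| \le \sqrt{2L\,g(\theta_n)} \le \sqrt{2L}\,\sqrt{\hat g(\theta_n)}$. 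Substituting into the increment bound produces precisely the $\sqrt{x}$-plus-constant shape of $h$; the appearance of the factor $\sqrt{2L}$ in the leading coefficient of $h$ is exactly the fingerprint of this self-bounding step, which confirms the route.

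Finally, $h(x) = a\sqrt{x} + b$ with $a,b > 0$, so the comparison $h(x) < x/2$ is a one-line calculus fact: writing $u = \sqrt{x}$, the inequality $\tfrac12 u^2 - a u - b > 0$ holds for $u > a + \sqrt{a^2 + 2b}$, hence for every $x \ge C_0 := (a + \sqrt{a^2 + 2b})^2$. I expect the one genuine obstacle to be the bookkeeping in the $\zeta$-difference bound: one must keep each stochastic-gradient factor paired with a copy of $1/\sqrt{S_n}$ so that the AdaGrad normalization can absorb it pathwise, which is exactly where the adaptive denominator does the work; the remaining manipulations are routine.
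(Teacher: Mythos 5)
Your proposal is correct and follows essentially the same route as the paper's own proof: a pathwise split of $\hat g(\theta_{n+1})-\hat g(\theta_n)$ into the $g$-increment and the $\zeta$-increment, monotonicity of $S_n$ to put both gradient-norm terms over the same denominator, the smoothness bound on $\|\nabla g(\theta_{n+1})\|^{2}-\|\nabla g(\theta_n)\|^{2}$ absorbed by the AdaGrad normalizations $\|\nabla g(\theta_n,\xi_n)\|/\sqrt{S_n}\le 1$ and $S_n\ge S_0$, and finally the self-bounding inequality $\|\nabla g(\theta_n)\|\le\sqrt{2Lg(\theta_n)}\le\sqrt{2L\hat g(\theta_n)}$ from \cref{loss_bound}. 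The only (cosmetic) differences are that you obtain the $g$-increment bound via Cauchy--Schwarz on \cref{inequ:smooth:inequality} rather than via $\|\theta_{n+1}-\theta_n\|\le\alpha_0$ plus the descent lemma, and that your explicit $C_0=\bigl(a+\sqrt{a^{2}+2b}\bigr)^{2}$ makes precise the paper's order-of-growth remark for $h(x)<x/2$.
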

\begin{proof}
By the dynamics of AdaGrad-Norm, we have $\|\theta_{n+1}-\theta_{n}\| = \left\|\alpha_0 \frac{\nabla g(\theta_n, \xi_n)}{\sqrt{S_n}}\right\|\le \alpha_{0}\ (\forall\ n>0).$ Then we estimate the change of the Lyapunov function $\hat{g}$ at two adjacent points as
\begin{align*}
\hat{g}(\theta_{n+1})-\hat{g}(\theta_{n})&=g(\theta_{n+1})-g(\theta_{n})+\frac{\sigma_{0}\alpha_{0}}{2}\left(\frac{\left\|\nabla g(\theta_{n+1}) \right\|^2}{\sqrt{S_{n+1}}}-\frac{\left\|\nabla g(\theta_{n}) \right\|^2}{\sqrt{S_{n}}}\right) \notag\\& \mathop{\le}^{(a)} g(\theta_{n+1})-g(\theta_{n})+\frac{\sigma_{0}\alpha_{0}}{2}\frac{\|\nabla g(\theta_{n+1})\|^{2}-\|\nabla g(\theta_{n})\|^{2}}{\sqrt{S_{n}}}
\notag\\&\mathop{\le}^{(b)} \alpha_0\sqrt{2 L\hat{g}(\theta_{n})}+\frac{ L\alpha_0^2}{2} +\frac{\sigma_{0}\alpha_{0}}{2\sqrt{S_{0}}}\big( L\sqrt{2 L\hat{g}(\theta_{n})} \alpha_0 +  L^{2}\alpha_0^2\big), \notag \\
h(\hat{g}(\theta_{n})) & := \sqrt{2 L}\left(1 + \frac{\sigma_0 L}{2\sqrt{S_0}}\right)\alpha_0\sqrt{\hat{g}(\theta_n)} + \left(1  + \frac{\sigma_0\alpha_0 L}{2\sqrt{S_0}}\right)\frac{ L\alpha_0^2}{2},
\end{align*}
{where $(a)$ uses the fact that $S_n \leq S_{n+1}$, $(b)$ follows from the $ L$-smoothness of $g$ and \cref{loss_bound} such that $\|\nabla g(\theta_{n})\|\le \sqrt{2 Lg(\theta_{n})}<\sqrt{2 L\hat{g}(\theta_{n})}$ we have
\begin{align}\label{inequ:g:adj} 
 g(\theta_{n+1}) - g(\theta_n) &\leq \nabla g(\theta_n)^{\top}(\theta_{n+1} - \theta_n) + \frac{ L}{2}\left\|\theta_{n+1} - \theta_n \right\|^2  \notag \\
& \leq \left\|\nabla g(\theta_n)\right\|\left\|\theta_{n+1} - \theta_n\right\|  +\frac{ L}{2}\left\|\theta_{n+1} - \theta_n \right\|^2 \notag\\& \leq \alpha_0\sqrt{2 L\hat{g}(\theta_n)} +\frac{ L\alpha_0^2}{2} 
\end{align}
}
and
\begin{align}
& \left\| \nabla g(\theta_{n+1})\right\|^2 -\left\| \nabla g(\theta_{n})  \right\|^2 \notag \\
\leq &\ \left(2\left\| \nabla g(\theta_{n})\right\| + \left\| \nabla g(\theta_{n+1})\right\| -\left\| \nabla g(\theta_{n})  \right\|\right)\left(\left\| \nabla g(\theta_{n+1})\right\| -\left\| \nabla g(\theta_{n})  \right\|\right) \notag \\
\leq &\ 2 L\left\|\nabla g(\theta_{n}) \right\| \left\|\theta_{n+1} - \theta_n \right\|  +  L^2\left\|\theta_{n+1} - \theta_n \right\|^2 \leq 2 L\alpha_0\sqrt{2 L \hat{g}(\theta_n)} +  L^2\alpha_0^2,
\end{align} 
since $\left\|\nabla g(\theta_{n+1}) \right\| - \left\| \nabla g(\theta_n) \right\| \leq \left\|\nabla g(\theta_{n+1})  - \nabla g(\theta_n) \right\| \leq  L \left\| \theta_{n+1} - \theta_n\right\| $.
There exists a constant \(C_{0}\) that only depends on the parameters of the problem and the initial state of the algorithm, such that if $x \geq C_0$, the following inequality holds
\begin{align*}
h(x)=\sqrt{2 L}\left(1 + \frac{\sigma_0 L}{2\sqrt{S_0}}\right)\alpha_0\sqrt{x} + \left(1  + \frac{\sigma_0\alpha_0 L}{2\sqrt{S_0}}\right)\frac{ L\alpha_0^2}{2}<\frac{x}{2}.
\end{align*} Since we treat $x$ as the variable: LHS is of order  \(\sqrt{x}\) while RHS is of order as \(x\). 
\end{proof}

\begin{property}\label{pro_0}
Under~\cref{ass_g_poi}~\ref{ass_g_poi:i3}, the gradient sublevel set $J_{\eta}:=\{\theta \mid \|\nabla g(\theta)\|^{2}\le \eta\}$ with $\eta >0$ is closed and bounded. Then, by~\cref{ass_g_poi}~\ref{ass_g_poi:i}, there exist a constant $\hat{C}_{g} > 0$  such that $\hat{g}(\theta) < \hat{C}_{g}$ for any $\theta \in J_{\eta}$. 
\end{property}
\begin{proof}
Denote the gradient sublevel set $J_{\eta}:=\{\theta \mid \|\nabla g(\theta)\|^{2}\leq \eta\}$ with $\eta >0$. According to \cref{ass_g_poi}~\ref{ass_g_poi:i3}, $J_{\eta}$ is a closed bounded set. Then by the continuity of $g$, there exist a constant $C_{g} > 0$  such that objective  $g(\theta) \leq C_{g}$ for any $\theta \in J_{\eta}$. For the Lyapunov function $\hat{g}$, we have $\hat{g}(\theta_n
) = g(\theta_n) + \frac{\sigma_0\alpha_0}{2}\frac{\left\|\nabla g(\theta_n) \right\|^2}{\sqrt{S_n}} \leq C_{g}+\frac{\sigma_{0}\alpha_{0}\eta}{2\sqrt{S_{0}}}$ for any $\theta \in J_{\eta}$. Conversely, if there exists $\hat{g}(\theta) >  \hat{C}_{g}:=C_{g}+\frac{\sigma_{0}\alpha_{0}\eta}{2\sqrt{S_{0}}},$ then we must have  $\|\nabla g(\theta)\|^{2} > \eta.$ 
\end{proof}

We are now prepared to present the formal description of the proof of \cref{stable}. To facilitate understanding, we outline the structure of this proof for the readers in~\cref{fig:proof:stability}.
\begin{figure}[ht]
\centering
\begin{tikzpicture}[
  node distance=1.2cm,
  every node/.style={draw, rectangle, minimum width=1.2cm, minimum height=0.6cm, text centered, font=\small},
  every comment/.style={rectangle, draw=none, font=\small},
  >=Stealth, 
  thick]
\node [label={sufficient decrease}](lemma31) {\cref{sufficient:lem}}; 

\node (lemma32) [right=of lemma31]{ \cref{lem:adj:ghat}}; 
\node (lemma33) [right= of lemma32]{\cref{pro_0}};  

\node (lemma34) [below= of lemma31]{ \cref{lem:estimation:supg}}; 
\node (lemma35) [right= of lemma34]{\cref{lem_su}}; 

\node (lemma36) [right= of lemma35]{ \cref{lem:psi:i1}}; 
\node (lemma37) [label={stability}, below right = 0.3cm and 1.2 cm of lemma33]{\cref{stable}}; 

\draw[->] (lemma31) to (lemma34);
\draw[->] (lemma31) to (lemma35);
\draw[->] (lemma31) to (lemma36);
\draw[->] (lemma32) to (lemma34);
\draw[->] (lemma32) to (lemma36);
\draw[->] (lemma33) to (lemma34);
\draw[->] (lemma33) to (lemma37);
\draw[->] (lemma36) to (lemma37);
\draw[->] (lemma35) to node[draw=none, left=-6.8cm, font=\small]{\text{+ Lebesgue's monotone theorem}}(lemma36);
\end{tikzpicture}
\caption{The structure of the proof of \cref{stable}} \label{fig:proof:stability}
\end{figure}
\begin{proof}(of \cref{stable}) \\
\noindent{\bf Phase I:} To demonstrate the stability of the loss function sequence \(\{g(\theta_{n})\}_{n\ge 1}\), the key technique is to segment the entire iteration process according to the value of the Lyapunov function \(\hat{g}(\theta_{n})\). Specifically, we define the non-decreasing stopping times $\{\tau_{t}\}_{t\ge 1}$ as 
\begin{align}\label{stopping_time}
&\tau_{1}:=\min\{k\ge 1:\hat{g}(\theta_{k})>\Delta_0\},\ \tau_{2}:=\min\{k\ge \tau_{1}: \hat{g}(\theta_{k})\le  \Delta_0\ \text{or}\ \hat{g}(\theta_{k})>2\Delta_0\},\notag\\&\tau_{3}:=\min\{k\ge \tau_{2}:\hat{g}(\theta_{k})\le \Delta_0\},...,
\notag\\&\tau_{3i-2}:=\min\{k> \tau_{3i-3}:\hat{g}(\theta_{k})>\Delta_0\},\ \notag \\
&\tau_{3i-1}:=\min\{k\ge \tau_{3i-2}:\hat{g}(\theta_{k})\le  \Delta_0\ \text{or}\ \hat{g}(\theta_{k})>2\Delta_0\},\notag\\&\tau_{3i}:=\min\{k\ge  \tau_{3i-1}:\hat{g}(\theta_{k})\le  \Delta_0\}.
\end{align}
where $\Delta_0:=\max\{C_{0},\hat{C}_{g}\}$ and $C_0, \hat{C}_g$
are defined in \cref{lem:adj:ghat} and \cref{pro_0}, respectively.  
{For the first three stopping time $\tau_1, \tau_2, 
\tau_3$, we have $\tau_1 \leq \tau_2 \leq \tau_3$. When $\tau_1 = \tau_2$, we have $\hat{g}(\theta_{\tau_1}) > 2 \Delta_0$ while we have $\tau_2 < \tau_3$ such that $\hat{g}(\theta_{\tau_3}) \leq \Delta_0$ and $\hat{g}(\theta_n) > \Delta_0$ for $n \in [\tau_1, \tau_3)$. If $\tau_1 < \tau_2$ (that is $\Delta_0 < \hat{g}(\theta_{\tau_1}) < 2\Delta_0$), no matter $\tau_2 = \tau_3$ or $\tau_2 < \tau_3$, we have $\hat{g}(\theta_n) > \Delta_0$ for any $n \in [\tau_1, \tau_3)$. We thus conclude that $\hat{g}(\theta_n) > \Delta_0$ for any $n \in [\tau_{1}, \tau_3)$. }

Next, by the definition of the stopping times $\tau_{3i}$ and $\tau_{3i+1}$, $\forall\ n \in [\tau_{3i}, \tau_{3i+1})$ ($i \geq 1$), we have
\begin{align}\label{fab_2}
\hat{g}(\theta_{n}) \leq \Delta_0.\ \,\, 
\end{align}
Meanwhile, the stopping time $\tau_{3i-1} > \tau_{3i-2}$  holds for $i\ge 2$, because for any $\ i\ge 2$ we have
\[
\Delta_{0} < \hat{g}(\theta_{\tau_{3i-2}}) \le \hat{g}(\theta_{\tau_{3i-2}-1}) + h(\hat{g}(\theta_{\tau_{3i-2}-1})) \le \Delta_{0} + h(\Delta_{0}) \mathop{<}^{(a)} \frac{3\Delta_{0}}{2} < 2\Delta_{0},
\]
where $(a)$ is due to our choice of $\Delta_{0} > C_{0}$ such that $h(\Delta_{0}) < \frac{\Delta_{0}}{2}$ (\cref{lem:adj:ghat}). Combining with this result and the definition of the stopping times $\tau_{3i-1}$, we have for any $n \in [\tau_{3i-2}, \tau_{3i-1})\ (\forall\ i\ge 2)$
\begin{align}\label{fabulous}
g(\theta_{n}) < \hat{g}(\theta_{n}) < 2\Delta_{0}\quad  \text{and }  \quad \hat{g}(\theta_{n}) > \Delta_0.
\end{align}
Thus, the outliers only appear between the stopping times \([ \tau_{3i-1}, \tau_{3i})\). To demonstrate stability in \cref{stable}, we aim to prove that  for any \(T \geq 1\),   \(\mathbb{E}\left[\sup_{1 \le n <T} g(\theta_{n}) \right]\) has an finite upper bound that is independent of \(T\). By the \emph{Lebesgue's monotone convergence} theorem, \(\mathbb{E}\left[\sup_{n \ge 1} g(\theta_{n}) \right]\) is also controlled by this bound.

\noindent{\bf Phase II:} In this step, for any $T \geq 1$, our aim is to estimate $\Expect[\sup_{1\le n<T}g(\theta_{n})]$ based on the segment of $g$ on the stopping time $\tau_t$ defined in the Phase I. For any $T \geq 1$, we define $\tau_{t,T} = \tau_t \wedge T$.
Specifically, we conclude the following auxiliary lemma, whose proof is provided in \cref{appendix:add:proof}.
\begin{lem}\label{lem:estimation:supg}
For the stopping time sequence defined in  \cref{stopping_time} and the intervals $I_{1,\tau}=[\tau_{1,T}, \tau_{3,T})$ and   $I_{i,\tau}^{'}=[\tau_{3i-1,T}, \tau_{3i,T})$, we have 
\begin{align}\label{lem:inequ:supg:main}
& \Expect\Big[\sup_{1\le n< T}g(\theta_{n})\Big] \notag \\
\le&\ \overline{C}_{\Pi,0}+ C_{\Pi,1}C_{\Delta_{0}}\cdot\sum_{i=2}^{+\infty}\underbrace{\Expect\big[\I_{\tau_{3i-1,T}<\tau_{3i,T}}\big]}_{\Psi_{i,1}}+C_{\Pi,1}C_{\Gamma,1}\underbrace{\Expect\left[\bigg(\sum_{I_{1,\tau}}+\sum_{i=2}^{+\infty}\sum_{n=I_{i,\tau}^{'}} \bigg)\Expect[\Gamma_n|\mathscr{F}_{n-1}]\right]}_{\Psi_{2}} \notag \\&+C_{\Pi,1}C_{\Gamma,2}\underbrace{\Expect\Bigg[\bigg(\sum_{n=I_{1,\tau}}+\sum_{i=2}^{+\infty}\sum_{n=I_{i,\tau}^{'}} \bigg)\frac{\Gamma_{n}}{\sqrt{S_{n}}}\Bigg]}_{\Psi_{3}},
\end{align}
where $\overline{C}_{\Pi,0}:=\hat{g}(\theta_{1})+\frac{3\Delta_{0}}{2}+C_{\Pi,0}$,  $C_{\Pi,0},\ C_{\Pi,1}$ and $C_{\Delta_{0}}$ are constants defined in \cref{constant_Pi} and \cref{inequ:C:Delta0} respectively, and $C_{\Gamma,1}, C_{\Gamma,2}$ are constants defined in \cref{sufficient:lem}. 
\end{lem}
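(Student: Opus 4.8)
The plan is to estimate $\Expect[\sup_{1\le n<T} g(\theta_n)]$ by exploiting the stopping-time partition of $[1,T)$ constructed in Phase~I, on which $g$ is already controlled everywhere except on the ``outlier'' excursions $I_{1,\tau}=[\tau_{1,T},\tau_{3,T})$ and $I'_{i,\tau}=[\tau_{3i-1,T},\tau_{3i,T})$. Before $\tau_1$, and on the segments $[\tau_{3i},\tau_{3i+1})$ and $[\tau_{3i-2},\tau_{3i-1})$, the estimates \cref{fab_2}--\cref{fabulous} give $g(\theta_n)\le\hat g(\theta_n)<2\Delta_0$; together with the initial value $\hat g(\theta_1)$ and the one-step bound $\Delta_0+h(\Delta_0)<\tfrac{3\Delta_0}{2}$ from \cref{lem:adj:ghat}, these ``good'' pieces contribute only the constant $\overline C_{\Pi,0}$. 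Since the first excursion $I_{1,\tau}$ is entered at most once, its starting-value cost can also be absorbed into $\overline C_{\Pi,0}$, and it remains only to estimate the supremum of $g$ over the excursion intervals.

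On a single excursion starting at a stopping time $s\in\{\tau_{1,T},\tau_{3i-1,T}\}$, the first step is to telescope the sufficient-decrease inequality of \cref{sufficient:lem}, discarding the non-positive term $-\tfrac{\alpha_0}{4}\zeta(k)\le 0$, to obtain for every $n$ in the excursion
\[
\hat g(\theta_n)\ \le\ \hat g(\theta_s)+\sum_{k=s}^{n-1}\Big(C_{\Gamma,1}\Gamma_k+C_{\Gamma,2}\tfrac{\Gamma_k}{\sqrt{S_k}}+\alpha_0\hat X_k\Big).
\]
The starting value is bounded by a constant: the iterate just before the excursion obeys $\hat g(\theta_{s-1})<2\Delta_0$, so \cref{lem:adj:ghat} gives $\hat g(\theta_s)<2\Delta_0+h(2\Delta_0)<3\Delta_0$, which is at most the constant $C_{\Delta_0}$; for $i\ge 2$ this contribution is present only on the event $\{\tau_{3i-1,T}<\tau_{3i,T}\}$ of probability $\Psi_{i,1}$. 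To expose the predictable part I would split $\Gamma_k=\Expect[\Gamma_k\mid\mathscr F_{k-1}]+(\Gamma_k-\Expect[\Gamma_k\mid\mathscr F_{k-1}])$ and fold the centred difference into the martingale term $D_k$ introduced below; the surviving predictable sum is what becomes $\Psi_2$, while the $\Gamma_k/\sqrt{S_k}$ sum becomes $\Psi_3$.

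Next I would take the supremum over $n$ inside each excursion. The predictable, non-negative sums are monotone in the upper limit, so their suprema equal the full segment sums and pass through the expectation to yield $C_{\Gamma,1}\Psi_2$ and $C_{\Gamma,2}\Psi_3$. The martingale part $\sum_{k}D_k$, with $D_k:=\alpha_0\hat X_k+C_{\Gamma,1}(\Gamma_k-\Expect[\Gamma_k\mid\mathscr F_{k-1}])$, is the crux. Because each iterate $\theta_k$ is $\mathscr F_{k-1}$-measurable, the excursion-membership indicators are predictable, so stitching the excursions together yields a genuine martingale; I would then invoke Doob's maximal inequality to bound $\Expect[\sup_{n}\sum_{k\le n} D_k]$ by a universal multiple of the predictable quantities already on the right-hand side, which is the source of the overall factor $C_{\Pi,1}$. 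Summing over $i$, taking total expectation, and using $\Gamma_k\le 1$ and $S_k\ge S_0$ to keep the increments controlled then assembles the claimed inequality with $C_{\Delta_0}\sum_{i\ge 2}\Psi_{i,1}$, $C_{\Gamma,1}\Psi_2$ and $C_{\Gamma,2}\Psi_3$.

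The main obstacle is precisely this martingale-supremum step. Unlike the SGD setting, $\hat X_k$ is not uniformly bounded: through $\nabla g(\theta_k)^\top\nabla g(\theta_k,\xi_k)/\sqrt{S_k}$ it is controlled only by $\hat g$ itself, growing like $\sqrt{\hat g(\theta_k)}$ via the estimate $\|\nabla g(\theta_k)\|\le\sqrt{2L\hat g(\theta_k)}$ used in \cref{lem:adj:ghat}. A naive maximal inequality would therefore reintroduce $\sup_n\hat g(\theta_n)$ on the right-hand side. The excursion structure is what rescues the argument: each excursion is entered from a level below $3\Delta_0$ and the martingale is handled on the stopped process, so Doob's inequality delivers a constant $C_{\Pi,1}$ independent of both $T$ and the excursion index --- exactly the uniformity that the subsequent monotone-convergence passage to $T=\infty$ in \cref{stable} requires. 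A further bookkeeping point is to ensure $C_{\Delta_0}$ multiplies only the occurrence indicators, so the ``restart cost'' of the excursions is carried by the series $\sum_{i\ge2}\Psi_{i,1}$ rather than by an unconditional count.
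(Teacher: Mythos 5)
Your proposal has a genuine gap at the step you yourself flag as ``the crux'': the martingale-supremum estimate. After telescoping \cref{sufficient:lem} you need $\Expect\big[\sup_n \sum_{k\le n} D_k\big]$, where $D_k$ contains $\alpha_0\hat X_k$. Doob's maximal inequality (in its usable $L^2$ form) bounds this by a multiple of $\big(\sum_k\Expect[\hat X_k^2]\big)^{1/2}$, and on an outlier excursion $[\tau_{3i-1,T},\tau_{3i,T})$ the conditional variance of $\hat X_k$ scales like $\|\nabla g(\theta_k)\|^2\Gamma_k\le 2L\hat g(\theta_k)\Gamma_k$, i.e., it is controlled only by $\hat g$ itself, which on these intervals is exactly the unbounded quantity you are trying to estimate. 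The fact that the excursion is \emph{entered} below $3\Delta_0$ gives no control of $\hat g(\theta_k)$ \emph{during} the excursion, so the asserted conclusion that ``Doob's inequality delivers a constant $C_{\Pi,1}$ independent of $T$ and $i$'' does not follow; you would be left with a self-referential bound of the type $Y\lesssim C+\sqrt{\Expect[\sup\hat g\cdot\sum\Gamma_k]}$, which demands a separate second-moment bootstrap argument (of the kind the paper only develops for \cref{lem23}), not a one-line maximal inequality. Relatedly, even if such a bound held, it would produce square roots of $\Psi_2,\Psi_3$ rather than the linear combination with the specific constants claimed in the lemma.

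The paper's proof avoids this problem entirely, and your proposal misses its key idea. The within-excursion supremum is never compared with partial sums of the sufficient-decrease inequality; instead it is bounded by the sum of \emph{absolute} one-step increments, $\sup_n(g(\theta_n)-g(\theta_s))\le\sum_n|g(\theta_{n+1})-g(\theta_n)|$, with each increment estimated by raw $L$-smoothness (\cref{inequ:one:g:adj}). This is a sum of non-negative terms, so the supremum over $n$ is trivial; its expectation is computed termwise using Doob's stopped theorem (\cref{sum:expect:ab}), the affine-noise condition, and the lower bound $\|\nabla g(\theta_n)\|^2>\eta$ valid on excursions (\cref{pro_0}), yielding $C_{\Pi,1}\Expect[\sum\zeta(n)]$; note that $C_{\Pi,1}$ in \cref{constant_Pi} arises from precisely this computation, not from any maximal inequality, so you have also misidentified its source. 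Only \emph{after} this reduction (\cref{wxm_200}) is \cref{sufficient:lem} telescoped to bound $\Expect[\sum\zeta(n)]$, and there the martingale $\hat X_n$ appears inside the expectation of a sum up to a stopping time, where optional stopping makes it vanish exactly rather than requiring any supremum bound. Your treatment of the good segments, the restart costs $\Psi_{i,1}$, and the roles of $\Psi_2,\Psi_3$ all match the paper, but without this two-stage reduction the argument does not close.
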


\noindent{\bf Phase III:} Next, we prove that the RHS of $\Expect\Big[\sup_{1\le n< T}g(\theta_{n})\Big]$ in \cref{lem:estimation:supg} is uniformly bounded for any \(T\). First, we introduce the following lemma, while the complete proof is provided in~\cref{appendix:add:proof}.
\begin{lem}\label{lem_su} 
Consider AdaGrad-Norm  and suppose that   \cref{ass_g_poi} \cref{ass_g_poi:i}$\sim$\cref{ass_g_poi:i2} and  \cref{ass_noise} \cref{ass_noise:i}$\sim$\cref{ass_noise:i2} hold. Then for any  $\nu>0$, 
\[
\Expect\Bigg[\sum_{n=1}^{+\infty}\I_{\|\nabla g(\theta_{n})\|^{2}>\nu}\frac{\|\nabla g(\theta_{n},\xi_{n})\|^{2}}{S_{n-1}}\Bigg]<\Big({\sigma_{0}}+{\frac{\sigma_{1}}{{\nu}}}\Big)\cdot M<+\infty,\]
where $M$ is a constant that  depends only on the parameters $\theta_{1},$ $S_{0},$ $\alpha_{0}$, $\sigma_{0},$ $\sigma_{1},$ $ L.$
\end{lem}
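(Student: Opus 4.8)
\emph{Proof proposal.} The plan is to reduce the claim, via a single conditioning step, to a deterministic telescoping estimate driven by the sufficient decrease inequality of \cref{sufficient:lem}. First I would condition on $\mathscr{F}_{n-1}$: since $\I_{\|\nabla g(\theta_n)\|^2>\nu}$ and $S_{n-1}$ are both $\mathscr{F}_{n-1}$-measurable, the tower property together with the affine noise variance of \cref{ass_noise}~\ref{ass_noise:i2} gives
\[
\Expect\!\left[\I_{\|\nabla g(\theta_n)\|^2>\nu}\frac{\|\nabla g(\theta_n,\xi_n)\|^2}{S_{n-1}}\;\middle|\;\mathscr{F}_{n-1}\right]\le \I_{\|\nabla g(\theta_n)\|^2>\nu}\frac{\sigma_0\|\nabla g(\theta_n)\|^2+\sigma_1}{S_{n-1}}.
\]
On the event $\{\|\nabla g(\theta_n)\|^2>\nu\}$ one has $\sigma_1\le\frac{\sigma_1}{\nu}\|\nabla g(\theta_n)\|^2$, so the numerator is at most $(\sigma_0+\frac{\sigma_1}{\nu})\|\nabla g(\theta_n)\|^2$; summing, using the tower property, and then dropping the indicator yields
\[
\Expect\!\left[\sum_{n\ge1}\I_{\|\nabla g(\theta_n)\|^2>\nu}\frac{\|\nabla g(\theta_n,\xi_n)\|^2}{S_{n-1}}\right]\le\Big(\sigma_0+\frac{\sigma_1}{\nu}\Big)\,\Expect\!\left[\sum_{n\ge1}\frac{\|\nabla g(\theta_n)\|^2}{S_{n-1}}\right].
\]
It therefore suffices to show that $M:=\Expect\big[\sum_{n\ge1}\|\nabla g(\theta_n)\|^2/S_{n-1}\big]$ is finite and depends only on the problem constants; crucially, this quantity is manifestly independent of $\nu$.

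The core step is the estimate of $M$. Writing $\|\nabla g(\theta_n)\|^2/S_{n-1}=\zeta(n)/\sqrt{S_{n-1}}$, I would take the sufficient decrease inequality of \cref{sufficient:lem}, divide it by the $\mathscr{F}_{n-1}$-measurable weight $\sqrt{S_{n-1}}$, and sum for $n=1,\dots,N$:
\[
\frac{\alpha_0}{4}\sum_{n=1}^N\frac{\zeta(n)}{\sqrt{S_{n-1}}}\le\sum_{n=1}^N\frac{\hat g(\theta_n)-\hat g(\theta_{n+1})}{\sqrt{S_{n-1}}}+C_{\Gamma,1}\sum_{n=1}^N\frac{\Gamma_n}{\sqrt{S_{n-1}}}+C_{\Gamma,2}\sum_{n=1}^N\frac{\Gamma_n}{\sqrt{S_{n-1}}\sqrt{S_n}}+\alpha_0\sum_{n=1}^N\frac{\hat X_n}{\sqrt{S_{n-1}}}.
\]
The first sum is controlled by Abel summation: since the weights $1/\sqrt{S_{n-1}}$ are non-increasing (as $S_n$ is non-decreasing) and $\hat g\ge0$ (by \cref{ass_g_poi}~\ref{ass_g_poi:i} and $\zeta\ge0$), every rearranged term except the first is non-positive, leaving a bound of $\hat g(\theta_1)/\sqrt{S_0}$. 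For the error sums the key elementary inequality is $\frac{\Gamma_n}{\sqrt{S_{n-1}}}\le 2\big(S_{n-1}^{-1/2}-S_n^{-1/2}\big)$, which follows from $\sqrt{S_{n-1}}\le\sqrt{S_n}$ and telescopes to $\sum_n\Gamma_n/\sqrt{S_{n-1}}\le 2/\sqrt{S_0}$; the third sum is smaller by a further factor $1/\sqrt{S_0}$.

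For the martingale term, since $1/\sqrt{S_{n-1}}$ is $\mathscr{F}_{n-1}$-measurable and $\{\hat X_n\}$ is a martingale difference sequence, $\{\hat X_n/\sqrt{S_{n-1}}\}$ is again one, so the expectation of each finite partial sum vanishes. Taking expectations at fixed $N$, all error terms are bounded by deterministic constants depending only on $S_0,\alpha_0,\sigma_0,\sigma_1,L$ (through $C_{\Gamma,1},C_{\Gamma,2}$) and on $\hat g(\theta_1)$; letting $N\to\infty$ by monotone convergence (the left-hand sum being non-negative and increasing) produces a finite, $\nu$-free bound on $M$, which finishes the proof. The main obstacle is precisely this core step: the naive bound on $\sum_n\zeta(n)$ diverges (it grows like $\ln S_\infty$), and it is the extra weight $1/\sqrt{S_{n-1}}$ that converts each telescoping and error contribution into a convergent, $S_0$-controlled series; identifying the sharp telescoping inequality $\frac{\Gamma_n}{\sqrt{S_{n-1}}}\le 2(S_{n-1}^{-1/2}-S_n^{-1/2})$ and carrying out the weighted Abel summation with $\hat g\ge0$ are the two non-routine ingredients. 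A secondary technical point is justifying the vanishing of the martingale expectation, which requires integrability of $\hat X_n$; this can be secured by a truncation or stopping-time argument before passing to the limit.
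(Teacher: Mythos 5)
Your proposal is correct and follows essentially the same route as the paper's proof: both reduce the claim to bounding $\Expect\big[\sum_{n}\|\nabla g(\theta_{n})\|^{2}/S_{n-1}\big]$ via the affine noise condition on the event $\{\|\nabla g(\theta_{n})\|^{2}>\nu\}$, and both obtain that bound by weighting the sufficient decrease inequality of \cref{sufficient:lem} with $1/\sqrt{S_{n-1}}$, telescoping the $\hat g$ terms using monotonicity of the weights and $\hat g\ge 0$, controlling the $\Gamma_n$ error sums by $\mathcal{O}(1/\sqrt{S_0})$, killing the martingale term in expectation, and passing to the limit by monotone convergence. The only differences are cosmetic (you perform the $\nu$-reduction first rather than last, and your telescoping inequality $\Gamma_n/\sqrt{S_{n-1}}\le 2(S_{n-1}^{-1/2}-S_n^{-1/2})$ gives the slightly sharper constant $2/\sqrt{S_0}$ where the paper's split yields $3/\sqrt{S_0}$).
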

Then, for the second term $\Psi_{2}$ of RHS of the result in \cref{lem:estimation:supg}, we have
\begin{align}\label{inequ:phi_2}
\Psi_{2} &= \Expect\Bigg[\bigg(\sum_{n=I_{1,\tau}}+\sum_{i=2}^{+\infty}\sum_{n=I_{i,\tau}^{'}} \bigg)\Expect[\Gamma_n|\mathscr{F}_{n-1}]\Bigg] \notag \\
&\mathop{=}^{(a)} \Expect\Bigg[\bigg(\sum_{n=I_{1,\tau}}+\sum_{i=2}^{+\infty}\sum_{n=I_{i,\tau}^{'}} \bigg)\I_{\|\nabla g(\theta_{n})\|^{2}>\eta}\frac{\|\nabla g(\theta_{n},\xi_{n})\|^{2}}{S_{n}}\Bigg] \mathop{<}^{\text{\cref{lem_su}}}\Big({\sigma_{0}}+{\frac{\sigma_{1}}{{\eta}}}\Big)\cdot M,
\end{align}
where  $(a)$ is due to the fact that when the intervals $I_{1,\tau}=[\tau_{1,T}, \tau_{3,T})$ and   $I_{i,\tau}^{'}=[\tau_{3i-1,T}, \tau_{3i,T})$ are non-degenerated, we have $\hat{g}(\theta_{n}) > \Delta_0 \geq \hat{C}_{g}$,  which implies $\|\nabla g(\theta_{n})\|^{2} > \eta$ for any $ n \in I_{1,\tau}  \cup I_{i,\tau}^{'}$  (by \cref{pro_0}). 
For the last term \(\Psi_{3}\) of RHS of the result in \cref{lem:estimation:supg}, by using the series-integral comparison test, we have
\begin{align}\label{inequ:phi_3}
\Psi_{3} = \sum_{i=2}^{+\infty}\Expect\Bigg[\sum_{n=\tau_{3i-1,T}}^{\tau_{3i,T}-1}\frac{\Gamma_{n}}{\sqrt{S_{n}}}\Bigg]<\int_{S_{0}}^{+\infty}\frac{1}{x^{\frac{3}{2}}}\text{d}x<\frac{2}{\sqrt{S_{0}}}.
\end{align}
Then we will prove that there exists a uniform upper bound for $\Psi_{i,1}$ in the following lemma, which is the most challenging part of evaluating $\Expect\Big[\sup_{1\le n< T}g(\theta_{n})\Big]$ in \cref{lem:estimation:supg}.
\begin{lem}\label{lem:psi:i1}
We achieve the following upper bound for $\Psi_{i,1}$ defined in \cref{lem:inequ:supg:main}
\begin{align*} 
& \frac{4C_{\Gamma,1}}{\Delta_0}\cdot \Expect\Bigg[\sum_{n=\tau_{3i-2,T}}^{\tau_{3i-1,T}-1}\Expect[\Gamma_n|\mathscr{F}_{n-1}]\Bigg]  + \frac{4C_{\Gamma,2}}{\Delta_0}\Expect\Bigg[\sum_{n=\tau_{3i-2,T}}^{\tau_{3i-1,T}-1}\frac{\Gamma_{n}}{\sqrt{S_{n}}}\Bigg] + \frac{4\alpha_0^{2}}{\Delta_0^{2}}\Expect \left[\sum_{n=\tau_{3i-2,T}}^{\tau_{3i-1,T}-1}\hat{X}_{n}^{2}\right].
\end{align*}
\end{lem}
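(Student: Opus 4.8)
The plan is to bound the outlier probability $\Psi_{i,1}=\Expect[\I_{\tau_{3i-1,T}<\tau_{3i,T}}]$ by summing the sufficient-decrease inequality of \cref{sufficient:lem} across the excursion $[\tau_{3i-2,T},\tau_{3i-1,T})$ and turning the forced increase of $\hat g$ into a bound on the indicator. Write $A_i:=\{\tau_{3i-1,T}<\tau_{3i,T}\}$. First I would characterize $A_i$: since $\tau_{3i-1}\le\tau_{3i}$ always, the strict inequality of the truncated times forces both $\tau_{3i-1}<T$ (so $\tau_{3i-1,T}=\tau_{3i-1}$, and likewise $\tau_{3i-2,T}=\tau_{3i-2}$) and $\tau_{3i-1}<\tau_{3i}$. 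By the definition of $\tau_{3i}$ in \cref{stopping_time}, the latter rules out the branch $\hat g(\theta_{\tau_{3i-1}})\le\Delta_0$, hence $\hat g(\theta_{\tau_{3i-1}})>2\Delta_0$ on $A_i$. Combining this with the Phase-I estimate $\hat g(\theta_{\tau_{3i-2}})<\tfrac{3}{2}\Delta_0$ (which itself follows from \cref{lem:adj:ghat} and the choice $\Delta_0>C_0$ giving $h(\Delta_0)<\Delta_0/2$) yields, on $A_i$,
\[
\hat g(\theta_{\tau_{3i-1,T}})-\hat g(\theta_{\tau_{3i-2,T}})>2\Delta_0-\tfrac{3}{2}\Delta_0=\tfrac{\Delta_0}{2}.
\]

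Next I would telescope \cref{sufficient:lem} over $n$ from $\tau_{3i-2,T}$ to $\tau_{3i-1,T}-1$ and discard the nonpositive term $-\tfrac{\alpha_0}{4}\sum\zeta(n)$. Setting $P:=C_{\Gamma,1}\sum\Gamma_n+C_{\Gamma,2}\sum\tfrac{\Gamma_n}{\sqrt{S_n}}\ge 0$ and $M:=\alpha_0\sum\hat X_n$ (both sums over $[\tau_{3i-2,T},\tau_{3i-1,T})$), the increment bound above gives, on $A_i$, the inequality $\tfrac{\Delta_0}{2}<P+M$. Multiplying by $\I_{A_i}$, using $P\ge 0$, and taking expectations produces
\[
\tfrac{\Delta_0}{2}\,\Psi_{i,1}\;<\;\Expect[P]+\Expect[M\,\I_{A_i}].
\]

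The main obstacle is the martingale cross-term $\Expect[M\,\I_{A_i}]$: because $\I_{A_i}$ is \emph{not} $\mathscr{F}_{n-1}$-measurable, this term is not zero and cannot be discarded. I would control it by Cauchy--Schwarz, $\Expect[M\,\I_{A_i}]\le\sqrt{\Expect[M^2]}\,\sqrt{\Psi_{i,1}}$, which introduces a $\sqrt{\Psi_{i,1}}$ and makes the estimate self-referential. The enabling observation is that $\theta_n$, and hence $\hat g(\theta_n)$, is $\mathscr{F}_{n-1}$-measurable, so the stopping times are defined by $\mathscr{F}_{n-1}$-measurable thresholds and $\I_{\tau_{3i-2}\le n<\tau_{3i-1}}$ is $\mathscr{F}_{n-1}$-measurable. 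Consequently $\hat X_n\,\I_{\tau_{3i-2}\le n<\tau_{3i-1},\,n<T}$ is a martingale-difference sequence, the cross terms vanish, and I obtain the orthogonality identity $\Expect[M^2]=\alpha_0^2\,\Expect\big[\sum\hat X_n^2\big]$ (each term is bounded on the excursion since $\hat g<2\Delta_0$ forces $\|\nabla g(\theta_n)\|^2\le 2L\hat g(\theta_n)$ bounded and $\Gamma_n\le 1$, so the finite sum is square-integrable). The same measurability justifies converting $\Expect[\sum\Gamma_n]=\Expect\big[\sum\Expect[\Gamma_n\mid\mathscr{F}_{n-1}]\big]$ by the tower/optional-stopping property.

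Finally I would close the self-referential bound. Writing $a:=\tfrac{2}{\Delta_0}\Expect[P]$ and $b:=\tfrac{2\alpha_0}{\Delta_0}\sqrt{\Expect[\sum\hat X_n^2]}$, the inequality reads $\Psi_{i,1}\le a+b\sqrt{\Psi_{i,1}}$. Viewing this as $u^2-bu-a\le 0$ with $u=\sqrt{\Psi_{i,1}}$ gives $u\le\tfrac{b+\sqrt{b^2+4a}}{2}$, and then $(x+y)^2\le 2(x^2+y^2)$ yields $\Psi_{i,1}\le b^2+2a$. Expanding $b^2=\tfrac{4\alpha_0^2}{\Delta_0^2}\Expect[\sum\hat X_n^2]$ and $2a=\tfrac{4}{\Delta_0}\Expect[P]$, and substituting the conditional form of the $\Gamma_n$ term, recovers exactly the three terms claimed in \cref{lem:psi:i1}. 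The delicate points, which I expect to be where the work concentrates, are the measurability argument feeding both the orthogonality identity and the Cauchy--Schwarz step, and the quadratic self-bounding that pins down the constants $4/\Delta_0$ and $4\alpha_0^2/\Delta_0^2$.
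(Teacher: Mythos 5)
Your proposal is correct, and it reproduces the paper's bound with exactly the same constants, but it handles the key step differently. You share the paper's skeleton: the event $\{\tau_{3i-1,T}<\tau_{3i,T}\}$ forces $\hat g(\theta_{\tau_{3i-1,T}})>2\Delta_0$ while $\hat g(\theta_{\tau_{3i-2,T}})<\tfrac32\Delta_0$ (via \cref{lem:adj:ghat} and $h(\Delta_0)<\Delta_0/2$), so the excursion increment exceeds $\Delta_0/2$; telescoping \cref{sufficient:lem} bounds that increment by $P+M$ with $P\ge 0$ the $\Gamma$-terms and $M=\alpha_0\sum\hat X_n$; and at the end both arguments invoke predictability of the stopping times (each $[\tau_t=n]\in\mathscr{F}_{n-1}$ because $\theta_n$ and $S_{n-1}$, hence $\hat g(\theta_n)$, are $\mathscr{F}_{n-1}$-measurable) to get the orthogonality identity $\Expect\big[(\sum\hat X_n)^2\big]=\Expect\big[\sum\hat X_n^2\big]$ and the tower conversion $\Expect[\sum\Gamma_n]=\Expect[\sum\Expect[\Gamma_n\mid\mathscr{F}_{n-1}]]$ (the paper's \cref{vital1} and \cref{sum:expect:ab}). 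Where you diverge is the conversion of the pathwise inequality into a probability bound: the paper applies Young's inequality pathwise, $\alpha_0\big|\sum\hat X_n\big|\le\tfrac{\alpha_0^2}{\Delta_0}\big(\sum\hat X_n\big)^2+\tfrac{\Delta_0}{4}$, obtaining the deterministic inequality \cref{power_00} on the event, then uses the inclusion $\{\tau_{3i-1,T}<\tau_{3i,T}\}\subset\{\text{\cref{power_00} holds}\}$ and a single application of Markov's inequality. You instead multiply by $\I_{A_i}$, control the martingale cross-term by Cauchy--Schwarz, and close the resulting self-referential inequality $\Psi_{i,1}\le a+b\sqrt{\Psi_{i,1}}$ by solving the quadratic and using $(x+y)^2\le 2(x^2+y^2)$, which yields $\Psi_{i,1}\le b^2+2a$ --- identical to the paper's bound. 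The paper's route is the more elementary of the two (no self-referential step, no need to pre-verify $\Expect[M^2]<\infty$ before Cauchy--Schwarz, since Markov's inequality is vacuously fine if the expectation is infinite), while your route makes explicit the measurability facts that the paper leaves implicit in its appeals to \cref{vital1}; the matching constants are no accident, as Young's inequality and your quadratic solve encode the same degree-two trade-off. One small caveat: your bound $\hat g(\theta_{\tau_{3i-2}})<\tfrac32\Delta_0$ needs $\hat g(\theta_{\tau_{3i-2}-1})\le\Delta_0$, which holds for $i\ge 2$ (the only indices the lemma is invoked for in \cref{stable}); it is worth stating that restriction explicitly, as the paper does.
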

Based on the estimation for the term $\Psi_{i,1}$ in \cref{lem:psi:i1}, we obtain an estimation for its sum
\begin{align}\label{power_01}
\sum_{i=2}^{+\infty}\Psi_{i,1} 
=&\sum_{i=2}^{+\infty}\Expect[\I_{\tau_{3i-1,T} < \tau_{3i,T}}]<\frac{4}{\Delta_0}C_{\Gamma,1}\cdot \sum_{i=2}^{+\infty}\Expect\Bigg[\sum_{n=\tau_{3i-2,T}}^{\tau_{3i-1,T}-1}\Expect[\Gamma_n|\mathscr{F}_{n-1}]\Bigg]\notag  \\&+ \frac{4C_{\Gamma,2}}{\Delta_0}\sum_{i=2}^{+\infty}\Expect\Bigg[\sum_{n=\tau_{3i-2,T}}^{\tau_{3i-1,T}-1}\frac{\Gamma_{n}}{\sqrt{S_{n}}}\Bigg]  + \frac{4\alpha_0^{2}}{\Delta_0^{2}}\sum_{i=2}^{+\infty}\Expect\Bigg[ \sum_{n=\tau_{3i-2,T}}^{\tau_{3i-1,T}-1}\hat{X}_{n}^{2}\bigg].
\end{align}
First, we bound the first term on the RHS of \cref{power_01}. When the interval $[\tau_{3i-2,T}, \tau_{3i-1,T})$ is non-degenerated (i.e., $\tau_{3i-2} < \tau_{3i-1}$), we must have $\hat{g}(\theta_{n}) > \Delta_0 \geq \hat{C}_{g}$. By \cref{pro_0} we have $\|\nabla g(\theta_{n})\|^{2} > \eta$ for any $n \in [\tau_{3i-2,T}, \tau_{3i-1,T})$. Then, we obtain that
\begin{align}\label{jrnn_00}
\sum_{i=2}^{+\infty}\E\left[\sum_{n=\tau_{3i-2,T}}^{\tau_{3i-1,T}-1}\Expect[\Gamma_n|\mathscr{F}_{n-1}] \right]&=\sum_{i=2}^{+\infty}\E\left[\sum_{n=\tau_{3i-2,T}}^{\tau_{3i-1,T}-1}\Expect\bigg[\I_{\|\nabla g(\theta_{n})\|^{2}>\eta}\frac{\|\nabla g(\theta_{n},\xi_{n})\|^{2}}{S_{n}}\bigg]\right]\notag\\& \mathop{<}^{\text{\cref{lem_su}}} \left(\sigma_0 + \frac{\sigma_1}{\eta} \right)M.
\end{align}
For the second term on the RHS of \cref{power_01}, by using the series-integral comparison test, we have:
\begin{align}\label{jrnn_01}
\sum_{i=2}^{+\infty}\Expect\Bigg[\sum_{n=\tau_{3i-2,T}}^{\tau_{3i-1,T}-1}\frac{\Gamma_{n}}{\sqrt{S_{n}}}\Bigg]<\int_{S_{0}}^{+\infty}\frac{1}{x^{\frac{3}{2}}}\text{d}x<\frac{2}{\sqrt{S_{0}}}.
\end{align}
For the third term of \cref{power_01}, we have:
\begin{align}\label{jrnn_02}
& \sum_{i=2}^{+\infty}\Expect\left[ \sum_{n=\tau_{3i-2,T}}^{\tau_{3i-1,T}-1}\hat{X}_{n}^{2}\right] \le 2\sum_{i=2}^{+\infty}\Expect\left[\sum_{n=\tau_{3i-2,T}}^{\tau_{3i-1,T}-1}(X_{n}^{2}+V_{n}^{2})\right]\notag\\& \le\ 2\sum_{i=2}^{+\infty}\Expect\Bigg[\sum_{n=\tau_{3i-2,T}}^{\tau_{3i-1,T}-1}\|\nabla g(\theta_{n})\|^{2}\Gamma_{n}+\bigg(\frac{\sigma_{1}}{2\sqrt{S_{0}}}\Gamma^{2}_{n}+\frac{\sigma_{0}}{2}\Lambda_{n}^{2}\bigg)^{2}\Bigg]\notag\\ & \mathop{\leq}^{(a)}\ 2\Big(4{ L\Delta_0}+\frac{\sigma_{1}}{2\sqrt{S_{0}}}+\frac{\sigma_{0}}{8}\Big)\sum_{i=2}^{+\infty}\Expect\Bigg[\sum_{n=\tau_{3i-2,T}}^{\tau_{3i-1,T}-1}\Gamma_{n}\Bigg]\notag\\ &\mathop{=}^{(b)}\ 2\Big(4{ L\Delta_0}+\frac{\sigma_{1}}{2\sqrt{S_{0}}}+\frac{\sigma_{0}}{8}\Big)\sum_{i=2}^{+\infty}\Expect\bigg[\sum_{n=\tau_{3i-2,T}}^{\tau_{3i-1,T}-1}\I_{\|\nabla g(\theta_{n})\|^{2}>\eta}\frac{\|\nabla g(\theta_{n},\xi_{n})\|^{2}}{{S_{n}}} \bigg]\notag\\
& \leq \ 2\Big(4{ L\Delta_0}+\frac{\sigma_{1}}{2\sqrt{S_{0}}}+\frac{\sigma_{0}}{8}\Big)\sum_{i=2}^{+\infty}\Expect\bigg[\sum _{n=\tau_{3i-2,T}}^{\tau_{3i-1,T}-1}\I_{\|\nabla g(\theta_{n})\|^{2}>\eta}\frac{\|\nabla g(\theta_{n},\xi_{n})\|^{2}}{{S_{n-1}}} \bigg]\notag\\
&\mathop{<}^{\text{\cref{lem_su}}}\ 2\Big(4{ L\Delta_0}+\frac{\sigma_{1}}{2\sqrt{S_{0}}}+\frac{\sigma_{0}}{8}\Big) \left(\sigma_0 + \frac{\sigma_1}{\eta} \right)M,
\end{align}
where $(a)$ is due to when $n\in[\tau_{3i-2,T},\tau_{3i-1,T}),$ there is $\|\nabla g(\theta_{n})\|^{2} \leq 2 Lg(\theta_n) \leq  {4 L\Delta_0},$ and $\Lambda_{n}\le \frac{1}{2}\Gamma_{n}$; $(b)$ is because when the interval $[\tau_{3i-2,T}, \tau_{3i-1,T})$ is non-degenerated (i.e., $\tau_{3i-2} < \tau_{3i-1}$), we have $\hat{g}(\theta_{n}) > \Delta_0 \geq \hat{C}_{g}$. By \cref{pro_0} we have $\|\nabla g(\theta_{n})\|^{2} > \eta$ for any $n \in [\tau_{3i-2,T}, \tau_{3i-1,T})$. Substituting \cref{jrnn_00}, \cref{jrnn_01}, and \cref{jrnn_02} into \cref{power_01} yields
\begin{equation}\nonumber\begin{aligned}
\sum_{i=2}^{+\infty}\Psi_{i,1}&< \frac{4C_{\Gamma,1}}{\Delta_0}\left(\sigma_0 + \sigma_1/\eta \right) M + \frac{4C_{\Gamma,2}}{\Delta_0}\frac{2}{\sqrt{S_0}}\\& + \frac{4\alpha_0^2}{\Delta_0^2} 2\left(4 L\Delta_0+\frac{\sigma_{1}}{2\sqrt{S_{0}}}+\frac{\sigma_{0}}{8}\right) \left(\sigma_0 + \frac{\sigma_1}{\eta} \right)M :=\overline{M},
\end{aligned}\end{equation}
which means there exists a constant $\overline{M} < +\infty$ such that $\sum_{i=2}^{+\infty}\Psi_{i,1}<\overline{M}$.
Combining the above estimation of $\sum_{i=2}^{+\infty}\Psi_{i,1}$ and estimations of $\Psi_2$ and $\Psi_3$ in \cref{inequ:phi_2,inequ:phi_3} into \cref{lem:inequ:supg:main}, we have 
\begin{equation}\nonumber\begin{aligned}
\Expect\Big[\sup_{1\le n<T}g(\theta_{n})\Big]&< \overline{C}_{\Pi,0} + C_{\Pi,1}C_{\Delta_0} \overline{M} + C_{\Pi,1}C_{\Gamma,1}\left(\sigma_0 + \frac{\sigma_1}{\eta} \right)M + C_{\Pi,1}C_{\Gamma,2} \frac{2}{\sqrt{S_0}}\\&:=\overline{M}_{1}<+\infty.
\end{aligned}\end{equation}
Therefore, there exists a constant $\overline{M}_{1} < +\infty$ that is independent on $T$ such that $\Expect\Big[\sup_{1\le n<T}g(\theta_{n})\Big]<+\infty$.
Since \(\overline{M}_{1}\) is independent of \(T\), according to the \emph{ Lebesgue's monotone convergence} theorem, we have $\Expect\Big[\sup_{n\ge 1}g(\theta_{n})\Big]<\overline{M}_{1}<+\infty,$ as we desired.
\end{proof}

\subsection{Almost Sure Convergence of AdaGrad-Norm}\label{subsec:almost:sure}

We now prove the asymptotic convergence under the stability result in \cref{subsec:stability}. We consider the function $g$ to satisfy the following assumptions.
\begin{assumpt}\label{extra}
\begin{enumerate}[label=\textnormal{(\roman*)}, leftmargin=*]
\item\label{extra:i1} (\textbf{Coercivity}) The function $g$ is coercive, that is, $\lim_{\|\theta\|\rightarrow+\infty} g(\theta) = +\infty$.
\item\label{extra:i2} (\textbf{Weak Sard Condition}) The critical value set $\{g(\theta) \mid \nabla g(\theta) = 0\}$ is nowhere dense in $\mathbb{R}$.
\end{enumerate}
\end{assumpt}
Coercivity is commonly employed to ensure the existence of minimizers and to make optimization problems well-posed \citep{rockafellar1970convex}. The weak Sard condition is a relaxed version of the Sard theorem used in non-convex optimization \citep{clarke1990optimization}. It indicates that the set of critical values (where the gradient vanishes) is ``small" in measure. 

{We note that the \emph{weak Sard condition} is implied from the conditions made in \cite{mertikopoulos2020almost}, which requires the $d$-time differentiable objective and the boundedness of the critical points set (the latter is implied from the \emph{non-asymptotically flat} assumption made in their paper). Now we prove this claim.
\begin{claims}\label{pros:assump:loss}
Suppose that $f:\mathbb{R}^{d}\rightarrow\mathbb{R}$ is  $d$-time differentiable and the critical points set $J$  is bounded where $J:=\{\theta \mid \nabla f(\theta)=0\}$. Then, the critical values set $f(J_{f}):=\{f(\theta) \mid \nabla f(\theta)=0\}$, are nowhere dense in $\mathbb{R}$.
\end{claims}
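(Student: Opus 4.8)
The plan is to recognize this claim as an instance of the classical \emph{Sard theorem}, combined with a compactness argument that upgrades the conclusion from ``Lebesgue measure zero'' to the stronger ``nowhere dense''. First I would invoke Sard's theorem in the form for maps $\R^{n}\to\R^{m}$, which requires $C^{k}$ regularity with $k\ge \max\{n-m+1,1\}$. Here $n=d$ and $m=1$, so the requirement is $k\ge d$, which is exactly met by the $d$-times differentiability hypothesis on $f$. Sard's theorem then yields that the critical value set $f(J)=\{f(\theta)\mid \nabla f(\theta)=0\}$ has Lebesgue measure zero in $\R$.

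Measure zero by itself does not imply nowhere dense (the rationals are the standard counterexample), so the second and essential step is to exploit the boundedness hypothesis on $J$. Since $d\ge 1$, $f$ is in particular $C^{1}$, so $\nabla f$ is continuous and $J=(\nabla f)^{-1}(\{0\})$ is closed; together with the assumed boundedness, $J$ is therefore compact. The continuous image $f(J)$ of a compact set is compact, hence closed in $\R$.

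Finally I would combine the two facts: $f(J)$ is a closed subset of $\R$ with Lebesgue measure zero. Every nonempty open interval has positive measure, so a set of measure zero has empty interior; since $f(J)$ is already closed, it coincides with its own closure, and hence its closure has empty interior. This is precisely the definition of nowhere dense, which establishes the claim.

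The main obstacle is exactly the passage from Sard's measure-zero conclusion to the topological statement of nowhere density, and this is where the boundedness of $J$ is indispensable—it is the only hypothesis that forces $f(J)$ to be closed, without which a measure-zero image could well be dense. A secondary point worth flagging is aligning the exact differentiability order with the version of Sard's theorem being used: for $\R^{d}\to\R$ the sharp requirement is $d$-fold differentiability, so the hypothesis is used tightly and no excess smoothness is assumed.
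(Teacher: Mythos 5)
Your proposal is correct and follows essentially the same route as the paper: Sard's theorem gives that $f(J)$ has Lebesgue measure zero, the boundedness of $J$ (plus closedness from continuity of $\nabla f$) gives that $f(J)$ is compact hence closed, and a closed measure-zero set is nowhere dense. The only difference is cosmetic—the paper argues by contradiction (assuming density on an interval forces an interior point) while you argue directly—and your write-up actually makes explicit the compactness step that the paper leaves implicit.
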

\begin{proof}
Since the critical point set $J$ is bounded, the critical values set $f(J_{f})$ is closed. Suppose that there exists an interval $(a, b)$ such that the set $f(J_{f})$ is dense on this interval. This condition is both necessary and sufficient to guarantee $f(J_{f})$ to have an interior point. Given that $f$ is  $d$-times differentiable, we can apply \textit{Sard's theorem} \citep{sard1942measure,bates1993toward} and deduce that $m(f(J_{f}))=0,$ where $m(\cdot)$ denotes \textit{Lebesgue's Measure}. It is well known that a set containing an interior point cannot have a zero measure. Thus, we conclude that $f(J_{f})$ is nowhere dense in $\mathbb{R}.$ 
\end{proof}
}
Based on the function value's stability in \cref{stable} and the \emph{coercivity} in \cref{extra}~\ref{extra:i1}, it is straightforward to derive the stability of the iteration shown below.
\begin{cor}\label{stable':iterate}
If \cref{ass_g_poi,ass_noise} and \cref{extra}~\ref{extra:i1} hold, given AdaGrad-Norm, we have
$$\sup_{n\ge 1}\|\theta_{n}\|<+\infty\ \ \text{a.s.}$$
\end{cor}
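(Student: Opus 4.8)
The plan is to combine the finite expected supremum of the loss value from \cref{stable} with the coercivity assumption. First, I would note that $\sup_{n\ge 1}g(\theta_n)$ is a nonnegative random variable (by \cref{ass_g_poi}~\ref{ass_g_poi:i}, $g \ge 0$), and \cref{stable} gives $\E[\sup_{n\ge 1}g(\theta_n)] < \tilde{M} < +\infty$. A nonnegative random variable with finite expectation is finite almost surely (by the usual Markov-type argument, $\Pro(\sup_{n\ge 1}g(\theta_n) = +\infty) = 0$). Hence there is an almost-sure event $\Omega_0$ on which $G(\omega) := \sup_{n\ge 1} g(\theta_n(\omega)) < +\infty$.

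Second, I would translate this bound on the loss values into a bound on the iterates using coercivity. By \cref{extra}~\ref{extra:i1}, $g(\theta)\to+\infty$ as $\|\theta\|\to+\infty$, which is exactly the statement that every sublevel set $L_c := \{\theta\in\R^d : g(\theta)\le c\}$ is bounded for each finite $c$: if some $L_c$ were unbounded, there would exist $\{\phi_k\}$ with $\|\phi_k\|\to+\infty$ and $g(\phi_k)\le c$, contradicting coercivity. Thus for each finite level $c$ there is a radius $R(c) < +\infty$ with $L_c \subseteq \{\theta : \|\theta\|\le R(c)\}$.

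Finally, I would fix $\omega\in\Omega_0$ and observe that every iterate satisfies $g(\theta_n(\omega))\le G(\omega) < +\infty$, so $\theta_n(\omega)\in L_{G(\omega)}$ for all $n\ge 1$, whence $\sup_{n\ge 1}\|\theta_n(\omega)\|\le R(G(\omega)) < +\infty$. Since $\Pro(\Omega_0)=1$, this yields $\sup_{n\ge 1}\|\theta_n\| < +\infty$ almost surely, as claimed.

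I do not anticipate a genuine obstacle here, since the corollary is an immediate consequence of \cref{stable} together with coercivity. The only points requiring slight care are the clean derivation of the boundedness of sublevel sets from coercivity, and the passage from finiteness of the \emph{expected} supremum to almost-sure finiteness of the supremum itself; both are elementary.
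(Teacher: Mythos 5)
Your proof is correct and follows essentially the same route as the paper: invoke \cref{stable} to get $\E[\sup_{n\ge 1}g(\theta_n)]<+\infty$, deduce almost-sure finiteness of the supremum, then apply coercivity (\cref{extra}~\ref{extra:i1}) to bound the iterates. The paper treats the sublevel-set step as immediate; your explicit argument that coercivity forces every sublevel set to be bounded is a harmless (and welcome) elaboration of the same idea.
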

\begin{proof}
From \cref{stable}, we obtain $\Expect[\sup_{n\ge 1}g(\theta_{n})]<+\infty,$ which implies $\sup_{n\ge 1}g(\theta_{n})<+\infty\ \ \text{a.s.}$ Then, by the \emph{coercivity}, it is evident that $\sup_{n\ge 1}\|\theta_{n}\|<+\infty\ \ \text{a.s.}$. 
\end{proof}
For recent studies, \citep{JMLR:v25:23-0576}  directly assumed the iteration's stability (see Assumption 2 in \cite{JMLR:v25:23-0576}) to prove the almost-sure convergence for Adam. \cite{mertikopoulos2020almost} attached the stability for SGD but assumed the uniformly bounded gradient across the entire space \(\theta \in \mathbb{R}^{d}\) which is a strong assumption. \cite{xiao2023convergence,josz2023lyapunov} have achieved the stability of SGD under coercivity. In contrast, our work is the first to establish the stability of adaptive gradient algorithms and to achieve even stronger results regarding the expected function value, as outlined in \cref{stable}.

Before we prove the asymptotic convergence, we establish a key lemma. This demonstrates that the adaptive learning rate of the AdaGrad-Norm algorithm is sufficiently 'large' to prevent premature termination of the algorithm.
\begin{lem}\label{step size}
   Consider AdaGrad-Norm, if \cref{ass_g_poi,ass_noise} hold,  then we have $\sum_{n=1}^{+\infty}\frac{1}{\sqrt{S_{n}}}=+\infty \, \ \text{a.s.}$
\end{lem}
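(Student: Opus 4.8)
The plan is to argue by a dichotomy on the sequence $S_n$, which is non-decreasing since $S_n-S_{n-1}=\|\nabla g(\theta_n,\xi_n)\|^2\ge 0$, so $\lim_n S_n$ exists in $(0,+\infty]$. On the event $\{\sup_n S_n<+\infty\}$ the claim is immediate, because there $1/\sqrt{S_n}$ is bounded below by a strictly positive constant and hence $\sum_n 1/\sqrt{S_n}=+\infty$. It therefore suffices to work on the complementary event $\{S_n\to+\infty\}$ and argue by contradiction: suppose there is a set $A$ of positive probability on which both $S_n\to+\infty$ and $\sum_n 1/\sqrt{S_n}<+\infty$. Since $S_n\to+\infty$, on $A$ we also get $\sum_n 1/S_n<+\infty$ (for large $n$, $S_n\ge 1$, so $1/S_n\le 1/\sqrt{S_n}$). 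The goal is to contradict $S_n\to+\infty$.

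Writing $a_n:=\|\nabla g(\theta_n,\xi_n)\|^2=S_n-S_{n-1}$, the crux is to establish $\sum_n a_n/S_n<+\infty$ on $A$. I split the indices by the magnitude of the true gradient, using the threshold $D_0$ of \cref{ass_noise}~\ref{ass_noise:i3}. On the small-gradient steps $G:=\{n:\|\nabla g(\theta_n)\|^2<D_0\}$, \cref{ass_noise}~\ref{ass_noise:i3} gives $a_n<D_1$ a.s., so $\sum_{n\in G}a_n/S_n\le D_1\sum_n 1/S_n<+\infty$ on $A$. On the large-gradient steps $B:=\{n:\|\nabla g(\theta_n)\|^2\ge D_0\}$, which satisfy $B\subseteq\{n:\|\nabla g(\theta_n)\|^2>\nu\}$ for $\nu:=D_0/2$, I use $a_n/S_n\le a_n/S_{n-1}$ together with \cref{lem_su} applied with this $\nu$. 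Since \cref{lem_su} bounds the \emph{expectation} of $\sum_n\I_{\|\nabla g(\theta_n)\|^2>\nu}\,a_n/S_{n-1}$ by a finite constant, that series is finite almost surely, whence $\sum_{n\in B}a_n/S_n\le\sum_n\I_{\|\nabla g(\theta_n)\|^2>\nu}\,a_n/S_{n-1}<+\infty$ a.s. Adding the two estimates yields $\sum_n a_n/S_n<+\infty$ on $A$.

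To finish, I telescope multiplicatively. Because $S_{n-1}/S_n=1-a_n/S_n\in(0,1)$, one has $S_0/S_N=\prod_{n=1}^{N}(1-a_n/S_n)$, so $\ln(S_N/S_0)=\sum_{n=1}^{N}\big[-\ln(1-a_n/S_n)\big]$. Summability of $a_n/S_n$ forces $a_n/S_n\to 0$, so for all large $n$ we have $a_n/S_n\le 1/2$, and the elementary bound $-\ln(1-x)\le 2x$ on $[0,1/2]$ makes the tail convergent. Hence $\ln(S_N/S_0)$ stays bounded as $N\to+\infty$, i.e., $\sup_N S_N<+\infty$ on $A$, contradicting $S_n\to+\infty$. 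Therefore $\mathbb{P}(A)=0$, which proves $\sum_{n=1}^{+\infty}1/\sqrt{S_n}=+\infty$ a.s.

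The main obstacle is controlling the contribution to $S_n$ from the large-gradient steps, where the stochastic-gradient increments $a_n$ need not be bounded; this is precisely where \cref{lem_su} is indispensable, converting potentially unbounded increments into the almost surely summable series $\sum_n\I_{\|\nabla g(\theta_n)\|^2>\nu}\,a_n/S_{n-1}$. Everything else is bookkeeping: choosing $\nu<D_0$ so that the boundary between $G$ and $B$ is covered, and using $S_n\to+\infty$ to pass from $\sum 1/\sqrt{S_n}<+\infty$ to $\sum 1/S_n<+\infty$.
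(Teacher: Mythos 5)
Your proof is correct, but it takes a genuinely different route from the paper's. The paper works on the event $\mathcal{S}=\{\sum_{n}1/\sqrt{S_{n}}<+\infty\ \text{and}\ S_{n}\rightarrow+\infty\}$ and derives a contradiction for a single series, $\sum_{n}\|\nabla g(\theta_{n+1})\|^{2}/\sqrt{S_{n}}$: on $\mathcal{S}$ it is finite because $\sup_{n}\|\nabla g(\theta_{n})\|^{2}\le 2 L\sup_{n}g(\theta_{n})<+\infty$ a.s.\ by the stability theorem (\cref{stable}), yet it is also infinite because, via the affine noise condition, it dominates $\frac{1}{\sigma_{0}}\sum_{n}(S_{n+1}-S_{n})/\sqrt{S_{n}}$ --- which telescopes to $+\infty$ when $S_{n}\rightarrow+\infty$ --- up to a convergent series and a martingale-difference sum whose a.s.\ convergence must be established separately. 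You replace both pillars: in place of \cref{stable} you split indices by gradient magnitude, controlling small-gradient steps with \cref{ass_noise}~\ref{ass_noise:i3} (increments bounded by $D_{1}$, hence summable against $\sum_{n}1/S_{n}$) and large-gradient steps with \cref{lem_su}; and in place of additive telescoping plus martingale convergence you use the multiplicative identity $S_{0}/S_{N}=\prod_{n=1}^{N}(1-a_{n}/S_{n})$, so that summability of $a_{n}/S_{n}$ forces $\sup_{N}S_{N}<+\infty$, contradicting $S_{n}\rightarrow+\infty$. What your route buys: it is entirely independent of \cref{stable} --- the heaviest result of the paper, whose proof occupies all of \cref{subsec:stability} --- and it requires no martingale convergence criterion, only \cref{lem_su}, which rests solely on the sufficient decrease inequality; your choice $\nu=D_{0}/2$ also correctly accommodates the strict inequality in the indicator of \cref{lem_su}. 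What the paper's route buys: it never consumes \cref{ass_noise}~\ref{ass_noise:i3}, drawing instead on \cref{ass_g_poi}~\ref{ass_g_poi:i3} through the stability theorem, and since \cref{stable} has already been established at that point of the paper, its argument is only a few lines long. Both proofs are valid under the lemma's stated hypotheses.
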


In this part, we will prove the almost sure convergence of AdaGrad-Norm. Combining the stability of $g(\theta_n)$  in \cref{stable} with the property of $S_n$ in \cref{step size}, we adopt the ODE method from stochastic approximation theory to demonstrate the desired convergence~\citep{benaim2006dynamics}. We follow the iterative formula of the standard stochastic approximation (as discussed on page 11 of \cite{benaim2006dynamics})
\begin{align}\label{SA}
x_{n+1} = x_{n} - \gamma_{n}(g(x_{n})+U_{n}),
\end{align}
where $\sum_{n=1}^{+\infty}\gamma_{n}=+\infty$ and $\lim_{n\rightarrow+\infty}\gamma_{n}=0$ and $U_n \in \R^d$ are the random noise (perturbations). Then, we provide the ODE method criterion (c.f. Proposition 4.1 and Theorem 3.2 of \cite{benaim2006dynamics}).
\begin{pros}\label{SA_p}
Let $F$ be a continuous globally integrable vector field. Assume that
\begin{enumerate}[label=\textnormal{(A.\arabic*)},leftmargin=*]
    \item\label{pros:a1} Suppose $\sup_n \|x_n\|< \infty,$ 
    \item\label{pros:a2} For all $T > 0$
    \[
    \lim_{n \to \infty} \sup \left\lbrace \left\lVert \sum_{i=n}^{k} \gamma_{i}U_{i} \right\lVert : k = n, \dots, m(\Sigma_{\gamma}(n) + T) \right\rbrace = 0,
    \]
    where 
    \[\Sigma_\gamma(n):=\sum_{k=1}^{n}\gamma_{k}\ \ \text{and}\ \ m(t):=\max\{j\ge 0:  \Sigma_{\gamma}(j)\le t\}.\] 
    \item\label{pros:a3} \(F(V)\) is nowhere dense on \(\mathbb{R}\), where \(V\) is the fixed point set of the ODE: \(\dot{x} = g(x)\).

\end{enumerate}
Then all limit points of the sequence \(\{x_{n}\}_{n\ge 1}\) are fixed points of the ODE: \(\dot{x} = g(x)\).
\end{pros}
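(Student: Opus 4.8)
The statement is the classical ODE method criterion of stochastic approximation, so the plan is to deploy the asymptotic pseudotrajectory (APT) machinery of \cite{benaim2006dynamics}; since the result is quoted verbatim from that source one could simply cite it, but I sketch the argument for completeness. First I would pass from the discrete recursion to continuous time: set $\tau_0 = 0$ and $\tau_n = \Sigma_\gamma(n) = \sum_{k=1}^{n}\gamma_k$, and define the piecewise-affine interpolated process $\bar X : \mathbb{R}_+ \to \mathbb{R}^d$ by $\bar X(\tau_n) = x_n$ with linear interpolation on each $[\tau_n, \tau_{n+1})$. Writing the recursion in the canonical form $x_{n+1} = x_n + \gamma_{n+1}(F(x_n) + U_{n+1})$, the cumulative perturbation over a window is precisely the quantity $\sum_i \gamma_i U_i$ appearing in (A.2), and the reindexing through $m(\Sigma_\gamma(n)+T)$ converts the discrete window into a flow-time window of length $T$. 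The goal is to show $\bar X$ shadows the flow $\Phi$ of $\dot x = F(x)$ and then read off the conclusion from the topology of its limit set.

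Second, I would prove that $\bar X$ is an APT of $\Phi$, i.e. for every $T > 0$,
\[
\lim_{t\to\infty}\ \sup_{0\le h \le T}\big\|\bar X(t+h) - \Phi_h(\bar X(t))\big\| = 0.
\]
The ingredients are (A.1), which by continuity of $F$ bounds $\{F(x_n)\}$ and confines the trajectory to a compact set on which $F$ is Lipschitz; (A.2), which forces the accumulated noise over any window of flow-length $T$ to vanish; and a Grönwall estimate comparing, on $[t,t+T]$, the interpolated increments against the exact flow. The error decomposes into a discretization term (controlled by $\gamma_n \to 0$ and local Lipschitzness) and the noise term supplied by (A.2); Grönwall then propagates both into a uniform-in-$t$ bound tending to $0$. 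Granting the APT property together with boundedness, the standard theory (Theorem 5.7 of \cite{benaim2006dynamics}) yields that the limit set $L(\bar X) = \bigcap_{t\ge 0}\overline{\{\bar X(s): s \ge t\}}$ is a compact, connected, internally chain transitive (ICT) set for $\Phi$, and that it coincides with the set of limit points of $\{x_n\}_{n\ge 1}$.

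Finally, I would invoke the Lyapunov structure to collapse $L(\bar X)$ onto the equilibria. In the intended application the field is $F = -\nabla g$, so $g$ is a strict Lyapunov function for $\Phi$: nonincreasing along orbits, strictly decreasing off the fixed point set $V$. The decisive step is that an ICT set $\Lambda$ carrying such a Lyapunov function satisfies $\Lambda \subseteq V$ once $g(V)$ is nowhere dense, which is exactly hypothesis (A.3). Indeed, $g(\Lambda)$ is a connected subset of $\mathbb{R}$, hence an interval, while chain transitivity together with the Lyapunov inequality forces $g(\Lambda) \subseteq g(\Lambda \cap V) \subseteq g(V)$; a nowhere-dense interval is a single point, so $g|_\Lambda$ is constant, and every orbit in $\Lambda$ then lies in a level set along which $g$ cannot strictly decrease, forcing $\Lambda \subseteq V$. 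Hence all limit points of $\{x_n\}$ are fixed points, as claimed.

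I expect the APT estimate of the second step to be the main technical obstacle: one must carefully reconcile the two clocks ($\tau_n$ versus flow time), handle the reindexing in (A.2), and ensure the discretization and noise errors are bounded \emph{uniformly in the starting time} $t$. The conceptual crux, however, is the last step, where the weak-Sard-type condition (A.3) is precisely what excludes the pathological ICT sets on which $g$ could fail to be constant.
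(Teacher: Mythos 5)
Your proposal is correct and follows essentially the same route as the paper: the paper does not prove this proposition itself but, in the remark immediately following it, assembles it from Proposition 4.1 (conditions (A.1)--(A.2) give a precompact asymptotic pseudotrajectory), Theorem 5.7 (limit sets of precompact APTs are internally chain transitive), and Proposition 6.4 (a Lyapunov function whose value set on the equilibria is nowhere dense forces ICT sets into the fixed-point set) of \cite{benaim2006dynamics}. Your three steps---interpolation plus Gr\"onwall for the APT property, Theorem 5.7 for the ICT limit set, and the connectedness/nowhere-dense argument collapsing the limit set onto $V$---are precisely the unpacked proofs of those three cited results.
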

\begin{rem}
\cref{SA_p} synthesizes results from Proposition 4.1, Theorem 5.7, and Proposition 6.4 in \cite{benaim2006dynamics}. Proposition 4.1 shows that the trajectory of an algorithm satisfying~\cref{pros:a1,pros:a2}  forms a precompact asymptotic pseudotrajectory of the corresponding ODE system. Meanwhile, Theorem 5.7 and Proposition 6.4 demonstrate that all limit points of this precompact asymptotic pseudotrajectory are fixed points of the ODE system. 
\end{rem}
We are now ready to present the following theorem on almost sure convergence. To help readers better understand the concepts underlying the proofs, we have included a dependency graph in \cref{fig:adagrad:norm:struc}  that visualizes the relationships among the key lemmas and theorems. 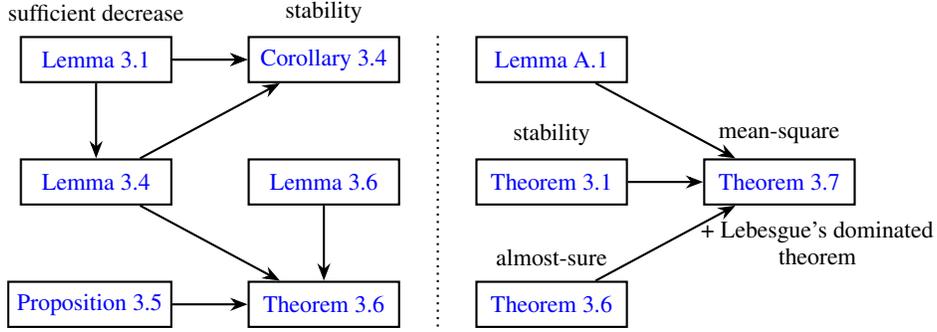
\begin{figure}[t]
\centering
\begin{tikzpicture}[
  node distance=1cm,
  every node/.style={draw, rectangle, minimum width=2cm, minimum height=0.5cm, text centered, font=\small},
  every comment/.style={rectangle, draw=none, font=\small},
  >=Stealth, 
  thick]

\node (lemma35) [label={stability}]{\cref{stable':iterate}};  
\node [label={sufficient decrease}, left= of lemma35](lemma31) {\cref{sufficient:lem}}; 

\node (lemma34) [below= of lemma31]{ \cref{lem_su}}; 
\node (lemma36) [right= of lemma34]{\cref{step size}}; 

\node (lemma39) [below= of lemma36]{ \cref{convergence_1}}; 

\node (lemma41) [right= of lemma35]{ \cref{loss_bound}}; 
\node (lemma35-1) [label=stability, below= of lemma41]{\cref{stable}};  
\node (lemma39-1) [label=almost-sure, below= of lemma35-1]{ \cref{convergence_1}}; 
\node (lemma40) [label=mean-square, right= of lemma35-1, distance=6cm]{ \cref{convergence_2}}; 
\node (lemma49) [left= of lemma39]{\cref{SA_p}};



\draw[->] (lemma31) to (lemma35);
\draw[->] (lemma36) to (lemma39);
\draw[->] (lemma31) to (lemma34);
\draw[->] (lemma34) to (lemma35);
\draw[->] (lemma31) to (lemma35);

\draw[->] (lemma34) to (lemma39);
\draw[thick, dotted] ($(lemma35.north east)!0.5!(lemma41.north west)$) to ($(lemma39.south east)!0.5!(lemma39-1.south west)$); 
\draw[->] (lemma49) to (lemma39);
\draw[->] (lemma41) to (lemma40);
\draw[->] (lemma35-1) to (lemma40);
\draw[->] (lemma39-1) to node[draw=none, left=-3.7cm, font=\small, align=center]{+ Lebesgue's dominated\\theorem}(lemma40);

\end{tikzpicture}
\caption{The proof structure of AdaGrad-Norm} \label{fig:adagrad:norm:struc}
\end{figure} 
\begin{thm}\label{convergence_1} 
Consider the AdaGrad-Norm algorithm defined in \cref{AdaGrad_Norm}. If \cref{ass_g_poi,ass_noise,extra} hold, then for any initial point $\theta_{1}\in \mathbb{R}^{d}$ and $S_{0}>0,$ we have
\begin{equation}\nonumber\begin{aligned}
\lim_{n\rightarrow\infty}\|\nabla g(\theta_{n})\|=0\ \ \text{a.s.}
\end{aligned}\end{equation}
\end{thm}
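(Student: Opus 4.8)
The plan is to recognize AdaGrad-Norm as the stochastic approximation recursion \eqref{SA} with step size $\gamma_n = \alpha_0/\sqrt{S_n}$, vector field $F=\nabla g$, martingale-difference noise $U_n = \nabla g(\theta_n,\xi_n)-\nabla g(\theta_n)$, and associated gradient-flow ODE $\dot\theta=-\nabla g(\theta)$, whose fixed-point set is exactly the critical set $\Theta^{\ast}$, and then to invoke the ODE criterion \cref{SA_p}. Since that criterion needs $\gamma_n\to 0$ together with $\sum_n\gamma_n=+\infty$, I would first split the sample space according to whether $S_\infty:=\lim_n S_n$ is finite. On $\{S_\infty<+\infty\}$ one has $\sum_n\|\nabla g(\theta_n,\xi_n)\|^2 = S_\infty-S_0<+\infty$; a conditional Borel--Cantelli argument upgrades this to $\sum_n\Expect[\|\nabla g(\theta_n,\xi_n)\|^2\mid\mathscr{F}_{n-1}]<+\infty$ a.s., and Jensen's inequality $\|\nabla g(\theta_n)\|^2\le\Expect[\|\nabla g(\theta_n,\xi_n)\|^2\mid\mathscr{F}_{n-1}]$ then forces $\|\nabla g(\theta_n)\|\to 0$. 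On $\{S_\infty=+\infty\}$ the step size satisfies $\gamma_n\to 0$, while $\sum_n\gamma_n=\alpha_0\sum_n 1/\sqrt{S_n}=+\infty$ by \cref{step size}, so the ODE machinery applies.

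To run \cref{SA_p} on $\{S_\infty=+\infty\}$, I would verify its three hypotheses. The vector field $F=\nabla g$ is globally Lipschitz by \cref{ass_g_poi}~\ref{ass_g_poi:i2}, so the flow is complete. Condition \ref{pros:a1} (bounded trajectory) is exactly \cref{stable':iterate}, which rests on the stability \cref{stable} and coercivity. Condition \ref{pros:a3} is the weak Sard condition \cref{extra}~\ref{extra:i2}: here $g$ itself is a strict Lyapunov function for $\dot\theta=-\nabla g$, and its values on the equilibrium set $\Theta^{\ast}$ are the critical-value set, which is nowhere dense by assumption (and, where applicable, by Claim \ref{pros:assump:loss}). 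These two are essentially bookkeeping given the earlier results.

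The heart of the argument, and the step I expect to be the main obstacle, is condition \ref{pros:a2}: the weighted noise increments $\sum_{i=n}^k\gamma_iU_i$ must vanish uniformly over every window $k\le m(\Sigma_\gamma(n)+T)$ of bounded step-budget $T$. The difficulty is that $\gamma_n=\alpha_0/\sqrt{S_n}$ is \emph{not} $\mathscr{F}_{n-1}$-measurable (it depends on $\nabla g(\theta_n,\xi_n)$), so $\gamma_nU_n$ is not a genuine martingale difference; moreover the total conditional variance $\sum_n\gamma_n^2\Expect[\|U_n\|^2\mid\mathscr{F}_{n-1}]$ generally diverges when $S_n\to\infty$ (it behaves like $\sum_n 1/S_n$), so no global martingale convergence is available and only the localized window estimate can hold. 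I would therefore replace $\gamma_n$ by the predictable surrogate $\gamma_n'=\alpha_0/\sqrt{S_{n-1}}$ and write $\gamma_nU_n=\gamma_n'U_n+(\gamma_n-\gamma_n')U_n$. For the genuine martingale $\sum_i\gamma_i'U_i$ I would apply Doob's maximal inequality on each $T$-window and control the summed conditional variances $\alpha_0^2\sum_i\Expect[\|U_i\|^2\mid\mathscr{F}_{i-1}]/S_{i-1}$ by a divide-and-conquer on the gradient norm: on iterations with $\|\nabla g(\theta_n)\|^2>\nu$ the energy $\sum_n\I_{\|\nabla g(\theta_n)\|^2>\nu}\|\nabla g(\theta_n,\xi_n)\|^2/S_{n-1}$ is finite by \cref{lem_su}, while on near-critical iterations the noise is bounded by $D_1$ via \cref{ass_noise}~\ref{ass_noise:i3}; together with the boundedness of $\nabla g$ along the a.s.-bounded trajectory this yields a window bound that tends to $0$ as $n\to\infty$. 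The correction $(\gamma_n-\gamma_n')U_n$ is governed by $1/\sqrt{S_{n-1}}-1/\sqrt{S_n}$ from \eqref{inequ:sn:minus:sn1}, which telescopes, and is likewise absorbed using \cref{lem_su}.

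Finally, \cref{SA_p} yields that every limit point of $\{\theta_n\}$ lies in $\Theta^{\ast}$. Since the trajectory is a.s. bounded, any subsequence admits a convergent sub-subsequence whose limit is critical; by continuity of $\nabla g$, the assumption $\limsup_n\|\nabla g(\theta_n)\|>0$ would produce a non-critical limit point, a contradiction. Hence $\lim_n\|\nabla g(\theta_n)\|=0$ a.s. on $\{S_\infty=+\infty\}$, and combined with the $\{S_\infty<+\infty\}$ case this gives $\lim_n\|\nabla g(\theta_n)\|=0$ a.s., as desired.
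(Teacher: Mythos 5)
Your treatment of the main branch, the event $\{\lim_n S_n=+\infty\}$, follows the paper's own proof essentially step for step: the same stochastic-approximation framing with $\gamma_n=\alpha_0/\sqrt{S_n}$, condition \ref{pros:a1} from \cref{stable':iterate}, condition \ref{pros:a3} from the weak Sard assumption, and for the crucial condition \ref{pros:a2} exactly the paper's two ingredients — replacing $\gamma_n$ by the predictable surrogate $\alpha_0/\sqrt{S_{n-1}}$ plus a telescoping correction governed by $1/\sqrt{S_{n-1}}-1/\sqrt{S_n}$, and a divide-and-conquer on $\|\nabla g(\theta_n)\|^2$ that handles the large-gradient part with \cref{lem_su} and the near-critical part with \cref{ass_noise}~\ref{ass_noise:i3}. (The paper organizes the $T$-windows with stopping times $\mu_t$ and uses Burkholder's inequality, including a third-moment estimate to make the window bounds summable over $t$; your sketch leaves that summability implicit, but the plan is the same in substance.)

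The genuine gap is in the degenerate case $\{\lim_n S_n<+\infty\}$. You assert that $\sum_n\|\nabla g(\theta_n,\xi_n)\|^2<+\infty$ can be "upgraded by conditional Borel--Cantelli" to $\sum_n\Expect\big[\|\nabla g(\theta_n,\xi_n)\|^2\mid\mathscr{F}_{n-1}\big]<+\infty$ a.s. That implication is false in general: the martingale (L\'evy) version of Borel--Cantelli, which identifies $\{\sum_n X_n<\infty\}$ with $\{\sum_n\Expect[X_n\mid\mathscr{F}_{n-1}]<\infty\}$ up to null sets, requires the nonnegative summands $X_n$ to be uniformly bounded — and the entire point of this paper's setting is that stochastic gradients are \emph{not} uniformly bounded (only affine noise variance is assumed). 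Concretely, take independent $X_n\geq 0$ with $\Pro(X_n=n^2)=n^{-2}$ and $X_n=0$ otherwise: then $\sum_n X_n<\infty$ a.s.\ by Borel--Cantelli, yet $\sum_n\Expect[X_n]=\sum_n 1=+\infty$. Without uniform boundedness only the reverse inclusion survives, so your chain (realized sum finite $\Rightarrow$ conditional-mean sum finite $\Rightarrow$ Jensen) breaks at the first arrow; exploiting unbiasedness and the affine noise bound to rescue it would require a separate argument you have not supplied. The repair is already in your toolbox and is what the paper does: \cref{lem_su} (a consequence of the descent inequality, not of the identity $S_\infty-S_0=\sum_n\|\nabla g(\theta_n,\xi_n)\|^2$) gives $\sum_n \I_{\|\nabla g(\theta_n)\|^2>\nu}\,\|\nabla g(\theta_n)\|^2/S_{n-1}<+\infty$ a.s.\ for every $\nu>0$; on the event where $S_n$ stays bounded the weights $1/S_{n-1}$ are bounded away from zero, so $\I_{\|\nabla g(\theta_n)\|^2>\nu}\|\nabla g(\theta_n)\|^2\rightarrow 0$, hence $\limsup_n\|\nabla g(\theta_n)\|^2\leq\nu$, and arbitrariness of $\nu$ finishes this case.
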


\begin{proof}
First, we consider a degenerate case that the $\mathcal{A}:=\big\{\lim_{n\rightarrow+\infty}S_{n}<+\infty\big\} $ event occurs.
According to \cref{lem_su}, we know that for any \( \nu > 0\), the following result holds
\[\sum_{n=1}^{+\infty}\mathbb{I}_{\|\nabla g(\theta_{n})\|^{2}> \nu}\frac{\|\nabla g(\theta_{n})\|^{2}}{S_{n-1}} < +\infty\ \ \text{a.s.}\]
When the event \(\mathcal{A}\) occurs, it is evident that \(\lim_{n\rightarrow+\infty}\mathbb{I}_{\|\nabla g(\theta_{n})\|^{2}>\nu}\|\nabla g(\theta_{n})\|^{2}=0\ \) \text{a.s.} Furthermore, we have
\begin{align*}
\limsup_{n\rightarrow+\infty}\|\nabla g(\theta_{n})\|^{2} &\le \limsup_{n\rightarrow+\infty}\mathbb{I}_{\|\nabla g(\theta_{n})\|^{2}\le\nu}\|\nabla g(\theta_{n})\|^{2}+\limsup_{n\rightarrow+\infty}\mathbb{I}_{\|\nabla g(\theta_{n})\|^{2}>\nu}\|\nabla g(\theta_{n})\|^{2} \notag \\
& \le \nu+0.
\end{align*}
Due to the arbitrariness of \(\nu\), we can conclude that when \(\mathcal{A}\) occurs, \(\lim_{n\rightarrow+\infty}\|\nabla g(\theta_{n})\|^{2} = 0.\) 

Next, we consider the case that  \(\mathcal{A}\) does not occur (that is $\mathcal{A}^{c}$ occurs), i.e., \(\lim_{n\rightarrow+\infty}S_{n}=+\infty.\) In this case, we transform the AdaGrad-Norm algorithm into the standard stochastic approximation algorithm as below
\[\theta_{n+1}-\theta_{n}=\frac{\alpha_{0}}{\sqrt{S_{n}}}\big(\nabla g(\theta_{n})+(\nabla g(\theta_{n},\xi_{n})-\nabla g(\theta_{n})\big)\] and the corresponding parameters in \cref{SA} are \(x_{n} = \theta_{n}\), \(g(x_n) = \nabla g(\theta_{n})\), \(U_{n} = \nabla g(\theta_{n}, \xi_{n}) - \nabla g(\theta_{n})\), and \(\gamma_{n} = \frac{\alpha_{0}}{\sqrt{S_{n}}}\). When \(\mathcal{A}^{c}\) occurs, it is clear that \(\lim_{n\rightarrow+\infty} \gamma_n = \lim_{n\rightarrow+\infty}\frac{\alpha_{0}}{\sqrt{S_{n}}}=0\). According to \cref{step size}, we know that \( \lim_{n \rightarrow \infty}\Sigma_{\gamma}(n) = \sum_{n=1}^{+\infty} \gamma_n = \sum_{n=1}^{+\infty}\frac{\alpha_{0}}{\sqrt{S_{n}}} = +\infty\ a.s.\) Therefore, it forms a standard stochastic approximation algorithm. 

Next, we aim to verify the two conditions, namely \cref{pros:a1,pros:a2} of \cref{SA_p}, hold for AdaGrad-Norm and use the conclusion of \cref{SA_p} to prove the almost sure convergence of AdaGrad-Norm.  Based on the stability of AdaGrad-Norm in \cref{stable':iterate}, we have \(\sup_{n\ge 1}\|\theta_{n}\|<+\infty\ a.s.\), thus Condition~\cref{pros:a1} holds. Next, we will check whether Condition \cref{pros:a2} is correct. For any \(N>0\), we define the stopping time sequence \(\{\mu_{t}\}_{t\ge 0}\)
\begin{equation}\nonumber\begin{aligned}
&\mu_{0}:=1,\ \mu_{1}:=\max\{n\ge 1:\Sigma_{\gamma}(n)\le N\},\ \mu_{t}:=\max\{n\ge \mu_{t-1}:\Sigma_{\gamma}(n)\le tN\},
\end{aligned}\end{equation}
where $ \Sigma_{\gamma}(n):=\sum_{k=1}^{n}\frac{\alpha_{0}}{\sqrt{S_{k}}}.$ By the definition of the stopping time $\mu_{t}$, we split the value of $\left\lbrace\Sigma_{\gamma}(n) \right\rbrace_{n=1}^{\infty}$ into pieces. 
For any $n>0,$ there exists a stopping time $\mu_{t_{n}}$ such that  $n\in[\mu_{t_{n}},\mu_{t_{n}+1}].$ We recall the definition of $m(t)$ in \cref{SA_p} and get that $m(\Sigma_{S}(n) + N) \leq \mu_{t_n+2}$. We then estimate the sum of $\gamma_{i}U_{i}$ in the interval $[n,m(\Sigma_{\gamma}(n)+N)]$ and achieve that (denote $\sum_{a}^{b}(\cdot)\equiv0\ (\forall\  b  < a)$)
\begin{align}\label{wxm_19}
 &\sup_{k\in[n,m(\Sigma_{\gamma}(n)+N)]}\Bigg\|\sum_{i=n}^{k}\gamma_{i}U_{i}\Bigg\| \notag\\
& =\sup_{k\in [n,m(\Sigma_{\gamma}(n)+N)]}\Bigg\|\sum_{i=\mu_{t_{n}}}^{k}\gamma_{i}U_{i}-\sum_{i=\mu_{t_{n}}}^{n-1}\gamma_{i}U_{i}\Bigg\|\notag\\&\le \sup_{k\in[n,m(\Sigma_{\gamma}(n)+N)]}\Bigg\|\sum_{i=\mu_{t_{n}}}^{k}\gamma_{i}U_{i}\Bigg\|+\sup_{k\in[n,m(\Sigma_{\gamma}(n)+N)]}\Bigg\|\sum_{i=\mu_{t_{n}}}^{n-1}\gamma_{i}U_{i}\Bigg\|\notag\\&\mathop{\le}^{(a)}\sup_{k\in[\mu_{t_{n}}, \mu_{t_{n}+2}]}\Bigg\|\sum_{i=\mu_{t_{n}}}^{k}\gamma_{i}U_{i}\Bigg\|+\sup_{k\in[\mu_{t_{n}}, \mu_{t_{n}+1}]}\Bigg\|\sum_{i=\mu_{t_{n}}}^{k}\gamma_{i}U_{i}\Bigg\|\notag\\&\le 2\sup_{k\in[\mu_{t_{n}}, \mu_{t_{n}+1}]}\Bigg\|\sum_{i=\mu_{t_{n}}}^{k}\gamma_{i}U_{i}\Bigg\| + \sup_{k\in[\mu_{t_{n}+1}, \mu_{t_{n}+2}]}\Bigg\|\sum_{i=\mu_{t_{n}}}^{\mu_{t_n}+1}\gamma_{i}U_{i} + \sum_{i=\mu_{t_{n}+1}}^{k}\gamma_{i}U_{i}\Bigg\| \notag \\
& \leq 3 \sup_{k\in[\mu_{t_{n}}, \mu_{t_{n}+1}]}\Bigg\|\sum_{i=\mu_{t_{n}}}^{k}\gamma_{i}U_{i}\Bigg\|  + \sup_{k\in[\mu_{t_{n}+1}, \mu_{t_{n}+2}]}\Bigg\|\sum_{i=\mu_{t_{n}+1}}^{k}\gamma_{i}U_{i}\Bigg\|,
\end{align}
where (a) follows from the fact that $n \in[\mu_{t_{n}},\mu_{t_{n}+1}]$ and $m(\Sigma_{S}(n) + N) \leq \mu_{t_n+2}$ which implies that \( [n, m(\Sigma_{S}(n) + N)] \subseteq [\mu_{t_{n}}, \mu_{t_{n}+2}] \). From \cref{wxm_19}, it is clear that to verify \cref{pros:a2} we only need to prove $$\lim_{t\rightarrow+\infty}\sup_{k\in[\mu_{t}, \mu_{t+1}]}\big\|\sum_{n=\mu_{t}}^{k}\gamma_{n}U_{n}\big\|=0. $$ First, we decompose $\sup_{k\in[\mu_{t}, \mu_{t+1}]}\big\|\sum_{n=\mu_{t}}^{k}\gamma_{n}U_{n}\big\|$ as below 
\begin{align}\label{wxm_06}
\sup_{k\in[\mu_{t}, \mu_{t+1}]}\Bigg\|\sum_{n=\mu_{t}}^{k}\gamma_{n}U_{n}\Bigg\|=&\sup_{k\in[\mu_{t}, \mu_{t+1}]}\Bigg\|\sum_{n=\mu_{t}}^{k}\frac{\alpha_{0}}{\sqrt{S_{n}}}(\nabla g(\theta_{n},\xi_{n})-\nabla g(\theta_{n}))\Bigg\|\notag\\\le&\underbrace{\sup_{k\in[\mu_{t}, \mu_{t+1}]}\Bigg\|\sum_{n=\mu_{t}}^{k}\frac{\alpha_{0}}{\sqrt{S_{n-1}}}(\nabla g(\theta_{n},\xi_{n})-\nabla g(\theta_{n}))\Bigg\|}_{\Omega_{t}}\notag \\&+\underbrace{\sup_{k\in[\mu_{t}, \mu_{t+1}]}\Bigg\|\sum_{n=\mu_{t}}^{k}\bigg(\frac{\alpha_{0}}{\sqrt{S_{n-1}}}-\frac{\alpha_{0}}{\sqrt{S_{n}}}\bigg)(\nabla g(\theta_{n},\xi_{n})-\nabla g(\theta_{n}))\Bigg\|}_{\Upsilon_{t}}.
\end{align}
Now we only need to demonstrate that \(\lim_{t\rightarrow+\infty}\Omega_{t}=0\) and \(\lim_{t\rightarrow+\infty}\Upsilon_{t}=0\). For the first term $\Omega_{t}$, we have
\begin{align}\label{wxm_05}
\Omega_{t}&=\sup_{k\in[\mu_{t}, \mu_{t+1}]}\Bigg\|\sum_{n=\mu_{t}}^{k}\frac{\alpha_{0}}{\sqrt{S_{n-1}}}(\nabla g(\theta_{n},\xi_{n})-\nabla g(\theta_{n}))\Bigg\|\notag\\&\le\sup_{k\in[\mu_{t}, \mu_{t+1}]}\Bigg\|\sum_{n=\mu_{t}}^{k}\frac{\alpha_{0}\I_{\|\nabla g(\theta_{n})\|^{2}<D_{0}}}{\sqrt{S_{n-1}}}(\nabla g(\theta_{n},\xi_{n})-\nabla g(\theta_{n}))\Bigg\|\notag\\&+\sup_{k\in[\mu_{t}, \mu_{t+1}]}\Bigg\|\sum_{n=\mu_{t}}^{k}\frac{\alpha_{0}\I_{\|\nabla g(\theta_{n})\|^{2}\ge D_{0}}}{\sqrt{S_{n-1}}}(\nabla g(\theta_{n},\xi_{n})-\nabla g(\theta_{n}))\Bigg\| \notag \\&\mathop{\le}^{(a)} \frac{2\delta^{\frac{3}{2}}}{3}+\frac{1}{3\delta^{3}}\underbrace{\sup_{k\in[\mu_{t}, \mu_{t+1}]}\Bigg\|\sum_{n=\mu_{t}}^{k}\frac{\alpha_{0}\I_{\|\nabla g(\theta_{n})\|^{2}<D_{0}}}{\sqrt{S_{n-1}}}(\nabla g(\theta_{n},\xi_{n})-\nabla g(\theta_{n}))\Bigg\|^{3}}_{\Omega_{t,1}}\notag\\&+\frac{\delta}{2}+\frac{1}{2\delta}\underbrace{\sup_{k\in[\mu_{t}, \mu_{t+1}]}\Bigg\|\sum_{n=\mu_{t}}^{k}\frac{\alpha_{0}\I_{\|\nabla g(\theta_{n})\|^{2}\ge D_{0}}}{\sqrt{S_{n-1}}}(\nabla g(\theta_{n},\xi_{n})-\nabla g(\theta_{n}))\Bigg\|^{2}}_{\Omega_{t,2}}
\end{align}
where $(a)$ uses \emph{Young's} inequality twice and $\delta>0$ is an arbitrary number. To check whether $\Omega_{t,1}$ and $\Omega_{t,2}$ converges, we will examine their series \(\sum_{t=1}^{+\infty}\mathbb{E}(\Omega_{t,1})\) and \(\sum_{t=1}^{+\infty}\mathbb{E}(\Omega_{t,2})\). For the series of $\Omega_{t,1}$ we have the following estimation:
\begin{align*}
&\sum_{t=1}^{+\infty}\mathbb{E}(\Omega_{t,1})\le \sum_{t=1}^{+\infty}\Expect\Bigg[\sup_{k\in[\mu_{t}, \mu_{t+1}]}\Bigg\|\sum_{n=\mu_{t}}^{k}\frac{\alpha_{0}\I_{\|\nabla g(\theta_{n})\|^{2}<D_{0}}}{\sqrt{S_{n-1}}}(\nabla g(\theta_{n},\xi_{n})-\nabla g(\theta_{n}))\Bigg\|^{3}\Bigg]\\&\mathop{\le}^{(a)} 3\sum_{t=1}^{+\infty}\Expect\Bigg[\sum_{n=\mu_{t}}^{\mu_{t+1}}\frac{\alpha^{2}_{0}\I_{\|\nabla g(\theta_{n})\|^{2}<D_{0}}}{{S_{n-1}}}\big\|\nabla g(\theta_{n},\xi_{n})-\nabla g(\theta_{n})\big\|^{2}\Bigg]^{\frac{3}{2}}\\&\mathop{\le}^{(b)}3\sum_{t=1}^{+\infty}{\Expect^{1/2}\Bigg[\sum_{n=\mu_{t}}^{\mu_{t+1}}\frac{1}{\sqrt{S_{n-1}}}\Bigg]}\cdot\Expect\Bigg[\sum_{n=\mu_{t}}^{\mu_{t+1}}\frac{\alpha^{3}_{0}\I_{\|\nabla g(\theta_{n})\|^{2}<D_{0}}}{{S^{\frac{5}{4}}_{n-1}}}\|\nabla g(\theta_{n},\xi_{n})-\nabla g(\theta_{n})\|^{3}\Bigg]\\&\mathop{\le}^{(c)}3\alpha^{3}_{0}(\sqrt{D_{0}}+\sqrt{D_{1}}) \cdot\sum_{t=1}^{+\infty}{\Expect^{1/2}\Bigg[\sum_{n=\mu_{t}}^{\mu_{t+1}}\frac{1}{\sqrt{S_{n-1}}}\Bigg]}\Expect\Bigg[\sum_{n=\mu_{t}}^{\mu_{t+1}}\frac{\I_{\|\nabla g(\theta_{n})\|^{2}<D_{0}}}{{S^{\frac{5}{4}}_{n-1}}}\|\nabla g(\theta_{n},\xi_{n})-\nabla g(\theta_{n})\|^{2}\Bigg]\\&\mathop{\le}^{(d)} \frac{3\alpha_{0}^{3}(\sqrt{D_{0}}+\sqrt{D_{1}})}{(N+S_0^{-1/2})^{-\frac{1}{2}}}\cdot\sum_{t=1}^{+\infty}\Expect\Bigg[\sum_{n=\mu_{t}}^{\mu_{t+1}}\frac{\I_{\|\nabla g(\theta_{n})\|^{2}<D_{0}}}{S_{n-1}^{\frac{5}{4}}}\Expect[\|\nabla g(\theta_{n},\xi_{n})-\nabla g(\theta_{n})\|^{2}|\mathscr{F}_{n-1}]\Bigg]\\& \mathop{\le}^{(e)} \frac{3\alpha_{0}^{3}(\sqrt{D_{0}}+\sqrt{D_{1}})}{(N+S_0^{-1/2})^{-\frac{1}{2}}}\Big(\frac{S_{0}+D_{1}}{S_{0}}\Big)^{\frac{5}{4}}\\&\cdot\sum_{t=1}^{+\infty}\Expect\Bigg[\sum_{n=\mu_{t}}^{\mu_{t+1}}\frac{\I_{\|\nabla g(\theta_{n})\|^{2}<D_{0}}}{(S_{n-1}+D_{1})^{\frac{5}{4}}}\Expect(\|\nabla g(\theta_{n},\xi_{n})\|^{2}|\mathscr{F}_{n-1})\Bigg]\\&\mathop{\le}^{(f)}\frac{3\alpha_{0}^{3}(\sqrt{D_{0}}+\sqrt{D_{1}})}{(N+S_0^{-1/2})^{-\frac{1}{2}}}\Big(\frac{S_{0}+D_{1}}{S_{0}}\Big)^{\frac{5}{4}}\sum_{t=1}^{+\infty}\Expect\Bigg[\sum_{n=\mu_{t}}^{\mu_{t+1}}\frac{\I_{\|\nabla g(\theta_{n})\|^{2}<D_{0}}\|\nabla g(\theta_{n},\xi_{n})\|^{2}}{(S_{n-1}+D_{1})^{\frac{5}{4}}}\Bigg]\\&\mathop{\le}^{(g)}\frac{3\alpha_{0}^{3}(\sqrt{D_{0}}+\sqrt{D_{1}})}{(N+S_0^{-1/2})^{-\frac{1}{2}}}\Big(\frac{S_{0}+D_{1}}{S_{0}}\Big)^{\frac{5}{4}}\sum_{t=1}^{+\infty}\Expect\Bigg[\sum_{n=\mu_{t}}^{\mu_{t+1}}\frac{\I_{\|\nabla g(\theta_{n})\|^{2}<D_{0}}\|\nabla g(\theta_{n},\xi_{n})\|^{2}}{S_{n}^{\frac{5}{4}}}\Bigg]\\&<\frac{3\alpha_{0}^{3}(\sqrt{D_{0}}+\sqrt{D_{1}})}{(N+S_0^{-1/2})^{-\frac{1}{2}}}\Big(\frac{S_{0}+D_{1}}{S_{0}}\Big)^{\frac{5}{4}}\int_{S_{0}}^{+\infty}\frac{1}{x^{\frac{5}{4}}}\text{d}x<+\infty.
\end{align*}
Inequality $(a)$ follows from  \emph{Burkholder's} inequality (\cref{vital2}) 
and Inequality $(b)$ uses \emph{Hölder's} inequality, i.e., \(\mathbb{E}(|XY|)^{\frac{3}{2}} \leq \sqrt{\mathbb{E}(|X|^3)} \cdot \mathbb{E}(|Y|^{\frac{3}{2}})\). For Inequality (c), we use \cref{ass_noise:i3} of \cref{ass_noise} such that 
\[\I_{\|\nabla g(\theta_{n})\|^{2}<D_{0}}\|\nabla g(\theta_{n},\xi_{n})-\nabla g(\theta_{n})\|\le \I_{\|\nabla g(\theta_{n})\|^{2}<D_{0}}(\sqrt{D_{0}}+\sqrt{D_{1}}).\]
For inequality (d), we follow from the fact that 
\[
\sum_{n=\mu_{t}}^{\mu_{t+1}} \frac{1}{\sqrt{S_{n-1}}} \leq \frac{1}{\sqrt{S_{\mu_{t}-1}}} + \sum_{n=\mu_{t}}^{\mu_{t+1}} \frac{1}{\sqrt{S_{n}}} \leq \frac{1}{\sqrt{S_{0}}} + N,
\]where we use the definition of the stopping time \(\mu_{t}\). In step (e), note that the function $g(x) = (x+D_1)/x$ is decreasing for $x > 0$. We have $\frac{x + D_{1}}{x} \leq \frac{S_{0} + D_{1}}{S_{0}}$  for any  $x \geq S_{0}$ and 
\begin{align}\label{inequ:var:grad}
\Expect[\|\nabla g(\theta_{n},\xi_{n})-\nabla g(\theta_{n})\|^{2}|\mathscr{F}_{n-1}] & =\Expect[\|\nabla g(\theta_{n},\xi_{n})\|^{2}-\|\nabla g(\theta_{n})\|^{2}|\mathscr{F}_{n-1}] \notag \\
& \le \Expect[\|\nabla g(\theta_{n},\xi_{n})\|^{2}|\mathscr{F}_{n-1}].
\end{align}
In (f), we use the \emph{Doob's stopped} theorem in \cref{vital1}.
In (g), when the event \(\{\|\nabla g(\theta_{n})\|^{2} \leq D_{0}\}\) holds, then \(\|\nabla g(\theta_{n}, \xi_{n})\|^{2} \leq D_{1}\ \text{a.s.}\) such that $S_n =S_{n-1} + \|\nabla g(\theta_{n}, \xi_{n})\|^{2} \leq S_{n-1} + D_1$. We thus conclude that the series $\sum_{t=1}^{+\infty}\mathbb{E}(\Omega_{t,1})$ is bounded. According to \cref{lem_summation}, we have \(\sum_{t=1}^{+\infty}\Omega_{t,1} < +\infty\ a.s.,\)  which implies  
\begin{align}\label{inequ:omega:t1}
\lim_{t\rightarrow+\infty}\Omega_{t,1} = 0\ \text{a.s.} 
\end{align}
Next, we consider the series \(\sum_{t=1}^{+\infty}\mathbb{E}(\Omega_{t,2})\)
\begin{align*}
&\sum_{t=1}^{+\infty}\mathbb{E}[\Omega_{n,2}]=\sum_{t=1}^{+\infty}\Expect\Bigg[\sup_{k\in[\mu_{t}, \mu_{t+1}]}\Bigg\|\sum_{n=\mu_{t}}^{k}\frac{\alpha_{0}\I_{\|\nabla g(\theta_{n})\|^{2}\ge D_{0}}}{\sqrt{S_{n-1}}}(\nabla g(\theta_{n},\xi_{n})-\nabla g(\theta_{n}))\Bigg\|^{2}\Bigg]\\&\mathop{\le}^{(a)} 4\sum_{t=1}^{+\infty}\Expect\Bigg[\sum_{n=\mu_{t}}^{\mu_{t+1}}\frac{\alpha_{0}\I_{\|\nabla g(\theta_{n})\|^{2}\ge D_{0}}}{{S_{n-1}}}\|\nabla g(\theta_{n},\xi_{n})-\nabla g(\theta_{n})\|^{2}\Bigg]\\& \mathop{=}^{\text{\cref{vital1}}} 4\sum_{t=1}^{+\infty}\Expect\Bigg[\sum_{n=\mu_{t}}^{\mu_{t+1}}\frac{\alpha_{0}\I_{\|\nabla g(\theta_{n})\|^{2}\ge D_{0}}}{{S_{n-1}}}\Expect[\|\nabla g(\theta_{n},\xi_{n})-\nabla g(\theta_{n})\|^{2}|\mathscr{F}_{n-1}]\Bigg]\\&\mathop{\le}^{(b)} 4\sum_{t=1}^{+\infty}\Expect\Bigg[\sum_{n=\mu_{t}}^{\mu_{t+1}} \alpha_{0}\I_{\|\nabla g(\theta_{n})\|^{2}\ge D_{0}}\frac{\|\nabla g(\theta_{n},\xi_{n})\|^{2}}{{S_{n-1}}}\Bigg]\\&\mathop{<}^{\text{\cref{lem_su}}} 4 \alpha_0 \left(\sigma_0 + \frac{\sigma_1}{D_0} \right)M,
\end{align*}
where $(a)$ follows from \emph{Burkholder's} inequality (\cref{vital2}) and $(b)$ uses \cref{inequ:var:grad} and the affine noise variance condition in \cref{ass_noise} \cref{ass_noise:i2} such that  \begin{align*}
&\I_{\|\nabla g(\theta_{n})\|^{2}\ge D_{0}}\Expect[\|\nabla g(\theta_{n},\xi_{n})-\nabla g(\theta_{n})\|^{2}|\mathscr{F}_{n-1}]\le \I_{\|\nabla g(\theta_{n})\|^{2}\ge D_{0}}\Expect[\|\nabla g(\theta_{n},\xi_{n})\|^{2}|\mathscr{F}_{n-1}].
\end{align*}
Thus, we obtain that the series $\sum_{t=1}^{+\infty}\mathbb{E}(\Omega_{n,2})$ is bounded. According to \cref{lem_summation}, we have $\sum_{t=1}^{+\infty}\Omega_{n,2}$ is bounded which induces that 
$\lim_{n \rightarrow +\infty} \Omega_{n,2} = 0 \ \text{a.s.} $
Combined with the result that $\lim_{n \rightarrow +\infty} \Omega_{n,1} = 0 \ \text{a.s.}$ in  \cref{inequ:omega:t1} and substituting them into \cref{wxm_05}, we can conclude that \(\limsup_{n\rightarrow+\infty}\Omega_{t}\le \frac{2\delta^{{3}/{2}}}{3}+\frac{\delta}{2}.\) Due to the arbitrariness of \(\delta\), we conclude that $\lim_{n\rightarrow+\infty}\Omega_{t}=0.
$. Next, we consider the term \(\Upsilon_{t}\) in \cref{wxm_06}.
\begin{align}\label{wxm_13}
\Upsilon_{t}=&\sup_{k\in[\mu_{t}, \mu_{t+1}]}\Bigg\|\sum_{n=\mu_{t}}^{k}\bigg(\frac{\alpha_{0}}{\sqrt{S_{n-1}}}-\frac{\alpha_{0}}{\sqrt{S_{n}}}\bigg)(\nabla g(\theta_{n},\xi_{n})-\nabla g(\theta_{n}))\Bigg\|\notag\\\le& \sup_{k\in[\mu_{t}, \mu_{t+1}]}\sum_{n=\mu_{t}}^{k}\bigg(\frac{\alpha_{0}}{\sqrt{S_{n-1}}}-\frac{\alpha_{0}}{\sqrt{S_{n}}}\bigg)\|\nabla g(\theta_{n},\xi_{n})-\nabla g(\theta_{n})\|\notag\\=&\sum_{n=\mu_{t}}^{\mu_{t+1}}\bigg(\frac{\alpha_{0}}{\sqrt{S_{n-1}}}-\frac{\alpha_{0}}{\sqrt{S_{n}}}\bigg)\|\nabla g(\theta_{n},\xi_{n})-\nabla g(\theta_{n})\| \notag\\=&\underbrace{\sum_{n=\mu_{t}}^{\mu_{t+1}}\I_{\|\nabla g(\theta_{n})\|^{2}<D_{0}}\bigg(\frac{\alpha_{0}}{\sqrt{S_{n-1}}}-\frac{\alpha_{0}}{\sqrt{S_{n}}}\bigg)\|\nabla g(\theta_{n},\xi_{n})-\nabla g(\theta_{n})\|}_{\Upsilon_{t,1}}\notag\\&+\underbrace{\sum_{n=\mu_{t}}^{\mu_{t+1}}\I_{\|\nabla g(\theta_{n})\|^{2}\ge D_{0}}\bigg(\frac{\alpha_{0}}{\sqrt{S_{n-1}}}-\frac{\alpha_{0}}{\sqrt{S_{n}}}\bigg)\|\nabla g(\theta_{n},\xi_{n})-\nabla g(\theta_{n})\|}_{\Upsilon_{t,2}}.
\end{align}
We now investigate the sum of the two terms. First, we consider the series \(\sum_{t=1}^{+\infty}\Upsilon_{t,1}\)
\begin{align*}
\sum_{t=1}^{+\infty}\Upsilon_{t,1}&=\sum_{t=1}^{+\infty}\sum_{n=\mu_{t}}^{\mu_{t+1}}\I_{\|\nabla g(\theta_{n})\|^{2}<D_{0}}\bigg(\frac{\alpha_{0}}{\sqrt{S_{n-1}}}-\frac{\alpha_{0}}{\sqrt{S_{n}}}\bigg)\|\nabla g(\theta_{n},\xi_{n})-\nabla g(\theta_{n})\|\\&\mathop{\le}^{(a)}\alpha_{0}(\sqrt{D_{1}}+\sqrt{D_{0}}) \sum_{t=1}^{+\infty}\sum_{n=\mu_{t}}^{\mu_{t+1}}\bigg(\frac{1}{\sqrt{S_{n-1}}}-\frac{1}{\sqrt{S_{n}}}\bigg)\\&<\alpha_{0}(\sqrt{D_{1}}+\sqrt{D_{0}})\sum_{n=1}^{+\infty}\bigg(\frac{1}{\sqrt{S_{n-1}}}-\frac{1}{\sqrt{S_{n}}}\bigg)<\frac{\alpha_{0}(\sqrt{D_{1}}+\sqrt{D_{0}})}{\sqrt{S_{0}}}\ \text{a.s.}, 
\end{align*}
which implies that $
\lim_{t\rightarrow+\infty}\Upsilon_{t,1}=0\ \text{a.s.} $
Inequality $(a)$ follows from \cref{ass_noise} \cref{ass_noise:i3} such that $\I_{\|\nabla g(\theta_{n})\|^{2}<D_{0}}\|\nabla g(\theta_{n},\xi_{n})-\nabla g(\theta_{n})\|\le \sqrt{D_{0}}+\sqrt{D_{1}}\ \text{a.s.}$ Then, we consider the series \(\sum_{t=1}^{+\infty}\mathbb{E}(\Upsilon_{t,2})\)
\begin{align*}
&\sum_{t=1}^{+\infty}\mathbb{E}[\Upsilon_{t,2}]\le \sum_{t=1}^{+\infty}\Expect\Bigg[\sum_{n=\mu_{t}}^{\mu_{t+1}}\I_{\|\nabla g(\theta_{n})\|^{2}\ge D_{0}}\bigg(\frac{\alpha_{0}}{\sqrt{S_{n-1}}}-\frac{\alpha_{0}}{\sqrt{S_{n}}}\bigg)\|\nabla g(\theta_{n},\xi_{n})-\nabla g(\theta_{n})\|\Bigg]\\&\le \alpha_{0}\sum_{t=1}^{+\infty}\Expect\Bigg[\sum_{n=\mu_{t}}^{\mu_{t+1}}\I_{\|\nabla g(\theta_{n})\|^{2}\ge D_{0}}\bigg(\frac{\sqrt{S_{n}}-\sqrt{S_{n-1}}}{\sqrt{S_{n-1}}\sqrt{S_{n}}}\bigg)\|\nabla g(\theta_{n},\xi_{n})-\nabla g(\theta_{n})\|\Bigg]\\&\mathop{\le}^{(a)}\alpha_{0}\sum_{t=1}^{+\infty}\Expect\Bigg[\sum_{n=\mu_{t}}^{\mu_{t+1}}\I_{\|\nabla g(\theta_{n})\|^{2}\ge D_{0}}\bigg(\frac{\|\nabla g(\theta_{n},\xi_{n})\|}{\sqrt{S_{n-1}}\sqrt{S_{n}}}\bigg)\|\nabla g(\theta_{n},\xi_{n})-\nabla g(\theta_{n})\|\Bigg]\\&\le \alpha_{0}\sum_{t=1}^{+\infty}\Expect\bigg[\sum_{n=\mu_{t}}^{\mu_{t+1}}\frac{\I_{\|\nabla g(\theta_{n})\|^{2}\ge D_{0}}}{S_{n-1}}\Expect[\|\nabla g(\theta_{n},\xi_{n})\|\cdot\|\nabla g(\theta_{n},\xi_{n})-\nabla g(\theta_{n})\||\mathscr{F}_{n-1}]\bigg]\\&\mathop{\le}^{(b)} \alpha_{0}\sum_{n=1}^{+\infty}\Expect\Bigg[\I_{\|\nabla g(\theta_{n})\|^{2}\ge D_{0}}\frac{\|\nabla g(\theta_{n},\xi_{n})\|^{2}}{S_{n-1}}\Bigg]\\&\mathop{\le}^{\text{\cref{lem_su}}}\alpha_{0} \left(\sigma_0 + \frac{\sigma_1}{D_0} \right)M,
\end{align*}
where $(a)$ uses the fact that $\sqrt{S_{n}}-\sqrt{S_{n-1}}\le \sqrt{S_{n}-S_{n-1}}=\|\nabla g(\theta_{n},\xi_{n})\|$, $(b)$ uses the similar results in \cref{asddsaasd,asddsa} which uses the affine noise variance condition  (\cref{ass_noise} \cref{ass_noise:i2}) such that \begin{align*}
& \I_{\|\nabla g(\theta_{n})\|^{2}\ge D_{0}}\Expect[\|\nabla g(\theta_{n},\xi_{n})\|\cdot\|\nabla g(\theta_{n},\xi_{n})-\nabla g(\theta_{n})\||\mathscr{F}_{n-1}]\\\le&\ \frac{1}{2}\I_{\|\nabla g(\theta_{n})\|^{2}\ge D_{0}}\left(\Expect[\|\nabla g(\theta_{n},\xi_{n})\|^{2}|\mathscr{F}_{n-1}]+\Expect[\|\nabla g(\theta_{n},\xi_{n})-\nabla g(\theta_{n})\|^{2}|\mathscr{F}_{n-1}] \right)\notag \\
\ \le&\ \I_{\|\nabla g(\theta_{n})\|^{2}\ge D_{0}}\|\nabla g(\theta_{n},\xi_{n})\|^{2}.
\end{align*}
We thus conclude that the series $\sum_{t=1}^{+\infty}\mathbb{E}(\Upsilon_{t,2})$ is bounded. Then, we apply \cref{lem_summation} and achieve that \(\sum_{t=1}^{+\infty}\Upsilon_{t,2} < +\infty \text{ a.s.}\) This induces the result that  \(\lim_{t \rightarrow +\infty} \Upsilon_{t,2} = 0 { a.s.}.\) Combining with the result $\lim_{t \rightarrow +\infty} \Upsilon_{t,1} = 0 { a.s.}$, we get that $\lim_{t\rightarrow+\infty}\Upsilon_{t}\le \lim_{t \rightarrow +\infty} \Upsilon_{t,1}+\lim_{t \rightarrow +\infty} \Upsilon_{t,2}=0\ \text{a.s.}$ Substituting the above results of $\Omega_t $ and $\Upsilon_{t}$ into  \cref{wxm_06}, we derive that
\[\lim_{t\rightarrow+\infty}\sup_{k\in[\mu_{t}, \theta_{t+1}]}\Bigg\|\sum_{n=\mu_{t}}^{k}\gamma_{n}U_{n}\Bigg\|=0\ \ \ \ \text{a.s.}\]
Based on \cref{wxm_19}, we now verify that \cref{pros:a2} in \cref{SA_p} holds. Moreover, by applying \cref{extra}$\sim$\cref{extra:i2}, we confirm that \cref{pros:a3} in \cref{SA_p} is also satisfied. Hence, by \cref{SA_p}, the theorem follows. 
\end{proof}

\subsection{Mean-Square Convergence for AdaGrad-Norm}\label{sec:mean:convergence}
Furthermore, based on the stability of the loss function $g(\theta_n)$ in \cref{stable} and the almost sure convergence in \cref{convergence_1}, it is straightforward to achieve mean-square convergence for AdaGrad-Norm.
\begin{thm}\label{convergence_2}
Consider the AdaGrad-Norm algorithm shown in  \cref{AdaGrad_Norm}. If \cref{ass_g_poi,ass_noise,extra} hold, then for any initial point $ \theta_{1}\in \mathbb{R}^{d}$ and $S_{0}>0,$  we have
\begin{equation}\nonumber\begin{aligned}
\lim_{n\rightarrow\infty}\Expect\|\nabla g(\theta_{n})\|^{2}=0.
\end{aligned}\end{equation}
\end{thm}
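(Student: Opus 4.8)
The plan is to upgrade the almost-sure convergence established in \cref{convergence_1} to convergence in mean via the dominated convergence theorem, using the strong stability result of \cref{stable} to furnish a uniform integrable envelope for the sequence $\{\|\nabla g(\theta_n)\|^2\}_{n\ge 1}$.

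First, I would invoke the standard consequence of $L$-smoothness together with the non-negativity of $g$ (\cref{ass_g_poi}~\ref{ass_g_poi:i},~\ref{ass_g_poi:i2}), namely the inequality $\|\nabla g(\theta)\|^2 \le 2L\,g(\theta)$ for all $\theta$ --- this is precisely \cref{loss_bound}, which follows by applying the descent inequality to the gradient step at $\theta$ and using $\inf g = 0$. Applied along the trajectory, it yields the pathwise pointwise domination
\begin{equation*}
\|\nabla g(\theta_n)\|^2 \le 2L\,g(\theta_n) \le 2L\sup_{m\ge 1} g(\theta_m) =: Y \qquad \text{for every } n\ge 1.
\end{equation*}

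Next, I would verify that the envelope $Y$ is integrable, and this is exactly where the \emph{strong} form of stability is essential: by \cref{stable} we have $\Expect[Y] = 2L\,\Expect[\sup_{m\ge 1} g(\theta_m)] < 2L\tilde{M} < +\infty$, so $Y \in L^1(\Omega)$. I would emphasize that the weaker statement $\sup_{m\ge 1} g(\theta_m) < +\infty$ almost surely would \emph{not} suffice here, since an almost-surely finite random variable need not be integrable; it is precisely the boundedness of the \emph{expected} supremum that provides a genuine $L^1$ majorant, as already flagged in the discussion following \cref{stable}.

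Finally, combining the uniform domination $0 \le \|\nabla g(\theta_n)\|^2 \le Y$ for all $n$, with $Y \in L^1(\Omega)$ and the almost-sure limit $\|\nabla g(\theta_n)\|^2 \to 0$ from \cref{convergence_1}, the dominated convergence theorem permits interchanging limit and expectation, so that $\lim_{n\to\infty}\Expect\|\nabla g(\theta_n)\|^2 = \Expect\big[\lim_{n\to\infty}\|\nabla g(\theta_n)\|^2\big] = 0$, as claimed. I do not anticipate a substantial obstacle in this final argument: the genuine difficulty has already been absorbed into the proofs of \cref{stable} and \cref{convergence_1}, and the only delicate point remaining is the recognition that mean-square convergence hinges on the $L^1$-bounded supremum rather than on mere almost-sure boundedness of the trajectory's function values.
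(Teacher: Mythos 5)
Your proposal is correct and follows essentially the same route as the paper's own proof: dominate $\|\nabla g(\theta_n)\|^2$ by $2L\sup_{m\ge 1} g(\theta_m)$ via \cref{loss_bound}, use \cref{stable} to certify that this envelope is integrable, and then apply Lebesgue's dominated convergence theorem together with the almost-sure convergence of \cref{convergence_1}. Your additional remark that the $L^1$-bounded supremum (rather than mere almost-sure finiteness) is the indispensable ingredient is precisely the point the paper itself emphasizes after \cref{stable}.
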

 \begin{proof}
By \cref{stable},
\[\Expect\Big[\sup_{n\ge 1}\|\nabla g(\theta_{n})\|^{2}\Big]   \mathop{\le}^{\text{\cref{loss_bound}}} 2 L\Expect\Big[\sup_{n\ge 1}g(\theta_{n})\Big]<+\infty.\]
Then, using the almost sure convergence from \cref{convergence_1} and \emph{Lebesgue's dominated convergence} theorem, we establish \(\lim_{n\rightarrow\infty}\Expect\|\nabla g(\theta_{n})\|^{2}=0.\) 
 \end{proof}
We are the first to establish the mean-square convergence of AdaGrad-Norm based on the stability result under milder conditions. In contrast, existing studies rely on the uniform boundedness of stochastic gradients or true gradients assumptions \citep{JMLR:v25:23-0576,mertikopoulos2020almost}. 

\begin{rem}(Almost-sure vs mean-square convergence)
As stated in the introduction, the almost sure convergence does not imply mean square convergence. To illustrate this concept, let us consider a sequence of random variables $\{\zeta_{n}\}_{n\ge 1},$ where $\pro[\zeta_{n}=0]=1-1/n^{2}$ and $\pro[\zeta_{n}=n^{2}]=1/n^{2}.$ According to \textit{the Borel-Cantelli lemma}, it follows that $\lim_{n\rightarrow+\infty}\zeta_{n}=0$ almost surely. However, it can be shown that $\Expect[\zeta_{n}]=1$ for all $n>0$ by simple calculations.
\end{rem}

\section{A Refined Non-Asymptotic Convergence Analysis of AdaGrad-Norm}\label{sec:nonasympt}
In this section, we present the non-asymptotic convergence rate of AdaGrad-Norm, which is measured by the expected averaged gradients $\frac{1}{T} \sum_{n=1}^{T} \E[\left\|\nabla g(\theta_n) \right\|^2]$. This measure is widely used in the analysis of SGD but is rarely investigated in adaptive methods. We examine this convergence rate under smooth and affine noise variance conditions, which is rather mild. 

A key step to achieve the expected rate of AdaGrad-Norm is to find an estimation of $\E[S_T]$. We first prepare the following two lemmas, which are important to deriving the convergence result. The proofs of the lemmas are deferred to \cref{appendix:add:proof}.
\begin{lem}\label{lem8}
Under \cref{ass_g_poi}~\ref{ass_g_poi:i}$\sim$\ref{ass_g_poi:i2} and  \cref{ass_noise}~\ref{ass_noise:i}$\sim$ \ref{ass_noise:i2}, for the AdaGrad-Norm algorithm we have
\begin{equation}\nonumber\begin{aligned}
\sum_{n=1}^{T}\Expect\bigg[\frac{\big\|\nabla g(\theta_{n})\big\|^{2}}{\sqrt{S_{n-1}}}\bigg] \leq \mathcal{O}(\ln T).
\end{aligned}\end{equation} 
\end{lem}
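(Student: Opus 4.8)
The plan is to telescope the one–step estimate of \cref{sufficient:lem}, convert the two $\Gamma$–type remainder sums into a logarithm of $S_T$ and an absolute constant via the series–integral comparison test, and then dispatch the logarithm in expectation with Jensen's inequality together with the a priori estimate $\Expect[S_T]=\mathcal{O}(T)$. The first observation is that the quantity to be bounded is exactly $\sum_{n=1}^{T}\Expect[\zeta(n)]$, where $\zeta(n)=\|\nabla g(\theta_{n})\|^{2}/\sqrt{S_{n-1}}$ is the term introduced in \cref{inequ:sufficient:decrease}; so it suffices to control the $\zeta(n)$–contribution generated by the sufficient decrease inequality.

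Summing \cref{sufficient:lem} over $n=1,\dots,T$ and rearranging gives
\begin{align*}
\frac{\alpha_0}{4}\sum_{n=1}^{T}\zeta(n)\le \hat g(\theta_1)-\hat g(\theta_{T+1})+C_{\Gamma,1}\sum_{n=1}^{T}\Gamma_n+C_{\Gamma,2}\sum_{n=1}^{T}\frac{\Gamma_n}{\sqrt{S_n}}+\alpha_0\sum_{n=1}^{T}\hat X_n.
\end{align*}
Because $g\ge 0$ and $\zeta\ge 0$ we have $\hat g\ge 0$, so I discard the $-\hat g(\theta_{T+1})$ term. Taking expectations removes the martingale part: since $\{\hat X_n,\mathscr F_n\}$ is a martingale difference sequence and each $\hat X_n$ is integrable for fixed $n\le T$ (the iterates satisfy $\|\theta_{n+1}-\theta_n\|\le\alpha_0$, so $\theta_n$ stays in a compact ball and $\nabla g$ is bounded there), one has $\Expect[\sum_{n=1}^{T}\hat X_n]=0$.

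I then bound the two remainder sums pathwise. Writing $\Gamma_n=\|\nabla g(\theta_n,\xi_n)\|^2/S_n=(S_n-S_{n-1})/S_n$ and using that $x\mapsto x^{-1}$ and $x\mapsto x^{-3/2}$ are decreasing, the series–integral comparison yields $\sum_{n=1}^{T}\Gamma_n\le\int_{S_0}^{S_T}x^{-1}\,\mathrm{d}x=\ln(S_T/S_0)$ and $\sum_{n=1}^{T}\Gamma_n/\sqrt{S_n}\le\int_{S_0}^{+\infty}x^{-3/2}\,\mathrm{d}x=2/\sqrt{S_0}$. The second sum is thus an absolute constant, and substituting both into the expectation of the displayed inequality gives
\begin{align*}
\frac{\alpha_0}{4}\sum_{n=1}^{T}\Expect[\zeta(n)]\le \hat g(\theta_1)+C_{\Gamma,1}\,\Expect\big[\ln(S_T/S_0)\big]+\frac{2C_{\Gamma,2}}{\sqrt{S_0}}.
\end{align*}

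The only remaining term is $\Expect[\ln(S_T/S_0)]$, and this is the crux. One cannot bound it pathwise by $\mathcal{O}(\ln T)$, because $S_T$ is random and may be large on low–probability events. The resolution is to exploit concavity: by Jensen's inequality $\Expect[\ln S_T]\le\ln\Expect[S_T]$, whence invoking $\Expect[S_T]=\mathcal{O}(T)$ (\cref{lem:st:informal}) gives $\Expect[\ln(S_T/S_0)]\le\ln(\Expect[S_T]/S_0)=\mathcal{O}(\ln T)$. Dividing through by $\alpha_0/4$ then yields $\sum_{n=1}^{T}\Expect[\zeta(n)]\le\mathcal{O}(\ln T)$, which is the claim. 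The main obstacle is precisely this last step: the argument hinges on having the linear–in–$T$ control of $\Expect[S_T]$ in hand, so the companion estimate must be proved independently — or jointly, through a self–bounding argument that feeds the affine–variance bound $\Expect[S_T]\le S_0+\sigma_0\sum_n\Expect\|\nabla g(\theta_n)\|^2+\sigma_1 T$ back into the descent inequality — before the logarithm can be tamed.
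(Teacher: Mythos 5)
Your skeleton coincides with the paper's own proof up to the last step: telescoping \cref{sufficient:lem}, discarding $-\hat g(\theta_{T+1})\le 0$, killing the martingale sum in expectation, and converting the two remainder sums into $\E[\ln (S_T/S_0)]$ and the constant $2C_{\Gamma,2}/\sqrt{S_0}$ is exactly what the paper does. The genuine gap is in how you tame $\E[\ln S_T]$. You invoke $\E[S_T]=\mathcal{O}(T)$, i.e.\ \cref{lem:st} — but in the paper that estimate is proved \emph{after} and \emph{by means of} the present lemma: the proof of \cref{lem:st} explicitly applies \cref{lem8} (and \cref{lem23}) to bound $\sum_{n=1}^{T}\E[\hat X_n]^2$ before it can solve for $\E[S_T]$. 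As written, your argument is therefore circular. Your fallback sketch — a "self-bounding" loop through $\E[S_T]\le S_0+\sigma_0\sum_n\E\|\nabla g(\theta_n)\|^2+\sigma_1 T$ — does not close either: the quantity your telescoped inequality controls is $\sum_n\E\big[\|\nabla g(\theta_n)\|^2/\sqrt{S_{n-1}}\big]$, which does not dominate $\sum_n\E\|\nabla g(\theta_n)\|^2$ (the reverse inequality would require multiplying by $\sqrt{S_T}$ inside the expectation, and Cauchy--Schwarz then introduces second-moment terms of the martingale that are precisely what \cref{lem8} and \cref{lem23} are needed to control). Bounding $\sum_n\E\|\nabla g(\theta_n)\|^2$ is the end goal of the whole section, not an available input.

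The repair is cheaper than the sharp bound you reached for: since all that is needed is $\E[\ln S_T]=\mathcal{O}(\ln T)$, \emph{any} polynomial-in-$T$ control of $S_T$ suffices, and such control is available without circularity. This is what the paper's auxiliary \cref{lem_S_{T}} supplies: weighting the sufficient-decrease inequality by $1/n^2$ and telescoping (the weights make every remainder term summable, so no knowledge of $S_T$ is required) shows that $\zeta:=\sqrt{S_0}+\sum_{n\ge 1}\|\nabla g(\theta_n,\xi_n)\|^2/n^2$ has $\E[\zeta]<+\infty$, together with the pathwise bound $\sqrt{S_T}\le(1+\zeta)T^2$. Then
\begin{align*}
\E[\ln S_T]\;\le\; 4\ln T+2\,\E[\ln(1+\zeta)]\;\le\; 4\ln T+2\,\E[\zeta]\;=\;4\ln T+\mathcal{O}(1),
\end{align*}
and your display closes with no appeal to \cref{lem:st}; note that Jensen is not even needed here, only $\ln(1+x)\le x$. (An even cruder alternative in the same spirit: $\Gamma_n\le 1$ in the descent inequality gives $\E[g(\theta_n)]=\mathcal{O}(n)$, hence $\E[S_T]=\mathcal{O}(T^2)$ by the affine-variance condition and \cref{loss_bound}, which again yields $\ln\E[S_T]=\mathcal{O}(\ln T)$.) With that substitution your proof is correct and matches the paper's.
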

\begin{lem}\label{lem23} 
Under \cref{ass_g_poi}~\ref{ass_g_poi:i}$\sim$\ref{ass_g_poi:i2} and  \cref{ass_noise}~\ref{ass_noise:i}$\sim$ \ref{ass_noise:i2}, for the AdaGrad-Norm algorithm we have
\begin{equation}\label{adagrad:123321}\begin{aligned}
\sum_{n=1}^{T}\Expect\Bigg[\frac{g(\theta_{n})\cdot\|\nabla g(\theta_{n})\|^{2}}{\sqrt{S_{n-1}}}\Bigg]=\mathcal{O}(\ln^{2} T).
\end{aligned}\end{equation} 
\end{lem}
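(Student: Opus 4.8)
The plan is to telescope the squared objective $\tfrac12 g(\theta_n)^2$; writing $g_n:=g(\theta_n)\ge0$, which is $\mathscr F_{n-1}$-measurable, this is the discrete analogue of $\tfrac{d}{dt}\tfrac12 g^2=g\,\dot g\approx -g\|\nabla g\|^2$, so the ``signal'' part already matches the quantity we must control. I would multiply the raw descent inequality \cref{inequ:smooth:inequality} by $g_n$ and use $g_n(g_{n+1}-g_n)=\tfrac12(g_{n+1}^2-g_n^2)-\tfrac12(g_{n+1}-g_n)^2$ to get
\begin{align*}
\alpha_0\,g_n\frac{\nabla g(\theta_n)^{\top}\nabla g(\theta_n,\xi_n)}{\sqrt{S_n}}\le\tfrac12\big(g_n^2-g_{n+1}^2\big)+\tfrac12\big(g_{n+1}-g_n\big)^2+\frac{L\alpha_0^2}{2}g_n\Gamma_n.
\end{align*}
Taking conditional expectations and replacing $\sqrt{S_n}$ by the predictable $\sqrt{S_{n-1}}$ exactly as in the derivation of \cref{inequ:sufficient:decrease}, the left side produces $\alpha_0\E[g_n\zeta(n)]$ minus a correction $\alpha_0\E[g_n\,\E[R_n\Lambda_n\mid\mathscr F_{n-1}]]$ controlled by the estimate \cref{adagrad:sufficient:inequ:2} used in \cref{sufficient:lem}. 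Summing over $n=1,\dots,T$ collapses the first bracket to at most $\tfrac12 g_1^2$, and every martingale-difference piece vanishes in expectation because $g_n$ is $\mathscr F_{n-1}$-measurable. After moving the $\tfrac12\E[g_n\zeta(n)]$ coming from the correction to the left, one is left with
\begin{align*}
\frac{\alpha_0}{2}\sum_{n=1}^{T}\E\big[g_n\zeta(n)\big]\le\tfrac12 g_1^2+\tfrac12\sum_{n=1}^{T}\E\big[(g_{n+1}-g_n)^2\big]+\mathcal O(1)\sum_{n=1}^{T}\E[g_n\Gamma_n],
\end{align*}
and since $g_n\zeta(n)=g_n\|\nabla g(\theta_n)\|^2/\sqrt{S_{n-1}}$ is exactly the summand in the statement, it remains to bound the two error sums on the right.

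The quadratic variation is the easy one: by $L$-smoothness $|g_{n+1}-g_n|\le\alpha_0\|\nabla g(\theta_n)\|\,\|\nabla g(\theta_n,\xi_n)\|/\sqrt{S_n}+\tfrac{L\alpha_0^2}{2}\Gamma_n$, so with $\|\nabla g(\theta_n)\|^2\le 2Lg_n$ (\cref{loss_bound}) and $\Gamma_n\le1$ one gets $(g_{n+1}-g_n)^2\lesssim g_n\Gamma_n+\Gamma_n$, and the correction errors are of the same form. Using the elementary $\Gamma_n\le\ln S_n-\ln S_{n-1}$ together with Jensen gives $\sum_n\E[\Gamma_n]=\E[\ln(S_T/S_0)]=\mathcal O(\ln T)$, while summing the unweighted sufficient decrease \cref{sufficient:lem} (dropping the nonpositive $\zeta$-terms) yields the a priori growth $\E[g_n]\le\E[\hat g(\theta_n)]=\mathcal O(\ln n)$, uniformly for $n\le T$, which supplies the second logarithm. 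Everything thus reduces to the single estimate $\sum_{n=1}^{T}\E[g_n\Gamma_n]=\mathcal O(\ln^2 T)$.

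I expect this last estimate to be the main obstacle. Heuristically the weight is $\mathcal O(\ln T)$ in mean and $\sum_n\Gamma_n=\mathcal O(\ln T)$, so the product is $\mathcal O(\ln^2 T)$; the difficulty is that $g_n$ and $\Gamma_n$ are strongly correlated and $g_n$ is not almost-surely bounded, so the weight cannot simply be pulled out. I would decouple them by feeding in the telescoped bound $g_n\le\hat g(\theta_n)\le\hat g(\theta_1)+C_{\Gamma,1}\sum_{k<n}\Gamma_k+C_{\Gamma,2}\sum_{k<n}\Gamma_k/\sqrt{S_k}+\alpha_0\sum_{k<n}\hat X_k$ from \cref{sufficient:lem}: the constant and the $\sum\Gamma_k/\sqrt{S_k}=\mathcal O(1)$ pieces are harmless, the leading piece becomes the double sum $\sum_n\E[(\sum_{k<n}\Gamma_k)\Gamma_n]\le\tfrac12\E[(\sum_n\Gamma_n)^2]\le\tfrac12\E[(\ln(S_T/S_0))^2]$, and there remains a martingale cross-term $\sum_n\E[(\sum_{k<n}\hat X_k)\Gamma_n]$. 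The lemma therefore hinges on the second-moment control $\E[(\ln S_T)^2]=\mathcal O(\ln^2 T)$ plus this cross-term, and this is genuinely delicate: $\E[S_T]=\mathcal O(T\ln T)$ alone does not force $\E[(\ln S_T)^2]$ down to the $\ln^2 T$ scale. I would instead run a self-bounding, Gronwall-type recursion on the partial sums $\E[(\sum_{k\le n}\Gamma_k)^2]$, using the conditional bound $\E[\Gamma_n\mid\mathscr F_{n-1}]\le\tfrac{\sigma_0}{\sqrt{S_0}}\zeta(n)+\sigma_1/S_{n-1}$ to split off a part summable at rate $\ln T$ by \cref{lem8}; the residual $\sum_n\E[1/S_{n-1}]$ must then be shown to be $\mathcal O(\ln T)$, i.e.\ the genuine linear-in-$n$ growth of $S_n$ rather than the vacuous $S_n\ge S_0$, and the cross-term is handled by Cauchy--Schwarz against the predictable quadratic variation of $\{\hat X_n\}$. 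Securing these two bounds is the technical heart; the rest is bookkeeping on \cref{lem8} and the a priori estimate $\E[g(\theta_n)]=\mathcal O(\ln n)$.
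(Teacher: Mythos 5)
Your opening move --- telescoping $\tfrac12 g^2$ so that $g_n\|\nabla g(\theta_n)\|^2/\sqrt{S_{n-1}}$ appears as the signal term, killing the martingale pieces because $g_n$ is $\mathscr F_{n-1}$-measurable --- is exactly the paper's strategy in its proof of \cref{lem23}, and your reduction to controlling $\sum_n\E[g_n\Gamma_n]$ correctly isolates where the difficulty lives. But the two devices you propose for the hard step both have genuine problems. First, your Gronwall-type recursion hinges on showing $\sum_{n\le T}\E[1/S_{n-1}]=\mathcal O(\ln T)$, which you yourself gloss as requiring ``genuine linear-in-$n$ growth of $S_n$.'' That is not provable here: \cref{ass_noise}~\ref{ass_noise:i2} is only an \emph{upper} bound on the noise, so nothing prevents the oracle from being noiseless with rapidly decaying gradients, in which case $S_n$ stays bounded and $\sum_{n\le T}\E[1/S_{n-1}]=\Theta(T)$. (The lemma's conclusion survives in that scenario; your route to it does not.) Incidentally, the sub-goal you call genuinely delicate, $\E[(\ln S_T)^2]=\mathcal O(\ln^2 T)$, is actually the easy part once one has $\E[S_T]\le\mathrm{poly}(T)$: the map $x\mapsto\ln^2(x+e)$ is concave on $[0,\infty)$, so Jensen gives it directly --- no recursion needed. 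Second, your cross-term plan (``Cauchy--Schwarz against the predictable quadratic variation of $\{\hat X_n\}$'') is circular as stated: $\E[\hat X_k^2]\lesssim\E[\|\nabla g(\theta_k)\|^2\Gamma_k]\le 2L\,\E[g_k\Gamma_k]$, i.e.\ the quadratic variation is controlled by the very quantity you are trying to bound. This can be repaired by summation by parts plus a self-bounding inequality of the form $Q_T\lesssim\ln^2T+\ln T\sqrt{Q_T}$ for $Q_T:=\sum_{n\le T}\E[g_n\Gamma_n]$, but you neither flag the circularity nor supply the fix.

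The paper breaks both deadlocks with two ingredients your plan never establishes. The first is \cref{lem_S_{T}}: the \emph{pointwise} bound $S_T\le(1+\zeta)^2T^4$ a.s.\ with $\E[\zeta]=\mathcal O(1)$, so that $\ln S_T\le 4\ln T+2\ln(1+\zeta)$ decomposes into a deterministic $\mathcal O(\ln T)$ part plus a random variable with good moments; this is what lets the paper decouple $\sup_{n\le T}g(\theta_n)$ from $\ln S_T$ via H\"older without any lower bound on the growth of $S_n$. The second is a bootstrap (truncation by the stopping time $\tau^{(\lambda)}=\min\{n:g^2(\theta_n)>\lambda\}$, Markov's inequality, and integration of the tail) yielding the \emph{supremum} estimate $\E[\sup_{n\le T}g(\theta_n)]=\mathcal O(\ln T)$, which is strictly stronger than the per-iterate bound $\E[g_n]=\mathcal O(\ln n)$ you invoke and is what the paper feeds back into its analogue of your decoupling step. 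Without a substitute for these two pillars, your argument does not close.
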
 
We provide a more accurate estimation of $\E[S_T]$ in \cref{lem:st} than  that of \cite{wang2023convergence}, which only established $\Expect[\sqrt{S_{T}}] = \mathcal{O}(\sqrt{T})$. 
\begin{lem}\label{lem:st}
Consider AdaGrad-Norm in \cref{AdaGrad_Norm} and suppose that   \cref{ass_g_poi}~\ref{ass_g_poi:i}$\sim$\ref{ass_g_poi:i2} and  \cref{ass_noise}~\ref{ass_noise:i}$\sim$ \ref{ass_noise:i2} hold, then for any initial point $\theta_{1}\in \mathbb{R}^{d}$ and $S_{0}>0,$  we have
\begin{align}
\E[S_T] = \mathcal{O}\left(T\right).
\end{align}
\end{lem}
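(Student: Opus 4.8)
The plan is to reduce the claim to a bound on $\sum_{n=1}^{T}\E\|\nabla g(\theta_n)\|^2$ and then close a self-referential inequality in $\E[S_T]$. Since the update gives $S_T = S_0 + \sum_{n=1}^{T}\|\nabla g(\theta_n,\xi_n)\|^2$, taking expectations and applying the affine noise variance condition (\cref{ass_noise}~\ref{ass_noise:i2}) yields $\E[S_T]\le S_0+\sigma_1 T+\sigma_0\sum_{n=1}^{T}\E\|\nabla g(\theta_n)\|^2$. Hence it suffices to control $\sum_{n=1}^{T}\E\|\nabla g(\theta_n)\|^2$; in fact the argument will produce the stronger estimate $\mathcal{O}\big(\sqrt T\,\ln T\,(\E[S_T])^{1/4}\big)$, which is exactly the sublinear-in-$\E[S_T]$ form needed to absorb itself back into the linear term.

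The key step is a $g$-weighted Cauchy--Schwarz decomposition designed to invoke \cref{lem23}. Writing $\|\nabla g(\theta_n)\|^2 = \frac{\sqrt{g(\theta_n)}\,\|\nabla g(\theta_n)\|}{S_{n-1}^{1/4}}\cdot \frac{\|\nabla g(\theta_n)\|\,S_{n-1}^{1/4}}{\sqrt{g(\theta_n)}}$ (terms with $g(\theta_n)=0$ drop out, since nonnegativity of $g$ forces $\nabla g(\theta_n)=0$ there), Cauchy--Schwarz over $n$ gives
\[
\sum_{n=1}^{T}\|\nabla g(\theta_n)\|^2 \le \Big(\underbrace{\textstyle\sum_{n=1}^{T}\tfrac{g(\theta_n)\|\nabla g(\theta_n)\|^2}{\sqrt{S_{n-1}}}}_{R_T}\Big)^{1/2}\Big(\textstyle\sum_{n=1}^{T}\tfrac{\|\nabla g(\theta_n)\|^2\sqrt{S_{n-1}}}{g(\theta_n)}\Big)^{1/2}.
\]
For the second factor I use the standard smoothness bound $\|\nabla g(\theta_n)\|^2\le 2L\,g(\theta_n)$ (\cref{loss_bound}), so that $\frac{\|\nabla g(\theta_n)\|^2}{g(\theta_n)}\le 2L$, together with $\sqrt{S_{n-1}}\le\sqrt{S_T}$ and the crude count of $T$ terms, to bound it by $\sqrt{2LT}\,S_T^{1/4}$. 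This produces the pathwise inequality $\sum_{n}\|\nabla g(\theta_n)\|^2\le \sqrt{2LT}\,R_T^{1/2}S_T^{1/4}$.

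Taking expectations, Cauchy--Schwarz followed by Jensen gives $\E\big[R_T^{1/2}S_T^{1/4}\big]\le (\E R_T)^{1/2}(\E S_T)^{1/4}$, and \cref{lem23} supplies $\E R_T=\mathcal{O}(\ln^2 T)$, whence $\sum_{n=1}^{T}\E\|\nabla g(\theta_n)\|^2=\mathcal{O}(\sqrt T\,\ln T)\,(\E[S_T])^{1/4}$. Substituting into the first reduction yields the self-referential inequality $\E[S_T]\le S_0+\sigma_1 T+C\sqrt T\,\ln T\,(\E[S_T])^{1/4}$ for some constant $C$. I close it with Young's inequality at exponents $4/3$ and $4$: $C\sqrt T\,\ln T\,(\E S_T)^{1/4}\le \tfrac14\E[S_T]+\tfrac34\big(C\sqrt T\,\ln T\big)^{4/3}$, so that $\tfrac34\E[S_T]\le S_0+\sigma_1 T+\tfrac34 C^{4/3}T^{2/3}(\ln T)^{4/3}$, and since $T^{2/3}(\ln T)^{4/3}=o(T)$ this gives $\E[S_T]=\mathcal{O}(T)$.

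The main obstacle is arranging the estimate so that the intrinsic second-moment flavour of $S_T$ does not inflate the bound. A naive Cauchy--Schwarz on $\sum_n\|\nabla g(\theta_n)\|^2=\sum_n \tfrac{\|\nabla g(\theta_n)\|^2}{\sqrt{S_{n-1}}}\sqrt{S_{n-1}}$ using only \cref{lem8} leads to a term $\E[P_T^2]$ with $P_T:=\sum_n\tfrac{\|\nabla g(\theta_n)\|^2}{\sqrt{S_{n-1}}}$, whose control is delicate because \cref{lem8} only governs $\E[P_T]$. Routing the argument through the $g$-weighted sum $R_T$ sidesteps this: it replaces the troublesome second moment by the benign factors $S_T^{1/4}$ and $T^{1/2}$, both sublinear relative to $\E[S_T]$ once the self-referential inequality is formed. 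The genuinely heavy lifting is therefore deferred to \cref{lem23} (the $\mathcal{O}(\ln^2 T)$ bound), whose proof rests on the sufficient-decrease inequality of \cref{sufficient:lem} and on careful bookkeeping of the martingale and $\Gamma_n$ terms.
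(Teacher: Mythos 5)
Your proof is correct, but it takes a genuinely different route from the paper's. The paper re-telescopes the sufficient-decrease inequality of \cref{sufficient:lem}, multiplies through by $\sqrt{S_T}$, and must then control the martingale contribution $\E\big[\sqrt{S_T}\sum_{n=1}^{T}\hat{X}_n\big]$ via Cauchy--Schwarz together with the second-moment identity $\E\big[\sum_{n=1}^{T}\hat{X}_n\big]^2=\sum_{n=1}^{T}\E[\hat{X}_n^2]$, a computation that consumes both \cref{lem8} and \cref{lem23}; this produces a self-referential inequality of the form $\E[S_T]\le \mathcal{O}(\ln T)\sqrt{\E[S_T]}+\mathcal{O}\big(\sqrt{\E[S_T]\ln \E[S_T]}\big)+\sigma_1 T+\ldots$ which is then solved implicitly. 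You bypass the martingale bookkeeping entirely with the pathwise weighted Cauchy--Schwarz split $\|\nabla g(\theta_n)\|^2=\big(\sqrt{g}\,\|\nabla g\|\,S_{n-1}^{-1/4}\big)\cdot\big(\|\nabla g\|\,S_{n-1}^{1/4}/\sqrt{g}\big)$, bound the second factor by smoothness (\cref{loss_bound}) and the crude estimates $S_{n-1}\le S_T$ and $T$ terms, and defer all the heavy lifting to \cref{lem23} alone; the resulting self-referential inequality has the benign exponent $1/4$ and closes with a one-line Young's inequality rather than an implicit solve. Your handling of the $g(\theta_n)=0$ terms is also sound, since \cref{loss_bound} forces $\nabla g(\theta_n)=0$ there, and your expectation step $\E[R_T^{1/2}S_T^{1/4}]\le(\E[R_T])^{1/2}(\E[S_T])^{1/4}$ is a valid Cauchy--Schwarz-plus-Jensen chain. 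What your route buys is economy: a single lemma input and no martingale analysis in the main argument. What it loses is reusability: the paper's intermediate inequality doubles as the engine for \cref{convergence_rate_0}, yielding $\sum_{n=1}^{T}\E\|\nabla g(\theta_n)\|^2=\mathcal{O}(\sqrt{T}\ln T)$ once $\E[S_T]=\mathcal{O}(T)$ is known, whereas your bound $\mathcal{O}(\sqrt{T}\ln T)\,(\E[S_T])^{1/4}$ only recovers $\mathcal{O}(T^{3/4}\ln T)$, i.e.\ a $T^{-1/4}\ln T$ average rate, so the paper's derivation would still be needed for the near-optimal rate theorem. One shared and harmless implicit step in both arguments: rearranging the self-referential inequality (in your case, subtracting $\tfrac14\E[S_T]$ from both sides) requires $\E[S_T]<\infty$ for each fixed $T$, which follows from $\|\theta_{n+1}-\theta_n\|\le\alpha_0$, smoothness, and the affine noise condition.
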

\begin{proof}
Recall the sufficient decrease inequality in \cref{sufficient:lem} and telescope the indices \( n \) from 1 to \( T \). We obtain
\begin{align}\label{inequ:ST:decrease}
\frac{\alpha_{0}}{4}\cdot\sum_{n=1}^{T}\zeta(n) \le&\ \hat{g}(\theta_{1})+\Big(\frac{\alpha_{0}\sigma_{1}}{2\sqrt{S_{0}}}+\frac{ L\alpha_{0}^{2}}{2}\Big)\cdot\sum_{n=1}^{T}\Gamma_n   \notag \\&+\Big( L^{2}\alpha_{0}^{3}\sigma_{0}^{2}+\frac{ L^{2}\alpha_{0}^{3}\sigma_{0}}{2}\Big)\sum_{n=1}^{T}\frac{\|\nabla g(\theta_{n},\xi_{n})\|^{2}}{S_{n}^{\frac{3}{2}}} +\alpha_0\sum_{n=1}^{T}\hat{X}_{n}.
\end{align}
Note that $S_T \geq S_{n-1}$ for all $n \geq [1, T]$. We have
\begin{align}\label{inequ:sum:grad:sn:1}
&\sum_{n=1}^{T}\frac{\|\nabla g(\theta_{n})\|^{2}}{\sqrt{S_{T}}} \leq \sum_{n=1}^{T}\frac{\|\nabla g(\theta_{n})\|^{2}}{\sqrt{S_{n-1}}} ,\notag \\& \sum_{n=1}^{T} \Gamma_n  =  \sum_{n=1}^{T} \frac{\|\nabla g(\theta_{n},\xi_{n})\|^{2}}{S_{n}} \leq \int_{S_{0}}^{S_{T}}\frac{1}{x}\text{d}x \leq \ln (S_T/S_0), \notag \\&\sum_{n=1}^{T}\frac{\|\nabla g(\theta_{n},\xi_{n})\|^{2}}{S^{\frac{3}{2}}_{n}}  \leq \int_{S_{0}}^{+\infty}\frac{1}{x^{\frac{3}{2}}}=\frac{2}{\sqrt{S_{0}}}.
\end{align}
Applying the above results and dividing $\alpha_0/(4\sqrt{S_T})$ over \cref{inequ:ST:decrease} and taking the mathematical expectation on both sides of the above inequality  give
\begin{align}\label{inequ:norm:grad:sum}
\sum_{n=1}^{T}\Expect\|\nabla g(\theta_{n})\|^{2}&\le \bigg(\frac{4g(\theta_{1})}{\alpha_0}+\frac{2\sigma_{0}\|\nabla g(\theta_{1})\|^{2}}{\sqrt{S_{0}}}+\frac{4 L^{2}\alpha_{0}^{2}\sigma_{0}}{\sqrt{S_{0}}}\Big(2\sigma_{0}+1\Big) - \ln(S_0)\bigg)\E\left(\sqrt{S_{T}} \right)\notag \\& +2\Big(\frac{\sigma_{1}}{\sqrt{S_{0}}}+ L\alpha_{0}\Big)\cdot \E\left(\sqrt{S_{T}}\ln (S_{T})\right)+ 4\Expect\bigg[\sqrt{S_{T}}\cdot\sum_{n=1}^{T}\hat{X}_{n}\bigg].
\end{align}
Because $f_1(x)=\sqrt{x}, f_2(x)=\sqrt{x}\ln(x)$ are concave functions, by \emph{Jensen's inequality}, we have 
\begin{align}
&\E\left(\sqrt{S_T} \right) \leq \sqrt{\E\left(S_T \right)}, \quad \E\left(\sqrt{S_T}\ln(S_T) \right) \leq \sqrt{\E\left(S_T \right)} \ln(\E(S_T)),  \label{inequ:ST}\\
& \Expect\bigg[\sqrt{S_{T}}\cdot\sum_{n=1}^{T}\hat{X}_{n}\bigg] \mathop{\leq}^{(a)}   \sqrt{\Expect[S_{T}]\cdot\Expect\bigg[\sum_{n=1}^{T}\hat{X}_{n}\bigg]^{2}},\label{inequ:ST:M'n}
\end{align}
where $(a)$ follows from  \emph{Cauchy Schwartz inequality} for expectation $\E(XY)^2 \leq \E(X^2)\E(Y^2)$. Applying the above estimations in \cref{inequ:ST} and \cref{inequ:ST:M'n} into \cref{inequ:norm:grad:sum}, we have
\begin{equation}\label{zxc_t54}\begin{aligned}
\sum_{n=1}^{T}\Expect\|\nabla g(\theta_{n})\|^{2}&\le C_1 \sqrt{\E\left(S_T \right)} + C_2\sqrt{\E\left(S_T \right)} \ln(\E(S_T)) +\sqrt{\Expect[S_{T}]\cdot\Expect\bigg[\sum_{n=1}^{T}\hat{X}_{n}\bigg]^{2}},
\end{aligned}\end{equation}
where $C_1 = \frac{4g(\theta_{1})}{\alpha_0}+\frac{2\sigma_{0}\|\nabla g(\theta_{1})\|^{2}}{\sqrt{S_{0}}}+\frac{4 L^{2}\alpha_{0}^{2}\sigma_{0}}{\sqrt{S_{0}}}\Big(2\sigma_{0}+1\Big) - \ln(S_0)$ and $C_2 = 2\Big(\frac{\sigma_{1}}{\sqrt{S_{0}}}+ L\alpha_{0}\Big)$.

Now we estimate the term \(\Expect\big[\sum_{n=1}^{T}\hat{X}_{n}\big]^{2}\) in \cref{zxc_t54}. Since  $\left\lbrace \hat{X}_{n},\mathscr{F}_{n}\right\rbrace_{n}^{+\infty}$ is a martingale difference sequence, that is $\forall\ T\ge 1,$ there is
$\Expect\bigg[\sum_{n=1}^{T}\hat{X}_{n}\bigg]^{2}=\sum_{n=1}^{T}\Expect[\hat{X}_{n}]^{2},$ by recalling the definition of $\hat{X}_{n}$ in \cref{sufficient:lem}, we have
\begin{equation}\nonumber\begin{aligned}
\sum_{n=1}^{T}\Expect[\hat{X}_{n}]^{2} \leq&\ 2 \sum_{n=1}^{T}\Expect X_{n}^{2} + 2 \sum_{n=1}^{T}\Expect V_{n}^{2} \notag \\
\le&\ 2\sum_{n=1}^{T}\Expect\bigg[\frac{\|\nabla g(\theta_{n})\|^{2}\cdot\|\nabla g(\theta_{n},\xi_{n})\|^{2}}{S_{n}}\bigg]+\frac{2\alpha_{0}^{2}\sigma_{1}^{2}}{4S_{0}}\sum_{n=1}^{T}\Expect\bigg[\Gamma_n^4\bigg]\notag\\& + \frac{\sigma_0^2}{2}\sum_{n=1}^{T}\Expect\left[\zeta(n)^2 \Lambda_n^4\right]\\
\mathop{\leq}^{(a)} &\ 2\sum_{n=1}^{T}\Expect\bigg[\frac{\|\nabla g(\theta_{n})\|^{2}\cdot\|\nabla g(\theta_{n},\xi_{n})\|^{2}}{S_{n-1}}\bigg]+\frac{\alpha_{0}^{2}\sigma_{1}^{2}}{2S_{0}}\sum_{n=1}^{T}\Expect\bigg[\Gamma_n\bigg]\notag\\& + \frac{\sigma_0^2}{2}\sum_{n=1}^{T}\Expect\left[\zeta(n)^2\right]   \notag \\
\mathop{\le}^{(b)} &\ 2\sigma_{1}\sum_{n=1}^{T}\Expect\bigg[\frac{\|\nabla g(\theta_{n})\|^{2}}{S_{n-1}}\bigg]+4\sigma_{0} L\sum_{n=1}^{T}\E\left(\frac{g(\theta_{n})\|\nabla g(\theta_{n})\|^{2}}{S_{n-1}}\right)\notag\\&+\frac{\alpha_{0}^{2}\sigma_{1}^{2}}{2S_{0}}\Expect[\ln( S_{T} / S_0)]  + \sigma_0^2 L\sum_{n=1}^{T}\E\left(\frac{g(\theta_{n})\|\nabla g(\theta_{n})\|^{2}}{S_{n-1}}\right) ,
\end{aligned}\end{equation}
where $(a)$ follows from the fact that $S_n \geq S_{n-1}$ and $\Lambda_n \leq \Gamma_n  \leq 1$, $(b)$ uses the affine noise variance condition of $\nabla g(\theta_{n},\xi_{n})$ and \cref{loss_bound}, i.e.
\[\Expect[\|\nabla g(\theta_{n},\xi_{n})\|^{2}|\mathscr{F}_{n-1}]\le \sigma_{0}\|\nabla g(\theta_{n})\|^{2}+\sigma_{1}\ \text{and}\ \|\nabla g(\theta_{n})\|^{2}\le 2 Lg(\theta_{n})\ (\text{\cref{loss_bound}}),\]
and the last two terms can be estimated as
\begin{align}
\sum_{n=1}^{T}\Expect\bigg[\Gamma_n\bigg] &  = \Expect \left[\sum_{n=1}^{T} \frac{\left\|\nabla g(\theta_n; \xi_n)\right\|^2}{S_n}\right] = \Expect \left[\int_{S_0}^{S_T} \frac{dx}{x} \right] = \E \left[\ln(S_T/S_0) \right] \\&\leq \ln \E\left[S_T\right]- \ln(S_0), \notag \\
\Expect\left[\zeta(n)^2\right]  &  = \Expect \left[\frac{\left\|\nabla g(\theta_n)\right\|^4}{S_{n-1}}\right] \leq 2 L\E\left[\frac{g(\theta_{n})\|\nabla g(\theta_{n})\|^{2}}{S_{n-1}}\right].
\end{align}
Applying \cref{lem8} and \cref{lem23}, we have
\[\sum_{n=1}^{T}\bigg(\frac{\|\nabla g(\theta_{n})\|^{2}}{S_{n-1}}\bigg)\le \frac{1}{\sqrt{S_{0}}}\sum_{n=1}^{T}\bigg(\frac{\|\nabla g(\theta_{n})\|^{2}}{\sqrt{S_{n-1}}}\bigg)=\mathcal{O}(\ln T),\]\[ \sum_{n=1}^{T}\bigg(\frac{g(\theta_{n})\|\nabla g(\theta_{n})\|^{2}}{S_{n-1}}\bigg)\le \frac{1}{\sqrt{S_{0}}}\sum_{n=1}^{T}\bigg(\frac{g(\theta_{n})\|\nabla g(\theta_{n})\|^{2}}{\sqrt{S_{n-1}}}\bigg)=\mathcal{O}(\ln^{2} T),\] which induces that
\begin{equation}\nonumber\begin{aligned}
\sum_{n=1}^{T}\Expect[\hat{X}_{n}]^{2}\le \frac{\alpha_{0}^{2}\sigma_{1}^{2}}{2S_{0}}\ln\Expect[S_{T}]+\mathcal{O}(\ln^{2}T).
\end{aligned}\end{equation}
Substituting the above estimation of $\sum_{n=1}^{T}\Expect[\hat{X}_{n}]^{2}$ into \cref{zxc_t54}, we have
\begin{equation}\label{zxc_t54'}\begin{aligned}
\sum_{n=1}^{T}\Expect\|\nabla g(\theta_{n})\|^{2}&\le C_1\sqrt{\Expect{S_{T}}}+ \left(C_2 + \frac{\alpha_0\sigma_1}{\sqrt{2S_0}}\right)\sqrt{\Expect[S_{T}]\cdot \ln \Expect[S_{T}]}+\mathcal{O}({\ln T})\cdot\sqrt{\Expect S_{T}}.
\end{aligned}\end{equation} 
Note that by the affine noise variance condition, we have 
\begin{align*}
\E(S_T - S_0)=\E\left[\sum_{n=1}^{T}\left\|\nabla g(\theta_n,\xi_n) \right\|^2\right]= \sum_{n=1}^{T}\E\left[\left\|\nabla g(\theta_n,\xi_n) \right\|^2\right] \leq \sigma_0\sum_{n=1}^{T} \E\left[\left\|\nabla g(\theta_n) \right\|^2\right] + \sigma_1 T, 
\end{align*} 
that is 
\[\sum_{n=1}^{T}\Expect\|\nabla g(\theta_{n})\|^{2}\ge \frac{1}{\sigma_{0}}\Expect[S_{T}]-\frac{\sigma_{1}}{\sigma_{0}}T-\frac{S_{0}}{\sigma_{0}}.\]
Combing the inequality with \cref{zxc_t54'} gives
\begin{equation}\nonumber\begin{aligned}
\Expect[S_{T}]\le \sigma_0C_1\sqrt{\Expect{S_{T}}}+ \sigma_0\left(C_2 + \frac{\alpha_0\sigma_1}{\sqrt{2S_0}}\right)\sqrt{\Expect[S_{T}]\cdot \ln \Expect[S_{T}]}+\mathcal{O}({\ln T})\cdot\sqrt{\Expect S_{T}}+ \sigma_1 T.
\end{aligned}\end{equation}
By treating $\E[S_T]$ as the variable of a function, to estimate $\E[S_T]$ is equivalent to solve
\begin{align}
x \leq \sigma_0C_1\sqrt{x}+ \sigma_0\left(C_2 + \frac{\alpha_0\sigma_1}{\sqrt{2S_0}}\right)\sqrt{x\cdot \ln(x)}+\mathcal{O}({\ln T})\cdot\sqrt{x}+ \sigma_1 T
\end{align}
for any $T \geq 1$. This concludes  
$$\Expect[S_{T}] \leq \mathcal{O}(T), $$ 
where the hidden term of $\mathcal{O}$  depends only on $\theta_{1}$, $S_{0}$, $\alpha_{0}$, $ L$,$\sigma_{0},$ and $\sigma_{1}.$  
\end{proof}

\begin{thm}\label{convergence_rate_0}
Under \cref{ass_g_poi}~\ref{ass_g_poi:i}$\sim$\ref{ass_g_poi:i2} and  \cref{ass_noise}~\ref{ass_noise:i}$\sim$ \ref{ass_noise:i2}, consider the sequence $\{\theta_{n}\}$ generated by AdaGrad-Norm. For any $\theta_{1}\in \mathbb{R}^{d}$ and $S_{0}>0,$ we have
$$\frac{1}{T}\sum_{n=1}^{T}{\Expect\big\|\nabla g(\theta_{n})\big\|^{2}} \leq \mathcal{O}\bigg(\frac{\ln T}{\sqrt{T}}\bigg),\ \ \text{and}\ \ \min_{1\le n\le T}\Expect\big[\|\nabla g(\theta_{n})\|^{2}\big] \leq \mathcal{O}\bigg(\frac{\ln T}{\sqrt{T}}\bigg).$$ 
\end{thm}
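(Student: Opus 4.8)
The plan is to read the theorem off from the machinery already assembled for \cref{lem:st}, since the only genuinely new input needed is the sharp estimate $\Expect[S_T]=\mathcal{O}(T)$. The starting point is the telescoped decrease bound \eqref{zxc_t54'} obtained inside the proof of \cref{lem:st},
\[
\sum_{n=1}^{T}\Expect\|\nabla g(\theta_{n})\|^{2}\le C_1\sqrt{\Expect[S_{T}]}+ \Big(C_2 + \frac{\alpha_0\sigma_1}{\sqrt{2S_0}}\Big)\sqrt{\Expect[S_{T}]\cdot \ln \Expect[S_{T}]}+\mathcal{O}(\ln T)\cdot\sqrt{\Expect[S_{T}]},
\]
which already folds in the martingale energy $\sum_n\Expect[\hat X_n]^2$ through \cref{lem8} and \cref{lem23}. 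The key observation is that the right-hand side is monotone increasing in $\Expect[S_T]$ (up to the explicit $\ln T$ prefactor), so once $\Expect[S_T]$ is controlled, the whole sum is controlled.

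Next I would substitute $\Expect[S_T]=\mathcal{O}(T)$ from \cref{lem:st} and track which term dominates. This gives $\sqrt{\Expect[S_T]}=\mathcal{O}(\sqrt{T})$, $\sqrt{\Expect[S_T]\ln\Expect[S_T]}=\mathcal{O}(\sqrt{T\ln T})$, and the final term $\mathcal{O}(\ln T)\sqrt{\Expect[S_T]}=\mathcal{O}(\sqrt{T}\,\ln T)$, the last of which is largest. Collecting everything yields
\[
\sum_{n=1}^{T}\Expect\|\nabla g(\theta_{n})\|^{2}\le \mathcal{O}(\sqrt{T}\,\ln T).
\]
Dividing by $T$ produces the averaged-gradient rate $\frac{1}{T}\sum_{n=1}^{T}\Expect\|\nabla g(\theta_n)\|^2\le\mathcal{O}(\ln T/\sqrt{T})$, and the minimum bound is then immediate from $\min_{1\le n\le T}\Expect[\|\nabla g(\theta_n)\|^2]\le\frac{1}{T}\sum_{n=1}^{T}\Expect\|\nabla g(\theta_n)\|^2$, since a minimum never exceeds the average.

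At this stage there is essentially no remaining obstacle: the theorem is a short corollary of \cref{lem:st} combined with the telescoped sufficient decrease inequality. The real difficulty lives upstream in \cref{lem:st}, where $\Expect[S_T]$ must be bootstrapped out of a self-referential inequality of the form $x\le a\sqrt{x}+b\sqrt{x\ln x}+\mathcal{O}(\ln T)\sqrt{x}+\sigma_1 T$ — this requires \emph{Jensen's inequality} on the concave maps $\sqrt{x}$ and $\sqrt{x}\ln x$ and \emph{Cauchy–Schwarz} on the martingale term before one can solve for $x=\mathcal{O}(T)$. The one subtlety I would flag explicitly is the provenance of the logarithm in the final rate: it is inherited entirely from the $\sqrt{x\ln x}$ and $\mathcal{O}(\ln T)\sqrt{x}$ contributions, so the rate is near-optimal only up to this $\ln T$ factor, and eliminating it would demand sharper control of the martingale energy than \cref{lem8,lem23} currently supply. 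It is also worth emphasizing that the bound holds in expectation, which is strictly stronger than the high-probability guarantees of prior work.
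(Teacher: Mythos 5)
Your proposal is correct and follows essentially the same route as the paper: the paper's own proof of this theorem is precisely to substitute the estimate $\Expect[S_T]=\mathcal{O}(T)$ from \cref{lem:st} into \cref{zxc_t54'} and divide by $T$, with the $\min$ bound following trivially since a minimum never exceeds the average. Your accounting of the dominant $\mathcal{O}(\ln T)\sqrt{\Expect[S_T]}$ term and your remark that the real work lives upstream in \cref{lem:st} (via \cref{lem8,lem23} and the self-referential inequality for $\Expect[S_T]$) match the paper's structure exactly.
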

\begin{proof}
By applying the estimation of $\E(S_T)$ in \cref{lem:st} to \cref{zxc_t54'}, we have
\begin{align*}
\frac{1}{T}\sum_{n=1}^{T}\Expect\|\nabla g(\theta_{n})\|^{2}&\le \frac{C_1\sqrt{\sigma_1}}{\sqrt{T}} + \left(C_2 + \frac{\alpha_0\sigma_1}{\sqrt{2S_0}}\right)\frac{\sqrt{\sigma_1}\sqrt{\ln(T)}}{\sqrt{T}}+\frac{\mathcal{O}({\ln T})\sqrt{\sigma_1}}{\sqrt{T}}. 
\end{align*}  
\end{proof}
Note that in Theorem~\ref{convergence_rate_0}, we do not need \cref{ass_g_poi:i3} of \cref{ass_g_poi}  and \cref{ass_noise:i2} of \cref{ass_noise}.  This theorem demonstrates that under smoothness and affine noise variance conditions, AdaGrad-Norm can achieve a near-optimal rate, i.e., $\mathcal{O}\big(\frac{\ln T}{\sqrt{T}}\big).$ It is worth mentioning that the complexity results in Theorem~\ref{convergence_rate_0} is in the expectation sense, rather than in the high probability sense as presented in most of the prior works~\citep{li2020high,defossez2020simple,kavis2022high,liu2022convergence,faw2022power,wang2023convergence}. Our assumptions align with those in \cite{faw2022power,wang2023convergence}, while our result in Theorem~\ref{convergence_rate_0} is stronger compared to the results presented in these works (as denoted in the below corollary).
Meanwhile, we do not impose the restrictive requirement that $\|\nabla g(\theta_{n},\xi_{n})\|$ is almost-surely uniformly bounded, which was assumed in~\cite{ward2020adagrad}. 

Furthermore, Theorem \ref{convergence_rate_0} directly leads to the following stronger high-probability convergence rate result.

\begin{cor}\label{coro:rate}
Under \cref{ass_g_poi}~\ref{ass_g_poi:i}$\sim$\ref{ass_g_poi:i2} and  \cref{ass_noise}~\ref{ass_noise:i}$\sim$ \ref{ass_noise:i2}, consider the sequence $\{\theta_{n}\}$ generated by AdaGrad-Norm. For any initial point $\theta_{1}\in \mathbb{R}^{d}$ and $S_{0}>0,$ we have with probability at least $1-\delta,$
$$\frac{1}{T}\sum_{k=1}^{T}{\big\|\nabla g(\theta_{n})\big\|^{2}} \leq \mathcal{O}\bigg(\frac{1}{\delta}\cdot\frac{\ln T}{\sqrt{T}}\bigg),\ \ \text{and}\ \min_{1\le k\le n}\|\nabla g(\theta_{n})\|^{2} \leq \mathcal{O}\bigg(\frac{1}{\delta}\cdot\frac{\ln T}{\sqrt{T}}\bigg).$$ 
\end{cor}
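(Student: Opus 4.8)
The plan is to obtain the high-probability statement directly from the in-expectation bound of \cref{convergence_rate_0} through a single application of \emph{Markov's inequality}; this is precisely the step that produces the linear (rather than quadratic) dependence on $1/\delta$.

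First I would introduce the non-negative random variable
\[
Y_T := \frac{1}{T}\sum_{n=1}^{T}\big\|\nabla g(\theta_n)\big\|^2,
\]
which is non-negative almost surely, and recall from \cref{convergence_rate_0} that $\Expect[Y_T] \le \mathcal{O}(\ln T/\sqrt{T})$. Since $Y_T\ge 0$, Markov's inequality gives $\Pro(Y_T \ge a) \le \Expect[Y_T]/a$ for every $a>0$. Taking $a = \Expect[Y_T]/\delta$ yields $\Pro\big(Y_T \ge \Expect[Y_T]/\delta\big) \le \delta$, so on the complementary event, which occurs with probability at least $1-\delta$,
\[
Y_T < \frac{\Expect[Y_T]}{\delta} \le \mathcal{O}\Big(\frac{1}{\delta}\cdot\frac{\ln T}{\sqrt{T}}\Big),
\]
which is exactly the first claimed bound.

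For the second bound, I would simply note the deterministic inequality $\min_{1\le n\le T}\|\nabla g(\theta_n)\|^2 \le Y_T$, since the minimum of a finite collection never exceeds its average. Consequently, on the same high-probability event the minimum inherits the identical upper bound, and the proof is complete.

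I do not expect a genuine obstacle here: all the analytic difficulty is already absorbed into \cref{lem:st} and \cref{convergence_rate_0}, and the passage to high probability is routine. The point worth stressing is that applying Markov directly to an in-expectation guarantee is what delivers the $1/\delta$ scaling; the earlier high-probability analyses of \citep{faw2022power,wang2023convergence} incur a $1/\delta^2$ factor precisely because they lack a clean bound on $\Expect[Y_T]$ and must instead rely on heavier concentration arguments.
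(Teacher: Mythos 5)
Your proposal is correct and follows exactly the paper's own route: the paper proves \cref{coro:rate} in one line by applying Markov's inequality to the in-expectation bound of \cref{convergence_rate_0}, which is precisely your argument (with the additional, correct observation that the minimum is dominated by the average). No gap; your write-up simply makes the paper's one-line proof explicit.
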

\begin{proof}
Applying \emph{Markov's inequality} into \cref{convergence_rate_0} concludes the high probability convergence rate for AdaGrad-Norm. 
\end{proof}
The high-probability results in Corollary~\ref{coro:rate} have a linear dependence on $1/\delta,$ which is better than the quadratic dependence $1/\delta^{2}$ in prior works \citep{faw2022power,wang2023convergence}.

\section{Extension of the Analysis to RMSProp}\label{sec:AdaGrad:coordinate}
In this section, we will employ the proof techniques outlined in \cref{sec:asympt:result} to establish the asymptotic convergence of the coordinated RMSProp algorithm. RMSprop, proposed by~\cite{RMSProp}, is a widely recognized adaptive gradient method. It has attracted much attention with several follow-up studies \citep{xu2021convergence,shi2021rmsprop}. The per-dimensional formula of the coordinated RMSProp is provided below. 
\begin{align}\label{RMSProp}
   v_{n,i} &= \beta_n v_{n-1,i} + (1 - \beta_n) (\nabla_i g(\theta_{n},\xi_{n}))^2, \notag\\
   \theta_{n+1,i} &= \theta_{n,i} - \frac{\alpha_{n}}{\sqrt{v_{n,i}} + \epsilon}\nabla_i g(\theta_{n}, \xi_{n}),
\end{align}
where \( \epsilon > 0 \) is a small number, \( \beta_{n} \in (0,1) \) is a parameter, and \( \alpha_{n} \) is the global learning rate. Here \( \nabla_{i}g(\theta_{n}, \xi_{n}) \) and \( \nabla_{i} g(\theta_{n}) \) denote the \( i \)-th component of the stochastic gradient and the gradient, respectively. We use \( v_{n} := [v_{n,1}, \dots, v_{n,d}]^{\top} \) to denote the corresponding vectors where each component is $v_{n,i}$ (with the initial value \( v_{0} := [v, v, \ldots, v]^{\top} \)), where $v > 0$. In our analysis, we define the variable $\eta_{t,i} = \frac{\alpha_n}{\sqrt{v_{t,i}} + \epsilon}$ and the vector $\eta_t = [\eta_{t,1} \cdots \eta_{t,d}]^T$. We utilize the symbol \( \circ \) to represent the Hadamard product. Consequently, the RMSProp algorithm can be expressed in vector form as: $\theta_{n+1} = \theta_n - \eta_t \circ \nabla g(\theta_{n}, \xi_{n})$. 

The work in \cite{zou2019sufficient} demonstrated that the RMSProp algorithm can achieve a near-optimal convergence rate of $\mathcal{O}(\ln n/\sqrt{n})$ with high probability under the boundedness of the second-order moment of stochastic gradient and the parameter settings 
\begin{align}\label{near_optimal_para}
\alpha_{n}:=\frac{1}{\sqrt{n}}, \ \  \beta_{n}:=1-\frac{1}{n}\ (\forall\ n\ge 2)\ \text{with}\,\, \beta_{1} \in (0,1).
\end{align}
Furthermore, \cite{zou2019sufficient,chen2022towards} noted that RMSprop can be seen as a coordinate-based version of AdaGrad under these ``near-optimal'' parameter settings. Our analysis of AdaGrad-Norm naturally extends to RMSProp due to the structural similarities with coordinated AdaGrad under this parameter setting of ~\cref{near_optimal_para}.

To analyze RMSprop, we will need to assume variants of  \cref{ass_g_poi}~\ref{ass_g_poi:i3} and \cref{ass_noise}~\ref{ass_noise:i2}~\ref{ass_noise:i3} to be the coordinate-wise versions respectively. 
\begin{assumpt}\label{coordinate_0}
$g(\theta)$ is not asymptotically flat in each coordinate, i.e., there exists $\eta>0,$ for any $i\in[d],$ such that $\liminf_{\|\theta\|\rightarrow+\infty} (\nabla_{i} g(\theta))^{2}>\eta.$ 
\end{assumpt}
\begin{assumpt}\label{coordinate}
The stochastic gradient $\nabla g(\theta_n, \xi_n)$ satisfies 
\begin{enumerate}[label=\textnormal{(\roman*)},leftmargin=*]
\item\label{coordinate_i_1}  
Each coordinate of $\nabla g(\theta_{n},\xi_{n})$  satisfies that \(\Expect[\nabla g_{i}(\theta_{n},\xi_{n})^{2} \mid \mathscr{F}_{n-1}]  \leq \sigma_{0}(\nabla g_{i}(\theta_{n}))^{2} + \sigma_{1}.\)
 \item\label{coordinate_i_2}  For any $i\in[d],$ any $\theta_n$ satisfying $(\nabla_{i} g(\theta_{n}))^{2}<D_{0}$, we have $(\nabla_{i} g(\theta_{n},\xi_{n}))^{2}<D_{1}\ \ \text{a.s.}$ for some constants $D_0, D_1 >0$.
\end{enumerate}
\end{assumpt}
The coordinate-wise affine noise variance condition in ~\cref{coordinate}~\ref{coordinate_i_1} was adopted in \cite{wang2023convergence} when extending the high-probability result of AdaGrad-Norm to coordinated AdaGrad. 
Note that the coordinate affine noise variance condition is less stringent than the typical bounded variance assumption, i.e., \(\mathbb{E} [\|\nabla g(\theta_{n},\xi_{n}) - \nabla g(\theta_{n})\|^{2} \mid \mathscr{F}_{n-1}] < \sigma^{2}.\)

First, we establish the coordinate-wise sufficient descent lemma for RMSProp, as detailed in~\cref{sufficient:lem'},  with the complete proof provided in~\cref{proof:lem:rmsprop}. For simplicity, we define the Lyapunov function
\begin{align}\hat{g}(\theta_t) = g(\theta_{t})+\sum_{i=1}^{d}\zeta_{i}(t)+\frac{\sigma_{1}}{2}\sum_{i=1}^{d}\eta_{t-1,i},
\end{align}
where $\zeta_{i}(t) := (\nabla_{i}g(\theta_{t}))^{2}\eta_{t-1,i}$. In the analysis, we make the special handling for \(v_n\) and then introduce the auxiliary variables $S_{t,i}:=v+\sum_{k=1}^{t}(\nabla_{i}g(\theta_{k},\xi_{k}))^{2}$ and $S_{t} := \sum_{i=1}^{d}S_{t,i}$ to transform RMSProp into a form that aligns with AdaGrad, which allow us to leverage the similar analytical approach.  
\begin{lem}\label{sufficient:lem'}
Under \cref{ass_g_poi} \ref{ass_g_poi:i}$\sim$\ref{ass_g_poi:i2}, \cref{ass_noise} \ref{ass_noise:i}, \cref{coordinate} \ref{coordinate_i_1}, consider the sequence $\{\theta_{t}\}$ generated by RMSProp, we have the following sufficient decrease inequality. 
\begin{align}\label{jh_30}
\hat{g}(\theta_{t+1})-\hat{g}(\theta_t) & \le-\frac{3}{4}\sum_{i=1}^{d}\zeta_{i}(t)+\left(\frac{ L}{2}+\frac{(2\sigma_{0}+1) L^{2}}{\sqrt{v}}\right)\|\eta_{t}\circ \nabla g(\theta_{t},\xi_{t})\|^{2}+M_{t},
\end{align} 
where $M_{t}:=M_{t,1}+M_{t,2}+M_{t,3}$ is a martingale difference sequence with \( M_{t,1} \) defined in \cref{adam_0} and \( M_{t,2} \), \( M_{t,3} \) defined in  \cref{adam_2}.
\end{lem}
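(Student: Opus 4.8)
The plan is to reuse the template of the AdaGrad-Norm sufficient decrease inequality (\cref{sufficient:lem}) coordinate-by-coordinate, after first converting the RMSProp recursion into an AdaGrad-type accumulator. Concretely, under the near-optimal schedule $\alpha_n=1/\sqrt n$, $\beta_n=1-1/n$, the recursion reads $v_{n,i}=\frac{n-1}{n}v_{n-1,i}+\frac{1}{n}(\nabla_i g(\theta_n,\xi_n))^2$, so multiplying by $n$ shows that $n v_{n,i}$ accumulates the squared stochastic gradients and hence stays comparable to $S_{t,i}=v+\sum_{k\le t}(\nabla_i g(\theta_k,\xi_k))^2$. This is the ``special handling of $v_n$'': writing $\eta_{t,i}=\frac{1/\sqrt t}{\sqrt{v_{t,i}}+\epsilon}=\frac{1}{\sqrt{t v_{t,i}}+\epsilon\sqrt t}$ exhibits $\eta_{t,i}$ as the RMSProp analogue of $1/\sqrt{S_{t,i}}$, which licenses reusing the AdaGrad estimates with $S_{t,i}$ in place of $S_n$. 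Two structural facts I will extract first are the monotonicity $\eta_{t,i}\le \eta_{t-1,i}$ (the analogue of $S_n\ge S_{n-1}$) and the floor $S_{t,i}\ge v$.

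First I would apply $L$-smoothness to the update $\theta_{t+1}=\theta_t-\eta_t\circ\nabla g(\theta_t,\xi_t)$, yielding
$$g(\theta_{t+1})-g(\theta_t)\le -\sum_{i=1}^{d}\eta_{t,i}\nabla_i g(\theta_t)\nabla_i g(\theta_t,\xi_t)+\frac{L}{2}\|\eta_t\circ\nabla g(\theta_t,\xi_t)\|^2.$$
Then, mirroring the decomposition leading to \eqref{inequ:sufficient:decrease}, I would split each cross term by replacing the $\mathscr{F}_{t-1}$-non-measurable $\eta_{t,i}$ with the measurable $\eta_{t-1,i}$, via $-\eta_{t,i}\nabla_i g\,\nabla_i g(\xi)=-\eta_{t-1,i}\nabla_i g\,\nabla_i g(\xi)+(\eta_{t-1,i}-\eta_{t,i})\nabla_i g\,\nabla_i g(\xi)$. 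Isolating a martingale difference $M_{t,1}$ (conditional expectation minus realization) makes the conditional mean of the first piece equal to $-\zeta_i(t)=-\eta_{t-1,i}(\nabla_i g(\theta_t))^2$. The correction piece $(\eta_{t-1,i}-\eta_{t,i})\nabla_i g\,\nabla_i g(\xi)$ is the coordinate analogue of $R_n\Lambda_n$; I would bound it by Cauchy--Schwarz and the coordinate-wise affine noise variance (\cref{coordinate}~\ref{coordinate_i_1}), with the splitting constant tuned so that only $\tfrac14\sum_i\zeta_i(t)$ is returned, leaving the clean descent at $-\tfrac34\sum_i\zeta_i(t)$, and peeling off a second martingale difference $M_{t,2}$.

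The remaining work is the Lyapunov bookkeeping. The $\sum_i\zeta_i(t)$ correction inside $\hat g$ telescopes, producing the difference $\zeta_i(t)-\zeta_i(t+1)$; estimating $\zeta_i(t+1)-\zeta_i(t)$ requires controlling $(\nabla_i g(\theta_{t+1}))^2-(\nabla_i g(\theta_t))^2$ through smoothness exactly as in \eqref{grad:norm:diff}, which is where the factor $\frac{(2\sigma_0+1)L^2}{\sqrt v}$ and the third martingale piece $M_{t,3}$ appear, while the $\frac{\sigma_1}{2}\sum_i\eta_{t-1,i}$ term in $\hat g$ absorbs the $\sigma_1$-part of the affine-noise bound. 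Using $S_{t,i}\ge v$ to bound $1/\sqrt{S_{t,i}}\le 1/\sqrt v$ then collapses all the step-size-cubed remainders into the $\|\eta_t\circ\nabla g(\theta_t,\xi_t)\|^2$ term with the stated coefficient $\frac{L}{2}+\frac{(2\sigma_0+1)L^2}{\sqrt v}$, and setting $M_t=M_{t,1}+M_{t,2}+M_{t,3}$ gives \eqref{jh_30}.

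The main obstacle I expect is the coordinate-wise correction step: obtaining a clean analogue of the identity \eqref{inequ:sn:minus:sn1} for $\eta_{t-1,i}-\eta_{t,i}$ in the presence of the additive $\epsilon$ and the $\alpha_n=1/\sqrt n$ rescaling, so that $\eta_{t-1,i}-\eta_{t,i}$ is genuinely controlled by the squared step size and a fraction of $\zeta_i(t)$ small enough to keep the surviving descent coefficient at $-\tfrac34$. A secondary but essential piece of care is verifying that each of $M_{t,1},M_{t,2},M_{t,3}$ is indeed an $\mathscr{F}_t$-measurable martingale difference, which follows since each is defined as a conditional-expectation-minus-realization, but must be checked after the $\eta_{t,i}\to\eta_{t-1,i}$ substitution and the Cauchy--Schwarz splitting have scrambled the terms.
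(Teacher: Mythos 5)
Your proposal follows essentially the same route as the paper's proof: the $L$-smoothness descent step, the $\eta_{t,i}\to\eta_{t-1,i}$ substitution extracting $M_{t,1}$ and the clean term $-\sum_i\zeta_i(t)$, Cauchy--Schwarz plus the coordinate-wise affine noise condition applied to the correction term $\sum_i\Delta_{t,i}\nabla_i g(\theta_t)\nabla_i g(\theta_t,\xi_t)$ extracting $M_{t,2}$ and $M_{t,3}$, and the telescoping/smoothness bookkeeping (the analogue of \cref{grad:norm:diff}) absorbed by the Lyapunov function $\hat g$, with $\eta_{t,i}\le 1/\sqrt v$ collapsing the remainders into the stated coefficient. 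The only cosmetic differences are that the paper extracts $M_{t,3}$ when de-conditioning $\Delta_{t,i}$ inside the correction-term estimate (before the telescoping) rather than in the gradient-difference step, and that the obstacle you flag never arises: no explicit analogue of \cref{inequ:sn:minus:sn1} is needed, since the monotonicity $\Delta_{t,i}\ge 0$ together with the bound $\Delta_{t,i}\le\sqrt{\eta_{t-1,i}}\sqrt{\Delta_{t,i}}$ suffices.
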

The first key result for RMSProp is the stability of the function value, which is described in the following theorem. The full proof of \cref{bound''} for RMSProp follows a similar approach to that of AdaGrad, which we defer to \cref{stability:proof:rmsprop}.
\begin{thm}\label{bound''}
Suppose that \cref{ass_g_poi} \ref{ass_g_poi:i}$\sim$\ref{ass_g_poi:i2}, \cref{ass_noise} \ref{ass_noise:i}, \cref{coordinate_0}, \cref{coordinate} \cref{coordinate_i_1} hold. Consider RMSProp. We have
\[\Expect\left[\sup_{n\ge 1}g(\theta_{n})\right] < +\infty.\]
\end{thm}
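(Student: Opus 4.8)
The plan is to replicate the three-phase stopping-time argument used to prove \cref{stable}, adapting each ingredient to the coordinate-wise structure of RMSProp. The crucial preliminary observation is the transformation noted before \cref{sufficient:lem'}: under the near-optimal schedule \cref{near_optimal_para}, the recursion $v_{n,i}=(1-\tfrac1n)v_{n-1,i}+\tfrac1n(\nabla_i g(\theta_n,\xi_n))^2$ telescopes to $n\,v_{n,i}=S_{n,i}$, so that $\eta_{n,i}=\bigl(\sqrt{S_{n,i}}+\epsilon\sqrt{n}\bigr)^{-1}$ plays precisely the role that $\alpha_0/\sqrt{S_n}$ plays in AdaGrad. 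First I would record this identity together with the one-step bound $\|\theta_{t+1}-\theta_t\|\le\sum_i\eta_{t,i}\,|\nabla_i g(\theta_t,\xi_t)|\le \alpha_0'$, which controls the displacement uniformly and enables a coordinate analogue of \cref{lem:adj:ghat}.

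Second, I would port the two supporting results. The adjacent-step lemma takes the form $\hat g(\theta_{t+1})-\hat g(\theta_t)\le h(\hat g(\theta_t))$ with $h(x)=c_1\sqrt{x}+c_2$ satisfying $h(x)<x/2$ for $x\ge C_0$, proved via $L$-smoothness and the bound $\|\nabla g(\theta_t)\|^2\le 2Lg(\theta_t)\le 2L\hat g(\theta_t)$ (the RMSProp instance of \cref{loss_bound}). The analogue of \cref{pro_0} now uses \cref{coordinate_0}: each per-coordinate sublevel set $\{\theta:(\nabla_i g(\theta))^2\le\eta\}$ is bounded, hence the full set where $\min_i(\nabla_i g)^2\le\eta$ fails is controlled, so $\hat g$ is bounded by some $\hat C_g$ on the region where $\hat g>\Delta_0$ forces a coordinate gradient to exceed $\eta$.

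Third, I would define the stopping times $\{\tau_t\}$ exactly as in \cref{stopping_time} with $\Delta_0=\max\{C_0,\hat C_g\}$ and reproduce Phases II and III. In the descent inequality \cref{sufficient:lem'}, $\sum_i\zeta_i(t)$ replaces $\zeta(n)$, the Hadamard term $\|\eta_t\circ\nabla g(\theta_t,\xi_t)\|^2$ replaces $\Gamma_n$, and $M_t$ replaces $\hat X_n$. I would then establish the RMSProp version of \cref{lem_su}, namely that $\E\bigl[\sum_n\sum_i\I_{(\nabla_i g(\theta_n))^2>\nu}\,\eta_{n-1,i}(\nabla_i g(\theta_n,\xi_n))^2\bigr]<\infty$, by telescoping \cref{sufficient:lem'} over the ``good'' intervals where the gradient stays large. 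The series-integral comparisons of \cref{inequ:phi_3,jrnn_01} become $\sum_n\|\eta_t\circ\nabla g\|^2/\sqrt{S_n}\le\sum_i\int_v^\infty x^{-3/2}\,\mathrm{d}x$, and bounding $\E\bigl[\sum M_t^2\bigr]$ over the outlier intervals follows the template of \cref{jrnn_02}. Assembling these into the analogue of \cref{lem:estimation:supg} gives a bound on $\E[\sup_{1\le n<T}g(\theta_n)]$ uniform in $T$, after which Lebesgue's monotone convergence theorem yields $\E[\sup_{n\ge1}g(\theta_n)]<\infty$.

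The main obstacle is the extra $\epsilon\sqrt{n}$ term in $\eta_{n,i}=(\sqrt{S_{n,i}}+\epsilon\sqrt{n})^{-1}$, which destroys the clean $1/\sqrt{S_{n,i}}$ form: every series-integral comparison and the martingale second-moment estimate must be carried through with this perturbed denominator and then summed over the $d$ coordinates. One must check that the $\epsilon$-regularization is benign, i.e.\ that it only shrinks the step sizes and therefore cannot disrupt either the telescoping of $\sum_i\zeta_i(t)$ across coordinates or the summability bounds, while keeping all constants independent of $T$. Coordinating these $d$ coupled sums simultaneously, rather than handling a single scalar $S_n$ as in the AdaGrad-Norm case, is the technically delicate part of the argument.
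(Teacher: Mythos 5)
Your scaffolding is indeed the paper's route: the stopping-time partition (\cref{stopping_time'}), the three phases, and the analogues \cref{lem:adj:ghat''} and \cref{dsad} of \cref{lem:adj:ghat} and \cref{pro_0} all match. However, the key summability lemma on which your Phase III rests is both mis-stated and not provable by the argument you sketch. You propose $\Expect\big[\sum_n\sum_i\I_{(\nabla_i g(\theta_n))^2>\nu}\,\eta_{n-1,i}(\nabla_i g(\theta_n,\xi_n))^2\big]<\infty$. After Doob's stopped theorem and the coordinate affine-noise bound on the event $\{(\nabla_i g(\theta_n))^2>\nu\}$, this reduces to a bound on $\sum_t\sum_i\Expect[\zeta_i(t)]$ with \emph{no} weight in $t$, i.e.\ the $\delta=0$ endpoint of \cref{RMSProp_0}, which the paper proves only for $\delta>0$. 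Your proposed proof cannot deliver it: telescoping \cref{sufficient:lem'} yields $\tfrac34\sum_{t\le T}\sum_i\Expect[\zeta_i(t)]\le\hat g(\theta_1)+C\sum_{t\le T}\Expect\|\eta_t\circ\nabla g(\theta_t,\xi_t)\|^2$, and since $\eta_{t,i}^2\le (r_1 S_{t,i})^{-1}$ the error sum behaves like $\sum_i\Expect[\ln(S_{T,i}/v)]=\mathcal{O}(\ln T)$ — not uniformly bounded, exactly as in AdaGrad-Norm. In AdaGrad this is repaired in \cref{lem_su} by multiplying the descent inequality by the single scalar $1/\sqrt{S_{n-1}}$, which makes the error terms summable by a series-integral comparison; for RMSProp no such scalar exists, because each coordinate carries its own $S_{t,i}$ while the descent inequality is one scalar inequality already summed over coordinates. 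Restricting the telescoping to the ``good'' intervals does not help either, since their total length can be infinite and the same logarithmic error accumulates.

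What actually closes the argument in the paper — and is absent from your plan — is a pair of ingredients. First, the $\epsilon$-regularization that you single out as the ``main obstacle'' is in fact the enabler: $\eta_{t,i}\le 1/(\epsilon\sqrt t)$ supplies a \emph{coordinate-uniform} decay factor, so the paper proves the weighted bound $\sum_t\sum_i\Expect[\zeta_i(t)/t^{\delta}]=\mathcal{O}(1)$ for $0<\delta\le 1/2$ (\cref{RMSProp_0}); Phase III then bounds the error sums $A_{j,1}$ and the martingale-square sums $A_{j,2}$ over outlier intervals by harvesting one factor $1/\sqrt t$ from $\eta_{t,i}\le 1/(\epsilon\sqrt t)$ together with the monotonicity $\eta_{t,i}\le\eta_{t-1,i}$, and invoking \cref{RMSProp_0} with $\delta=1/2$. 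Second, \cref{RMSProp_0} itself requires the a priori polynomial growth estimate $\sqrt{S_T}\le (T+1)^4\zeta$ with $\Expect[\zeta]<\infty$ (\cref{vital_0}, which rests on $\Expect[g(\theta_t)]=\mathcal{O}(t)$ from the descent lemma); this trades the weight $1/(t+1)^{\delta}$ for $\zeta^{\delta/4}S_{t,i}^{-\delta/8}$, after which the comparison $\int_v^{+\infty}x^{-1-\delta/8}\,\mathrm{d}x<+\infty$ gives a $T$-free bound. Without an analogue of \cref{vital_0}, your weighted sums cannot be closed. Two smaller corrections: the identity $n\,v_{n,i}=S_{n,i}$ is not exact (the first step with $\beta_1$ breaks it; only $r_1 S_{t,i}\le t\,v_{t,i}\le S_{t,i}$ holds, \cref{property_1}); and on the outlier intervals you need \emph{every} coordinate to satisfy $(\nabla_i g(\theta_t))^2>\eta$ — the complement of the union $\bigcup_{i=1}^{d}\{(\nabla_i g)^2\le\eta\}$ in \cref{dsad} — since the affine-noise bound is applied coordinate-by-coordinate; ``a coordinate gradient'' exceeding $\eta$ is not sufficient.
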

Building on the stability,  several auxiliary lemmas from  \cref{proof:lem:rmsprop}, and then applying \cref{pros:assump:loss}, we conclude the almost sure convergence for RMSProp. This is the first almost sure convergence for RMSProp to the best of our knowledge. The full proof is provided in~\cref{sec:proof:thm1:rmsprop}.
\begin{thm}\label{convergence_1.0'}
Suppose that \cref{ass_g_poi} \ref{ass_g_poi:i}$\sim$\ref{ass_g_poi:i2}, \cref{ass_noise} \ref{ass_noise:i}, \cref{coordinate_0,coordinate,extra} hold. Consider RMSProp. We have 
\begin{equation}\nonumber\begin{aligned}
\lim_{n\rightarrow\infty}\|\nabla g(\theta_{n})\|=0\ \ \text{a.s.}
\end{aligned}\end{equation}
\end{thm}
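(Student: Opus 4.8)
The plan is to mirror the stochastic-approximation (ODE) argument used for AdaGrad-Norm in \cref{convergence_1}, invoking the criterion of \cref{SA_p}, but adapted to the coordinate-wise step sizes of RMSProp. Under the near-optimal parameters in \cref{near_optimal_para} the recursion $v_{n,i}=(1-\tfrac1n)v_{n-1,i}+\tfrac1n(\nabla_i g(\theta_n,\xi_n))^2$ satisfies $n v_{n,i}=(n-1)v_{n-1,i}+(\nabla_i g(\theta_n,\xi_n))^2$, so $S_{n,i}:=v+\sum_{k\le n}(\nabla_i g(\theta_k,\xi_k))^2$ plays the role of the AdaGrad accumulator and the effective per-coordinate step size is $\eta_{n,i}=\alpha_n/(\sqrt{v_{n,i}}+\epsilon)=1/(\sqrt{S_{n,i}}+\epsilon\sqrt n)$. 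I would cast the update in the standard form $\theta_{n+1}=\theta_n-\gamma_n(\nabla g(\theta_n)+U_n)$ with a scalar reference $\gamma_n=1/(\epsilon\sqrt n)$ and an error term $U_n$ collecting (i) the reweighted martingale noise $(\eta_{n,i}/\gamma_n)(\nabla_i g(\theta_n,\xi_n)-\nabla_i g(\theta_n))$ and (ii) the deterministic rescaling discrepancy $(\eta_{n,i}/\gamma_n-1)\nabla_i g(\theta_n)$. It then suffices to verify \cref{pros:a1,pros:a2,pros:a3} of \cref{SA_p}.

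Condition \cref{pros:a1} (bounded trajectory) follows immediately from the RMSProp stability in \cref{bound''} together with coercivity (\cref{extra}~\ref{extra:i1}), exactly as \cref{stable':iterate} was deduced for AdaGrad-Norm. A pleasant simplification over the AdaGrad-Norm proof is that the regularizer forces $\gamma_n\le 1/(\epsilon\sqrt n)\to 0$ and $\eta_{n,i}\to 0$ unconditionally, so the degenerate event $\{\lim_n S_n<\infty\}$ that required separate treatment in \cref{convergence_1} does not arise here; and since $S_{n,i}=O(n)$ a.s.\ under stability and affine noise, $\sum_n\eta_{n,i}=+\infty$ a.s.\ (the coordinate analogue of \cref{step size}), which supplies the Robbins--Monro-type requirements on the reference step size.

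The heart of the argument is \cref{pros:a2}, the vanishing of the windowed noise sums $\sup_k\|\sum_{i=n}^k\gamma_iU_i\|$ over fixed ODE-time windows. For the martingale part of $U_n$ I would replicate the $\Omega_t$--$\Upsilon_t$ decomposition from the proof of \cref{convergence_1}: split according to $\{(\nabla_i g)^2<D_0\}$ versus $\{(\nabla_i g)^2\ge D_0\}$, bound the suprema via Burkholder's inequality, pass from $S_{n,i}$ to $S_{n-1,i}$ through the telescoping identity for $1/\sqrt{S_{n-1,i}}-1/\sqrt{S_{n,i}}$, apply Doob's stopped theorem, and finish with the coordinate-wise versions of \cref{lem_su} and \cref{coordinate}. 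The genuinely new obstacle is the deterministic rescaling term $(\eta_{n,i}-\gamma_n)\nabla_i g(\theta_n)$, whose windowed sums must also vanish. This is exactly where the $\epsilon\sqrt n$ regularization is indispensable: it pins $\eta_{n,i}$ to the common order $\Theta(1/\sqrt n)$, so that $\eta_{n,i}-\gamma_n=-\sqrt{S_{n,i}}/\big((\sqrt{S_{n,i}}+\epsilon\sqrt n)\epsilon\sqrt n\big)$ is negligible relative to $\gamma_n$ on each fixed-length window, with the surviving coordinate-dependent constants merely converting the limiting flow into $\dot x=-D\nabla g(x)$ for a fixed positive-diagonal $D$.

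Finally, for \cref{pros:a3} I would observe that $g$ remains a strict Lyapunov function for $\dot x=-D\nabla g(x)$ (since $\tfrac{d}{dt}g=-\nabla g^\top D\nabla g\le 0$ when $D\succ 0$), so its equilibrium set is exactly the critical set $\Theta^\ast$, and by the weak Sard property --- guaranteed here through \cref{pros:assump:loss} under \cref{coordinate_0} --- the critical value set is nowhere dense. \cref{SA_p} then yields that every limit point of $\{\theta_n\}$ is a critical point; combined with the boundedness from \cref{pros:a1} and the continuity of $\nabla g$, this upgrades to $\lim_{n\to\infty}\|\nabla g(\theta_n)\|=0$ a.s. I expect the control of the coordinate-wise, $n$-dependent step-size discrepancy in \cref{pros:a2} to be the principal difficulty, since it is the one feature with no direct counterpart in the scalar AdaGrad-Norm analysis.
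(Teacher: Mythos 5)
Your overall scaffolding (reduction to \cref{SA_p}, condition \cref{pros:a1} via \cref{bound''} plus coercivity, Burkholder's inequality with the $D_0$-splitting for the martingale part) matches the paper's strategy, but the step you yourself flag as the principal difficulty --- the deterministic rescaling discrepancy --- contains a genuine gap, and your stated justification for it is false. With $\gamma_n=1/(\epsilon\sqrt n)$ one has $\eta_{n,i}/\gamma_n=\epsilon/(\sqrt{v_{n,i}}+\epsilon)$, so the discrepancy factor satisfies $\eta_{n,i}/\gamma_n-1=-\sqrt{v_{n,i}}/(\sqrt{v_{n,i}}+\epsilon)$, which is $\Theta(1)$, not $o(1)$: $v_{n,i}$ is a running average of squared stochastic gradients and does not tend to zero (for instance, with additive noise $\sigma_1>0$ it stays bounded away from zero in expectation). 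Consequently, over a window of ODE-time length $T$ the sums $\sum_{i=n}^{k}\gamma_i\,(\eta_{i,j}/\gamma_i-1)\,\nabla_j g(\theta_i)$ are of order $T\cdot C\cdot\|\nabla g\|$ and cannot be made to satisfy \cref{pros:a2}; treating this term as noise is impossible unless the gradients already vanish, which is circular.

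Your fallback --- absorbing the discrepancy into the vector field so that the limiting flow becomes $\dot x=-D\nabla g(x)$ with $D=\lim_{n}\mathrm{diag}(\eta_{n,i}/\gamma_n)$ --- can be made rigorous, but only if that limit exists almost surely, i.e., only if $v_n$ converges almost surely. This is precisely the paper's \cref{convergence_v}, a substantive lemma with its own proof chain (stability \cref{bound''} feeding \cref{RMSProp_0} and \cref{bounded_v}, then \cref{RMSProp_9}, then the martingale convergence theorem), and you neither prove nor invoke it; your observation that $S_{n,i}=O(n)$ a.s.\ gives an upper bound on $v_{n,i}$ but says nothing about convergence of $S_{n,i}/n$. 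The paper sidesteps the time-varying preconditioner differently: it augments the state to $x_n=(\theta_n,v_n)$, keeps the preconditioner inside a \emph{fixed} field $\big((\sqrt{v}+\epsilon)^{-1}\circ\nabla g(\theta),\,0\big)$, and places the increment $(v_{n+1}-v_n)/\alpha_n$ into $U_n$, where verifying \cref{pros:a2} for that block again reduces to the a.s.\ convergence (Cauchy property) of $v_n$. Either way, the almost sure convergence of $v_n$ is the irreducible ingredient your proposal is missing; once you establish \cref{convergence_v} (or an equivalent statement about $\eta_{n}/\gamma_n$), your route does go through, including the Lyapunov argument for \cref{pros:a3} with the realized diagonal $D$.
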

By combining the stability in \cref{bound''} with almost sure convergence in \cref{convergence_1.0'}, we apply Lebesgue’s dominated convergence theorem to obtain the mean-square convergence result for RMSProp. 
\begin{thm}\label{convergence_2.0'}
Suppose that \cref{ass_g_poi} \ref{ass_g_poi:i}$\sim$\ref{ass_g_poi:i2}, \cref{ass_noise} \ref{ass_noise:i}, \cref{coordinate_0,coordinate,extra} hold. Consider RMSProp. We have
\begin{equation}\nonumber\begin{aligned}
\lim_{n\rightarrow\infty}\Expect\|\nabla g(\theta_{n})\|^{2}=0.
\end{aligned}\end{equation}
\end{thm}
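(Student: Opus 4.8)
(of \cref{convergence_2.0'})
The plan is to reproduce the mean-square convergence argument already carried out for AdaGrad-Norm in \cref{convergence_2}, since the two ingredients it relies upon are both available for RMSProp: the global stability of the function value through \cref{bound''}, and the almost sure convergence of the gradient norm through \cref{convergence_1.0'}. The overall strategy is a dominated-convergence argument: exhibit a single integrable random variable that dominates $\|\nabla g(\theta_{n})\|^{2}$ uniformly in $n$, and then upgrade the pointwise almost sure limit to convergence in expectation.

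First I would invoke the $L$-smoothness of $g$ together with its non-negativity (\cref{ass_g_poi}~\ref{ass_g_poi:i}) to obtain the pointwise inequality $\|\nabla g(\theta_{n})\|^{2}\le 2 Lg(\theta_{n})$, which is precisely the content of \cref{loss_bound}. Taking the supremum over $n$ on both sides and then the expectation, and applying the stability result \cref{bound''}, yields
\[
\Expect\Big[\sup_{n\ge 1}\|\nabla g(\theta_{n})\|^{2}\Big]\le 2 L\,\Expect\Big[\sup_{n\ge 1}g(\theta_{n})\Big]<+\infty .
\]
Hence the random variable $W:=\sup_{n\ge 1}\|\nabla g(\theta_{n})\|^{2}$ is integrable, and it dominates every term of the sequence $\{\|\nabla g(\theta_{n})\|^{2}\}_{n\ge 1}$.

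Next, \cref{convergence_1.0'} supplies $\lim_{n\to\infty}\|\nabla g(\theta_{n})\|=0$ almost surely, so that $\|\nabla g(\theta_{n})\|^{2}\to 0$ almost surely as well. With the integrable dominating function $W$ in hand, \emph{Lebesgue's dominated convergence theorem} applies to the sequence $\{\|\nabla g(\theta_{n})\|^{2}\}_{n\ge 1}$ and gives $\lim_{n\to\infty}\Expect\|\nabla g(\theta_{n})\|^{2}=0$, as desired.

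The genuine difficulty in this chain lies entirely upstream, in establishing the stability \cref{bound''} and the almost sure convergence \cref{convergence_1.0'} for the coordinate-wise RMSProp iterates, which require adapting the stopping-time partitioning technique and the ODE-method verification to the auxiliary variables $S_{t,i}$. Given those two results, the present theorem is essentially immediate; the only subtle point to verify is that the \emph{supremum} $\sup_{n\ge 1}\|\nabla g(\theta_{n})\|^{2}$ is integrable rather than merely each finite-$n$ value, which is exactly what the strengthened stability statement $\Expect[\sup_{n\ge 1}g(\theta_{n})]<+\infty$, as opposed to the weaker almost sure boundedness of the supremum, is designed to deliver.
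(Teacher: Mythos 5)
Your proposal is correct and follows exactly the paper's own argument: bound $\|\nabla g(\theta_{n})\|^{2}\le 2Lg(\theta_{n})$ via \cref{loss_bound}, use the stability from \cref{bound''} to get an integrable dominating variable $\sup_{n\ge 1}\|\nabla g(\theta_{n})\|^{2}$, and combine the almost sure convergence of \cref{convergence_1.0'} with Lebesgue's dominated convergence theorem. No gaps; your closing remark correctly identifies that the strengthened stability in expectation (rather than mere almost sure boundedness) is what makes the domination argument work.
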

\begin{proof}
Based on the function value's stability in \cref{bound''}, we can derive the following inequality:
\[\Expect\Big[\sup_{n\ge 1}\|\nabla g(\theta_{n})\|^{2}\Big] \mathop{\le}^{\text{\cref{loss_bound}}} 2 L\Expect\Big[\sup_{n\ge 1}g(\theta_{n})\Big]<+\infty.\]
Then, by the almost sure convergence  from \cref{convergence_1.0'} and \emph{Lebesgue's dominated convergence} theorem, the mean-square convergence result, i.e., \(\lim_{n\rightarrow\infty}\Expect\|\nabla g(\theta_{n})\|^{2}=0\) follows.  
\end{proof}
It is worth mentioning that our approach for establishing the non-asymptotic convergence rate of AdaGrad-Norm can be directly applied to RMSProp under the hyperparameters setting in \cref{near_optimal_para}, which implies \(\frac{1}{T} \sum_{t=1}^{T} \mathbb{E}\|\nabla g(\theta_{n})\|^2 \leq \mathcal{O}(\ln T/\sqrt{T})\). 

\section{Conclusion}\label{sec:conclusion}

This study offers a comprehensive analysis of the norm version of AdaGrad and addresses significant gaps in its theoretical framework, particularly regarding asymptotic convergence and non-asymptotic convergence rates in non-convex optimization. By introducing a novel stopping time technique from probabilistic theory, we are the first to establish AdaGrad-Norm stability under mild conditions. Our findings encompass two forms of asymptotic convergence, namely almost sure convergence and mean-square convergence. Additionally, we provide a more precise estimation for $\E[S_T]$ and establish a near-optimal non-asymptotic convergence rate based on expected average squared gradients. The techniques we derived in the proof might be of broader interest to the optimization community. We justify this by applying the techniques to RMSProp with a specific parameter configuration, which provides new insights into the stability and asymptotic convergence of RMSProp. This new perspective reinforces existing findings and paves the way for further exploration of other adaptive optimization techniques, such as Adam. The community might benefit from these new understandings of adaptive methods in optimization in stochastic algorithms, online learning methods, deep learning methods, and beyond.

\bibliographystyle{plainnat}
\bibliography{adagrad_ref}

\appendix

\newpage

\tableofcontents

\newpage

\section{Appendix: Auxiliary Lemmas for the Theoretical Results}\label{sec:lem:appendix}
\begin{lem}\label{loss_bound} (Lemma 10 of \cite{jin2022convergence})
Suppose that $g(x)$ is differentiable and lower bounded $ g^{\ast} = \inf_{x\in \ \mathbb{R}^{d}}g(x) >-\infty$ and $\nabla g(x)$ is Lipschitz continuous with parameter $ L > 0$, then $\forall \ x\in \ \mathbb{R}^{d}$, we have
\begin{align*}
\big\|\nabla g(x)\big\|^{2}\le {2 L}\big(g(x)-g^{*}\big).
\end{align*}
\end{lem}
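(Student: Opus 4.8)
The plan is to derive the inequality from the standard quadratic upper bound (descent lemma) that follows from Lipschitz continuity of the gradient, and then optimize that bound over a cleverly chosen second point. First I would establish the descent lemma: for any $x, y \in \mathbb{R}^d$,
\begin{align*}
g(y) \le g(x) + \nabla g(x)^\top (y-x) + \frac{L}{2}\|y - x\|^2.
\end{align*}
This is obtained by writing $g(y) - g(x) = \int_0^1 \nabla g(x + t(y-x))^\top (y-x)\, dt$ via the fundamental theorem of calculus, then adding and subtracting $\nabla g(x)^\top(y-x)$ and bounding the remainder using the Lipschitz condition $\|\nabla g(x + t(y-x)) - \nabla g(x)\| \le L t \|y-x\|$ together with Cauchy--Schwarz, which integrates to the $\frac{L}{2}\|y-x\|^2$ term.

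Next I would exploit the freedom in $y$. The right-hand side of the descent lemma is a convex quadratic in $y$, minimized at the gradient-step point $y = x - \frac{1}{L}\nabla g(x)$. Substituting this choice collapses the bound to
\begin{align*}
g\!\left(x - \tfrac{1}{L}\nabla g(x)\right) \le g(x) - \frac{1}{2L}\|\nabla g(x)\|^2,
\end{align*}
after combining the linear term $-\frac{1}{L}\|\nabla g(x)\|^2$ with the quadratic term $\frac{1}{2L}\|\nabla g(x)\|^2$.

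Finally I would invoke the lower-boundedness hypothesis $g^* = \inf_{x} g(x) > -\infty$. Since $g^* \le g\!\left(x - \frac{1}{L}\nabla g(x)\right)$, chaining this with the previous display yields $g^* \le g(x) - \frac{1}{2L}\|\nabla g(x)\|^2$, and rearranging gives exactly $\|\nabla g(x)\|^2 \le 2L(g(x) - g^*)$. There is no real obstacle here, as this is a classical smooth-optimization fact; the only mildly non-obvious step is recognizing that one should evaluate the descent lemma at the gradient-step point $y = x - \frac{1}{L}\nabla g(x)$ rather than at an arbitrary $y$, which is precisely what makes the quadratic bound tight and produces the factor $\frac{1}{2L}$.
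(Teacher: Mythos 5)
Your proof is correct and is the standard argument for this classical fact: the paper itself does not reprove the lemma (it simply imports it as Lemma 10 of \cite{jin2022convergence}), and the canonical proof in that reference and elsewhere is exactly your route of applying the descent lemma at the gradient-step point $y = x - \tfrac{1}{L}\nabla g(x)$ and invoking lower-boundedness. No gaps; the derivation of the quadratic upper bound via the fundamental theorem of calculus and the Lipschitz condition is also handled correctly.
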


	\begin{lem} (Theorem 4.2.1 in \cite{Guo2005}) \label{lem_summation_MDS}	
		Suppose that $\{Y_{n}\}\in \mathbb{R}^{d}$ is a $ L_2$ martingale difference sequence, and $(Y_{n},\mathscr{F}_{n})$ is an adaptive process. Then it holds that $\sum_{k=0}^{+\infty}Y_k<+\infty \ a.s.,$ if there exists $p\in (0,2)$ such that 
		\begin{align*}
  \sum_{n=1}^{+\infty}\Expect[\|Y_{n}\|^{p}]<+\infty,	 \quad   \text{ or }  \quad  \sum_{n=1}^{+\infty}\Expect\big[\|Y_{n}\|^{p}\big|\mathscr{F}_{n-1}\big]<+\infty. \quad \text{a.s.}
		\end{align*}
	\end{lem}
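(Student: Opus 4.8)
The plan is to reduce the statement to the classical almost-sure convergence theorem for square-integrable martingales by a truncation-and-recentering argument, so that the restriction $p<2$ enters only through the control of the tails. Write $M_n:=\sum_{k=1}^{n}Y_k$; since $(Y_n,\mathscr{F}_n)$ is a martingale difference sequence, $(M_n)$ is a martingale and it suffices to prove that $M_n$ converges a.s. I would fix the truncation level $1$ and split each increment as $Y_n=Y_n'+Y_n''$ with $Y_n':=Y_n\I_{\|Y_n\|\le 1}$ and $Y_n'':=Y_n\I_{\|Y_n\|>1}$, then treat the three resulting series separately. Throughout I would carry the two hypotheses (the unconditional $\sum_n\E\|Y_n\|^p<\infty$ and the conditional $\sum_n\E[\|Y_n\|^p\mid\mathscr{F}_{n-1}]<\infty$ a.s.) in parallel, since the arguments differ only by whether one applies an expectation or works pathwise.

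For the large part $\sum_n Y_n''$, the elementary bound $\I_{\|Y_n\|>1}\le\|Y_n\|^p$ (valid for every $p>0$) gives $\sum_n\Pro(\|Y_n\|>1)\le\sum_n\E\|Y_n\|^p<\infty$, and likewise $\sum_n\Pro(\|Y_n\|>1\mid\mathscr{F}_{n-1})\le\sum_n\E[\|Y_n\|^p\mid\mathscr{F}_{n-1}]<\infty$ a.s. By the Borel--Cantelli lemma (respectively its conditional, Lévy-type extension) the event $\{\|Y_n\|>1\}$ occurs only finitely often a.s., so $\sum_n Y_n''$ has a.s.\ finitely many nonzero terms and converges trivially.

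Next I would recenter the small part: set $a_n:=\E[Y_n'\mid\mathscr{F}_{n-1}]$ and $\tilde Y_n:=Y_n'-a_n$, so $(\tilde Y_n,\mathscr{F}_n)$ is again a martingale difference sequence. Because $\|Y_n'\|\le 1$, the conditional variance satisfies $\E[\|\tilde Y_n\|^2\mid\mathscr{F}_{n-1}]\le\E[\|Y_n'\|^2\mid\mathscr{F}_{n-1}]=\E[\|Y_n\|^2\I_{\|Y_n\|\le1}\mid\mathscr{F}_{n-1}]$, and since $\|Y_n\|^2\le\|Y_n\|^p$ on $\{\|Y_n\|\le1\}$ (here $p\le2$ is used), the predictable quadratic variation of $\sum_n\tilde Y_n$ is a.s.\ finite. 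Hence $\sum_n\tilde Y_n$ converges a.s.\ by the a.s.\ convergence theorem for square-integrable martingales, i.e.\ the $p=2$ case applied on the set where the compensator $\sum_n\E[\|\tilde Y_n\|^2\mid\mathscr{F}_{n-1}]$ is finite.

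The crux, and the step I expect to be the main obstacle, is showing that the predictable compensator $\sum_n a_n$ converges absolutely. The key observation is that $a_n$ has two equivalent forms, $a_n=\E[Y_n'\mid\mathscr{F}_{n-1}]=-\E[Y_n''\mid\mathscr{F}_{n-1}]$, which follow from $\E[Y_n\mid\mathscr{F}_{n-1}]=0$, and the right one depends on the regime of $p$. For $p\ge1$ I would use the second form, bounding $\|a_n\|\le\E[\|Y_n\|\I_{\|Y_n\|>1}\mid\mathscr{F}_{n-1}]\le\E[\|Y_n\|^p\mid\mathscr{F}_{n-1}]$ via $\|Y_n\|\le\|Y_n\|^p$ on $\{\|Y_n\|>1\}$; for $0<p<1$ I would use the first form, bounding $\|a_n\|\le\E[\|Y_n\|\I_{\|Y_n\|\le1}\mid\mathscr{F}_{n-1}]\le\E[\|Y_n\|^p\mid\mathscr{F}_{n-1}]$ via $\|Y_n\|\le\|Y_n\|^p$ on $\{\|Y_n\|\le1\}$. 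Either way $\sum_n\|a_n\|\le\sum_n\E[\|Y_n\|^p\mid\mathscr{F}_{n-1}]<\infty$ a.s.\ (the unconditional hypothesis yields the same bound after taking expectations and applying monotone convergence), so $\sum_n a_n$ converges absolutely a.s. Combining the three convergent series gives convergence of $M_n=\sum_n(\tilde Y_n+a_n+Y_n'')$, which is the claim.
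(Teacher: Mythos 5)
Your proof is correct, but it is worth noting that the paper itself does not prove this statement at all: it is imported verbatim as Theorem 4.2.1 of \cite{Guo2005} and used as a black box. What you have reconstructed is, in essence, the classical proof of Chow's local convergence theorem via truncation at level $1$: the tail part $Y_n''$ is killed by (conditional) Borel--Cantelli since $\I_{\|Y_n\|>1}\le\|Y_n\|^p$; the truncated part is recentered into a bounded martingale difference sequence $\tilde Y_n$ whose predictable quadratic variation is controlled by $\sum_n\E[\|Y_n\|^p\mid\mathscr{F}_{n-1}]$ because $\|Y_n\|^2\le\|Y_n\|^p$ on $\{\|Y_n\|\le 1\}$, so the local convergence theorem for square-integrable martingales applies; and the compensator $\sum_n a_n$ is handled by the case split $p\ge 1$ versus $p<1$, using respectively $a_n=-\E[Y_n''\mid\mathscr{F}_{n-1}]$ with $\|Y_n\|\le\|Y_n\|^p$ on $\{\|Y_n\|>1\}$, and $a_n=\E[Y_n'\mid\mathscr{F}_{n-1}]$ with $\|Y_n\|\le\|Y_n\|^p$ on $\{\|Y_n\|\le 1\}$ --- this case split is exactly the right mechanism and is the step most often fumbled. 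Your reduction of the unconditional hypothesis to the conditional one (via $\E\bigl[\sum_n\E[\|Y_n\|^p\mid\mathscr{F}_{n-1}]\bigr]=\sum_n\E\|Y_n\|^p<\infty$) is also sound. What your approach buys is a self-contained justification of the lemma, resting only on the conditional Borel--Cantelli lemma and the a.s.\ convergence of $L_2$-bounded-compensator martingales, whereas the paper's ``approach'' is simply a citation; the only thing the citation buys is brevity.
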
		
\begin{lem}\label{lem_summation} (Lemma 6 in \cite{jin2022convergence})
Suppose that $\{Y_n\}\in \mathbb{R}^{d}$ is a non-negative sequence of random variables, then it holds that $\sum_{n=0}^{+\infty} Y_n<+\infty \ a.s.,$ if\ $\sum_{n=0}^{+\infty}\Expect\big[Y_n\big]<+\infty.$   	\end{lem}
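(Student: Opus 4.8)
The plan is to reduce this standard statement to two elementary facts: the interchange of expectation with an infinite sum of non-negative terms, and the almost-sure finiteness of a non-negative random variable with finite mean. First I would set $S := \sum_{n=0}^{+\infty} Y_{n}$ and observe that since every $Y_{n}\ge 0$, the partial sums $\sum_{n=0}^{N} Y_{n}$ are non-decreasing in $N$. Consequently $S$ is a well-defined $[0,+\infty]$-valued random variable, realized as the pointwise monotone limit of these partial sums.

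Next I would apply the \emph{Monotone Convergence Theorem} (equivalently, Tonelli's theorem for non-negative integrands) to justify exchanging expectation and the infinite summation, yielding $\Expect[S] = \sum_{n=0}^{+\infty}\Expect[Y_{n}]$. By the hypothesis $\sum_{n=0}^{+\infty}\Expect[Y_{n}] < +\infty$, this gives $\Expect[S] < +\infty$. Finally, I would invoke the fact that a non-negative random variable with finite expectation is finite almost surely: concretely, by \emph{Markov's inequality} one has $\pro[S > M] \le \Expect[S]/M$ for every $M > 0$, and letting $M \to +\infty$ forces $\pro[S = +\infty] = 0$. Hence $\sum_{n=0}^{+\infty} Y_{n} < +\infty$ almost surely, which is the claim.

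There is no substantive obstacle here. The only point demanding care is the justification of the swap $\Expect[\sum_{n} Y_{n}] = \sum_{n}\Expect[Y_{n}]$, and this is supplied precisely by the non-negativity of the $Y_{n}$ via the Monotone Convergence Theorem, so that no additional integrability or dominated-convergence hypothesis beyond the stated summability of the expectations is required. Since the result is cited from prior work, I would expect the argument to be stated in one or two lines rather than elaborated.
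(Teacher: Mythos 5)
Your proof is correct: the Monotone Convergence (Tonelli) interchange $\Expect\big[\sum_n Y_n\big] = \sum_n \Expect[Y_n]$ followed by Markov's inequality is exactly the standard argument for this fact. The paper itself states this lemma without proof, citing Lemma 6 of \cite{jin2022convergence}, so there is no internal proof to diverge from; your two-step argument is precisely what that citation stands in for.
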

\begin{lem}\label{Guo_Lei} (Lemma 4.2.13 in \cite{Guo2005}) Let $\{Y_{n},\mathscr{F}_{n}\}$ be a martingale difference sequence, where $Y_n$ can be a matrix. Let $(U_{n},\mathscr{F}_{n})$ be an adapted process, where $U_{n}$ can be a matrix, and $\|U_{n}\|<+\infty$ almost surely for all $n$. If $\sup_{n}\Expect[\|Y_{n+1}\||\mathscr{F}_{n}]<+\infty\ \ a.s.,$ then we have
$$\sum_{k=0}^{n}U_{n}Y_{n+1}=\mathcal{O}\bigg(\bigg(\sum_{k=0}^{n}\|U_{n}\|\bigg)\ln^{1+\sigma}\Bigg(\bigg(\sum_{k=0}^{n}\|U_{n}\|\bigg)+e\Bigg)\Bigg)\ \ (\forall\ \sigma>0)\ \ \text{a.s.}$$
\end{lem}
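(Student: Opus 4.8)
The plan is to read the displayed sum as the matrix martingale transform $S_n := \sum_{k=0}^{n} U_k Y_{k+1}$ and to reduce the growth estimate to (i) the almost-sure convergence of a single \emph{weighted} series and (ii) a deterministic summation argument. Fix $\sigma>0$, set $A_n := \sum_{k=0}^{n}\|U_k\|$, $\phi(x):=x\ln^{1+\sigma}(x+e)$ (strictly increasing with $\phi(A_n+e)\ge e>0$), and let $C:=\sup_n \E[\|Y_{n+1}\|\mid\mathscr F_n]$, which is finite a.s. by hypothesis. I would first dispose of the degenerate case $A_\infty:=\lim_n A_n<\infty$: here $\sum_k \E[\|U_kY_{k+1}\|\mid\mathscr F_k]\le C A_\infty<\infty$ a.s., so $S_n$ converges absolutely and the bound is immediate because $\phi(A_n+e)$ is bounded away from $0$. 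The substantive case is $A_n\to\infty$.

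For that case I introduce the predictable weights $w_k:=1/\phi(A_k+e)$, which are $\mathscr F_k$-measurable since $A_k$ is. The key quantitative input is an integral comparison: because $\|U_k\|=A_k-A_{k-1}$ and $\phi$ is increasing,
\begin{align*}
\sum_{k=0}^{\infty} w_k \|U_k\| \le \frac{A_0}{\phi(A_0+e)} + \int_{A_0}^{\infty} \frac{dt}{\phi(t+e)} \le 1 + \int_{0}^{\infty} \frac{dt}{(t+e)\ln^{1+\sigma}(t+2e)} =: I_0 < \infty,
\end{align*}
where the finiteness of $I_0$ (via $u=\ln(t+2e)$, reducing the tail to $\int u^{-(1+\sigma)}\,du$) is exactly where $\sigma>0$ enters, matching the ``for all $\sigma>0$'' clause. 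Crucially $I_0$ is a deterministic constant.

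Next I would show $\sum_k w_k U_k Y_{k+1}$ converges a.s., and in fact \emph{absolutely}, i.e. $\sum_k \|w_k U_k Y_{k+1}\|<\infty$ a.s. The hard part is that the hypothesis only gives $C<\infty$ almost surely, not in $L_1$, so I cannot take expectations directly. I would resolve this by localization: define the stopping times $\tau_m:=\inf\{n:\E[\|Y_{n+1}\|\mid\mathscr F_n]>m\}$, so that $\{\tau_m=\infty\}=\{C\le m\}$ increases to a probability-one event. On the stopped level, $\I_{k<\tau_m}$ is $\mathscr F_k$-measurable and $\E[\|Y_{k+1}\|\mid\mathscr F_k]\le m$ there, hence $\E\big[\|w_kU_kY_{k+1}\|\,\I_{k<\tau_m}\big]\le m\,\E[w_k\|U_k\|]$ and $\sum_k \E[w_k\|U_k\|]\le I_0$; therefore $\sum_k \E[\|w_kU_kY_{k+1}\|\,\I_{k<\tau_m}]\le mI_0<\infty$, and \cref{lem_summation} yields $\sum_k \|w_kU_kY_{k+1}\|\,\I_{k<\tau_m}<\infty$ a.s. Letting $m\to\infty$ gives absolute convergence on $\{C<\infty\}$. (Notably, the martingale-difference structure of $Y$ is not even needed for this log-padded bound; absolute summability already suffices, since the extra $\ln^{1+\sigma}$ factor leaves ample room.)

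Finally, since $x_n:=\phi(A_n+e)$ is nondecreasing with $x_n\to\infty$ and $\sum_k \frac{U_kY_{k+1}}{x_k}$ converges a.s., \emph{Kronecker's lemma} applied pathwise (coordinatewise on the matrix entries) gives $\tfrac{1}{x_n}S_n\to0$, that is $S_n=o\big(\phi(A_n+e)\big)=O\big(A_n\ln^{1+\sigma}(A_n+e)\big)$ a.s. As $\sigma>0$ was arbitrary, this is precisely \cref{Guo_Lei}. I expect the localization step to be the only genuinely delicate point, together with the bookkeeping needed to keep every constant deterministic so that the unconditional summability lemma \cref{lem_summation} remains applicable despite the merely almost-sure bound on the conditional expectations.
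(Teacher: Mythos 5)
The paper never proves \cref{Guo_Lei}; it is imported as Lemma 4.2.13 of \cite{Guo2005}, so there is no in-paper argument to compare against — your proposal must stand on its own, and it does. It follows the classical route behind such estimates: weight the transform by $w_k=1/\phi(A_k+e)$ with $\phi(x)=x\ln^{1+\sigma}(x+e)$, prove a.s.\ convergence of the weighted series, and conclude by Kronecker's lemma on $\{A_n\to\infty\}$ (the case $A_\infty<\infty$ being elementary). Both substantive steps are sound: the pathwise integral comparison $\sum_k w_k\|U_k\|\le I_0$ with $I_0$ deterministic (this is exactly where $\sigma>0$ enters), and the localization through $\tau_m=\inf\{n:\E[\|Y_{n+1}\|\mid\mathscr{F}_n]>m\}$, which legitimately upgrades the merely almost-sure hypothesis to expectation bounds so that \cref{lem_summation} applies; the key measurability facts ($w_k\|U_k\|$ and $\I_{[k<\tau_m]}$ are $\mathscr{F}_k$-measurable, so the tower property yields $\E\big[\|w_kU_kY_{k+1}\|\I_{[k<\tau_m]}\big]\le m\,\E[w_k\|U_k\|]$) all check out. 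Your side remark is also correct and worth keeping: at the first-moment level the martingale-difference property is never used — absolute summability of the weighted series suffices — which is precisely why this $p=1$ instance of Guo's lemma is the easy one; for conditional moments of order in $(1,2]$ one would genuinely need Chow's local convergence theorem, and only there does the MDS structure enter. An alternative to the hand-rolled localization would have been the conditional branch of the paper's own \cref{lem_summation_MDS} with $p=1$, since $\{w_kU_kY_{k+1},\mathscr{F}_{k+1}\}$ is an MDS with $\sum_k\E[\|w_kU_kY_{k+1}\|\mid\mathscr{F}_k]\le C\,I_0<\infty$ a.s.; but as stated there it presupposes an $L_2$ MDS, so your localization is actually the cleaner dependency. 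Only two cosmetic gaps remain, both easily patched: the conclusion needs the trivial comparison $\phi(A_n+e)=(A_n+e)\ln^{1+\sigma}(A_n+2e)=\mathcal{O}\big(A_n\ln^{1+\sigma}(A_n+e)\big)$, valid once $A_n\ge 1$, and the bound $\|U_kY_{k+1}\|\le\|U_k\|\,\|Y_{k+1}\|$ presumes a submultiplicative matrix norm, which is the intended reading of the statement.
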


\begin{lem}\label{vital2}(Burkholder's inequality)
Let \( \{X_n\}_{n \geq 0} \) be a real-valued martingale difference sequence for a filtration \( \{\mathscr{F}_n\}_{n \geq 0} \), and let \( s\le t<+\infty \) be two stopping time with respect to the same filtration \( \{\mathscr{F}_n\}_{n \geq 0} \). Then for any \( p > 1 \), there exist positive constants \( C_p \) and \( C_p' \) (depending only on \( p \)) such that 
\[C_p \mathbb{E}\left[\bigg(\sum_{n=s}^{t}|X_{n}|^{2}\bigg)^{p/2}\right] \leq \mathbb{E}\left[\sup_{s \leq n \leq t} \bigg|\sum_{k=s}^{n}X_k\bigg|^{p} \right] \leq C_p' \mathbb{E}\left[\bigg(\sum_{n=s}^{t}|X_{n}|^{2}\bigg)^{p/2}\right].\]   
\end{lem}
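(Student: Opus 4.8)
The plan is to prove this as the discrete Burkholder--Davis--Gundy inequality, comparing the maximal function of the martingale with its square function. First I would reduce to a clean setting: replacing $X_n$ by $X_n\,\mathbf{1}_{\{s\le n\le t\}}$ (which remains a martingale difference sequence because $s,t$ are stopping times) and stopping at a finite horizon $t\wedge N$, so that I may work with the partial sums $Y_k=\sum_{j=s}^{k}X_j$, the maximal function $M=\sup_{s\le k\le t}|Y_k|$, and the square function $S=\big(\sum_{n=s}^{t}X_n^2\big)^{1/2}$; all quantities are finite after truncation and are recovered at the end by monotone convergence. Since $\{Y_k\}$ is a martingale, Doob's $L^p$-maximal inequality gives $\mathbb{E}[M^p]\le (p/(p-1))^p\,\mathbb{E}[|Y_t|^p]$, so the middle term $\mathbb{E}[\sup|Y_k|^p]$ is equivalent, up to constants depending only on $p$, to $\mathbb{E}[|Y_t|^p]$. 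Hence the lemma reduces to the two-sided comparison $c_p\,\mathbb{E}[S^p]\le \mathbb{E}[M^p]\le C_p\,\mathbb{E}[S^p]$.

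The core tool I would use is a good-$\lambda$ (distributional) inequality. For the upper bound, I would show that for every $\beta>1$ and $\delta\in(0,1)$,
\[\mathbb{P}\big(M>\beta\lambda,\ S\le\delta\lambda\big)\le \frac{\delta^2}{(\beta-1)^2}\,\mathbb{P}\big(M>\lambda\big),\qquad \lambda>0.\]
This is proved by introducing the stopping times $\sigma=\inf\{k:|Y_k|>\lambda\}$, $\rho=\inf\{k:|Y_k|>\beta\lambda\}$ and $\tau=\inf\{k:S_k>\delta\lambda\}$ (with $S_k^2=\sum_{j=s}^k X_j^2$), observing that on the event in question the increment $Y_{\rho\wedge\tau}-Y_\sigma$ must exceed $(\beta-1)\lambda$ in size, and bounding its second moment by $\mathbb{E}\big[(Y_{\rho\wedge\tau}-Y_{\sigma\wedge\tau})^2\mathbf{1}_{\{\sigma<\infty\}}\big]\le \delta^2\lambda^2\,\mathbb{P}(\sigma<\infty)$ via orthogonality of martingale increments together with $S_\tau\le\delta\lambda$; Chebyshev's inequality then yields the claim. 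Multiplying by $p\lambda^{p-1}$ and integrating over $\lambda\in(0,\infty)$ converts this into $\beta^{-p}\mathbb{E}[M^p]\le \frac{\delta^2}{(\beta-1)^2}\mathbb{E}[M^p]+\delta^{-p}\mathbb{E}[S^p]$; choosing $\delta$ small enough (given $\beta$) so that $\beta^{-p}-\delta^2/(\beta-1)^2>0$ and rearranging produces $\mathbb{E}[M^p]\le C_p'\,\mathbb{E}[S^p]$, the right-hand inequality.

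For the left-hand inequality I would run the symmetric argument with the roles of $M$ and $S$ interchanged, using that $k\mapsto S_k$ is adapted and nondecreasing: the analogous bound $\mathbb{P}(S>\beta\lambda,\ M\le\delta\lambda)\le \frac{\delta^2}{(\beta-1)^2}\mathbb{P}(S>\lambda)$ (again via stopping times and the $L^2$ identity $\mathbb{E}[(Y_{b}-Y_{a})^2\mid\mathscr{F}_a]=\mathbb{E}[S_b^2-S_a^2\mid\mathscr{F}_a]$) integrates to $\mathbb{E}[S^p]\le c_p^{-1}\,\mathbb{E}[M^p]$. Combining the two bounds with the Doob reduction and letting $N\to\infty$ by monotone convergence gives the stated inequality with constants depending only on $p$. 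I expect the main obstacle to be the good-$\lambda$ inequality itself --- specifically the stopping-time bookkeeping and the correct use of the orthogonality/$L^2$ identity to control the excursion of $Y$ on the set where $S$ is small --- and, secondarily, justifying the rearrangement, which requires knowing a priori that $\mathbb{E}[M^p]<\infty$; the truncation at $t\wedge N$ is what makes this legitimate. An alternative, slicker route I would keep in reserve is Burkholder's special-function method, constructing a function $U(x,y)$ dominating $|x|^p-C|y|^p$ with a concavity/supermartingale property along $(Y_k,S_k)$, which avoids good-$\lambda$ altogether but is harder to discover.
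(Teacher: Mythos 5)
The paper does not actually prove this lemma: it is invoked as the classical Burkholder (Burkholder--Davis--Gundy) inequality, on the same footing as \cref{vital1}, so your proposal has to stand on its own as a proof of a classical theorem. Your outline (truncate, Doob reduction, two-sided good-$\lambda$ inequality integrated against $p\lambda^{p-1}\,\mathrm{d}\lambda$) is indeed the classical route, but the central estimate of your good-$\lambda$ step fails, and it fails exactly at the point that makes the discrete-time theorem genuinely hard. With $\tau=\inf\{k: S_k>\delta\lambda\}$ you invoke ``$S_\tau\le\delta\lambda$''; by definition of $\tau$ the opposite holds, $S_\tau>\delta\lambda$, and only $S_{\tau-1}\le\delta\lambda$. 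Orthogonality gives
\[
\mathbb{E}\Big[\big(Y_{\rho\wedge\tau}-Y_{\sigma\wedge\tau}\big)^2\Big]=\mathbb{E}\Big[\sum_k X_k^2\,\mathbf{1}_{\{\sigma\wedge\tau<k\le\rho\wedge\tau\}}\Big],
\]
and this sum contains the overshoot term $X_\tau^2$, which no choice of $\delta$ controls: take a martingale that, immediately after $\sigma$, makes a single symmetric jump of size $A$; then $\tau=\rho=\sigma+1$, the left-hand side is at least $A^2\,\mathbb{P}(\sigma<\tau)$, which blows up as $A\to\infty$, while $\delta^2\lambda^2\,\mathbb{P}(\sigma<\infty)$ stays fixed. (The good-$\lambda$ inequality itself survives, because such a jump also forces $S>\delta\lambda$ and removes the path from the event $\{M>\beta\lambda,\,S\le\delta\lambda\}$; it is only your proof of it that breaks.) The overshoot cannot be removed by predictable bookkeeping --- an indicator such as $\mathbf{1}_{\{S_{k-1}\le\delta\lambda\}}$ still lets the first large increment through --- and this is precisely why the classical discrete-time proofs need an additional idea: Davis' decomposition of the martingale into a predictably-small-jump part and a part whose maximal jump is controlled in $L^1$, or a detour through the conditional square function $s^2=\sum_k\mathbb{E}[X_k^2\mid\mathscr{F}_{k-1}]$, whose level-crossing times are predictable so that the one-step look-ahead bound does hold. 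Your symmetric good-$\lambda$ for the lower bound has the same defect.

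Two secondary points. First, the excursion on the good event has size at least $(\beta-1-\delta)\lambda$, not $(\beta-1)\lambda$, since $|Y_\sigma|\le\lambda+|X_\sigma|\le(1+\delta)\lambda$ there; this is cosmetic. Second, your opening reduction asserts that $X_n\mathbf{1}_{\{s\le n\le t\}}$ is a martingale difference sequence ``because $s,t$ are stopping times'': this needs $\{s\le n\}\in\mathscr{F}_{n-1}$, i.e.\ $s$ predictable in the sense $[s=n]\in\mathscr{F}_{n-1}$ (the convention the paper imposes in \cref{vital1} and satisfies in its applications, but which is absent from the statement of \cref{vital2}); for an ordinary stopping time only $\{s<n\}\in\mathscr{F}_{n-1}$, so you must split off the $n=s$ term, which is harmless since $|X_s|^2\le\sum_{n=s}^{t}X_n^2$. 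As written, then, the proposal has a genuine gap at its core $L^2$ estimate; the fallback you mention (Burkholder's special-function method), or the Davis-decomposition argument, is what an actual proof requires.
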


\begin{lem}\label{vital1}(Doob's stopped theorem)
For an adapted process $(Y_{n}, \mathscr{F}_{n})$, if there exist two bounded stopping times $s \leq t < +\infty\ a.s.$, and if $[s= n] \in \mathscr{F}_{n-1}$ and $[t=n]\in \mathscr{F}_{n-1}$ for all $n > 0$, then the following equation holds.
\[
\Expect\left[\sum_{n=s}^{t}Y_{n}\right] = \Expect\left[\sum_{n=s}^{t}\Expect[Y_{n}|\mathscr{F}_{n-1}]\right].
\]
\end{lem}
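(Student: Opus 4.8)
The plan is to reduce the stopped sum to an ordinary finite sum weighted by an indicator, and then to exploit the \emph{predictability} of the two stopping times to pull the conditional expectation inside. Concretely, I would write the stopped sum as $\sum_{n=s}^{t}Y_{n}=\sum_{n\ge 1}Y_{n}\I_{\{s\le n\le t\}}$. The boundedness assumption (a bounded stopping time means there is a deterministic constant $N$ with $t\le N$ a.s.) guarantees that this is in fact a finite sum of at most $N$ terms, so plain linearity of expectation applies termwise and no convergence subtlety arises.

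The heart of the argument is a single measurability claim: for every $n\ge 1$ the event $\{s\le n\le t\}$ is $\mathscr{F}_{n-1}$-measurable. To establish it I would use the two hypotheses $[s=n]\in\mathscr{F}_{n-1}$ and $[t=n]\in\mathscr{F}_{n-1}$ directly. For the lower endpoint, $\{s\le n\}=\bigcup_{k\le n}\{s=k\}$ and each $\{s=k\}\in\mathscr{F}_{k-1}\subseteq\mathscr{F}_{n-1}$, whence $\{s\le n\}\in\mathscr{F}_{n-1}$. For the upper endpoint, $\{t\ge n\}=\{t\le n-1\}^{c}$ with $\{t\le n-1\}=\bigcup_{k\le n-1}\{t=k\}$ and each $\{t=k\}\in\mathscr{F}_{k-1}\subseteq\mathscr{F}_{n-1}$, so $\{t\ge n\}\in\mathscr{F}_{n-1}$. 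Intersecting gives $\{s\le n\le t\}\in\mathscr{F}_{n-1}$, i.e. $\I_{\{s\le n\le t\}}$ is predictable with respect to the filtration.

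With predictability in hand the remainder is the tower property together with ``taking out what is known.'' For each fixed $n$ I would write
\[
\Expect[Y_{n}\I_{\{s\le n\le t\}}]=\Expect\big[\Expect[Y_{n}\I_{\{s\le n\le t\}}\mid\mathscr{F}_{n-1}]\big]=\Expect\big[\I_{\{s\le n\le t\}}\,\Expect[Y_{n}\mid\mathscr{F}_{n-1}]\big],
\]
where the indicator is pulled out of the inner conditional expectation precisely because it is $\mathscr{F}_{n-1}$-measurable. Summing this identity over $n=1,\dots,N$ and folding the indicators back into stopped-sum notation yields exactly $\Expect[\sum_{n=s}^{t}Y_{n}]=\Expect[\sum_{n=s}^{t}\Expect[Y_{n}\mid\mathscr{F}_{n-1}]]$, which is the stated identity.

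I expect the only genuine obstacle to be the predictability step, since it is the one place where the specific hypotheses $[s=n],[t=n]\in\mathscr{F}_{n-1}$ (rather than the weaker $\in\mathscr{F}_{n}$ asked of an ordinary stopping time) are essential; these ``one-step-ahead'' conditions are exactly what make the indicator $\mathscr{F}_{n-1}$-measurable and thereby license pulling it through the conditional expectation. A minor secondary point is integrability: one should assume $Y_{n}\in L^{1}$ (or at least that each $Y_{n}\I_{\{s\le n\le t\}}$ is integrable) so that the termwise expectations and the tower property are well defined, after which the boundedness of $t$ ensures the finite sum causes no further difficulty.
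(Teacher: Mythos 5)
Your proof is correct, but there is nothing in the paper to compare it against: the paper states \cref{vital1} as a standard auxiliary fact (alongside Burkholder's inequality, \cref{vital2}) and invokes it as a black box — e.g.\ in the proofs of \cref{sum:expect:ab}, \cref{lem:estimation:supg}, \cref{lem:psi:i1}, and \cref{lem23} — without ever proving it. Your argument supplies the missing derivation in the canonical way: rewrite the stopped sum as $\sum_{n=1}^{N}Y_{n}\I_{\{s\le n\le t\}}$ (a genuinely finite sum since $t\le N$ a.s.), verify $\{s\le n\le t\}\in\mathscr{F}_{n-1}$, and finish with the tower property after pulling the indicator out of $\Expect[\,\cdot\mid\mathscr{F}_{n-1}]$. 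All three steps are sound, and your integrability caveat is apt, since the lemma statement leaves $Y_{n}\in L^{1}$ implicit (in the paper's applications the summands are nonnegative, so one could alternatively dispense with integrability via monotone convergence).

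One refinement to your closing remark: the one-step-ahead condition is genuinely needed only at the \emph{lower} endpoint. For any ordinary stopping time $t$ one already has $\{n\le t\}=\{t\le n-1\}^{c}\in\mathscr{F}_{n-1}$, so your use of $[t=n]\in\mathscr{F}_{n-1}$ there is harmless but unnecessary; it is $\{s\le n\}\in\mathscr{F}_{n-1}$ that truly requires $[s=n]\in\mathscr{F}_{n-1}$, since for an ordinary stopping time one only gets $\{s\le n\}\in\mathscr{F}_{n}$. This is exactly why the paper imposes predictability on its stopping times (cf.\ the requirement $[\tau=i]\in\mathscr{F}_{i-1}$ in the proof of \cref{lem23}, and the construction in \cref{stopping_time}, whose stopping times are predictable because $\hat{g}(\theta_{k})$ is $\mathscr{F}_{k-1}$-measurable). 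Neither point affects the correctness of your proof.
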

If the upper index of the summation is less than the lower index, we define the summation to be zero, i.e., $\sum_{s}^{t}(\cdot) \equiv -\sum_{t}^{s}(\cdot)\ (\forall\ t<s)$. The above equation remains true. 
\begin{lem}\label{sum:expect:ab}
For an adapted process $(Y_{n}, \mathscr{F}_{n})$, and finite stopping times \(a-1,\) \(a\) and \(b\), i.e., $a,\ b<+\infty\ a.s.$
the following equation holds.
\begin{align*}
\E\left[\sum_{n=a}^b Y_n \right] = \E\left[\sum_{n=a}^{b}\Expect[ Y_n|\mathscr{F}_{n-1}] \right].
\end{align*}
\end{lem}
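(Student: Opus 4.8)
The plan is to reduce this random-index identity to a termwise use of the tower property, precisely mirroring the proof of Doob's stopped theorem in \cref{vital1}. The one structural fact that makes everything work is that, under the stated hypotheses, the event $\{a \le n \le b\}$ lies in $\mathscr{F}_{n-1}$ for every fixed $n$, so that $\Expect[Y_n \mid \mathscr{F}_{n-1}]$ can be pulled out against it.

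First I would rewrite the sum with fixed limits by means of indicators,
\[
\sum_{n=a}^{b} Y_n = \sum_{n=1}^{+\infty} Y_n\, \I_{\{a \le n\}}\,\I_{\{n \le b\}},
\]
noting that $a,b<+\infty$ a.s.\ guarantees that only finitely many summands are nonzero on each sample path. Next I would check the measurability claim. Because $a-1$ is a stopping time, $\{a \le n\} = \{a-1 \le n-1\} \in \mathscr{F}_{n-1}$; because $b$ is a stopping time, $\{n \le b\} = \Omega \setminus \{b \le n-1\} \in \mathscr{F}_{n-1}$. Hence $\I_{\{a \le n \le b\}} = \I_{\{a \le n\}}\,\I_{\{n \le b\}}$ is $\mathscr{F}_{n-1}$-measurable, and the tower property yields, for every $n$,
\[
\Expect\big[Y_n\,\I_{\{a \le n \le b\}}\big] = \Expect\big[\I_{\{a \le n \le b\}}\,\Expect[Y_n \mid \mathscr{F}_{n-1}]\big].
\]

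Finally I would sum this identity over $n$ and interchange the summation with the expectation, which recovers exactly $\Expect[\sum_{n=a}^{b} Y_n] = \Expect[\sum_{n=a}^{b}\Expect[Y_n \mid \mathscr{F}_{n-1}]]$. This interchange is the only point needing care: in every application in this paper the stopping times are bounded truncations of the form $\tau \wedge T$, so the series has at most $T$ nonzero terms and the interchange is just finite additivity of expectation; for merely a.s.-finite (unbounded) stopping times one would instead invoke Fubini--Tonelli or dominated convergence under an integrability hypothesis on $\sum_n |Y_n|\,\I_{\{a \le n \le b\}}$. The degenerate ordering $b < a$ makes the indicator identically zero, so both sides vanish, consistent with the paper's summation convention. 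I expect no conceptual difficulty beyond keeping the measurability bookkeeping straight; the role of the hypothesis that $a-1$ (rather than merely $a$) is a stopping time is exactly to push $\{a \le n\}$ from $\mathscr{F}_n$ down into $\mathscr{F}_{n-1}$, which is indispensable for pulling out the one-step-ahead conditional expectation.
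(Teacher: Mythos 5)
Your proof is correct, but it takes a different route from the paper's. The paper proves \cref{sum:expect:ab} by the prefix-sum decomposition $\sum_{n=a}^{b} Y_n = \sum_{n=1}^{b} Y_n - \sum_{n=1}^{a-1} Y_n$, splits the expectation across the difference, and then invokes Doob's stopped theorem (\cref{vital1}) twice — once with the finite stopping time $b$ and once with the finite stopping time $a-1$ — before recombining. You instead give a self-contained argument: rewrite $\sum_{n=a}^{b} Y_n = \sum_{n\ge 1} Y_n \I_{\{a\le n\le b\}}$, verify that $\{a\le n\} = \{a-1\le n-1\}\in\mathscr{F}_{n-1}$ (this is precisely where the hypothesis that $a-1$, not just $a$, is a stopping time enters) and $\{n\le b\} = \{b\le n-1\}^{c}\in\mathscr{F}_{n-1}$, apply the tower property termwise, and sum. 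In effect you re-derive the one-sided version of Doob's theorem rather than citing it, which makes the role of each hypothesis visible; the paper's proof is shorter but hides the use of predictability of $a$ inside the black-box application of \cref{vital1} to the prefix sum ending at $a-1$. Your treatment of the sum--expectation interchange is also more careful than the paper's: you note it is mere finite additivity when the stopping times are truncations $\tau\wedge T$ (as in every application in the paper), and that for genuinely unbounded a.s.-finite stopping times one needs Fubini--Tonelli or domination — a caveat the paper's proof glosses over, since it splits $\E[\sum_{1}^{b}Y_n - \sum_{1}^{a-1}Y_n]$ into a difference of expectations and applies \cref{vital1} (stated only for bounded stopping times) without comment. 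Both proofs share the same implicit integrability assumptions, so neither is more rigorous in substance on that point, but yours localizes the issue explicitly.
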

\begin{proof}(of \cref{sum:expect:ab})
\begin{align*}
\E\left[\sum_{n=a}^b Y_n \right]&=\Expect\left[\sum_{n=1}^{b}Y_{n}-\sum_{n=1}^{a-1}Y_{n}\right]=\Expect\left[\sum_{n=1}^{b}Y_{n}\right]-\Expect\left[\sum_{n=1}^{a-1}Y_{n}\right]\\&\mathop{=}^{(a)}\Expect\left[\sum_{n=1}^{b}\Expect\left[Y_{n}|\mathscr{F}_{n-1}\right]\right]-\Expect\left[\sum_{n=1}^{a-1}\Expect\left[Y_{n}|\mathscr{F}_{n-1}\right]\right]\\&=\E\left[\sum_{n=a}^b \Expect[Y_n|\mathscr{F}_{n-1}] \right],
\end{align*}
where in \emph{$(a)$}, we apply \emph{Doob's stopped} theorem, i.e., for any stopping times \(s < +\infty\ \ a.s.\), we have \(\mathbb{E}\left[\sum_{n=1}^{s} Y_n\right] = \mathbb{E}\left[\sum_{n=1}^{s} \Expect[Y_n|\mathscr{F}_{n-1}]\right].\) 
\end{proof}

\begin{lem}\label{lem_S_{T}} 
Consider the AdaGrad-Norm algorithm in  \cref{AdaGrad_Norm} and suppose that   \cref{ass_g_poi}~\ref{ass_g_poi:i}$\sim$\ref{ass_g_poi:i2} and  \cref{ass_noise}~\ref{ass_noise:i}$\sim$ \ref{ass_noise:i2} hold. For any initial point $\theta_{1}\in \mathbb{R}^{d}, S_{0}>0$, and $T \geq 1$, let $\zeta =\sqrt{S_{0}}+\sum_{n=1}^{\infty}\|\nabla g(\theta_{n},\xi_{n})\|^{2}/n^2$. The following results hold.
\begin{itemize}
\item[(a)]  $\E(\zeta)$ is uniformly upper bounded by a constant, which depends on $\theta_1, \sigma_0, \sigma_1, \alpha_0,  L, S_0$.
\item[(b)] $S_T$ is upper bounded by $(1+\zeta)^2T^4$.
\end{itemize}
\end{lem}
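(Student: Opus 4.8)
The plan is to establish the deterministic bound (b) first, since it is purely pathwise, and then feed it into the descent machinery already developed in the paper to obtain the expectation bound (a).

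For part (b) I would argue directly from the definition $\zeta = \sqrt{S_0} + \sum_{n\ge 1}\|\nabla g(\theta_n,\xi_n)\|^2/n^2$. Because every summand is nonnegative, each term is dominated by the whole sum, giving the pointwise inequality $\|\nabla g(\theta_n,\xi_n)\|^2 \le \zeta\,n^2$ for every $n$. Summing over $n\le T$ and using $\sum_{n=1}^T n^2 \le T^3$ yields $\sum_{n=1}^T\|\nabla g(\theta_n,\xi_n)\|^2 \le \zeta T^3$, so that $S_T = S_0 + \sum_{n=1}^T\|\nabla g(\theta_n,\xi_n)\|^2 \le S_0 + \zeta T^3$. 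Since $\zeta \ge \sqrt{S_0}$ forces $S_0 \le \zeta^2$, I would finish with $S_T \le \zeta^2 + \zeta T^3 \le (\zeta^2+\zeta)T^4 \le (1+\zeta)^2 T^4$ for all $T\ge 1$, where the last step uses $\zeta^2 + \zeta \le 1 + 2\zeta + \zeta^2$. This part is routine.

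For part (a) the object to control is $\E[\zeta] = \sqrt{S_0} + \sum_{n\ge 1} n^{-2}\,\E\|\nabla g(\theta_n,\xi_n)\|^2$, where the interchange of sum and expectation is legal by Tonelli since the terms are nonnegative. The affine-noise variance condition of \cref{ass_noise} together with \cref{loss_bound} gives $\E\|\nabla g(\theta_n,\xi_n)\|^2 \le 2L\sigma_0\,\E[g(\theta_n)] + \sigma_1$, so it suffices to show $\sum_n n^{-2}\E[g(\theta_n)] < \infty$; a rate $\E[g(\theta_n)] = O(\ln n)$ is far more than enough. I would obtain this by a two-stage bootstrap. Stage one is a crude pathwise bound: the one-step estimate \cref{inequ:g:adj} combined with $\hat g(\theta_n) \le c_0\, g(\theta_n)$, where $c_0 = 1 + \sigma_0\alpha_0 L/\sqrt{S_0}$ (obtained from $\zeta(n)\le 2Lg(\theta_n)/\sqrt{S_0}$), gives $g(\theta_{n+1}) \le g(\theta_n) + \alpha_0\sqrt{2Lc_0}\,\sqrt{g(\theta_n)} + L\alpha_0^2/2$; writing $u_n=\sqrt{g(\theta_n)}$ this reads $u_{n+1}\le u_n + \mathrm{const}$, whence the deterministic bound $g(\theta_n)=O(n^2)$ and therefore $\E[S_n]=S_0+\sum_{k\le n}\E\|\nabla g(\theta_k,\xi_k)\|^2 = O(n^3)$. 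Stage two upgrades this to a logarithmic rate: telescoping \cref{sufficient:lem}, discarding the nonpositive $-\tfrac{\alpha_0}{4}\zeta(n)$ terms, and taking full expectation (the martingale term $\hat X_n$ has zero mean) gives $\E[\hat g(\theta_{N+1})] \le \hat g(\theta_1) + C_{\Gamma,1}\,\E[\ln(S_N/S_0)] + 2C_{\Gamma,2}/\sqrt{S_0}$, using the pathwise estimates $\sum_n \Gamma_n \le \ln(S_N/S_0)$ and $\sum_n \Gamma_n/\sqrt{S_n} \le 2/\sqrt{S_0}$. Jensen's inequality then gives $\E[\ln(S_N/S_0)] \le \ln(\E[S_N]/S_0)= O(\ln N)$ by stage one, so $\E[g(\theta_n)] \le \E[\hat g(\theta_n)] = O(\ln n)$. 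Substituting back bounds $\sum_n n^{-2}\E\|\nabla g(\theta_n,\xi_n)\|^2$ by a convergent series, yielding a finite $\E[\zeta]$ depending only on $\theta_1, S_0, \alpha_0, \sigma_0, \sigma_1, L$.

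The main obstacle is the circularity hidden in part (a): the descent lemma naturally bounds $\E[\hat g(\theta_n)]$ in terms of $\E[\ln S_n]$, yet $S_n$ is itself driven by the gradient norms we are trying to control. Breaking this loop is exactly what the bootstrap accomplishes—one must first extract \emph{some} polynomial control of $\E[S_n]$ (the pathwise $O(n^2)$ bound is enough) and only then route it through Jensen's inequality into the logarithmically small descent estimate. I would also be careful that the stability result \cref{stable} is unavailable here, since this lemma is proved only under parts (i)–(ii) of \cref{ass_g_poi} and \cref{ass_noise}, without the non-flatness and near-critical sharpness conditions; the bootstrap is precisely the device that substitutes for stability in this weaker regime.
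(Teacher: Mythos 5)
Your proposal is correct, and it reaches the lemma by a genuinely different route than the paper. The paper's own proof never leaves the descent machinery: it divides the sufficient-decrease inequality of \cref{sufficient:lem} by $n^{2}$, telescopes (handling the $\hat{g}$-differences by an Abel-type summation), bounds the remainder series pathwise, takes expectations, and then inverts the affine-noise condition to convert the resulting bound on $\sum_{n}\E[\zeta(n)]/n^{2}$ into a bound on a stochastic-gradient series. The crucial point is that the series the paper actually controls---and the $\zeta$ it actually uses in its own proof of part (b), via $\sqrt{S_T}\le \zeta T^{2}$---is the \emph{weighted} one, $\sqrt{S_0}+\sum_{n}\|\nabla g(\theta_n,\xi_n)\|^{2}/(n^{2}\sqrt{S_{n-1}})$: the factor $1/\sqrt{S_{n-1}}$ is inherited from $\zeta(n)$ and survives the application of \cref{ass_noise}~\ref{ass_noise:i2} because $S_{n-1}$ is $\mathscr{F}_{n-1}$-measurable, so no bootstrap is ever needed. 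Your proof, by contrast, establishes the statement with the \emph{unweighted} $\zeta$ exactly as written, which the paper's one-pass argument cannot deliver (the weight cannot simply be dropped, since $\sqrt{S_{n-1}}$ is unbounded); the price is your two-stage bootstrap, whose logic is sound: the deterministic bound $g(\theta_n)=O(n^{2})$ (valid because $\|\theta_{n+1}-\theta_n\|\le\alpha_0$) gives $\E[S_N]=O(N^{3})$, Jensen turns this into $\E[\ln S_N]=O(\ln N)$, the telescoped descent inequality upgrades this to $\E[g(\theta_n)]=O(\ln n)$, and affine noise together with \cref{loss_bound} then makes $\sum_{n}n^{-2}\E\|\nabla g(\theta_n,\xi_n)\|^{2}$ convergent. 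Your part (b) is likewise simpler and works directly with the stated $\zeta$. What each approach buys: yours is faithful to (indeed slightly stronger than) the statement, since the unweighted bound implies the weighted one up to a factor $1/\sqrt{S_0}$, and it removes the mismatch between the paper's statement and its proof; the paper's is a shorter single-pass argument producing exactly the weighted quantity that is consumed downstream in \cref{lem8}. Either version suffices for everything the lemma is used for.
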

\begin{proof}(of \cref{lem_S_{T}})
Recalling the sufficient decrease inequality in \cref{sufficient:lem}
\begin{align*}
\hat{g}(\theta_{n+1})-\hat{g}(\theta_{n}) & \le -\frac{\alpha_{0}}{4}\zeta(n)+C_{\Gamma,1}\cdot \Gamma_n  +C_{\Gamma,2}\frac{\Gamma_{n}}{\sqrt{S_{n}}}  + \alpha_0 \hat{X}_{n}.
\end{align*}
Dividing both sides of the inequality by $n^2\alpha_0/4$, we obtain
\begin{align}\label{zxc_09}
\frac{1}{n^{2}} \zeta(n) &\le \frac{4}{\alpha_0n^{2}}\big(\hat{g}(\theta_{n})-\hat{g}(\theta_{n+1})\big)+\frac{4C_{\Gamma,1}}{\alpha_0}\cdot\frac{\Gamma_n}{n^{2}}  +\frac{4C_{\Gamma,2}}{\alpha_0}\frac{\|\nabla g(\theta_{n},\xi_{n})\|^{2}}{n^2 S_{n}^{\frac{3}{2}}}+\frac{4 \hat{X}_{n}}{n^{2}}.
\end{align}
For the second term on the RHS of \cref{zxc_09}, we use \emph{Young's inequality} and $S_n \geq S_{n-1}$:
\begin{equation}\nonumber\begin{aligned}
 \frac{4C_{\Gamma,1}}{\alpha_0}\cdot\frac{\Gamma_n}{n^{2}} &\le \frac{\|\nabla g(\theta_{n},\xi_{n})\|^{2}}{2n^{2}\sqrt{S_{n}}}+\frac{16C_{\Gamma,1}^2}{\alpha_0^2}\frac{\|\nabla g(\theta_{n},\xi_{n})\|^{2}}{2n^2S_{n}^{\frac{3}{2}}}  \le \frac{\|\nabla g(\theta_{n},\xi_{n})\|^{2}}{2n^{2}\sqrt{S_{n-1}}}+\frac{16C_{\Gamma,1}^2}{\alpha_0^2}\frac{\|\nabla g(\theta_{n},\xi_{n})\|^{2}}{2n^2S_{n}^{\frac{3}{2}}}.
\end{aligned}\end{equation}
Substituting the above inequality into \cref{zxc_09} gives
\begin{align*}
\frac{\zeta(n)}{2n^{2}}  &\le \frac{4}{\alpha_0n^{2}}\big(\hat{g}(\theta_{n})-\hat{g}(\theta_{n+1})\big)+\left(\frac{4C_{\Gamma,2}}{\alpha_0} + \frac{8C_{\Gamma,1}^2}{\alpha_0^2}\right)\frac{\|\nabla g(\theta_{n},\xi_{n})\|^{2}}{n^2 S_{n}^{\frac{3}{2}}}    +\frac{4 \hat{X}_{n}}{n^{2}}.
\end{align*}
Telescoping the indices \( n \) from 1 to \( T \) over the above inequality, we have
\begin{align}\label{inequ:sequence:1}
\sum_{n=1}^{T} \frac{1}{2n^2}\zeta(n) &\le \sum_{n=1}^{T} \frac{4}{\alpha_0n^{2}}\big(\hat{g}(\theta_{n})-\hat{g}(\theta_{n+1})\big) +\mathcal{C}_1 \sum_{n=1}^T\frac{\|\nabla g(\theta_{n},\xi_{n})\|^{2}}{n^2 S_{n}^{\frac{3}{2}}}  + 4\sum_{n=1}^{T}\frac{\hat{X}_{n}}{n^{2}},
\end{align}
where we use $\mathcal{C}_1$ to denote the coefficient constant factor of $\frac{\|\nabla g(\theta_{n},\xi_{n})\|^{2}}{n^2 S_{n}^{\frac{3}{2}}}$ to simplify the expression. For the first term of RHS of \cref{inequ:sequence:1}, since $\hat{g}(\theta_n) = g(\theta_n) + \sigma_0\alpha_0\zeta(n)/2 \geq 0$ for all $n \geq 1$, we have 
\begin{align}
& \sum_{n=1}^{T}\frac{1}{n^{2}}\big(\hat{g}(\theta_{n})-\hat{g}(\theta_{n+1})\big) =  \sum_{n=1}^{T} \frac{\hat{g}(\theta_{n})}{n^2}-\frac{\hat{g}(\theta_{n+1})}{(n+1)^2} + \frac{\hat{g}(\theta_{n+1})}{(n+1)^{2}} - \frac{\hat{g}(\theta_{n+1})}{n^{2}} \notag \\
=  &\sum_{n=1}^{T} \frac{\hat{g}(\theta_{n})}{n^2}-\frac{\hat{g}(\theta_{n+1})}{(n+1)^2} - \frac{\hat{g}(\theta_{n+1})(2n+1)}{(n+1)^{2}n^2}  \leq  \hat{g}(\theta_1).
\end{align}
For the second term of RHS of \cref{inequ:sequence:1}, we utilized the series-integral result
\[\sum_{n=1}^{T}\frac{\|\nabla g(\theta_{n},\xi_{n})\|^{2}}{n^2 S_{n}^{\frac{3}{2}}} \leq \sum_{n=1}^{T}\frac{\|\nabla g(\theta_{n},\xi_{n})\|^{2}}{S_{n}^{\frac{3}{2}}}<\int_{S_{0}}^{+\infty}\frac{1}{x^{\frac{3}{2}}}\text{d}x=\frac{2}{\sqrt{S_{0}}}.\]
Applying the above estimations into \cref{inequ:sequence:1} and taking the mathematical expectation on both sides, we have $\forall\ n\ge 1,$
\begin{align}\label{inequ:sequence:expect}
\sum_{n=1}^{T}\frac{\E\left[\zeta(n)\right]}{2n^2} &\le \frac{4}{\alpha_0} \hat{g}(\theta_1) + \frac{2}{\sqrt{S_0}}\mathcal{C}_1   + 4\sum_{n=1}^{T}\frac{\E[\hat{X}_{n}]}{n^{2}} = \frac{4}{\alpha_0} \hat{g}(\theta_1) + \frac{2}{\sqrt{S_0}}\mathcal{C}_1,
\end{align}
since $\lbrace \hat{X}_{n}, \mathscr{F}_{n-1}\rbrace$ is a martingale difference sequence. According to \emph{the affine noise variance condition}, we obtain:
\begin{align}\label{adagrad_100-1}
\sum_{n=1}^{T}\frac{\E\left[\zeta(n)\right]}{2n^2}\ge \sum_{n=1}^{T}\frac{\E\left[\|\nabla g(\theta_{n},\xi_{n})\|^{2}\right]}{2\sigma_{0}n^2}-\frac{\sigma_{1}}{2\sigma_{0}}\sum_{n=1}^{T}\frac{1}{n^{2}}\mathop{\ge}^{(a)} \sum_{n=1}^{T}\frac{\E\left[\|\nabla g(\theta_{n},\xi_{n})\|^{2}\right]}{2\sigma_{0}n^2}-\frac{\sigma_{1}\pi^{2}}{12\sigma_{0}}.
\end{align}
Here, $(a)$ uses the inequity \[\sum_{n=1}^{T}\frac{1}{n^{2}}<\sum_{n=1}^{+\infty}\frac{1}{n^{2}}=\frac{\pi^{2}}{6}.\] 
Combining \cref{inequ:sequence:expect} with \cref{adagrad_100-1}, we obtain
\begin{align*}
\Expect\bigg[\sum_{n=1}^{T}\frac{\|\nabla g(\theta_{n},\xi_{n})\|^{2}}{2\sigma_{0}n^{2}}\bigg]=\sum_{n=1}^{T}\frac{\E\left[\|\nabla g(\theta_{n},\xi_{n})\|^{2}\right]}{2\sigma_{0}n^2}\le \frac{\sigma_{1}\pi^{2}}{12\sigma_{0}}+\frac{4}{\alpha_0} \hat{g}(\theta_1) + \frac{2}{\sqrt{S_0}}\mathcal{C}_1.
\end{align*}
By \emph{Lebesgue monotone convergence} theorem, we further get that $\zeta=\sqrt{S_{0}}+\sum_{n=1}^{+\infty}\|\nabla g(\theta_{n},\xi_{n})\|^{2}\big/n^{2}<+\infty\ \ a.s.,$ and
\begin{equation}\label{zxc_10}\begin{aligned}
\Expect[\zeta]=\sqrt{S_{0}}+\Expect\bigg[\sum_{n=1}^{T}\frac{\|\nabla g(\theta_{n},\xi_{n})\|^{2}}{n^{2}}\bigg] \leq   \sqrt{S_{0}}+\frac{\sigma_{0}\sigma_{1}\pi^{2}}{6\sigma_{0}}+\frac{16\sigma_{0}}{\alpha_0} \hat{g}(\theta_1) + \frac{8\sigma_{0}}{\sqrt{S_0}}\mathcal{C}_1.
\end{aligned}\end{equation}
Next, we derive the relationship of $S_T$ and the $\zeta$. Note that $\forall\ T\ge 1,$
\[\sum_{n=1}^{T}\frac{\|\nabla g(\theta_{n},\xi_{n})\|^{2}}{n^{2}\sqrt{S_{n-1}}}>\frac{1}{T^{2}\sqrt{S_{T}}}\sum_{n=1}^{T}\|\nabla g(\theta_{n},\xi_{n})\|^{2}=\frac{S_T -S_0}{T^{2}\sqrt{S_{T}}} .\] We have
\begin{equation}\nonumber\begin{aligned}
\sqrt{S_T} &\leq \bigg(\sum_{n=1}^{T}\frac{\|\nabla g(\theta_{n},\xi_{n})\|^{2}}{n^{2}\sqrt{S_{n-1}}}\bigg) \cdot T^2+ \sqrt{S_0}  \leq \bigg(\sum_{n=1}^{T}\frac{\|\nabla g(\theta_{n},\xi_{n})\|^{2}}{n^{2}\sqrt{S_{n-1}}} + \sqrt{S_0} \bigg)\cdot T^2=\zeta\cdot T^{2}\\&<(1+\zeta)\cdot T^{2},
\end{aligned}\end{equation}
as we desired. 
\end{proof}


\section{Appendix: Additional Proofs in~\cref{sec:asympt:result}}\label{appendix:add:proof}
\subsection{Proofs of Lemmas in \cref{subsec:stability}}
\begin{proof}(of \cref{lem:estimation:supg})
For any $T\geq 1,$ we calculate \(\E\left(\sup_{n \ge 1} g(\theta_{n})\right)\) based on the segment of $g$ on the stopping time
\begin{align}\label{start}
  &\quad \Expect\Big[\sup_{1\le n<T}g(\theta_{n})\Big] \notag \\&\le \Expect\Big[\sup_{1\le n< \tau_{1,T}}g(\theta_{n})\Big]  +\Expect\Big[\sup_{\tau_{1,T}\le n<T}g(\theta_{n})\Big]\notag\\&=\Expect\Big[\I_{[\tau_{1,T}=1]}\sup_{1\le n< \tau_{1,T}}g(\theta_{n})\Big]+\underbrace{\Expect\Big[\I_{[\tau_{1,T}>1]}\sup_{1\le n< \tau_{1,T}}g(\theta_{n})\Big]}_{\Pi_{1, T}} +\underbrace{\Expect\Big[\sup_{\tau_{1,T}\le n< T}g(\theta_{n})\Big]}_{\Pi_{2,T}}\notag\\&\mathop{\le}^{(a)}0+\Delta_{0}+\Pi_{2,T},
\end{align}
where we define $\tau_{t,T}:=\tau_{t}\wedge T.$ 
To make the inequality consistent, we let $\sup_{a \le t < b}(\cdot) = 0 \ (\forall\ a \ge b).$ For $(a)$ in \cref{start}, since $\tau_{1,T} \geq 1$, we have  $\Expect\Big[\I_{[\tau_{1,T}=1]}\sup_{1\le n< \tau_{1,T}}g(\theta_{n})\Big]=0$ and
\begin{align*}
\Pi_{1, T}&=\Expect\Big[\I_{[\tau_{1,T}>1]}\sup_{1\le n< \tau_{1,T}}g(\theta_{n})\Big]\le\Expect\Big[\I_{[\tau_{1}>1]}\sup_{1\le n< \tau_{1,T}}g(\theta_{n})\Big]\le\Delta_{0}.
\end{align*}
Next, we focus on  $\Pi_{2,T}$. Specifically, we have:
\begin{align}\label{inequ:xt2}
\Pi_{T,2}&=\Expect\Big[\sup_{\tau_{1,T}\le n<T}g(\theta_{n})\Big]=\Expect\bigg[\sup_{i\ge 1}\Big(\sup_{\tau_{3i-2,T}\le n<\tau_{3i+1,T}}g(\theta_n)\Big)\bigg]\notag\\&\le\underbrace{\Expect\bigg[\Big(\sup_{\tau_{1,T}\le n<\tau_{4,T}}g(\theta_n)\Big)\bigg]}_{\Pi_{2,T}^1}+\underbrace{\Expect\bigg[\sup_{i\ge 2}\Big(\sup_{\tau_{3i-2,T}\le n<\tau_{3i+1,T}}g(\theta_n)\Big)\bigg]}_{\Pi_{2,T}^2}.
\end{align}
We decompose $\Pi_{2,T}$ into $\Pi_{2,T}^1$ and $\Pi_{2,T}^2$ and estimate them separately. For the term $\Pi_{2,T}^1$ we have
\begin{align}\label{inequ:xt21}
    \Pi_{2,T}^1& = \Expect\bigg[\Big(\sup_{\tau_{1,T}\le n<\tau_{3,T}}g(\theta_n)\Big)\bigg]+\Expect\bigg[\Big(\sup_{\tau_{3,T}\le n<\tau_{4,T}}g(\theta_n)\Big)\bigg] \notag \\
    & \mathop{\le}^{\text{\cref{fab_2}}} \Expect\bigg[\Big(\sup_{\tau_{1,T}\le n<\tau_{3,T}}g(\theta_n)\Big)\bigg]+\Delta_{0}\notag\\&=\Expect[g(\theta_{\tau_{1,T}})]+\Expect\bigg[\Big(\sup_{\tau_{1,T}\le n<\tau_{3,T}}(g(\theta_n)-g(\theta_{\tau_{1,T}}))\Big)\bigg]+\Delta_{0}\notag\\
&=\Expect[\I_{[\tau_{1}=1]}g(\theta_{\tau_{1}})]+\Expect[\I_{[\tau_{1}>1]}g(\theta_{\tau_{1}})]+\Expect\bigg[\Big(\sup_{\tau_{1,T}\le n<\tau_{3,T}}(g(\theta_n)-g(\theta_{\tau_{1,T}}))\Big)\bigg]+\Delta_{0}\notag\\&\mathop{\le}^{(a)}g(\theta_{1})+\Big(\Delta_{0}+\alpha_{0}\sqrt{2 L\Delta_{0}}+\frac{ L\alpha_{0}^{2}}{2}\Big)+\Expect\bigg[\Big(\sup_{\tau_{1,T}\le n<\tau_{3,T}}(g(\theta_n)-g(\theta_{\tau_{1,T}}))\Big)\bigg] + \Delta_{0}\notag\\&\mathop{\le}^{(b)}g(\theta_{1})+2\Delta_{0}+\alpha_{0}\sqrt{2 L\Delta_{0}}+\frac{ L\alpha_{0}^{2}}{2} + C_{\Pi,1} \Expect\Bigg[\sum_{n=\tau_{1,T}}^{\tau_{3,T}-1} \zeta(n)\Bigg],
\end{align}
where $C_{\Pi,1}$ is a constant and is defined in \cref{inqu:diff:g:tau1}.
For $(a)$ of \cref{inequ:xt21}, we follow the fact that $\Expect\Big[\I_{[\tau_{1,T}>1]}g(\theta_{\tau_{1,T}-1})\Big]\le \Delta_{0}$ and  get that
\begin{align*}
\Expect[\I_{[\tau_{1}>1]}g(\theta_{\tau_{1,T}})]&=\Expect[\I_{[\tau_{1}>1]}g(\theta_{\tau_{1,T}-1})]+\Expect[\I_{[\tau_{1}>1]}g(\theta_{\tau_{1,T}})-g(\theta_{\tau_{1,T}-1})]\\&\mathop{\le}^{\text{\cref{inequ:g:adj}}} \Delta_{0}+\alpha_{0}\sqrt{2 L\Delta_{0}}+\frac{ L\alpha_{0}^{2}}{2}.
\end{align*}
For (b) we use the one-step iterative formula on $g$ 
   \begin{align}\label{inequ:one:g:adj}
       g(\theta_{n+1})-g(\theta_{n})&\le \nabla g(\theta_{n})^{\top}(\theta_{n+1}-\theta_{n})+\frac{ L}{2}\|\theta_{n+1}-\theta_{n}\|^{2} \notag \\&\le\frac{\alpha_{0}\|\nabla g(\theta_{n})\|\|\nabla g(\theta_{n},\xi_{n})\|}{\sqrt{S_{n}}}+\frac{ L\alpha_{0}^{2}}{2}\frac{\|\nabla g(\theta_{n},\xi_{n})\|^{2}}{S_{n}} \notag \\
       & \leq \frac{\alpha_{0}\|\nabla g(\theta_{n})\|}{\sqrt{S_{n-1}}} \|\nabla g(\theta_{n},\xi_{n})\|+\frac{ L\alpha_{0}^{2}}{2}\frac{\|\nabla g(\theta_{n},\xi_{n})\|^{2}}{\sqrt{S_0}\sqrt{S_{n-1}}},
   \end{align}
   which induces that (recall that $\zeta_n = \|\nabla g(\theta_{n},\xi_{n})\|^{2}/\sqrt{S_{n-1}}$)
\begin{align}\label{inqu:diff:g:tau1}
&\quad \Expect\bigg[\Big(\sup_{\tau_{1,T}\le n<\tau_{3,T}}(g(\theta_n)-g(\theta_{\tau_{1,T}}))\Big)\bigg]
\le  \Expect\Bigg[\sum_{n=\tau_{1,T}}^{\tau_{3,T}-1}|g(\theta_{n+1})-g(\theta_{n})|\Bigg] \notag\\& \le \Expect\Bigg[\sum_{n=\tau_{1,T}}^{\tau_{3,T}-1}\frac{\alpha_{0}\|\nabla g(\theta_{n})\|\cdot\|\nabla g(\theta_{n},\xi_{n})\|}{\sqrt{S_{n-1}}}\Bigg]+\Expect\Bigg[\sum_{n=\tau_{1,T}}^{\tau_{3,T}-1}\frac{ L\alpha^{2}_{0}\|\nabla g(\theta_{n},\xi_{n})\|^{2}}{2\sqrt{S_0}\sqrt{S_{n-1}}}\Bigg] \notag \\&\mathop{=}^{(a)}\Expect\left[\sum_{n=\tau_{1,T}}^{\tau_{3,T}-1}\frac{\alpha_{0}\|\nabla g(\theta_{n})\|}{\sqrt{S_{n}}}\E\left(\|\nabla g(\theta_{n},\xi_{n})\| \mid \mathscr{F}_{n-1}\right)+\frac{ L\alpha_{0}^{2}}{2\sqrt{S_{0}}}\sum_{n=\tau_{1,T}}^{\tau_{3,T}-1}\frac{\E \left(\|\nabla g(\theta_{n},\xi_{n})\|^{2} \mid \mathscr{F}_{n-1}\right) }{\sqrt{S_{n-1}}}\right]
\notag 
\\&\mathop{\le}^{(*)}\bigg(\alpha_{0}\Big(\sqrt{\sigma_{0}}+\sqrt{\frac{\sigma_{1}}{\eta}}\Big)+\frac{ L\alpha_{0}^{2}}{2\sqrt{S_{0}}}\Big(\sigma_{0}+\frac{\sigma_{1}}{\eta}\Big)\bigg)\Expect\Bigg[\sum_{n=\tau_{1,T}}^{\tau_{3,T}-1}\zeta(n)\Bigg] := C_{\Pi,1}\Expect\Bigg[\sum_{n=\tau_{1,T}}^{\tau_{3,T}-1}\zeta(n)\Bigg],
\end{align}
where (a) uses \cref{sum:expect:ab}.
If $\tau_{1,T} > \tau_{3,T} - 1$, inequality $(*)$ trivially holds since $\sum_{n=\tau_{1,T}}^{\tau_{3,T}-1}\cdot=0$. Moving forward we will exclusively examine the case $\tau_{1,T} \le \tau_{3,T} - 1$. By the definition of $\tau_t$, we have $\hat{g}(\theta_n) > \Delta_0 \geq \hat{C}_g$ for any $n \in [\tau_{1,T}, \tau_{3,T})$. Consequently, upon applying \cref{pro_0}, we deduce that $\|\nabla g(\theta_{n})\|^{2} > \eta$ for any $n \in [\tau_{1,T}, \tau_{3,T})$. Combined with the affine noise variance condition, we further achieve the subsequent inequalities that for any $n\in[\tau_{1,T},\tau_{3,T})$
\begin{align}\label{asddsaasd}
    \Expect[\|\nabla g(\theta_{n},\xi_{n})\|^{2}|\mathscr{F}_{n-1}]&\le {\sigma_{0}}\|\nabla g(\theta_{n})\|^{2}+{\sigma_{1}} 
     < \Big({\sigma_{0}}+{\frac{\sigma_{1}}{{\eta}}}\Big)\cdot\|\nabla g(\theta_{n})\|^{2}
\end{align}
and
\begin{align}\label{asddsa}
    &\quad \Expect[\|\nabla g(\theta_{n},\xi_{n})\||\mathscr{F}_{n-1}]  \leq \left(\Expect[\|\nabla g(\theta_{n},\xi_{n})\|^2|\mathscr{F}_{n-1}) \right]^{1/2} \leq \Big({\sigma_{0}}\|\nabla g(\theta_{n})\|^{2}+{\sigma_{1}}\Big)^{1/2} \notag \\
    &\le \sqrt{\sigma_{0}}\|\nabla g(\theta_{n})\|+\sqrt{\sigma_{1}} <\Big(\sqrt{\sigma_{0}}+\sqrt{\frac{\sigma_{1}}{{\eta}}}\Big)\cdot\|\nabla g(\theta_{n})\|.
\end{align}
Next, we turn to estimate $\Pi_{2,T}^2$.
\begin{align}\label{qaswert}
   \Pi_{2,T}^2&={\Expect\bigg[\sup_{i\ge 2}\Big(\sup_{\tau_{3i-2,T}\le n<\tau_{3i+1,T}}g(\theta_n)\Big)\bigg]}\notag\\&\le \Expect\bigg[\sup_{i\ge 2}\Big(\sup_{\tau_{3i-2,T}\le n<\tau_{3i-1,T}}g(\theta_n)\Big)\bigg]+\Expect\bigg[\sup_{i\ge 2}\Big(\sup_{\tau_{3i-1,T}\le n<\tau_{3i,T}}g(\theta_n)\Big)\bigg]\notag\\&\quad +\Expect\bigg[\sup_{i\ge 2}\Big(\sup_{\tau_{3i,T}\le n<\tau_{3i+1,T}}g(\theta_n)\Big)\bigg]\notag\\&\mathop{\le}^{(a)} 2\Delta_{0}+\Expect\bigg[\sup_{i\ge 2}\Big(\sup_{\tau_{3i-1,T}\le n<\tau_{3i,T}}g(\theta_n)\Big)\bigg]+\Delta_{0}\notag\\&\le 3\Delta_{0}+\Expect\Big[\sup_{n=\tau_{3i-1,T}}{g}(\theta_{n})\Big]+\Expect\Big[\sup_{i\ge 2}\sup_{\tau_{3i-1,T}\le n\le \tau_{3i,T}}(g(\theta_{n})-g(\theta_{\tau_{3i-1,T}}))\Big]\notag\\&\mathop{\le}^{(b)}3\Delta_{0}+\Big(2\Delta_0+2\alpha_{0}\sqrt{ L\Delta_0}+\frac{ L\alpha_{0}^{2}}{2}\Big) + C_{\Pi,1}\E\Bigg[\sum_{i=2}^{+\infty}\sum_{\tau_{3i-1,T}}^{\tau_{3i,T}-1}\zeta(n)\Bigg],
\end{align}
where $(a)$ follows from \cref{fab_2} and \cref{fabulous}. To derive $(b)$, we first use the following estimation of $g(\theta_n)$ at the stopping time $\tau_{3i-1,T}$  
\begin{align*}
\sup_{n=\tau_{3i-1,T}}{g}(\theta_{n})&=\sup_{n=\tau_{3i-1,T}}{g}(\theta_{n-1})+\sup_{n=\tau_{3i-1,T}}({g}(\theta_{n})-g(\theta_{n-1}))\\&\mathop{\le}^{\text{\cref{inequ:g:adj}}} 2\Delta_{0}+2\alpha_{0}\sqrt{ L\Delta_{0}}+\frac{ L\alpha_{0}^{2}}{2}.
\end{align*}
Then, since the objective $g(\theta_n)$ in the interval $ n \in [\tau_{3i-1,T},\tau_{3i,T})$ has similar properties as the interval $[\tau_{1,T},\tau_{3,T})$, we follow the same procedure as \cref{inqu:diff:g:tau1}  to estimate the supremum of $g(\theta_{n})-g(\theta_{\tau_{3i-1,T}})$ on the interval $ n \in [\tau_{3i-1,T}, \tau_{3i,T})$, it achieves that 
\begin{align}\label{inqu:diff:g:tau2}
&\quad \E \left[\sup_{i\ge 2}\sup_{\tau_{3i-1,T}\le n\le \tau_{3i,T}}(g(\theta_{n})-g(\theta_{\tau_{3i-1,T}}))\right] \leq \E \left[\sum_{i=2}^{+\infty}\sup_{\tau_{3i-1,T}\le n\le \tau_{3i,T}}(g(\theta_{n})-g(\theta_{\tau_{3i-1,T}}))\right]  \notag \\
& \leq \bigg(\alpha_{0}\Big(\sqrt{\sigma_{0}}+\sqrt{\frac{\sigma_{1}}{\eta}}\Big)+\frac{ L\alpha_{0}^{2}}{2\sqrt{S_{0}}}\Big(\sigma_{0}+\frac{\sigma_{1}}{\eta}\Big)\bigg)\Expect\Bigg[\sum_{i=2}^{+\infty}\sum_{n=\tau_{3i-1,T}}^{\tau_{3i,T}-1}\zeta(n)\Bigg].
\end{align}
By substituting the estimations of $\Pi_{2,T}^1$ and $\Pi_{2,T}^2$ from \cref{inequ:xt21} and \cref{qaswert} respectively into \cref{inequ:xt2}, we achieve the estimation for $\Pi_{2,T}$. Then, substituting the result for $\Pi_{2,T}$ into \cref{start} gives
\begin{align}\label{wxm_200}
\Expect\Big[\sup_{1\le n<T}g(\theta_{n})\Big]\le C_{\Pi,0}+C_{\Pi, 1}\underbrace{\Expect\Bigg[\sum_{n=\tau_{1,T}}^{\tau_{3,T}-1}\zeta(n)+\sum_{i=2}^{+\infty}\sum_{\tau_{3i-1,T}}^{\tau_{3i,T}-1}\zeta(n)\Bigg]}_{\Pi_{3,T}},
\end{align}
where
\begin{align}\label{constant_Pi}
C_{\Pi,0}&=g(\theta_{1})+6\Delta_{0}+5\alpha_{0}\sqrt{ L\Delta_{0}}+\frac{3 L\alpha_{0}^{2}}{2}, C_{\Pi,1}=\alpha_{0}\Big(\sqrt{\sigma_{0}}+\sqrt{\frac{\sigma_{1}}{\eta}}\Big)+\frac{ L\alpha_{0}^{2}}{2\sqrt{S_{0}}}\Big(\sigma_{0}+\frac{\sigma_{1}}{\eta}\Big).
\end{align}
Next, we turn to find an upper bound for $\Pi_{3,T}$ which is independent of $T$. Recall the sufficient decrease inequality in \cref{sufficient:lem}
\begin{align*}
\hat{g}(\theta_{n+1})-\hat{g}(\theta_{n}) & \le -\frac{\alpha_{0}}{4}\zeta_{n}+C_{\Gamma,1}\cdot \Gamma_n  +C_{\Gamma,2}\frac{\Gamma_{n}}{\sqrt{S_{n}}} + \alpha_0 \hat{X}_{n}.
\end{align*}
First, we estimate the first term of $\Pi_{3,T}$. Telescoping the above inequality over $n$ from the interval \(I_{1,\tau}:=[\tau_{1,T},\tau_{3,T}-1]\)  gives
\begin{align*}
\frac{\alpha_{0}}{4}\sum_{n \in I_{1,\tau}}\zeta(n)&\le \hat{g}(\theta_{\tau_{1,T}})-\hat{g}(\theta_{\tau_{3,T}})+C_{\Gamma,1}\sum_{n \in I_{1,\tau}} \Gamma_n+C_{\Gamma,2}\sum_{n \in I_{1,\tau}}\frac{\Gamma_{n}}{\sqrt{S_{n}}}+\alpha_0 \sum_{n \in I_{1,\tau}}\hat{X}_{n}.
\end{align*} 
Taking the expectation on both sides of the above inequality, we have
\begin{align*}
\frac{\alpha_{0}}{4}\Expect\left[\sum_{n \in I_{1,\tau}}\zeta(n)\right]&\le\Expect\big[\hat{g}(\theta_{\tau_{1,T}})\big]+C_{\Gamma,1}\Expect\bigg[\sum_{n \in I_{1,\tau}} \Gamma_n \bigg]+C_{\Gamma,2}\Expect\Bigg[\sum_{n \in I_{1,\tau}}\frac{\Gamma_{n}}{\sqrt{S_{n}}}\Bigg]+\alpha_0\Expect\Bigg[\sum_{n \in I_{1,\tau}}\hat{X}_{n}\Bigg]\notag\\&\mathop{\le}^{(a)} \Expect\big[\hat{g}(\theta_{\tau_{1,T}})\big]+C_{\Gamma,1}\Expect\bigg[\sum_{n \in I_{1,\tau}} \Expect[\Gamma_n|\mathscr{F}_{n-1}]\bigg]+C_{\Gamma,2}\Expect\Bigg[\sum_{n \in I_{1,\tau}}\frac{\Gamma_{n}}{\sqrt{S_{n}}}\Bigg]+0,
\end{align*}
where for (a), we use \text{\emph{Doob's Stopped} theorem} (see \cref{vital1}) since the stopping times $\tau_{1,T}\le \tau_{3,T}-1$ and $\hat{X}_n$ is a martingale sequence. For the first term of the RHS of the above inequality,
\begin{align*}
\Expect\big[\hat{g}(\theta_{\tau_{1,T}})\big]&=\Expect\big[\I_{[\tau_{1}=1]}\hat{g}(\theta_{1})\big]+\Expect\big[\I_{\tau_{1}>1}\hat{g}(\theta_{\tau_{1,T}})\big] \\&\le \hat{g}(\theta_{1})+ \Expect\big[\I_{\tau_{1}>1}\hat{g}(\theta_{\tau_{1,T}-1})\big]+\Expect\big[\I_{\tau_{1}>1}(\hat{g}(\theta_{\tau_{1,T}})-\hat{g}(\theta_{\tau_{1,T}-1}))\big]\\&\mathop{\le}^{\text{\cref{lem:adj:ghat}}} \hat{g}(\theta_{1})+\Delta_{0}+h(\Delta_{0})<\hat{g}(\theta_{1})+\frac{3\Delta_{0}}{2}.
\end{align*}
We thus conclude that 
\begin{align}\label{inequ:stat:1'}
\frac{\alpha_{0}}{4}\Expect\left[\sum_{n \in I_{\tau,1}}\zeta(n)\right]&\le\hat{g}(\theta_{1})+\frac{3\Delta_{0}}{2}+C_{\Gamma,1}\Expect\bigg[\sum_{n \in I_{\tau,i}} \Expect[\Gamma_n|\mathscr{F}_{n-1}] \bigg]+C_{\Gamma,2}\Expect\Bigg[\sum_{n \in I_{\tau,i}}\frac{\Gamma_{n}}{\sqrt{S_{n}}}\Bigg].
\end{align}
For the second term of $\Pi_{3,T}$, we telescope the sufficient decrease inequality in \cref{sufficient:lem} over $n$ from the interval \( I_{i,\tau}^{'}:= [\tau_{3i-1,T}, \tau_{3i,T}-1 ]\ (\forall\ i\ge 2)\)
\begin{align}\label{inequ:stab:2}
\frac{\alpha_{0}}{4}\sum_{n \in I_{i,\tau}^{'}}\zeta(n)&\le \hat{g}(\theta_{\tau_{3i-1,T}})-\hat{g}(\theta_{\tau_{3i,T}})+C_{\Gamma,1}\sum_{n \in I_{i,\tau}^{'}} \Gamma_n+C_{\Gamma,2}\sum_{n \in I_{i,\tau}^{'}}\frac{\Gamma_{n}}{\sqrt{S_{n}}}+\alpha_0 \sum_{n \in I_{i,\tau}^{'}}\hat{X}_{n}.
\end{align} 
Recalling the definition of the stopping time $\tau_t$, we know that \(\tau_{3i,T} \geq \tau_{3i-1,T}\) always holds. In particular, \(\tau_{3i,T} = \tau_{3i-1,T}\) implies that \(\tau_{3i,T}-1 < \tau_{3i-1,T}\). Since $\sum_{n=a}^b(\cdot) = 0$ for $b  <  a$, we have  $\sum_{n=\tau_{3i-1,T}}^{\tau_{3i,T}-1}(\cdot) = 0 $ and $\hat{g}(\theta_{\tau_{3i,T}}) = \hat{g}(\theta_{\tau_{3i-1,T}})$, then LHS and RHS of \cref{inequ:stab:2} are both zero and \cref{inequ:stab:2} holds. Taking the expectation on both sides and noting the equation of \cref{sum:expect:ab} gives
\begin{align}\label{inequ:stat:1}
\frac{\alpha_{0}}{4}\Expect\left[\sum_{n \in I_{i,\tau}^{'}}\zeta(n)\right]&\le \Expect\big[\hat{g}(\theta_{\tau_{3i-1,T}})-\hat{g}(\theta_{\tau_{3i,T}})\big]+C_{\Gamma,1}\Expect\bigg[\sum_{n \in I_{i,\tau}^{'}} \Expect[\Gamma_n|\mathscr{F}_{n-1}] \bigg]\notag\\&+C_{\Gamma,2}\Expect\Bigg[\sum_{n \in I_{i,\tau}^{'}}\frac{\Gamma_{n}}{\sqrt{S_{n}}}\Bigg]+0.
\end{align}
If \(\tau_{3i-1, T} <  \tau_{3i, T}\),  for any $n \in I_{i,\tau}^{'} = [\tau_{3i-1,T}, \tau_{3i,T}-1]$, by applying \cref{lem:adj:ghat} we have
\begin{align*}
\hat{g}(\theta_{\tau_{3i-1, T}}) - \hat{g}(\theta_{\tau_{3i,T}})&<\hat{g}(\theta_{\tau_{3i-1,T}}) <  \hat{g}(\theta_{\tau_{3i-1,T}-1})+h(\hat{g}(\theta_{\tau_{3i-1,T}-1})).
\end{align*}
Based on the properties of the stopping time \(\tau_{3i-1},\) we have $\hat{g}(\theta_{\tau_{3i-1,T}-1}) \leq 2\Delta_{0}.$ Based on the above inequality, we further estimate the first term of  \cref{inequ:stat:1} and achieve that
\begin{align}\label{wxm_90}
\frac{\alpha_{0}}{4}\Expect\Bigg[\sum_{n=I_{i,\tau}^{'}}\zeta(n)\Bigg]\notag&\le C_{\Delta_{0}}\Expect\big[\I_{\{\tau_{3i-1,T}<\tau_{3i,T} \}}\big]+C_{\Gamma,1}\Expect\bigg[\sum_{n=I_{i,\tau}^{'}} \Expect[\Gamma_n|\mathscr{F}_{n-1}] \bigg] \\&+C_{\Gamma,2}\Expect\Bigg[\sum_{n=I_{i,\tau}^{'}}\frac{\Gamma_{n}}{\sqrt{S_{n}}}\Bigg]  ,
\end{align}
where
\begin{align}\label{inequ:C:Delta0}
C_{\Delta_{0}}: =2\Delta_{0}+ \sqrt{2 L}\left(1 + \frac{\sigma_0 L}{2\sqrt{S_0}}\right)\alpha_0\sqrt{2\Delta_0} + \left(1  + \frac{\sigma_0\alpha_0 L}{2\sqrt{S_0}}\right)\frac{ L\alpha_0^2}{2}.
\end{align}
Telescoping \cref{wxm_90} over $i$ from \(2\) to \(+\infty\) to estimate the second part of $\Pi_{3,T}$, we have
\begin{align}\label{pqwe_1}
\frac{\alpha_{0}}{4}\Expect\left[\sum_{i=2}^{+\infty}\sum_{n=I_{i,\tau}^{'}}\zeta(n)\right]\le& C_{\Delta_{0}}\cdot\sum_{i=2}^{+\infty}\Expect\big[\I_{\tau_{3i-1,T}<\tau_{3i,T}}\big]+C_{\Gamma,1}\sum_{i=2}^{+\infty}\Expect\bigg[\sum_{n=I_{i,\tau}^{'}} \Expect[\Gamma_n|\mathscr{F}_{n-1}]\bigg] \notag \\&+C_{\Gamma,2}\sum_{i=2}^{+\infty}\Expect\Bigg[\sum_{n=I_{i,\tau}^{'}}\frac{\Gamma_{n}}{\sqrt{S_{n}}}\Bigg].
\end{align}
Note that the stopping time \(\tau_{t}\) is truncated for any finite time \(T\). For a specific $T$, the sum \(\sum_{i=2}^{+\infty}\) has only finite non-zero terms, thus we can interchange the order of summation and expectation \(\mathbb{E}\left(\sum_{i=2}^{+\infty} (\cdot) \right) = \sum_{i=2}^{+\infty} \left(\mathbb{E}(\cdot) \right).\) Substituting \cref{pqwe_1} and \cref{inequ:stat:1'} into \cref{wxm_200} gives
\begin{align}\label{wxm_110}
&\Expect\Big[\sup_{1\le n< T}g(\theta_{n})\Big]\notag \\
\le&\ \overline{C}_{\Pi,0}+ C_{\Pi,1}C_{\Delta_{0}}\cdot\sum_{i=2}^{+\infty}\underbrace{\Expect\big[\I_{\tau_{3i-1,T}<\tau_{3i,T}}\big]}_{\Psi_{i,1}}+C_{\Pi,1}C_{\Gamma,1}\underbrace{\Expect\left[\bigg(\sum_{I_{1,\tau}}+\sum_{i=2}^{+\infty}\sum_{n=I_{i,\tau}^{'}} \bigg)\Expect[\Gamma_n|\mathscr{F}_{n-1}]\right]}_{\Psi_{2}} \notag \\&\quad +C_{\Pi,1}C_{\Gamma,2}\underbrace{\Expect\Bigg[\bigg(\sum_{n=I_{1,\tau}}+\sum_{i=2}^{+\infty}\sum_{n=I_{i,\tau}^{'}} \bigg)\frac{\Gamma_{n}}{\sqrt{S_{n}}}\Bigg]}_{\Psi_{3}},
\end{align}
where $\overline{C}_{\Pi,0}:=\hat{g}(\theta_{1})+\frac{3\Delta_{0}}{2}+C_{\Pi,0}.$ 
\end{proof}

\begin{proof}(of \cref{lem_su})
Due to \cref{sufficient:lem}, we know
\begin{align}\label{jh_20}
\hat{g}(\theta_{n+1})-\hat{g}(\theta_{n}) & \le -\frac{\alpha_{0}}{4}\zeta(n)+C_{\Gamma, 1}\cdot \Gamma_n  + C_{\Gamma, 2}\frac{\Gamma_{n}}{\sqrt{S_{n}}} + \alpha_0 \hat{X}_{n},
\end{align}
Then we define an auxiliary variable $ y_{n} := \frac{1}{\sqrt{S_{n-1}}}$. Multiplying both sides of \cref{jh_20} by this auxiliary variable, we obtain
\begin{align*}
y_{n}\hat{g}(\theta_{n+1})-y_{n}\hat{g}(\theta_{n}) & \le  -\frac{\alpha_{0}}{4}y_{n}\zeta(n)+C_{\Gamma, 1}\cdot y_{n}\Gamma_n  + C_{\Gamma, 2}y_{n}\frac{\Gamma_{n}}{\sqrt{S_{n}}} + \alpha_0 y_{n}\hat{X}_{n}\notag.
\end{align*}
By transposing the above inequality, and note that 
\(y_{n}g(\theta_{n+1}) - y_{n}g(\theta_{n}) = y_{n+1}g(\theta_{n+1}) - y_{n}g(\theta_{n}) + (y_{n} - y_{n+1})g(\theta_{n+1}),\) we obtain
\begin{equation}\nonumber\begin{aligned}
\frac{\alpha_{0}}{4}y_{n}\zeta(n)&\le \big(y_{n}\hat{g}(\theta_{n})-y_{n+1}\hat{g}(\theta_{n+1})\big)+{(y_{n+1}-y_{n})\hat{g}(\theta_{n+1})}+C_{\Gamma, 1}\cdot y_{n}\Gamma_n\\&+C_{\Gamma,2}y_{n}\frac{\Gamma_{n}}{\sqrt{S_{n}}}+\alpha_0 y_{n}\hat{X}_{n}.
\end{aligned}\end{equation}
For any positive number \(T \ge 0\), we telescope the terms indexed by \(n\) from 1 to \(T\), and take the mathematical expectation, yielding
\begin{align}\label{ada_q_1}
\frac{\alpha_{0}}{4}\Expect\Bigg[\sum_{n=1}^{T}y_{n}\zeta_{n}\Bigg]&{\leq} y_{1}\hat{g}(\theta_{1})+\Expect\Bigg[\underbrace{\sum_{n=1}^{T}(y_{n+1}-y_{n})\hat{g}(\theta_{n+1})}_{\Theta_{1}}\Bigg]+C_{\Gamma,1}\cdot\underbrace{\sum_{n=1}^{T}y_{n}\Gamma_{n}}_{\Theta_{2}}+C_{\Gamma,2}\cdot\underbrace{\sum_{n=1}^{T}y_{n}\frac{\Gamma_{n}}{\sqrt{S_{n}}}}_{\Theta_{3}}+0.
\end{align}
Our objective is to prove that the RHS of the above inequality has an upper bound independent of $T.$ To this end, we bound $\Theta_{1}$, $\Theta_{2}$, and $\Theta_{3}$ separately. For $\Theta_{2}$, we have
\begin{align}\label{jh_02}
\Theta_{1}&=\sum_{n=1}^{T}(y_{n+1}-y_{n})\hat{g}(\theta_{n+1})=\sum_{n=1}^{T}\Big(\frac{1}{\sqrt{S_{n+1}}}-\frac{1}{\sqrt{S_{n}}}\Big)\hat{g}(\theta_{n+1})\le 0.
\end{align}
Then for term $\Theta_{2}$ in \cref{jh_02}, we have
\begin{align}\label{jh_03}
\Theta_{2}=\sum_{n=1}^{T}y_{n}\Gamma_{n}&\le \sum_{n=1}^{T}\frac{\Gamma_{n}}{\sqrt{S_{n-1}}}=\sum_{n=1}^{T}y_{n}\Gamma_{n}\le \sum_{n=1}^{T}\frac{\Gamma_{n}}{\sqrt{S_{n}}}+\sum_{n=1}^{T}\Gamma_{n}\bigg(\frac{1}{\sqrt{S_{n-1}}}-\frac{1}{\sqrt{S_{n}}}\bigg)\notag\\&\mathop{\le}^{(a)} \int_{S_{0}}^{+\infty}\frac{1}{x^{\frac{3}{2}}}\text{d}x+\frac{1}{\sqrt{S_{0}}}=\frac{3}{\sqrt{S_{0}}}.
\end{align}
In step $(a)$, we apply the series-integral inequality and the fact that \(\|\nabla g(\theta_{n})\|/\sqrt{S_{n}}\le 1.\)
Finally for term \(\Theta_{3}\), we only need to use the series-integral inequality to get
\begin{align}\label{jh_04}
\Theta_{3}=\sum_{n=1}^{T}y_{n}\frac{\Gamma_{n}}{\sqrt{S_{n}}}\le \frac{1}{\sqrt{S_{0}}}\int_{S_{0}}^{+\infty}\le \frac{2}{S_{0}}.
\end{align}
Subsequently, we substitute the estimates for $\Theta_{1}$, $\Theta_{2},$ and $\Theta_{3}$ from \cref{jh_02}, \cref{jh_03}, and \cref{jh_04} back into \cref{ada_q_1}, resulting in the following inequality
\begin{align*}
\frac{\alpha_{0}}{4}\Expect\Bigg[\sum_{n=1}^{T}y_{n}\zeta_{n}\Bigg]&{\leq} y_{1}\hat{g}(\theta_{1})+0+\frac{3C_{\Gamma,1}}{\sqrt{S_{0}}}+\frac{2C_{\Gamma,2}}{S_{0}}<+\infty.
\end{align*}
The right-hand side of the above inequality is independent of \(T\). Therefore, by applying the \emph{ Lebesgue's monotone convergence} theorem, we obtain
\begin{align*}
\frac{\alpha_{0}}{4}\Expect\Bigg[\sum_{n=1}^{+\infty}y_{n}\zeta_{n}\Bigg]&{\leq} y_{1}\hat{g}(\theta_{1})+\frac{3C_{\Gamma,1}}{\sqrt{S_{0}}}+\frac{2C_{\Gamma,2}}{S_{0}}<+\infty.
\end{align*}
Then,
\begin{align*}
\Expect\Bigg[\sum_{n=1}^{+\infty}\frac{\|\nabla g(\theta_{n})\|^{2}}{S_{n-1}}\Bigg]\le M:=\hat{g}(\theta_{1})+\frac{3C_{\Gamma,1}}{\sqrt{S_{0}}}+\frac{2C_{\Gamma,2}}{S_{0}}<+\infty,
\end{align*}
where $M$ is a constant. For any $\nu > 0$,
combined with the affine noise variance condition, we further achieve the subsequent inequality
\begin{align}
    \I_{\|\nabla g(\theta_{n})\|^{2}> \nu}\Expect[\|\nabla g(\theta_{n},\xi_{n})\|^{2}|\mathscr{F}_{n-1}]&\le \I_{\|\nabla g(\theta_{n})\|^{2}>\nu}( {\sigma_{0}}\|\nabla g(\theta_{n})\|^{2}+{\sigma_{1}})\notag\\&= \I_{\|\nabla g(\theta_{n})\|^{2}>\nu}\Big({\sigma_{0}}+\frac{{\sigma_{1}}}{\|\nabla g(\theta_{n})\|^{2}}\Big)\|\nabla g(\theta_{n})\|^{2} \notag \\
    & <  \I_{\|\nabla g(\theta_{n})\|^{2}>\nu}\Big({\sigma_{0}}+{\frac{\sigma_{1}}{{\nu}}}\Big)\cdot\|\nabla g(\theta_{n})\|^{2}\notag\\&\le \Big({\sigma_{0}}+{\frac{\sigma_{1}}{\nu}}\Big)\cdot\|\nabla g(\theta_{n})\|^{2}.
\end{align}
Then, we obtain
\begin{align*}
    \Expect\Bigg[ \sum_{n=1}^{+\infty}\I_{\|\nabla g(\theta_{n})\|^{2}>\nu}\frac{\|\nabla g(\theta_{n},\xi_{n})\|^{2}}{S_{n}}\Bigg]&\le\Expect\Bigg[ \sum_{n=1}^{+\infty}\I_{\|\nabla g(\theta_{n})\|^{2}>\nu}\frac{\|\nabla g(\theta_{n},\xi_{n})\|^{2}}{S_{n-1}}\Bigg]\notag\\&\le\Big({\sigma_{0}}+{\frac{\sigma_{1}}{\nu}}\Big)\cdot\Expect\Bigg[\sum_{n=1}^{+\infty}\frac{\|\nabla g(\theta_{n})\|^{2}}{S_{n-1}}\Bigg] \\&<\Big({\sigma_{0}}+{\frac{\sigma_{1}}{{\nu}}}\Big)\cdot M.
\end{align*}
This completes the proof. 
\end{proof}

\begin{proof}(of \cref{lem:psi:i1})
We start by observing the inequality
\[\Psi_{i,1}=\Expect[\I_{\tau_{3i-1, T} < \tau_{3i, T}}]=\pro(\tau_{3i-1, T} < \tau_{3i, T}).\]
What we need to consider is the probability of the event $\tau_{3i-1, T} < \tau_{3i, T}$ occurring. In the case we consider $\tau_{3i-1, T} < \tau_{3i, T}$ which implies that $\hat{g}(\theta_{3i-1, T}) \geq 2\Delta_0$. On the other hand, according to the definition of the stopping time $\tau_{3i-2,T}$, we have $\hat{g}(\tau_{{3i-2, T}-1}) \leq \Delta_0$. Then
\begin{align*}
\hat{g}(\theta_{\tau_{3i-2,T}}) <  \hat{g}(\theta_{\tau_{3i-2,T}-1})+h(\hat{g}(\theta_{\tau_{3i-2,T}-1})) \leq \Delta_0 + h(\Delta_0) < \frac{3}{2}\Delta_0.
\end{align*}
Since \(\Delta_0 > C_{0}\), we know that \( h(\Delta_0) < \frac{1}{2}\Delta_0\) by \cref{lem:adj:ghat}.  Then, by \cref{sufficient:lem}),
\begin{align*}
\frac{\Delta_0}{2} &=2\Delta_0-\frac{3\Delta_0}{2}\le \hat{g}(\theta_{\tau_{3i-1,T}})-\hat{g}(\theta_{\tau_{3i-2,T}})\le \sum_{n=\tau_{3i-2,T}}^{\tau_{3i-1,T}-1}(\hat{g}(\theta_{n+1})-\hat{g}(\theta_{n}))\\&\le C_{\Gamma,1}\cdot \sum_{n=\tau_{3i-2,T}}^{\tau_{3i-1,T}-1}\Gamma_n  +C_{\Gamma,2}\sum_{n=\tau_{3i-2,T}}^{\tau_{3i-1,T}-1}\frac{\Gamma_{n}}{\sqrt{S_{n}}} + \alpha_0\Bigg| \sum_{n=\tau_{3i-2,T}}^{\tau_{3i-1,T}-1}\hat{X}_{n}\Bigg|\\& \mathop{\le}^{\text{Young's inequality}} C_{\Gamma,1}\cdot \sum_{n=\tau_{3i-2,T}}^{\tau_{3i-1,T}-1}\Gamma_n  +C_{\Gamma,2}\sum_{n=\tau_{3i-2,T}}^{\tau_{3i-1,T}-1}\frac{\Gamma_{n}}{\sqrt{S_{n}}}+\frac{\alpha_0^{2}}{\Delta_0}\Bigg( \sum_{n=\tau_{3i-2,T}}^{\tau_{3i-1,T}-1}\hat{X}_{n}\Bigg)^{2}  + \frac{\Delta_0}{4},
\end{align*} 
which further induces that 
\begin{equation}\label{power_00}\begin{aligned}
\frac{\Delta_0}{4}&\le  C_{\Gamma,1}\cdot \sum_{n=\tau_{3i-2,T}}^{\tau_{3i-1,T}-1}\Gamma_n  +C_{\Gamma,2}\sum_{n=\tau_{3i-2,T}}^{\tau_{3i-1,T}-1}\frac{\Gamma_{n}}{\sqrt{S_{n}}}  + \frac{\alpha_0^{2}}{\Delta_0}\Bigg( \sum_{n=\tau_{3i-2,T}}^{\tau_{3i-1,T}-1}\hat{X}_{n}\Bigg)^{2}.
\end{aligned}\end{equation}
Based on the above analysis, we can obtain the following sequence of event inclusions
\begin{align*}
\{\tau_{3i-1,T}<\tau_{3i,T}\}&\subset \{\hat{g}(\theta_{3i-1,T})>2\Delta_{0}\}\subset\Big\{\frac{\Delta_{0}}{2}\le \hat{g}(\theta_{\tau_{3i-1,T}})-\hat{g}(\theta_{\tau_{3i-2,T}})\Big\}\\&\subset\{\text{\cref{power_00} holds}\}.
\end{align*}
Thus, we have the following probability inequality
\begin{align*}
\Expect[\I_{\tau_{3i-1,T}<\tau_{3i,T}}]=\pro(\tau_{3i-1,T}<\tau_{3i,T})\le \pro(\text{\cref{power_00} holds}).
\end{align*}
Then, according to \emph{Markov's inequality}, we obtain
\begin{align*}
\Pro(\text{\cref{power_00} holds})&\le \frac{4}{\Delta_0}C_{\Gamma,1}\cdot \Expect\Bigg[\sum_{n=\tau_{3i-2,T}}^{\tau_{3i-1,T}-1}\Gamma_n\Bigg]  \\&+ \frac{4C_{\Gamma,2}}{\Delta_0}\Expect\Bigg[\sum_{n=\tau_{3i-2,T}}^{\tau_{3i-1,T}-1}\frac{\Gamma_{n}}{\sqrt{S_{n}}}\Bigg]  + \frac{4\alpha_0^{2}}{\Delta_0^{2}}\Expect\Bigg[\sum_{n=\tau_{3i-2,T}}^{\tau_{3i-1,T}-1}\hat{X}_{n}\Bigg]^{2}\\&\mathop{=}^{\text{\cref{sum:expect:ab}}} \frac{4C_{\Gamma,1}}{\Delta_0}\cdot \Expect\Bigg[\sum_{n=\tau_{3i-2,T}}^{\tau_{3i-1,T}-1}\Expect[\Gamma_n|\mathscr{F}_{n-1}]\Bigg]  + \frac{4C_{\Gamma,2}}{\Delta_0}\Expect\Bigg[\sum_{n=\tau_{3i-2,T}}^{\tau_{3i-1,T}-1}\frac{\Gamma_{n}}{\sqrt{S_{n}}}\Bigg]\\&  + \frac{4\alpha_0^{2}}{\Delta_0^{2}}\Expect\Bigg[\sum_{n=\tau_{3i-2,T}}^{\tau_{3i-1,T}-1}\hat{X}_{n}^{2}\bigg].
\end{align*}
This completes the proof. 
\end{proof}

\subsection{Proofs of Lemmas in \cref{subsec:almost:sure}}

\begin{proof}(of \cref{step size})
Firstly, when $\lim_{n\rightarrow+\infty}S_{n} < +\infty,$ we clearly have \[\sum_{n=1}^{+\infty}\frac{1}{\sqrt{S_{n}}} = +\infty.\] We then only need to prove that this result also holds for the case $\lim_{n\rightarrow+\infty}S_{n} = +\infty.$ That is, we define the event $S$ \[\mathcal{S} := \left\{\sum_{n=1}^{+\infty}\frac{1}{\sqrt{S_{n}}} < +\infty,\ \text{and}\ \lim_{n\rightarrow+\infty}S_{n}=+\infty\right\}\] and desire to prove that \(\Pro(\mathcal{S}) = 0.\) 

According to the stability of $g(\theta_n)$ in \cref{stable}, the following result holds almost surely on the event $\mathcal{S}$.
\begin{equation}\label{wxm_03}\begin{aligned}
\sum_{n=1}^{+\infty}\frac{\|\nabla g(\theta_{n+1})\|^{2}}{\sqrt{S_{n}}} \mathop{\leq}^{\text{\cref{loss_bound}}} 2 L\Big(\sup_{n\ge 1}g(\theta_{n})\Big)\cdot\sum_{n=1}^{+\infty}\frac{1}{\sqrt{S_{n}}}<+\infty\ \text{a.s.}
\end{aligned}\end{equation}
On the other hand, by the affine noise variance condition  \( \Expect\big[\|\nabla g(\theta_{n+1};\xi_{n+1})\|^{2} \big| \mathscr{F}_{n}\big] \leq \sigma_0\|\nabla g(\theta_{n+1})\|^{2} + \sigma_{1}\), it induces that 
\begin{align}\label{inequ:sum:gradS}
\sum_{n=1}^{+\infty}\frac{\|\nabla g(\theta_{n+1})\|^{2}}{\sqrt{S_{n}}}&\ge \frac{1}{\sigma_{0}}\sum_{n=1}^{+\infty}\frac{\Expect[\|\nabla g(\theta_{n+1},\xi_{n+1})\|^{2}|\mathscr{F}_{n}]}{\sqrt{S_{n}}}-\sum_{n=1}^{+\infty}\frac{\sigma_{1}}{\sigma_{0}\sqrt{S_{n}}} \notag \\ &=\frac{1}{\sigma_{0}}\underbrace{\sum_{n=1}^{+\infty}\frac{\|\nabla g(\theta_{n+1},\xi_{n+1})\|^{2}}{\sqrt{S_{n}}}}_{\Xi_1}-\underbrace{\sum_{n=1}^{+\infty}\frac{\sigma_{1}}{\sigma_{0}\sqrt{S_{n}}}}_{\Xi_2} \notag \\&+\underbrace{\sum_{n=1}^{+\infty}\frac{\Expect[\|\nabla g(\theta_{n+1},\xi_{n+1})\|^{2}|\mathscr{F}_{n}]-\|\nabla g(\theta_{n+1},\xi_{n+1})\|^{2}}{\sqrt{S_{n}}}}_{\Xi_3}.
\end{align}
Next, we determine whether the RHS of \cref{inequ:sum:gradS} converges the event $\mathcal{S}.$ For the term $\Xi_1$, using the series-integral comparison test, the following result holds on the event $\mathcal{S}$:
\[\Xi_1 = \lim_{n \rightarrow \infty} \int_{S_{0}}^{S_{n}}\frac{1}{\sqrt{x}}\text{d}x = \lim_{n \rightarrow \infty}\sqrt{S_n} - \sqrt{S_0}=+\infty.\] The second term $\Xi_2$ clearly converges on $\mathcal{S}.$ Since the last term $\Xi_3$ is the sum of a martingale sequence, we only need to determine the convergence of the following series on the set $\mathcal{S}$
\begin{align*}
&\sum_{n=1}^{+\infty}\Expect\Bigg[\bigg|\frac{\|\nabla g(\theta_{n+1},\xi_{n+1})\|^{2}-\Expect[\|\nabla g(\theta_{n+1},\xi_{n+1})\|^{2}|\mathscr{F}_{n}]}{\sqrt{S_{n}}}\bigg| \mid \mathscr{F}_{n}\Bigg]\\ 
& \leq 2\sum_{n=1}^{+\infty}\Expect\Bigg[\frac{\|\nabla g(\theta_{n+1},\xi_{n+1})\|^{2}}{\sqrt{S_{n}}} \mid \mathscr{F}_{n}\Bigg]  \mathop{<}^{(a)}2(2 L\sigma_{0}\sup_{n\ge 1}g(\theta_{n})+\sigma_{1})\sum_{n=1}^{+\infty}\frac{1}{\sqrt{S_{n}}}<+\infty\  \,\, a.s.,
\end{align*}
where $(a)$ uses the affine noise variance condition $\Expect[\|\nabla g(\theta_{n},\xi_{n})\|^{2}|\mathscr{F}_{n-1}]\le \sigma_{0}\|\nabla g(\theta_{n})\|^{2}+\sigma_{1},$ and \cref{loss_bound} that $ \|\nabla g(\theta)\|^{2}\le 2 Lg(\theta) $ for $\forall\ \theta\in\mathbb{R}^{d}.$
We conclude that the last term $\Xi_3$ converges almost surely. Therefore, combining the above estimations for $\Xi_1, \Xi_2, \Xi_3$, we prove that the following relation holds on the event $\mathcal{S}$: \[\sum_{n=1}^{+\infty}\frac{\|\nabla g(\theta_{n+1})\|^{2}}{\sqrt{S_{n}}}=+\infty\ \text{a.s.}\] 
However, in \cref{wxm_03} we know that the series $\sum_{n=1}^{+\infty}\frac{\|\nabla g(\theta_{n+1})\|^{2}}{\sqrt{S_{n}}}$ converges almost surely on the event $\mathcal{S}$. Thus, we can claim that if and only if the event $\mathcal{S}$ is a set of measure zero, that is $\Pro(\mathcal{S}) = 0.$ We complete the proof. 
\end{proof}


\section{Appendix: Proofs of Lemmas in \cref{sec:nonasympt}}
\begin{proof}(of \cref{lem8})
Recalling the sufficient decrease inequality in \cref{sufficient:lem}, we have
\begin{align*}
\hat{g}(\theta_{n+1})-\hat{g}(\theta_{n}) & \le -\frac{\alpha_{0}}{4}\zeta(n)+C_{\Gamma,1}\cdot \Gamma_n  +C_{\Gamma,2}\frac{\Gamma_{n}}{\sqrt{S_{n}}} + \alpha_0 \hat{X}_{n}.
\end{align*}
We take the mathematical expectation
\begin{align}\label{12sqdr}
\Expect\big[\hat{g}(\theta_{n+1})\big]-\Expect\big[\hat{g}(\theta_{n})\big] &  \le -\frac{\alpha_{0}}{4}\E\left[\zeta(n)\right]+C_{\Gamma,1}\cdot \E\left[\Gamma_n \right]+C_{\Gamma,2}\E\left[\frac{\Gamma_{n}}{\sqrt{S_{n}}} \right] + \alpha_0 \E\left[\hat{X}_{n},
\right]\end{align}
since $\hat{X}_{n}$ is a martingale such that $\Expect\left[\hat{X}_{n} \mid \mathscr{F}_{n-1}\right] = 0$. Telescoping the above inequality from $n=1$ to $T$ gives
\begin{align}\label{jrn_1}
\sum_{n=1}^{T}\E\left[\zeta(n)\right] & \le \frac{4}{\alpha_0}\Expect\big[\hat{g}(\theta_{1})\big]+\frac{4C_{\Gamma,1}}{\alpha_0}\sum_{n=1}^{T}\E\left[\Gamma_n \right] + \frac{4C_{\Gamma,2}}{\alpha_0}\sum_{n=1}^{T}\E\left[\frac{\Gamma_{n}}{\sqrt{S_{n}}} \right].
\end{align}
Note that \begin{align}
& \sum_{n=1}^{T} \E\left[\Gamma_n \right]  =  \E\left[\sum_{n=1}^{T} \frac{\|\nabla g(\theta_{n},\xi_{n})\|^{2}}{S_{n}} \right] \leq \E\left[\int_{S_{0}}^{S_{T}}\frac{1}{x}\text{d}x \right]\leq \E\left[\ln (S_T/S_0)\right] \leq \E(\ln S_T) - \ln S_0 \notag \\
& \Expect\bigg[\sum_{n=1}^{T}\frac{\|\nabla g(\theta_{n},\xi_{n})\|^{2}}{S_{n}^{\frac{3}{2}}}\bigg] \leq \E\left[\int_{S_{0}}^{S_T} \frac{1}{x^{\frac{3}{2}}} \mathrm{d} x \right] \le \frac{2}{\sqrt{S_{0}}}<+\infty. \notag
\end{align}
Substituting the above results into \cref{jrn_1}, we have
\begin{align}\label{inqu:lem:v1:1}
\sum_{n=1}^{T}\E\left[\zeta(n)\right] & \le \left(\frac{4}{\alpha_0}\Expect\big[\hat{g}(\theta_{1})\big] -\frac{4C_{\Gamma,1}}{\alpha_0}\ln S_0  \right)+ \frac{4C_{\Gamma,1}}{\alpha_0}\E\left[\ln S_T \right] + \frac{4C_{\Gamma,2}}{\alpha_0}\frac{2}{\sqrt{S_0}}.
\end{align}
By \cref{lem_S_{T}} (b), we know that $$S_T \leq \left(\sum_{n=1}^{\infty}\frac{\zeta(n)}{n^{2}} + \sqrt{S_0} \right)^2T^4.$$
Combing \cref{lem_S_{T}} (a), we have 
\begin{align}
\E\left[\ln S_T \right] & \leq 2\E\left[\sum_{n=1}^{\infty}\frac{\zeta(n)}{n^{2}} + \sqrt{S_0}\right] + 4\ln T = 2 \sum_{n=1}^{\infty}\frac{\E\left[\zeta(n)\right]}{n^{2}} + 4\ln T + 2\sqrt{S_0} \notag \\
&  \leq 4\ln T + \mathcal{O}(1). \notag
\end{align}
Then for any $T \geq 1$
\begin{equation}\nonumber\begin{aligned}
\sum_{n=1}^{T}\E\left[\zeta(n)\right] & \le  \frac{16C_{\Gamma,1}}{\alpha_0}  \ln T  + \mathcal{O}(1).
\end{aligned}\end{equation}
The proof is complete. 
\end{proof}

\begin{proof}(of \cref{lem23})
Applying the $ L$-smoothness of $g$ and the iterative formula of AdaGrad-Norm, we have 
\begin{align}\label{inequ:smoothness:g}
g(\theta_{n+1}) \leq g(\theta_n)- \alpha_0 \frac{\nabla g(\theta_n)^{T}\nabla g(\theta_n,\xi_n)}{\sqrt{S_n}} + \frac{ L\alpha_0^2}{2}\frac{\nabla g(\theta_n; \xi_n)^2}{S_n}.
\end{align}
Then combined with $g^{2}(\theta_{n+1})-g^{2}(\theta_{n}) = \left(g(\theta_{n+1})-g(\theta_{n}) \right)\left(g(\theta_{n+1})+g(\theta_{n}) \right)$ we have
\begin{align}\label{inequ:sufifient:v2:1}
& \quad g^{2}(\theta_{n+1})-g^{2}(\theta_{n}) \notag \\ &\le -\frac{2\alpha_{0}g(\theta_{n})\nabla g(\theta_{n})^{\top}\nabla g(\theta_{n},\xi_{n})}{\sqrt{S_{n}}}+ \frac{\alpha_0^2\left(\nabla g(\theta_{n})^{\top}\nabla g(\theta_{n},\xi_{n})\right)^2}{S_n} \notag 
\\& \quad +\left(g(\theta_{n}) - \frac{\alpha_0\nabla g(\theta_{n})^{\top}\nabla g(\theta_{n},\xi_{n})}{\sqrt{S_{n}}}\right) L\alpha_{0}^{2}\frac{\big\|\nabla g(\theta_{n},\xi_{n})\big\|^{2}}{S_{n}} +  \frac{ L^2\alpha_0^4}{4}\frac{\big\|\nabla g(\theta_{n},\xi_{n})\big\|^{4}}{S_{n}^2}  \notag \\
& \mathop{\leq}^{(a)} -\frac{2\alpha_{0}g(\theta_{n})\nabla g(\theta_{n})^{\top}\nabla g(\theta_{n},\xi_{n})}{\sqrt{S_{n}}} + g(\theta_n)\left(2+\alpha_0^2 \right) L\cdot\Gamma_n   + \frac{\alpha_0^2}{2}\left\| \nabla g(\theta_n)\right\|^2\Gamma_n  + \frac{3\alpha_0^4 L^2}{4}\Gamma_n  \notag \\
& \leq -\frac{2\alpha_{0}g(\theta_{n})\nabla g(\theta_{n})^{\top}\nabla g(\theta_{n},\xi_{n})}{\sqrt{S_{n}}} + \left((2+2\alpha_0^2) Lg(\theta_n) + \frac{3\alpha_0^4 L^2}{4} \right)\Gamma_n 
\end{align} 
Here we inherit the notation $\Gamma_n = \left\|\nabla g(\theta_n, \xi_n) \right\|^2/S_n$ in \cref{inequ:sufficient:decrease}. For $(a)$ we use some common inequalities, the facts that  $S_n \geq \left\|\nabla g(\theta_{n},\xi_{n}) \right\|^2$, \cref{loss_bound} such that
\begin{align}
\frac{\left(\nabla g(\theta_{n})^{\top}\nabla g(\theta_{n},\xi_{n})\right)^2}{S_n} &  
\leq \frac{\left\|\nabla g(\theta_{n}) \right\|^2\left\|\nabla g(\theta_{n},\xi_{n}) \right\|^2}{S_n} \leq \frac{2 Lg(\theta_n)\left\|\nabla g(\theta_{n},\xi_{n}) \right\|^2}{S_n} \notag \\
- \frac{\alpha_0\nabla g(\theta_{n})^{\top}\nabla g(\theta_{n},\xi_{n})}{\sqrt{S_{n}}} 
& \leq \frac{1}{2 L}\left\|\nabla g(\theta_{n})\right\|^2 + \frac{\alpha_0^2 L}{2} \frac{\left\|\nabla g(\theta_{n},\xi_{n}) \right\|^2}{S_n} \leq \frac{1}{2 L}\left\|\nabla g(\theta_{n})\right\|^2 + \frac{\alpha_0^2 L}{2}  \notag \\
\frac{\big\|\nabla g(\theta_{n},\xi_{n})\big\|^{4}}{S_{n}^2} &  \leq \frac{\big\|\nabla g(\theta_{n},\xi_{n})\big\|^{2}}{S_{n}}.
\end{align}
and for the last inequality we use \cref{loss_bound} that $\left\|\nabla g(\theta_n) \right\|^2 \leq 2 Lg(
\theta_n).$
For the first term of RHS of \cref{inequ:sufifient:v2:1}, we let $\Delta_{S, n}$ denote  $1/\sqrt{S_n} -1/\sqrt{S_{n-1}}$ and inherit the notation $\zeta(n) = \left\|\nabla g(\theta_{n})\right\|^2/\sqrt{S_{n-1}}$ in \cref{inequ:sufficient:decrease}:
\begin{align}\label{inqu:sufficient:v2:2}
&\frac{g(\theta_{n})\nabla g(\theta_{n})^{\top}\nabla g(\theta_{n},\xi_{n})}{\sqrt{S_{n}}}  = \frac{g(\theta_{n})\nabla g(\theta_{n})^{\top}\nabla g(\theta_{n},\xi_{n})}{\sqrt{S_{n-1}}} + g(\theta_{n})\nabla g(\theta_{n})^{\top}\nabla g(\theta_{n},\xi_{n})\Delta_{S,n} \notag \\ 
& = g(\theta_{n}) \zeta(n)  + \frac{g(\theta_{n})\nabla g(\theta_{n})^{\top}\left( \nabla g(\theta_{n},\xi_{n})- g(\theta_{n})\right)}{\sqrt{S_{n-1}}} + g(\theta_{n})\nabla g(\theta_{n})^{\top}\nabla g(\theta_{n},\xi_{n})\Delta_{S,n}.
\end{align}
We then substitute \cref{inqu:sufficient:v2:2} into \cref{inequ:sufifient:v2:1} and achieve that
\begin{align}\label{inequ:sufficient:v2:3}
g^{2}(\theta_{n+1})-g^{2}(\theta_{n}) 
&\le - 2\alpha_0 g(\theta_{n})\zeta(n) + \left((2+2\alpha_0^2) Lg(\theta_n) + \frac{3\alpha_0^4 L^2}{4} \right)\Gamma_n\notag \\
& + 2\alpha_0g(\theta_{n})\E\left[\nabla g(\theta_{n})^{\top}\nabla g(\theta_{n},\xi_{n})\Delta_{S,n} \mid \mathscr{F}_{n-1}\right] + 2\alpha_0\hat{Y}_n,
\end{align}
where $\hat{Y}_n$ is a martingale different sequence and defined below
\begin{equation}\nonumber\begin{aligned}
 \hat{Y}_n &:=\frac{g(\theta_{n})\nabla g(\theta_{n})^{\top}(\nabla g(\theta_{n})-\nabla g(\theta_{n},\xi_{n}))}{\sqrt{S_{n-1}}}\\&+g(\theta_{n})\nabla g(\theta_{n})^{\top}\nabla g(\theta_{n},\xi_{n})\Delta_{S, n}-g(\theta_{n})\Expect\bigg[\nabla g(\theta_{n})^{\top}\nabla g(\theta_{n},\xi_{n})\Delta_{S, n}\bigg|\mathscr{F}_{n-1}\bigg].\end{aligned}\end{equation}
For the second to last term of RHS of \cref{inequ:sufficient:v2:3} we have
\begin{equation}\nonumber\begin{aligned}
&\quad 2\alpha_{0}g(\theta_{n})\Expect\bigg[\nabla g(\theta_{n})^{\top}\nabla g(\theta_{n},\xi_{n})\Delta_{S,n}\bigg|\mathscr{F}_{n-1}\bigg]\\& \mathop{\le}^{(a)} \alpha_{0}g(\theta_{n})\|\nabla g(\theta_{n})\|^{2} \Delta_{S,n}+4\alpha_{0}g(\theta_{n})\Expect^{2}\bigg[\nabla g(\theta_{n},\xi_{n})\sqrt{\Delta_{S,n}}\bigg|\mathscr{F}_{n-1}\bigg] \notag \\
& \mathop{\le}^{(b)}\frac{\alpha_{0}g(\theta_{n})\|\nabla g(\theta_{n})\|^{2}}{\sqrt{S_{n-1}}}+4\alpha_{0}g(\theta_{n})\Expect[\|\nabla g(\theta_{n},\xi_{n})\|^{2}|\mathscr{F}_{n-1}]\cdot\Expect\bigg[\Delta_{S,n}\bigg|\mathscr{F}_{n-1}\bigg]\\&\mathop{\le}^{(c)} \frac{\alpha_{0}g(\theta_{n})\|\nabla g(\theta_{n})\|^{2}}{\sqrt{S_{n-1}}}+4\alpha_{0}g(\theta_{n})\Expect\bigg[(\sigma_{0}\|\nabla g(\theta_{n})\|^{2}+\sigma_{1})\Delta_{S,n}\bigg|\mathscr{F}_{n-1}\bigg]\\&\mathop{\le}^{(d)} \alpha_{0}g(\theta_{n})\zeta(n)+4 L\alpha_{0}\sigma_{0}g^{2}(\theta_{n})\Expect\bigg[ \Delta_{S,n}\bigg|\mathscr{F}_{n-1}\bigg]+4\alpha_{0}\sigma_{1}g(\theta_{n})\Expect\bigg[\Delta_{S,n}\bigg|\mathscr{F}_{n-1}\bigg],
\end{aligned}\end{equation} 
where $(a)$ follows from mean inequality, $(b)$ uses Cauchy-Schwartz inequality, $(c)$ applies the affine noise variance condition, and $(d)$ follows from \cref{loss_bound} which states $\|\nabla g(\theta)\|^2 \leq 2 Lg(\theta)$. We then substitute the above estimation into \cref{inequ:sufficient:v2:3}
\begin{align}\label{inequ:sufficient:key}
{g^{2}(\theta_{n+1})}-{g^{2}(\theta_{n})} 
& \le- \alpha_0 g(\theta_{n}) \zeta(n)+4 L\alpha_{0}\sigma_{0}g^{2}(\theta_{n})\Expect\left[\Delta_{S,n}\mid\mathscr{F}_{n-1}\right]+4\alpha_{0}\sigma_{1}g(\theta_{n})\Expect\left[\Delta_{S,n}\mid\mathscr{F}_{n-1}\right] \notag \\& +\left((2+2\alpha_0^2) Lg(\theta_n) + \frac{3\alpha_0^4 L^2}{4} \right)\Gamma_n +2\alpha_0 \hat{Y}_n.
\end{align}
Next, for any stopping time $\tau$ that satisfies $[\tau=i] \in \mathscr{F}_{i-1}\ (\forall\ i>0)$, telescoping the index $n$ from $1$ to $\tau\wedge T-1$ in \cref{inequ:sufficient:key} and taking expectation on the above inequality yields
\begin{equation}\label{jrn_4}\begin{aligned}
&\Expect\big[{g^{2}(\theta_{\tau\wedge T})}\big]-\Expect\big[{g^{2}(\theta_{1})}\big]\le- \alpha_0 \Expect\bigg[\sum_{n=1}^{\tau\wedge T-1}g(\theta_{n}) \zeta(n)\bigg]\\&+4 L\alpha_{0}\sigma_{0}\Expect\bigg[\sum_{n=1}^{\tau\wedge T-1}g^{2}(\theta_{n})\Expect\bigg[\Delta_{S,n}\bigg|\mathscr{F}_{n-1}\bigg]\bigg]+4\alpha_{0}\sigma_{1}\Expect\bigg[\sum_{n=1}^{\tau\wedge T-1}g(\theta_{n})\Expect\bigg[\Delta_{S,n}\bigg|\mathscr{F}_{n-1}\bigg]\bigg]\\&+\Expect\bigg[\sum_{n=1}^{\tau\wedge T-1}\left((2+2\alpha_0^2) Lg(\theta_n) + \frac{3\alpha_0^4 L^2}{4} \right) \Gamma_n\bigg]+2\alpha_0\Expect\bigg[\sum_{n=1}^{\tau\wedge T-1} \hat{Y}_n\bigg]. 
\end{aligned}\end{equation} 
We further use \emph{Doob's stopped} theorem that $\Expect\big[\sum_{n=1}^{\tau\wedge T-1}\Expect(\cdot|\mathscr{F}_{n-1})\big]=\Expect\big[\sum_{n=1}^{\tau\wedge T-1}\cdot\big]$ to simplify \cref{jrn_4} and achieve that 
\begin{align}\label{jrn_5}
& \quad \Expect\big[{g^{2}(\theta_{\tau\wedge T})}\big]-\Expect\big[{g^{2}(\theta_{1})}\big] \notag \\
&\le- \alpha_0 \Expect\bigg[\sum_{n=1}^{\tau\wedge T-1}g(\theta_{n}) \zeta(n)\bigg]+4 L\alpha_{0}\sigma_{0}\Expect\bigg[\sum_{n=1}^{\tau\wedge T-1}g^{2}(\theta_{n})\Delta_{S,n}\bigg]+4\alpha_{0}\sigma_{1}\Expect\bigg[\sum_{n=1}^{\tau\wedge T-1}g(\theta_{n})\Delta_{S,n}\bigg] \notag \\&\quad +\Expect\bigg[\sum_{n=1}^{\tau\wedge T-1}\left((2+2\alpha_0^2) Lg(\theta_n) + \frac{3\alpha_0^4 L^2}{4} \right) \Gamma_n\bigg] + 0.
\end{align}
For the second term on the RHS of the aforementioned inequality, we have the following estimation
\begin{equation}\label{jrn_6}\begin{aligned}
&\quad \Expect\bigg[\sum_{n=1}^{\tau\wedge T-1}g^{2}(\theta_{n})\bigg(\Delta_{S, n}\bigg)\bigg] \notag \\
& =\Expect\bigg[\sum_{n=0}^{\tau\wedge T-2}\frac{g^{2}(\theta_{n+1})}{\sqrt{S_{n}}}-\sum_{n=1}^{\tau\wedge T-1}\frac{g^{2}(\theta_{n})}{\sqrt{S_{n}}}\bigg] \leq \Expect\bigg[\frac{g^{2}(\theta_{1})}{\sqrt{S_{0}}}\bigg]+\Expect\bigg[\sum_{n=1}^{\tau\wedge T-1}\frac{g^{2}(\theta_{n+1})-g^{2}(\theta_{n})}{\sqrt{S_{n}}}\bigg]\\& \mathop{\le}^{(a)} \Expect\bigg(\frac{g^{2}(\theta_{1})}{\sqrt{S_{0}}}\bigg)+ 2\alpha_0\Expect\bigg[\sum_{n=1}^{\tau\wedge T-1}\frac{g(\theta_{n})\|\nabla g(\theta_{n})\|\|\nabla g(\theta_{n},\xi_{n})\|}{S_{n}}\bigg] \\&\quad +\Expect\bigg[\sum_{n=1}^{\tau\wedge T-1}\left((2+2\alpha_0^2) Lg(\theta_n) + \frac{3\alpha_0^4 L^2}{4} \right)\frac{\big\|\nabla g(\theta_{n},\xi_{n})\big\|^{2}}{S_{n}^{\frac{3}{2}}}\bigg]\\& \mathop{\le}^{(b)} \Expect\bigg[\frac{g^{2}(\theta_{1})}{\sqrt{S_{0}}}\bigg]+\frac{\alpha_{0}\psi_1}{4}\Expect\bigg[\sum_{n=1}^{\tau\wedge T-1}\frac{g(\theta_{n})\|\nabla g(\theta_{n})\|^{2}}{\sqrt{S_{n-1}}}\bigg]+\frac{4\alpha_0}{\psi_1} \Expect\bigg[\sum_{n=1}^{\tau\wedge T-1}\frac{g(\theta_{n})\|\nabla g(\theta_{n},\xi_{n})\|^{2}}{S_{n}^{\frac{3}{2}}}\bigg] \notag \\
& \quad +\Expect\bigg[\sum_{n=1}^{\tau\wedge T-1}\left((2+2\alpha_0^2) Lg(\theta_n) + \frac{3\alpha_0^4 L^2}{4} \right)\frac{\big\|\nabla g(\theta_{n},\xi_{n})\big\|^{2}}{S_{n}^{\frac{3}{2}}}\bigg],
\end{aligned}\end{equation}
where for $(a)$ we use the upper bound of $g^2(\theta_{n+1}) - g^2(\theta_n)$ in \cref{inequ:sufifient:v2:1} and \emph{the Cauchy-Schwartz inequality}, and for $(b)$ we use \emph{Young inequality} and let $\psi_1 = \frac{1}{4 L\sigma_0\alpha_0}$. 
Similarly, we can estimate the third term on the RHS of \cref{jrn_5} as follows.
\begin{equation}\label{jrn_6:v1}\begin{aligned}
&\quad\Expect\bigg[\sum_{n=1}^{\tau\wedge T-1}g(\theta_{n})\bigg(\Delta_{S, n}\bigg)\bigg] \notag \\
& =\Expect\bigg[\sum_{n=0}^{\tau\wedge T-2}\frac{g(\theta_{n+1})}{\sqrt{S_{n}}}-\sum_{n=1}^{\tau\wedge T-1}\frac{g(\theta_{n})}{\sqrt{S_{n}}}\bigg] \leq \Expect\bigg[\frac{g(\theta_{1})}{\sqrt{S_{0}}}\bigg]+\Expect\bigg[\sum_{n=1}^{\tau\wedge T-1}\frac{g(\theta_{n+1})-g(\theta_{n})}{\sqrt{S_{n}}}\bigg]\\& \mathop{\le}^{(a)} \Expect\bigg[\frac{g(\theta_{1})}{\sqrt{S_{0}}}\bigg]+ \alpha_0\Expect\bigg[\sum_{n=1}^{\tau\wedge T-1}\frac{\|\nabla g(\theta_{n})\|\|\nabla g(\theta_{n},\xi_{n})\|}{S_{n}}\bigg] +\frac{\alpha_0^2 L}{2}\Expect\bigg[\sum_{n=1}^{\tau\wedge n-1}\frac{\big\|\nabla g(\theta_{n},\xi_{n})\big\|^{2}}{S_{n}^{\frac{3}{2}}}\bigg]\\& \mathop{\le}^{(b)} \Expect\bigg[\frac{g(\theta_{1})}{\sqrt{S_{0}}}\bigg]+\frac{\alpha_{0}\psi_2}{4}\Expect\bigg[\sum_{n=1}^{\tau\wedge n-1}\frac{\|\nabla g(\theta_{n})\|^{2}}{\sqrt{S_{n-1}}}\bigg]+\left(\frac{\alpha_0}{\psi_2}  + \frac{\alpha_0^2 L}{2}\right)\Expect\bigg[\sum_{n=1}^{\tau\wedge T-1}\frac{\|\nabla g(\theta_{n},\xi_{n})\|^{2}}{S_{n}^{\frac{3}{2}}}\bigg],
\end{aligned}\end{equation}
where for $(a)$ we use \cref{inequ:smoothness:g} and \emph{the Cauchy-Schwartz inequality} and for $(b)$ we use \emph{Young inequality} and let $\psi_2 = 1/(4\alpha_0\sigma_1)$. 
Substituting the above estimations into \cref{jrn_5} we have
\begin{align}\label{jrn_8}
&\Expect\big({g^{2}(\theta_{\tau\wedge T})}\big)-\Expect\big[{g^{2}(\theta_{1})}\big]\le- \frac{3\alpha_{0}}{4}\Expect\bigg[\sum_{n=1}^{\tau\wedge T-1}  g(\theta_{n}) \zeta(n)\bigg] +\frac{\alpha_{0}}{4}\Expect\bigg[] \zeta(n)\bigg] + \tilde{C}_1\Expect\bigg[\sum_{n=1}^{\tau\wedge T-1}\frac{g(\theta_{n})\Gamma_n}{\sqrt{S_{n}}}\bigg] \notag \\& +\tilde{C}_{2}\Expect\bigg[\sum_{n=1}^{\tau\wedge T-1} g(\theta_{n}) \Gamma_n\bigg] 
+\tilde{C}_3\Expect\bigg[\sum_{n=1}^{\tau\wedge T-1}\frac{\Gamma_n}{\sqrt{S_{n}}}\bigg]+\frac{3\alpha_{0}^2 L^2}{4}\Expect\bigg[\sum_{n=1}^{\tau\wedge T-1} \Gamma_n\bigg] + \mathcal{O}(1),
\end{align} where
\begin{align}
\tilde{C}_{1}&:=64\sigma_0^2\alpha_0^3 L^2 + 8\sigma_0\alpha_0(1+\alpha_0^2) L^2,\ \ \tilde{C}_{2}:= 2(1+\alpha_0^2) L,  \notag \\
\tilde{C}_{3} & :=4\alpha_0^3\sigma_1\left(4\sigma_1+\frac{ L}{2}\right)+3\sigma_0\alpha_{0}^{5} L^3. \notag 
\end{align}
We notice the following facts
\begin{align} 
&\sum_{n=1}^{\tau\wedge T-1} \Gamma_n \leq \sum_{n=1}^{T}\Gamma_n = \sum_{n=1}^{T}\frac{\|\nabla g(\theta_{n},\xi_{n})\|^{2}}{S_{n}}<\int_{S_{0}}^{S_{T}}\frac{1}{x}\text{d}x<\ln S_{T}-\ln S_{0}, \notag \\&
\sum_{n=1}^{\tau\wedge T-1}\frac{\Gamma_n}{\sqrt{S_n}}\le \sum_{n=1}^{+\infty}\frac{\|\nabla g(\theta_{n},\xi_{n})\|^{2}}{S_{n}^{\frac{3}{2}}}\le \int_{S_{0}}^{+\infty}x^{-\frac{3}{2}}\text{d}x\le \frac{2}{\sqrt{S_0}}, \notag \\& 
\Expect\bigg[\sum_{n=1}^{\tau\wedge T-1} \zeta(n)\bigg]\le \Expect\bigg[\sum_{n=1}^{T}\frac{\|\nabla g(\theta_{n})\|^{2}}{\sqrt{S_{n-1}}}\bigg]< \mathcal{O}(1)+ 2\left(\frac{\sigma_1}{\sqrt{S_0}} + \alpha_0 L\right)\Expect[\ln S_{T}],  \notag 
\end{align} 
where the last fact follows from \cref{inqu:lem:v1:1} of \cref{lem8}.
We then use these facts to simplify  \cref{jrn_8} as
\begin{align}\label{inequ:lem:v2:1}
&\quad\Expect\big[{g^{2}(\theta_{\tau\wedge T})}\big] \notag \\ 
&\le- \frac{3\alpha_{0}}{4}\Expect\bigg[\sum_{n=1}^{\tau\wedge T-1} g(\theta_{n}) \zeta(n)\bigg]+
2\left(\frac{\sigma_1}{\sqrt{S_0}} + \alpha_0 L\right)\Expect[\ln S_{T}]+\tilde{C}_1\Expect\left[\sup_{n\le T}g(\theta_{n})  \sum_{n=1}^{\tau\wedge T-1}\frac{\Gamma_n}{\sqrt{S_n}}\right]\notag \\
& \quad +\tilde{C}_2\Expect\left[\big(\sup_{n\le T}g(\theta_{n})\big) \cdot \sum_{n=1}^{\tau\wedge T-1}\Gamma_n\right] + \frac{2\tilde{C}_3}{\sqrt{S_0}} +\frac{3\alpha_{0}^2 L^2}{4}\E\left[\ln S_{T}\right] + \mathcal{O}(1)
\notag \\
&\mathop{\le}^{(a)} - \frac{3\alpha_{0}}{4}\Expect\bigg[\sum_{n=1}^{\tau\wedge T-1} g(\theta_{n}) \zeta(n)\bigg]+
2\left(\frac{\sigma_1}{\sqrt{S_0}} + \alpha_0 L\right)\Expect[\ln S_{T}]+\frac{2\tilde{C}_1}{\sqrt{S_0}}\Expect\left[\sup_{n\le T}g(\theta_{n})\right] \notag \\
&\quad +\tilde{C}_2\Expect\left[\sup_{n\le T}g(\theta_{n}) \cdot \ln(S_T)\right] + \frac{3\alpha_{0}^2 L^2}{4}\E\left[\ln S_{T}\right]+ \mathcal{O}(1).
\end{align}  
Then for any $\lambda>0,$ we define a stopping time $\tau^{(\lambda)}:=\min\Big\{n:{g^{2}(\theta_{n})}>\lambda\Big\}.$ For any$\ \lambda_{0}>0,$ we let $\tau = \tau^{(\ln T)\lambda_{0}}\wedge T\ (\forall\ T\ge 3)$ in  \cref{inequ:lem:v2:1} and use the \emph{ Markov's inequality}
\begin{align}\label{jrnjrn_1}
\Pro\Bigg(\frac{\sup_{1\le n\le T}{g^{\frac{3}{2}}(\theta_{n})}}{\ln^{\frac{3}{2}} T}>\lambda_{0}\Bigg)&=\Pro\Big(\sup_{1\le n\le T}{g^{2}(\theta_{n})}>\lambda^{\frac{4}{3}}_{0}\ln^{2} T\Big)=\Expect\left[\I_{\tau^{(\ln^{2}T)\lambda_{0}}\wedge T}\right]\notag \\
&\le \frac{1}{\lambda^{\frac{4}{3}}_{0}\ln^{2} T}\cdot\Expect\left[{g^{2}(\theta_{\tau^{(\ln^{2} T)\lambda_{0}}\wedge T})}\right] \notag \\
&\mathop{\le}^{(a)}\frac{\phi_{0}}{\lambda^{\frac{4}{3}}_{0} \ln T}\Bigg(\Expect\bigg[\frac{\sup_{1\le k\le n}{g^{\frac{3}{2}}(\theta_{n})}}{\ln^{\frac{3}{2}} T}\bigg]\Bigg)^{\frac{2}{3}}+\frac{\phi_{1}}{\lambda_{0}^{\frac{4}{3}}\ln^2 T},\end{align}
where $\phi_0 =\frac{2\tilde{C}_1}{\sqrt{S_0}} + \left(4\ln T + 2\sqrt{S_0} \right) + 2 \left(\Expect\ln^{3}(\zeta)\right)^{\frac{1}{3}}$ and $\phi_1 = 2\left(\frac{\sigma_1}{\sqrt{S_0}} + \alpha_0 L\right)\E\left[\ln S_{T}\right] + \mathcal{O}(1)$. The last inequality $(a)$ follows $\ln T>1\ (\forall\ T\ge 3)$, and since $g(x) = x^{3/2}$ is convex, by Jensen inequality 
\begin{align}
\Expect\left[\sup_{n\le T}g(\theta_{n}) \right]^{\frac{3}{2}} & \leq \Expect\left[\sup_{n\le T}g^{\frac{3}{2}}(\theta_{n}) \right] \notag 
 \end{align}
and by \emph{Holder inequality} and the upper bound of $S_T \leq \left(1+\zeta\right)^2T^4$ and $\zeta = \sqrt{S_{0}}+\sum_{n=1}^{\infty}\|\nabla g(\theta_{n},\xi_{n})\|^{2}/n^2$ is uniformly bounded in \cref{lem_S_{T}}. We have 
\begin{align}\label{inequ:glnST}
\Expect\left[\sup_{n\le T}g(\theta_{n}) \cdot \ln(S_T)\right] & \leq 4\ln T\Expect\left[\sup_{n\le T}g(\theta_{n}) \right] + 2\Expect\left[\sup_{n\le T}g(\theta_{n})\ln( 1+\zeta) \right] \notag \\
& \mathop{\leq}^{(a)} \left(4\ln T + 2\sqrt{S_0} \right)\left(\Expect\sup_{n\le T}g^{\frac{3}{2}}(\theta_{n})\right)^{\frac{2}{3}} +2\Expect\left[\sup_{n\le T}g^{\frac{3}{2}}(\theta_{n})\right]^{\frac{2}{3}}   \left(\Expect\ln^{3}(\zeta)\right)^{\frac{1}{3}}. 
\end{align}
In step $(a)$, we first used the common inequality $\ln(1+x)\le x\ (\forall\ x>-1)$, and then applied the \emph{Hölder's} inequality, i.e., $\Expect[XY]\le \Expect^{\frac{2}{3}}[\|X\|^{\frac{3}{2}}]\Expect^{\frac{1}{3}}[\|Y\|^{3}].$ Next, we bound the expectation of $\sup_{1\le n\le T}{g^{\frac{3}{2}}(\theta_{n})/\ln^{\frac{3}{2}} T}$
{\begin{align}\label{jrnjrn_0}
&\Expect\left[\frac{\sup_{1\le n\le T}{g^{\frac{3}{2}}(\theta_{n})}}{\ln^{\frac{3}{2}}T}\right]\notag\\
&=\Expect\left[\I_{\bigg(\frac{\sup_{1\le n\le T}{g^{\frac{3}{2}}(\theta_{n})}}{\ln^{\frac{3}{2}}n}\le 1\bigg)}\frac{\sup_{1\le n\le T}{g^{\frac{3}{2}}(\theta_{n})}}{\ln^{\frac{3}{2}}n}\right]+\Expect\left[\I_{\bigg(\frac{\sup_{1\le n\le T}{g^{\frac{3}{2}}(\theta_{n})}}{\ln^{\frac{3}{2}}n}>1\bigg)}\frac{\sup_{1\le n\le T}{g^{\frac{3}{2}}(\theta_{n})}}{\ln^{\frac{3}{2}} T}\right]\notag\\
&\le 1+\int_{1}^{+\infty}-\lambda\ \ \text{d} \Pro\Big(\frac{\sup_{1\le n\le T}{g^{\frac{3}{2}}(\theta_{n})}}{\ln^{\frac{3}{2}} T}>\lambda\Big)\notag\\
&=1+\int_{1}^{+\infty}\Pro\Big(\frac{\sup_{1\le n\le T}{g^{\frac{3}{2}}(\theta_{n})}}{\ln^{\frac{3}{2}} T}>\lambda\Big) \ \text{d} \lambda\notag\\
&\le 1+\int_{1}^{+\infty}\frac{1}{\lambda^{\frac{4}{3}}}\Bigg(\frac{\phi_{0}}{\ln T}\Bigg(\Expect\bigg[\frac{\sup_{1\le n\le T}{g^{\frac{3}{2}}(\theta_{n})}}{\ln^{\frac{3}{2}}n}\bigg]\Bigg)^{\frac{2}{3}}+\frac{\phi_{1}}{\ln^2 T}\Bigg)\text{d}\lambda \notag\\
&=1+\frac{3\phi_{0}}{\ln T}\Expect\Bigg[\frac{\sup_{1\le n\le T}{g^{\frac{3}{2}}(\theta_{n})}}{\ln^{\frac{3}{2}} T}\Bigg]^{\frac{2}{3}}+\frac{3\phi_{1}}{\ln^2 T}.
\end{align}}
for $T \geq 3$, we have $\ln T \geq 1$ and recall the upper bound of $S_T$ in \cref{lem_S_{T}}
\begin{align}
\E[\ln S_T] & \leq \E[2\ln(1+\zeta) + 4\ln T] \leq  \mathcal{O}(1) + 4\ln T \notag \\
\frac{\phi_0}{\ln T} & =  \frac{2\tilde{C}_1/\sqrt{S_0} + 4\ln T + 2\sqrt{S_0}}{ \ln T} + \frac{(\E[\ln^3 \zeta])^{1/3}}{\ln T}  
 = 4 + \frac{\mathcal{O}(1)}{\ln T}  + \frac{(\E[\ln^3 \zeta])^{1/3}}{\ln T} = 4 + \frac{\mathcal{O}(1)}{\ln T} \notag \\
\frac{\phi_{1}}{\ln^2 T} & =  2\left(\frac{\sigma_1}{\sqrt{S_0}} + \alpha_0 L\right)\frac{\E\left[\ln S_{T}\right]}{\ln^2 T} + \frac{\mathcal{O}(1)}{\ln T} \leq 2\left(\frac{\sigma_1}{\sqrt{S_0}} + \alpha_0 L\right)\frac{4\ln T}{\ln^2 T} + \frac{\mathcal{O}(1)}{\ln T} = \frac{\mathcal{O}(1)}{\ln T}, \notag 
\end{align}
where we use the fact that there exists $c_0 > 0$ such that $\ln^3(x) \leq \max(c_0, x)$ for all $x > 0$, then
\begin{align}
(\E[\ln^3 \zeta])^{1/3} & \leq   \max\left(c_0^{1/3},  \left( \E (\zeta)\right)^{1/3}\right) < +\infty. \notag
\end{align}
We treat $\E\left[\sup_{1\le n\le T}{g^{\frac{3}{2}}(\theta_{n})}/\ln^{\frac{3}{2}} T\right]$ as the variable. Then to solve \cref{jrnjrn_0} is equivalent to solve
\begin{align}
x \leq 1 + \left(4 + \frac{\mathcal{O}(1)}{\ln T} \right) x^{2/3} + \frac{\mathcal{O}(1)}{\ln T} \notag 
\end{align}
We have
\begin{equation}\label{adagrad:60.1}\begin{aligned}
&\Expect\Bigg[\frac{\sup_{1\le n\le T}{g^{\frac{3}{2}}(\theta_{n})}}{\ln^{\frac{3}{2}} T}\Bigg]\le\max\left\lbrace 1 + \frac{\mathcal{O}(1)}{\ln T},\left(4+ \frac{\mathcal{O}(1)}{\ln T}\right)^{3} \right\rbrace<+\infty.
\end{aligned}\end{equation} By Jensen inequality with the convex function $g(x) = x^{3/2}$, this also implies that 
\[\Expect\Big[\sup_{1\le n\le T}g(\theta_{n})\Big] \leq \Big(\Expect\sup_{1\le n\le T}g(\theta_{n})^{3/2}\Big)^{2/3}\le \mathcal{O}\left(\ln T\right).\]
We set the stopping time $\tau$ in  \cref{inequ:lem:v2:1} to be $n$ and combine \cref{inequ:glnST} and the estimation of $\E[\ln S_T]$
\begin{equation}\nonumber\begin{aligned}
\Expect\Bigg[\sum_{n=1}^{T-1}\frac{ g(\theta_{n})\|\nabla g(\theta_{n})\|^{2}}{\sqrt{S_{n-1}}}\Bigg] = \Expect\Bigg[\sum_{n=1}^{T-1} g(\theta_{n}) \zeta(n)\Bigg]\le \mathcal{O}(\ln^2 T).
\end{aligned}\end{equation} 
The lemma follows. 
\end{proof}

\section{Appendix: Proofs of RMSProp }\label{p_RMSProp}
In this section, we will provide the proofs of the lemmas and theorems related to RMSProp, as discussed in \cref{sec:AdaGrad:coordinate}. To facilitate a clear grasp of the concepts, we provide a dependency graph below to illustrate the relationships among these lemmas and theorems. 
\begin{figure}[ht]
\begin{tikzpicture}[
  node distance=1cm,
  every node/.style={draw, rectangle, minimum width=1.2cm, minimum height=0.5cm, text centered, font=\small},
  every comment/.style={rectangle, draw=none, font=\small},
  >=Stealth, 
  thick]
\node [label={sufficient decrease}](lemma31) {\cref{sufficient:lem'}}; 

\node (lemma33) [below=of lemma31]{ \cref{vital_0}}; 
\node (lemma35) [label={stability}, right= of lemma31]{\cref{bound''}};  

\node (lemma34) [below= of lemma35]{ \cref{RMSProp_0}}; 
\node (lemma36) [right= of lemma35]{\cref{bounded_v}}; 

\node (lemma37) [below= of lemma36]{ \cref{RMSProp_9}}; 
\node (lemma38) [below= of lemma37]{ \cref{convergence_v}}; 
\node (lemma39) [left= of lemma38]{ \cref{convergence_1.0'}}; 

\node (lemma41) [right= of lemma36]{ \cref{loss_bound}}; 
\node (lemma35-1) [label=stability, below= of lemma41]{\cref{bound''}};  
\node (lemma39-1) [label=almost-sure, below= of lemma35-1]{ \cref{convergence_1.0'}}; 
\node (lemma40) [label=mean-square, right= of lemma35-1, distance=8cm]{ \cref{convergence_2.0'}};



\draw[->] (lemma31) to (lemma35);
\draw[->] (lemma35) to (lemma36);
\draw[->] (lemma31) to (lemma34);
\draw[->] (lemma33) to (lemma34);
\draw[->] (lemma34) to (lemma35);
\draw[->] (lemma31) to (lemma35);

\draw[->] (lemma36) to (lemma37);
\draw[->] (lemma34) to (lemma37);
\draw[->] (lemma37) to (lemma38);
\draw[->] (lemma38) to (lemma39);
\draw[->] (lemma34) to (lemma39);
\draw[thick, dotted] ($(lemma36.north east)!0.5!(lemma41.north west)$) to ($(lemma38.south east)!0.5!(lemma39-1.south west)$); 
\draw[->] (lemma41) to (lemma40);
\draw[->] (lemma35-1) to (lemma40);
\draw[->] (lemma39-1) to node[draw=none, left=-3.6cm, font=\small, align=center]{+ Lebesgue's dominated\\theorem}(lemma40);

\end{tikzpicture}
\caption{The proof structure of RMSProp}
\end{figure}
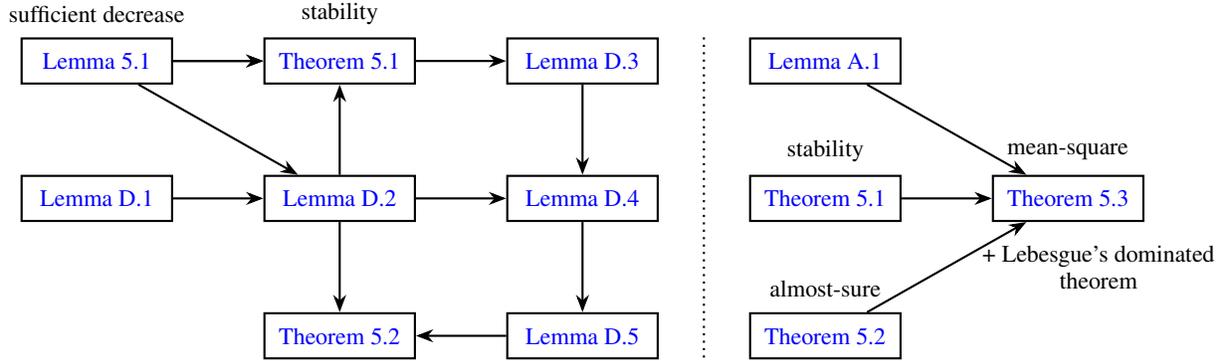

\subsection{Useful Properties of RMSProp}
\begin{property}\label{property_0}
The sequence \(\{\eta_t\}_{t\ge 1}\) is monotonically decreasing per coordinate with respect to \( t \).
\end{property}
\begin{proof}
By the iterative formula of RMSProp~in~\cref{RMSProp}, we know that for all $\ t\ge 1$
\[v_{t+1}=\beta_{2,t+1}v_{t}+(1-\beta_{2,t+1})(\nabla g(\theta_{t+1},\xi_{t+1}))^{\circ 2}=\Big(1-\frac{1}{t+1}\Big)v_{t}+\frac{1}{t+1}(\nabla g(\theta_{t+1},\xi_{t+1}))^{\circ 2},\]
which induces that 
\begin{align}\label{jh_100}
(t+1)v_{t+1,i}=\big((t+1)-1\big)v_{t,i}+ (\nabla_i g(\theta_{t+1},\xi_{t+1}))^{2}\ge tv_{t,i}.
\end{align}
This implies that \(tv_{t,i}\) is monotonically non-decreasing. Since
\begin{align*}
\eta_{t,i}=\frac{\alpha_t}{\sqrt{v_{t,i}}+\epsilon}=\frac{\sqrt{t}\alpha_t}{\sqrt{tv_{t,i}}+\sqrt{t}\epsilon}=\frac{1}{\sqrt{tv_{t,i}}+\sqrt{t}\epsilon},
\end{align*} 
where the global learning rate $\alpha_t = 1/\sqrt{t}$ and the denominator is monotonically non-increasing and greater than 0. Thus, the sequence \(\eta_t\) is monotonically decreasing at each coordinate with respect to $t$.  
\end{proof}
\begin{property}\label{property_1}
The sequence \(\{\eta_t\}_{t\ge 1}\) satisfies that for each coordinate $i$, \(tv_{t,i}\ge r_1 S_{t,i},\) where \(r_1:=\min\{\beta_1,1-\beta_1\},\) \(S_{t,i}:=v+\sum_{k=1}^{t}(\nabla_{i}g(\theta_{k},\xi_{k}))^{2}\) for all \( t\ge 1 \), and \(S_{0,i}:=v.\)
\end{property}
\begin{proof}
For $v_{1,i}$, we derive the following estimate
\begin{align*}
v_{1,i} &=\beta_{1}v_{0,i} + (1-\beta_{1})(\nabla_{i}g(\theta_{1},\xi_{1}))^{2} =\beta_{1}v+(1-\beta_1)(\nabla_{i}g(\theta_{1},\xi_{1}))^{2}.
\end{align*}
We observe that $ \min(\beta_1, 1-\beta_1) S_{1,i} \le v_{1,i}\le S_{1,i}.$ Recalling \cref{jh_100} that $kv_{k,i}\ge(k-1)v_{k-1,i}+ (\nabla_{i}g(\theta_{k},\xi_{k}))^{2}$ for $\forall\ k\ge 2$ and summing up it  for $2 \leq k \leq t$, we have $\forall\ t\ge 2,$ $$t v_{t,i}\ge v_{1,i}+\sum_{k=2}^{t}(\nabla_{i}g(\theta_{k},\xi_{k}))^{2}$$
Combining this with the estimate for $v_{1,i}$
\[tv_{t,i}\ge \beta_1 v+ (1-\beta_1)(\nabla_{i}g(\theta_{1},\xi_{1}))^{2}+\sum_{k=2}^{t}(\nabla_{i}g(\theta_{k},\xi_{k}))^{2},\]
we have \( tv_{t,i}\ge \min(\beta_1, 1-\beta_1) S_{t,i}.\) 
\end{proof}

\subsection{Auxiliary Lemmas of RMSProp}\label{proof:lem:rmsprop}

\begin{proof}(of \cref{sufficient:lem'})
Recalling the $L$-smoothness of the function and substituting the formula of RMSProp gives
\begin{align}\label{adam_-1}
g(\theta_{t+1})-g(\theta_{t})& \mathop{\le}^{(a)}\underbrace{-\sum_{i=1}^{d}\eta_{t,i}\nabla_{i}g(\theta_{t})\nabla_{i}g(\theta_{t},\xi_{t})}_{\Theta_{t,1}}+\frac{ L}{2}\sum_{i=1}^{d}\eta_{t,i}^{2}\nabla_{i}g(\theta_{t},\xi_{t})^{2}.
\end{align}
Using the following identity, we decompose $\Theta_{t,1}$ into a negative term $-\sum_{i=1}^{d}\zeta_{i}(t)$, an error term $\Theta_{t,1,1}$, and a martingale difference term $M_{t,1}$.
\begin{align}\label{adam_0}
&\Theta_{t,1} \notag \\
&=-\sum_{i=1}^{d}\eta_{t,i}\nabla_{i}g(\theta_{t})\nabla_{i}g(\theta_{t},\xi_{t})=-\sum_{i=1}^{d}\eta_{{t-1},i}\nabla_{i}g(\theta_{t})\nabla_{i}g(\theta_{t},\xi_{t})+\sum_{i=1}^{d}\Delta_{t,i}\nabla_{i}g(\theta_{t})\nabla_{i}g(\theta_{t},\xi_{t})\notag\\&=-\sum_{i=1}^{d}\underbrace{\eta_{{t-1},i}(\nabla_{i}g(\theta_{t}))^{2}}_{\zeta_{i}(t)}+\underbrace{\sum_{i=1}^{d}\Delta_{t,i}\nabla_{i}g(\theta_{t})\nabla_{i}g(\theta_{t},\xi_{t})}_{\Theta_{t,1,1}}+\underbrace{\sum_{i=1}^{d}\eta_{{t-1},i}\nabla_{i}g(\theta_{t})(\nabla_{i} g(\theta_{t})-\nabla_{i}g(\theta_{t},\xi_{t}))}_{M_{t,1}},
\end{align}
where $\Delta_{t} = \eta_{t-1} - \eta_t$ and $\Delta_{t,i}$ represents the $i$-th component of $\Delta_{t}$. We further bound the error term $\Theta_{t,1,1}$
\begin{align}\label{adam_2}
\Theta_{t,1,1} &=\sum_{i=1}^{d}\Expect\big[\Delta_{t,i}\nabla_{i}g(\theta_{t})\nabla_{i}g(\theta_{t},\xi_{t})\mid\mathscr{F}_{t-1}\big] \notag \\
& \quad +\underbrace{\sum_{i=1}^{d}\big(\Delta_{t,i}\nabla_{i}g(\theta_{t})\nabla_{i}g(\theta_{t},\xi_{t})-\Expect\big[\Delta_{t,i}\nabla_{i}g(\theta_{t})\nabla_{i}g(\theta_{t},\xi_{t})\mid\mathscr{F}_{t-1}\big]}_{M_{t,2}}\big)\notag\\&\mathop{<}^{(a)}\sum_{i=1}^{d}\sqrt{\eta_{{t-1}}}\nabla_{i}g(\theta_{t})\Expect\big[\sqrt{\Delta_{t,i}}\sqrt{\nabla_{i}g(\theta_{t},\xi_{t})}\mid\mathscr{F}_{t-1}\big]+M_{t,2}\notag\\&\mathop{\le}^{(b)} \frac{1}{2}\sum_{i=1}^{d}\eta_{{t-1}}(\nabla_{i} g(\theta_{t}))^{2}+\frac{1}{2}\sum_{i=1}^{d}\Expect^{2}\big[\sqrt{\Delta_{t,i}}{\nabla_{i}g(\theta_{t},\xi_{t})}\mid\mathscr{F}_{t-1}\big]+M_{t,2}\notag\\&\mathop{\le}^{(c)} \frac{1}{2}\sum_{i=1}^{d}\zeta_{i}(t)+\frac{1}{2}\sum_{i=1}^{d}\Expect[(\nabla_{i}g(\theta_{t},\xi_{t}))^{2}\mid\mathscr{F}_{t-1}]\cdot\Expect[\Delta_{t,i}\mid\mathscr{F}_{t-1}]+M_{t,2}\notag\\&\le \frac{1}{2}\sum_{i=1}^{d}\zeta_{i}(t)+\frac{1}{2}\sum_{i=1}^{d}\Expect[(\nabla_{i}g(\theta_{t},\xi_{t}))^{2}\mid\mathscr{F}_{t-1}]\cdot\Delta_{t,i} +M_{t,2}\notag\\&\quad +\underbrace{\frac{1}{2}\Bigg(\sum_{i=1}^{d}\Big(\Expect[(\nabla_{i}g(\theta_{t},\xi_{t}))^{2}\mid\mathscr{F}_{t-1}]\cdot\Expect[\Delta_{t,i}\mid\mathscr{F}_{t-1}]-\Expect[(\nabla_{i}g(\theta_{t},\xi_{t}))^{2}\mid\mathscr{F}_{t-1}]\cdot\Delta_{t,i}\Big)\Bigg)}_{M_{t,3}} \notag\\&\mathop{\le}^{(d)} \frac{1}{2}\sum_{i=1}^{d}\zeta_{i}(t)+\frac{\sigma_{0}}{2}\underbrace{\sum_{i=1}^{d}(\nabla_{i}g(\theta_{t}))^{2}\cdot\Delta_{t,i}}_{\Theta_{t,1,1,1}}+\frac{\sigma_{1}}{2}\sum_{i=1}^{d}\Delta_{t,i}+M_{t,2}+M_{t,3}.
\end{align}
In the above derivation, step $(a)$ utilizes the property of conditional expectation that for the random variables \(X \in \mathscr{F}_{n-1}\) and \(Y \in \mathscr{F}_{n}\), \(\Expect[XY|\mathscr{F}_{n-1}] = X\Expect[Y|\mathscr{F}_{n-1}]\). Note that \(\Delta_{t,i} = \sqrt{\Delta_{t,i}}\sqrt{\Delta_{t,i}} < \sqrt{\eta_{{t-1}}}\sqrt{\Delta_{t,i}}\) (due to  \cref{property_0}, each element of $\eta_t$ is non-increasing, we have \(\Delta_{t,i} \ge 0\), thus the square root of $\Delta_{t,i}$ is well-defined). In step $(b)$, we employed the \emph{AM-GM} inequality that \(ab \le \frac{a^{2} + b^{2}}{2}\). In step $(c)$, we used the \emph{Cauchy-Schwarz} inequality for conditional expectations that \(\Expect[XY|\mathscr{F}_{n-1}] \le \sqrt{\Expect[X^{2}|\mathscr{F}_{n-1}]\Expect[Y^{2}|\mathscr{F}_{n-1}]}.\) For step $(d)$, we used the coordinate-wise affine noise variance assumption stated in \cref{coordinate}~\ref{coordinate_i_1}.
Next, we estimate the second term \(\Theta_{t,1,1,1}\) of RHS of \cref{adam_2}
\begin{align*}
\Theta_{t,1,1,1}&=\sum_{i=1}^{d}(\nabla_{i}g(\theta_{t}))^{2}\cdot\Delta_{t,i}=\sum_{i=1}^{d}(\nabla_{i}g(\theta_{t}))^{2}\cdot\eta_{{t-1},i}-\sum_{i=1}^{d}(\nabla_{i}g(\theta_{t}))^{2}\cdot\eta_{t,i}\\&\le \sum_{i=1}^{d}(\nabla_{i}g(\theta_{t}))^{2}\eta_{{t-1},i}-\sum_{i=1}^{d}(\nabla_{i}g(\theta_{t+1}))^{2}\eta_{t,i}+\sum_{i=1}^{d}\big((\nabla_{i}g(\theta_{t+1}))^{2}-(\nabla_{i}g(\theta_{t}))^{2}\big)\eta_{t,i}\\&=\sum_{i=1}^{d}\zeta_{i}(t)-\sum_{i=1}^{d}\zeta_{i}(t+1)+\sum_{i=1}^{d}\big((\nabla_{i}g(\theta_{t+1}))^{2}-(\nabla_{i}g(\theta_{t}))^{2}\big)\eta_{t,i}\\&\le \sum_{i=1}^{d}\zeta_{i}(t)-\sum_{i=1}^{d}\zeta_{i}(t+1)+\sum_{i=1}^{d}\big((\nabla_{i}g(\theta_{t+1}))^{2}-(\nabla_{i}g(\theta_{t}))^{2}\big)\eta_{t,i}\\&\mathop{\le}^{(a)} \sum_{i=1}^{d}\zeta_{i}(t)-\sum_{i=1}^{d}\zeta_{i}(t+1)+\frac{1}{2\sigma_{0}}\sum_{i=1}^{d}\zeta_{i}(t)+\frac{(2\sigma_{0}+1) L^{2}}{\sqrt{v}}\|\eta_t\circ \nabla g(\theta_{t},\xi_{t})\|^{2}.
\end{align*}
In step $(a)$, we utilized the following inequality
\begin{align*}
(\nabla_{i}g(\theta_{t+1}))^{2}-(\nabla_{i}g(\theta_{t}))^{2}&= (\nabla_{i}g(\theta_{t})+\nabla_{i}g(\theta_{t+1})-\nabla_{i}g(\theta_{t}))^{2}-(\nabla_{i}g(\theta_{t}))^{2}\\&\le 2|\nabla_{i}g(\theta_{t})||\nabla_{i}g(\theta_{t+1})-\nabla_{i}g(\theta_{t})|+(\nabla_{i}g(\theta_{t+1})-\nabla_{i}g(\theta_{t}))^{2}\\&\le \frac{1}{2\sigma_{0}}(\nabla_{i}g(\theta_{t}))^{2}+(2\sigma_{0}+1)(\nabla_{i}g(\theta_{t+1})-\nabla_{i}g(\theta_{t}))^{2}.
\end{align*}
Furthermore, we have
\begin{align*}
&\quad \sum_{i=1}^{d}\big((\nabla_{i}g(\theta_{t+1}))^{2}-(\nabla_{i}g(\theta_{t}))^{2}\big)\eta_{t,i} \notag \\
& =\sum_{i=1}^{d}\big(2\nabla_{i} g(\theta_{t})^{\top}(\nabla_i g(\theta_{t+1})-\nabla_i g(\theta_{t}))+(\nabla_{i}g(\theta_{t+1})-\nabla_{i}g(\theta_{t}))^{2}\big)\eta_{t,i}\\
& \leq \sum_{i=1}^{d}\left(\frac{1}{2\sigma_0}\nabla_{i} g(\theta_{t})^{2} + 2\sigma_0(\nabla_i g(\theta_{t+1})-\nabla_i g(\theta_{t}))^2+(\nabla_{i}g(\theta_{t+1})-\nabla_{i}g(\theta_{t}))^{2} \right)\eta_{t,i} \\
&\mathop{\le}^{\text{$\eta_{t,i}\le \frac{1}{\sqrt{v}}$}}
 \frac{1}{2\sigma_{0}}\sum_{i=1}^{d}\zeta_{i}(t)+\frac{2\sigma_{0}+1}{\sqrt{v}}\|\nabla g(\theta_{t+1})-\nabla g(\theta_{t})\|^{2}\notag\\&\le\frac{1}{2\sigma_{0}}\sum_{i=1}^{d}\zeta_{i}(t)+\frac{(2\sigma_{0}+1) L^{2}}{\sqrt{v}}\|\theta_{t+1}-\theta_{t}\|^{2}\notag\\&\le \frac{1}{2\sigma_{0}}\sum_{i=1}^{d}\zeta_{i}(t)+\frac{(2\sigma_{0}+1) L^{2}}{\sqrt{v}}\|\eta_t\circ \nabla g(\theta_{t},\xi_{t})\|^{2}\notag,
\end{align*}
where since each component of $\eta_{t}$ is monotonically non-increasing in \cref{property_0}, we have $\eta_{t,i} \leq \eta_{0,i} \leq 1/\sqrt{v}$. 
We substitute the estimate of $\Theta_{t,1,1,1}$ into \cref{adam_2} and then substitute the estimation of $\Theta_{t,1,1}$ into \cref{adam_0}, which obtains
\begin{align}\label{adam_3}
\Theta_{t,1}&=-\frac{3}{4}\sum_{i=1}^{d}\zeta_{i}(t)+\sum_{i=1}^{d}\zeta_{i}(t)-\sum_{i=1}^{d}\zeta_{i}(t+1)+\frac{(2\sigma_{0}+1) L^{2}}{\sqrt{v}}\|\eta_t\circ \nabla g(\theta_{t},\xi_{t})\|^{2}\notag\\&+\frac{\sigma_{1}}{2}\sum_{i=1}^{d}\Delta_{t,i}+\underbrace{M_{t,1}+M_{t,2}+M_{t,3}}_{M_{t}}.
\end{align}
Then we apply the estimation of $\Theta_{t,1}$ into \cref{adam_-1}
\begin{align}\label{adam_16}
g(\theta_{t+1})-g(\theta_{t})& \leq -\frac{3}{4}\sum_{i=1}^{d}\zeta_{i}(t)+\sum_{i=1}^{d}\zeta_{i}(t)-\sum_{i=1}^{d}\zeta_{i}(t+1)+\left(\frac{ L}{2}+\frac{(2\sigma_{0}+1) L^{2}}{\sqrt{v}}\right)\|\eta_t\circ \nabla g(\theta_{t},\xi_{t})\|^{2}\notag\\&\quad +\frac{\sigma_{1}}{2}\sum_{i=1}^{d}\Delta_{t,i}+M_{t}.
\end{align}
We define the Lyapunov function $\hat{g}(\theta_{t}) = g(\theta_{t})+\sum_{i=1}^{d}\zeta_{i}(t)+\frac{\sigma_{1}}{2}\sum_{i=1}^{d}\eta_{{t-1},i}$. Then the above inequality can be re-written as
\begin{align}
\hat{g}(\theta_{t+1}))- \hat{g}(\theta_{t})) \le-\frac{3}{4}\sum_{i=1}^{d}\zeta_{i}(t)+\left(\frac{ L}{2}+\frac{(2\sigma_{0}+1) L^{2}}{\sqrt{v}}\right)\|\eta_t\circ \nabla g(\theta_{t},\xi_{t})\|^{2}+M_{t},
\end{align}
as we desired. 
\end{proof}



\begin{lem}\label{vital_0}
Under \cref{ass_g_poi} \ref{ass_g_poi:i}$\sim$\ref{ass_g_poi:i2}, \cref{ass_noise} \ref{ass_noise:i}, \cref{coordinate} \ref{coordinate_i_1}, we consider RMSProp with any initial point and $T \geq 1$. There exists a random variable $\zeta$ such that the following results hold
\begin{itemize}
\item[(a)] the random variable $0 \leq \zeta < +\infty$ a.s., and its expectation $\mathbb{E}(\zeta)$ is uniformly bounded above.
\item[(b)] $\sqrt{S_{T}} \leq (T+1)^{4} \zeta$  where $S_T = [S_{T,1}, S_{T,2}, \cdots, S_{T,d}]^T$ and each element $S_{T,i}$ is defined in \cref{property_1}
\end{itemize}
\end{lem}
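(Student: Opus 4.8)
The plan is to prove \cref{vital_0} as the RMSProp analogue of \cref{lem_S_{T}}, transporting that argument to the coordinate-wise setting through \cref{property_0} and \cref{property_1}, which are exactly the tools that convert the EMA buffers $v_{t,i}$ and the steps $\eta_{t,i}$ into the surrogate accumulators $S_{t,i}$. First I would start from the RMSProp sufficient-decrease inequality \cref{sufficient:lem'}, divide it by $t^{2}$, and rearrange to isolate the negative term,
\[
\frac{3}{4t^{2}}\sum_{i=1}^{d}\zeta_{i}(t)\le \frac{1}{t^{2}}\bigl(\hat g(\theta_{t})-\hat g(\theta_{t+1})\bigr)+\frac{C}{t^{2}}\,\|\eta_{t}\circ\nabla g(\theta_{t},\xi_{t})\|^{2}+\frac{M_{t}}{t^{2}},
\]
with $C=\tfrac{L}{2}+\tfrac{(2\sigma_{0}+1)L^{2}}{\sqrt v}$. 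The $t^{-2}$ weight is essential here: by \cref{property_1} one has $\eta_{t,i}^{2}\le \tfrac{1}{r_{1}S_{t,i}}$, hence $\|\eta_{t}\circ\nabla g(\theta_{t},\xi_{t})\|^{2}\le \tfrac{1}{r_{1}}\sum_{i}\tfrac{(\nabla_{i}g(\theta_{t},\xi_{t}))^{2}}{S_{t,i}}\le \tfrac{d}{r_{1}}$, so the weighted error sum is dominated by $\tfrac{Cd}{r_{1}}\sum_{t}t^{-2}<\infty$, a genuine constant with no logarithmic factor. The $\hat g$ differences collapse under Abel summation to a quantity of order $\hat g(\theta_{1})$, and taking expectations annihilates the martingale term because $M_{t}$ is an MDS (\cref{sufficient:lem'}). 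This produces a bound on $\sum_{t=1}^{T}t^{-2}\,\E\bigl[\sum_{i}\zeta_{i}(t)\bigr]$ that is uniform in $T$.

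Next I would turn this into control of an accumulator series and deduce (a). Writing $\zeta_{i}(t)=(\nabla_{i}g(\theta_{t}))^{2}\eta_{t-1,i}$ and invoking the coordinate-wise affine noise bound \cref{coordinate}~\ref{coordinate_i_1}, I lower-bound $(\nabla_{i}g(\theta_{t}))^{2}$ by $\tfrac{1}{\sigma_{0}}\bigl(\E[(\nabla_{i}g(\theta_{t},\xi_{t}))^{2}\mid\mathscr{F}_{t-1}]-\sigma_{1}\bigr)$; the leftover $\tfrac{\sigma_{1}}{\sigma_{0}}\sum_{t}t^{-2}\eta_{t-1,i}$ is summable since $\eta_{t-1,i}\le \tfrac{1}{\sqrt{r_{1}v}}$. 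For the step factor I use the other half of \cref{property_1}, $tv_{t,i}\le S_{t,i}$, together with $S_{t-1,i}\ge v$ to absorb the additive $\epsilon$, obtaining $\eta_{t-1,i}\ge \tfrac{c}{\sqrt t\,\sqrt{S_{t-1,i}}}$ with $c=(1+\epsilon/\sqrt v)^{-1}$. The \emph{Doob's stopped} theorem (\cref{sum:expect:ab}) lets me move the conditional expectations outside the finite sums, and the net conclusion is that $\E\bigl[\sum_{i=1}^{d}\sum_{t=1}^{\infty}\tfrac{(\nabla_{i}g(\theta_{t},\xi_{t}))^{2}}{t^{5/2}\sqrt{S_{t-1,i}}}\bigr]$ is finite, uniformly in $T$. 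Defining $\zeta$ to be $\sqrt v$ plus a suitable constant multiple of $\sum_{i=1}^{d}\sum_{t=1}^{\infty}\tfrac{(\nabla_{i}g(\theta_{t},\xi_{t}))^{2}}{t^{5/2}\sqrt{S_{t-1,i}}}$, \cref{lem_summation} then yields $0\le\zeta<+\infty$ a.s.\ and $\E[\zeta]<+\infty$, which is (a).

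Finally, for (b) I would invert coordinate by coordinate exactly as in \cref{lem_S_{T}}. Because $t\le T$ and $S_{t-1,i}\le S_{T,i}$, every summand obeys $\tfrac{(\nabla_{i}g(\theta_{t},\xi_{t}))^{2}}{t^{5/2}\sqrt{S_{t-1,i}}}\ge \tfrac{(\nabla_{i}g(\theta_{t},\xi_{t}))^{2}}{T^{5/2}\sqrt{S_{T,i}}}$, so summing $\sum_{t=1}^{T}(\nabla_{i}g(\theta_{t},\xi_{t}))^{2}=S_{T,i}-v$ gives $\tfrac{S_{T,i}-v}{T^{5/2}\sqrt{S_{T,i}}}\le \zeta$, hence $\sqrt{S_{T,i}}\le T^{5/2}\zeta+\sqrt v$; summing over coordinates through $\sqrt{S_{T}}=\sqrt{\sum_{i}S_{T,i}}\le \sum_{i}\sqrt{S_{T,i}}$ and letting the constant built into $\zeta$ absorb $d$, $c$, and $\sqrt v$ yields $\sqrt{S_{T}}\le (T+1)^{4}\zeta$, the exponent $4$ being a deliberately loose bound on the genuine $T^{5/2}$ growth. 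The main obstacle, absent for AdaGrad-Norm, is precisely the extra $\sqrt t$ in the lower bound for $\eta_{t-1,i}$: were $\epsilon=0$, the two-sided estimate $r_{1}S_{t,i}\le tv_{t,i}\le S_{t,i}$ would force $\eta_{t-1,i}\asymp 1/\sqrt{S_{t-1,i}}$ and recover the $T^{2}$ power of \cref{lem_S_{T}}, but the additive $\epsilon$ in the denominator of $\eta_{t,i}$ together with the schedule $\alpha_{t}=1/\sqrt t$ opens a factor-$\sqrt t$ gap between the upper and lower controls of the step; tracking this gap, the $\epsilon$-term, and the coordinate sum while preserving summability is the delicate bookkeeping of the proof.
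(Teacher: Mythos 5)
Your proposal is correct, but it takes a genuinely different route from the paper's own proof of \cref{vital_0}. The paper never lower-bounds the step size: it writes $\sqrt{S_T}/(T+1)^4 \le \sqrt{vd} + \sum_{t}\Lambda_{4,t}$ with $\Lambda_{4,t} := \|\nabla g(\theta_t,\xi_t)\|^2/\big((t+1)^4\sqrt{S_{t-1}}\big)$, bounds $\Expect\big[\|\nabla g(\theta_t,\xi_t)\|^2 \mid \mathscr{F}_{t-1}\big] \le 2L\sigma_0 g(\theta_t) + \sigma_1 d$ by the \emph{forward} affine-noise condition together with \cref{loss_bound}, and then exploits a fact special to RMSProp's normalization --- each step has bounded displacement, $\|\eta_t\circ\nabla g(\theta_t,\xi_t)\|^2 \le d/r_1$ --- to extract the linear growth $\Expect[g(\theta_t)] \le \mathcal{O}(t)$ from \cref{sufficient:lem'}; the weight $(t+1)^{-4}$ then makes $\sum_t\Expect[\Lambda_{4,t}]$ converge, and $\zeta := \sqrt{vd}+\sum_{t=1}^{\infty}\Lambda_{4,t}$ finishes both parts. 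You instead transport the proof of \cref{lem_S_{T}} to the coordinate-wise setting: weight the sufficient decrease by $t^{-2}$, use the crude bound $\|\eta_t\circ\nabla g(\theta_t,\xi_t)\|^2\le d/r_1$ only to make the error term summable, then run the affine-noise condition in \emph{reverse} and lower-bound $\eta_{t-1,i}\ge c/(\sqrt{t}\sqrt{S_{t-1,i}})$ to arrive at the per-coordinate series $\sum_{i}\sum_{t}(\nabla_i g(\theta_t,\xi_t))^2/(t^{5/2}\sqrt{S_{t-1,i}})$, which you invert coordinate by coordinate. Both arguments are valid, and your choice of an already-summable weight correctly avoids the circularity you would incur by invoking \cref{RMSProp_0}, whose proof in the paper itself relies on \cref{vital_0}. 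What the paper's route buys is brevity and complete freedom from $\epsilon$-bookkeeping and step-size lower bounds; what yours buys is per-coordinate control of $S_{T,i}$ with the sharper exponent $T^{5/2}$ (deliberately padded to $(T+1)^4$), reusing machinery that appears elsewhere in the AdaGrad analysis. One small patch is needed in a full write-up: the inequality $tv_{t,i}\le S_{t,i}$ that you call ``the other half of \cref{property_1}'' is not stated there --- \cref{property_1} only asserts $tv_{t,i}\ge r_1 S_{t,i}$ --- so you must derive it from the recursion, namely $tv_{t,i}=\beta_1 v+(1-\beta_1)(\nabla_i g(\theta_1,\xi_1))^2+\sum_{k=2}^{t}(\nabla_i g(\theta_k,\xi_k))^2\le S_{t,i}$, which is immediate but needs to be said since your step-size lower bound (and hence part (b)) hinges on it.
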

\begin{proof}
For any $\phi > 0$, we estimate $\frac{\sqrt{S_{T}}}{(T+1)^{\phi}}$ as follows
\begin{align}\label{Lambda}
\frac{\sqrt{S_{T}}}{(T+1)^{\phi}}&=\frac{{S_{T}}}{(T+1)^{\phi}\sqrt{S_{T}}}=\frac{S_{0}+\sum_{t=1}^{T}\|\nabla g(\theta_{t},\xi_{t})\|^{2}}{(T+1)^{\phi}\sqrt{S_{T}}} =\frac{S_{0}}{(T+1)^{\phi}\sqrt{S_{T}}} +\sum_{t=1}^{T}\frac{\|\nabla g(\theta_{t},\xi_{t})\|^{2}}{(T+1)^{\phi}\sqrt{S_{T}}} \notag  \\&\le \frac{S_{0}}{(T+1)^{\phi}\sqrt{S_{T}}} +\sum_{t=1}^{T}\frac{\|\nabla g(\theta_{t},\xi_{t})\|^{2}}{(T+1)^{\phi}\sqrt{S_{T}}}\le {\sqrt{S_{0}}}+\underbrace{\sum_{t=1}^{T}\frac{\|\nabla g(\theta_{t},\xi_{t})\|^{2}}{(t+1)^{\phi}\sqrt{S_{t-1}}}}_{\sum_{t=1}^{T}\Lambda_{\phi,t}},
\end{align}
where $S_0=vd$. We set $\phi = 4$ in \cref{Lambda} and bound the expectation of the sum $\sum_{t=1}^{T}\Lambda_{4,t}$
\begin{align}\label{dsacdqewed}
\Expect\Bigg[\sum_{t=1}^{T}\Lambda_{4,t}\Bigg]&=\sum_{t=1}^{T}\Expect[\Lambda_{4,t}]=\sum_{t=1}^{T}\Expect\Bigg[\frac{\|\nabla g(\theta_{t},\xi_{t})\|^{2}}{(t+1)^{4}\sqrt{S_{t-1}}}\Bigg]=\sum_{t=1}^{T}\Expect\Bigg[\frac{\Expect[\|\nabla g(\theta_{t},\xi_{t})\|^{2}|\mathscr{F}_{t-1}]}{(t+1)^{4}\sqrt{S_{t-1}}}\Bigg]\notag\\&\mathop{\le}_{\text{\cref{loss_bound}}}^{\text{\cref{coordinate}\ref{coordinate_i_1} }}\sum_{t=1}^{T}\Expect\Bigg[\frac{2 L\sigma_{0}g(\theta_{t})+\sigma_{1}}{(t+1)^{4}\sqrt{S_{t-1}}}\Bigg]\le 2 L\sigma_{0}\sum_{t=1}^{T}\frac{\Expect\left[g(\theta_{t})\right]}{(t+1)^{4}}+\sigma_{1}\sum_{t=1}^{T}\frac{1}{(t+1)^{4}}.
\end{align}
Based on the sufficient descent inequality  in \cref{sufficient:lem'}, we estimate 
\begin{align*}
\Expect\left[g(\theta_{t})\right] \leq \mathcal{O}\left(\sum_{k=1}^{t}\Expect\|\eta_{k}\circ \nabla g(\theta_{k},\xi_{k})\|^{2}\right)+\mathcal{O}(1) = \mathcal{O}\left(\sum_{k=1}^{t}\Expect \left\| \theta_{t+1} - \theta_t \right\|^2\right)+\mathcal{O}(1) \leq \mathcal{O}(t).
\end{align*}
Substituting the above result into \cref{dsacdqewed}, and since $\sum_{t=1}^{T}\frac{1}{(t+1)^{p}}\le\sum_{t=1}^{T}\frac{1}{(t+1)^{2}}=\frac{\pi^{2}}{6},$ for any $p \geq 2$, we have
\[\Expect\left[\sum_{t=1}^{T}\Lambda_{4,t}\right] \leq \mathcal{O}(1).\]
where the RHS term is independent of $T$. According to the \emph{Lebesgue's Monotone Convergence} theorem, we have 
\[\sum_{t=1}^{T}\Lambda_{4,t} \to \sum_{t=1}^{+\infty}\Lambda_{4,t} \,\, \text{a.s.}, \quad\,\text{ and } \, 
\Expect\left[\sum_{t=1}^{+\infty}\Lambda_{4,t}\right] = \lim_{T \to \infty} \Expect\Bigg[\sum_{t=1}^{T}\Lambda_{4,t}\Bigg] = \lim_{T \to \infty} \sum_{t=1}^{T} \Expect[\Lambda_{4,t}] =\mathcal{O}(1).
\]
Next, we combine \cref{Lambda} and define $\zeta :=  \sqrt{vd} + \sum_{t=1}^{+\infty}\Lambda_{4,t},$
then
\begin{align}
\sqrt{S_{T}}\le  (T+1)^{4}\zeta, \quad 
\Expect[\zeta]=\sqrt{vd}+\Expect\Bigg[\sum_{t=1}^{+\infty}\Lambda_{4,t}\Bigg] \leq \mathcal{O}(1).
\end{align}
\end{proof}

\begin{lem}\label{RMSProp_0}
Under \cref{ass_g_poi} \ref{ass_g_poi:i}$\sim$\ref{ass_g_poi:i2}, \cref{ass_noise} \ref{ass_noise:i}, \cref{coordinate} \ref{coordinate_i_1}, consider RMSProp. We have $\forall\ 0<\delta\le 1/2$
\[\sum_{t=1}^{+\infty}\sum_{i=1}^{d}\Expect\left[\frac{\zeta_{i}(t)}{t^{\delta}}\right] \leq \mathcal{O}(1).\]
\end{lem}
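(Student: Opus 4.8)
The plan is to convert the coordinate-wise sufficient decrease inequality of \cref{sufficient:lem'} into a weighted summability statement, in the same spirit as \cref{lem_su} does for AdaGrad-Norm, but with the deterministic weight $1/t^{\delta}$ playing the role of the auxiliary factor. Rearranging \cref{sufficient:lem'} gives $\frac34\sum_{i=1}^{d}\zeta_i(t)\le \hat g(\theta_t)-\hat g(\theta_{t+1})+C_\eta\|\eta_t\circ\nabla g(\theta_t,\xi_t)\|^2+M_t$, where $C_\eta=\frac{L}{2}+\frac{(2\sigma_0+1)L^2}{\sqrt v}$. I would multiply both sides by $1/t^{\delta}$, sum over $t=1,\dots,T$, and take expectations. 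Since $\{M_t,\mathscr{F}_t\}$ is a martingale difference sequence and $1/t^{\delta}$ is deterministic, $\E[M_t/t^{\delta}]=0$, so the martingale contribution disappears from the finite sum; letting $T\to\infty$ at the end is then justified by Lebesgue's monotone convergence theorem because $\sum_i\zeta_i(t)\ge 0$.

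First I would dispose of the Lyapunov telescoping term. Writing $\frac{1}{t^{\delta}}\big(\hat g(\theta_t)-\hat g(\theta_{t+1})\big)=\frac{\hat g(\theta_t)}{t^{\delta}}-\frac{\hat g(\theta_{t+1})}{(t+1)^{\delta}}+\hat g(\theta_{t+1})\big(\tfrac{1}{(t+1)^{\delta}}-\tfrac{1}{t^{\delta}}\big)$ and using that $\hat g\ge 0$ while $t\mapsto t^{-\delta}$ is nonincreasing, the last bracket is nonpositive; hence the summed (telescoping) first two pieces are bounded by $\hat g(\theta_1)$ and the remaining part is discarded. This is exactly the Abel-summation bookkeeping used in \cref{lem_su}, contributing only the initial constant $\hat g(\theta_1)$.

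The main obstacle is the error term $C_\eta\sum_{t=1}^{T}t^{-\delta}\E\|\eta_t\circ\nabla g(\theta_t,\xi_t)\|^2$, which must be bounded uniformly in $T$. Here I would invoke \cref{property_1}: since $tv_{t,i}\ge r_1S_{t,i}$ and $\eta_{t,i}\le 1/\sqrt{tv_{t,i}}$ (from \cref{property_0}), one obtains $\eta_{t,i}^2(\nabla_i g(\theta_t,\xi_t))^2\le \tfrac{1}{r_1}\tfrac{(\nabla_i g(\theta_t,\xi_t))^2}{S_{t,i}}$. The crucial step is the concavity-of-$\log$ increment bound $\frac{(\nabla_i g(\theta_t,\xi_t))^2}{S_{t,i}}=\frac{S_{t,i}-S_{t-1,i}}{S_{t,i}}\le \ln S_{t,i}-\ln S_{t-1,i}$, after which a second Abel summation transfers the weight, namely $\sum_{t=1}^{T}\frac{\ln S_{t,i}-\ln S_{t-1,i}}{t^{\delta}}=\frac{\ln S_{T,i}}{T^{\delta}}-\ln S_{0,i}+\sum_{t=1}^{T-1}\big(\tfrac{1}{t^{\delta}}-\tfrac{1}{(t+1)^{\delta}}\big)\ln S_{t,i}$. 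Finally I would use \cref{vital_0}: from $\sqrt{S_T}\le (T+1)^{4}\zeta$ with $\E[\zeta]=\mathcal O(1)$ one gets $\ln S_{t,i}\le \ln S_t\le 8\ln(t+1)+2\ln\zeta$, so that $\E[\ln S_{t,i}]\le 8\ln(t+1)+2\ln\E[\zeta]=\mathcal O(\ln t)$ by Jensen's inequality. The boundary term is then $\mathcal O(T^{-\delta}\ln T)=\mathcal O(1)$, and the transferred series is controlled by $\sum_{t\ge1}\big(\tfrac{1}{t^{\delta}}-\tfrac{1}{(t+1)^{\delta}}\big)\mathcal O(\ln t)\le \mathcal O\big(\sum_{t\ge1}t^{-\delta-1}\ln t\big)<+\infty$, which converges for every fixed $\delta>0$ (with a constant that may depend on $\delta$, blowing up only as $\delta\to 0^{+}$). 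Combining the three estimates yields $\sum_{t=1}^{\infty}\sum_{i=1}^{d}\E[\zeta_i(t)/t^{\delta}]\le\mathcal O(1)$ for every $\delta\in(0,1/2]$, as claimed.
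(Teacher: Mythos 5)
Your proof is correct, and its skeleton is the paper's: divide the sufficient-decrease inequality of \cref{sufficient:lem'} by $t^{\delta}$, telescope the Lyapunov terms (costing only $\hat g(\theta_1)$, exactly your Abel bookkeeping using $\hat g\ge 0$ and the monotonicity of $t^{-\delta}$), drop the martingale term in expectation, and reduce the claim to bounding $\sum_{t}t^{-\delta}\Expect\big[\|\eta_t\circ\nabla g(\theta_t,\xi_t)\|^2\big]$ via $\eta_{t,i}^{2}\le \frac{1}{r_1S_{t,i}}$ (\cref{property_0,property_1}). The genuine difference is how this weighted sum is controlled. The paper stays pathwise: it uses \cref{vital_0}(b) in the multiplicative form $(t+1)^{-\delta}\le\zeta^{\delta/4}S_{t}^{-\delta/8}$ to absorb the time weight into a higher power of $S_{t,i}$, applies the integral test $\sum_{t}(\nabla_i g(\theta_t,\xi_t))^{2}/S_{t,i}^{1+\delta/8}\le\int_{v}^{\infty}x^{-1-\delta/8}\,\mathrm{d}x=8v^{-\delta/8}/\delta$, and finishes with Jensen in the form $\Expect[\zeta^{\delta/4}]\le\Expect^{\delta/4}[\zeta]$. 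You instead replace the increment ratio by a logarithmic increment, $(\nabla_i g(\theta_t,\xi_t))^{2}/S_{t,i}\le\ln S_{t,i}-\ln S_{t-1,i}$, shift the weight $t^{-\delta}$ by a second Abel summation, and then invoke \cref{vital_0} only through the much weaker consequence $\Expect[\ln S_{t,i}]\le 8\ln(t+1)+2\ln\Expect[\zeta]=\mathcal{O}(\ln t)$, concluding from $\sum_{t}\big(t^{-\delta}-(t+1)^{-\delta}\big)\ln t\le\delta\sum_{t}t^{-1-\delta}\ln t<+\infty$ together with $\sup_T T^{-\delta}\ln T<+\infty$. Both routes yield a constant of order $1/\delta$, which is what the paper actually relies on downstream (e.g.\ in \cref{RMSProp_9}); the paper's version produces a fully explicit pathwise constant, while yours is somewhat more robust, since it would go through under any estimate of the form $\Expect[\ln S_T]=\mathcal{O}(\ln T)$ rather than the specific polynomial pathwise bound $\sqrt{S_T}\le(T+1)^{4}\zeta$.
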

\begin{proof}
First, we recall the sufficient descent inequality in \cref{sufficient:lem'}
\begin{align*}
&\hat{g}(\theta_{t+1})-\hat{g}(\theta_{t})\notag \le-\frac{3}{4}\sum_{i=1}^{d}\zeta_{i}(t)+\left(\frac{ L}{2}+\frac{(2\sigma_{0}+1) L^{2}}{\sqrt{v}}\right)\|\eta_t\circ \nabla g(\theta_{t},\xi_{t})\|^{2}+M_{t}.
\end{align*}
For any $0<\delta\le 1/2$, dividing both sides of the above inequality by \(t^{\delta}\) and noting that \(t^{\delta} < (t+1)^{\delta}\), we have
\begin{align*}
 &\frac{\hat{g}(\theta_{t+1})}{(t+1)^{\delta}}-\frac{\hat{g}(\theta_{t})}{t^{\delta}}\notag \le-\frac{3}{4}\sum_{i=1}^{d}\frac{\zeta_{i}(t)}{t^{\delta}}+\left(\frac{ L}{2}+\frac{(2\sigma_{0}+1) L^{2}}{\sqrt{v}}\right)\frac{\|\eta_t\circ \nabla g(\theta_{t},\xi_{t})\|^{2}}{t^{\delta}}+\frac{M_{t}}{t^{\delta}}.   
\end{align*}
Since $M_t$ is a martingale difference sequence with $\E[M_t] = 0$, we take the expectation on both sides of the above inequality
\begin{align*}
    \Expect\left[\frac{\hat{g}(\theta_{t+1})}{(t+1)^{\delta}}\right]-\Expect\left[\frac{\hat{g}(\theta_{t})}{t^{\delta}}\right]\le -\frac{3}{4}\sum_{i=1}^{d}\Expect\left[\frac{\zeta_{i}(t)}{t^{\delta}}\right]+\left(\frac{ L}{2}+\frac{(2\sigma_{0}+1) L^{2}}{\sqrt{v}}\right)\Expect\left[\frac{\|\eta_t\circ \nabla g(\theta_{t},\xi_{t})\|^{2}}{t^{\delta}}\right]+0.
\end{align*}
Telescoping both sides of the above inequality for \(t\) from \(1\) to \(T\) gives
\begin{align}\label{rms_0}
    \frac{3}{4}\sum_{t=1}^{T}\sum_{i=1}^{d}\Expect\left[\frac{\zeta_{i}(t)}{t^{\delta}}\right]\le \hat{g}(\theta_{1})+\left(\frac{ L}{2}+\frac{(2\sigma_{0}+1) L^{2}}{\sqrt{v}}\right)\sum_{t=1}^{T}\Expect\left[\frac{\|\eta_t\circ \nabla g(\theta_{t},\xi_{t})\|^{2}}{t^{\delta}}\right].
\end{align}
Next, we focus on estimating \(\sum_{t=1}^{T}\Expect\left[\frac{\|\eta_t\circ \nabla g(\theta_{t},\xi_{t})\|^{2}}{t^{\delta}}\right]\)
\begin{align*}
&\sum_{t=1}^{T}\Expect\left[\frac{\|\eta_t\circ \nabla g(\theta_{t},\xi_{t})\|^{2}}{t^{\delta}}\right]=\sum_{t=1}^{T}   \sum_{i=1}^{d}\frac{1}{t^{\delta}}\Expect\left[\eta_{t,i}^{2}(\nabla_{i}g(\theta_{t},\xi_{t}))^{2}\right]\mathop{\le}^{\text{\cref{property_1}}}\frac{1}{r_1}\sum_{t=1}^{T}   \sum_{i=1}^{d}\frac{1}{t^{\delta}}\Expect\left[\frac{(\nabla_{i}g(\theta_{t},\xi_{t}))^{2}}{S_{t,i}}\right]\\&\le \frac{2}{r_1}\sum_{t=1}^{T}   \sum_{i=1}^{d}\frac{1}{(t+1)^{\delta}}\Expect\left[\frac{(\nabla_{i}g(\theta_{t},\xi_{t}))^{2}}{S_{t,i}}\right]\mathop{\le}^{\text{\cref{vital_0}}}\frac{2}{r_1}\sum_{t=1}^{T}   \sum_{i=1}^{d}\Expect\left[\zeta^{\delta/4}\frac{(\nabla_{i}g(\theta_{t},\xi_{t}))^{2}}{S^{1+\frac{\delta}{8}}_{t,i}}\right]\\&\le\frac{2}{r_1}\sum_{i=1}^{d}\Expect\left[\zeta^{1/8}\int_{v}^{+\infty}\frac{\text{d}x}{x^{1+\frac{\delta}{8}}}\right]=\frac{16dv^{-\delta/8}}{\delta r_1}\Expect\left[\zeta^{\delta/4}\right]\le \frac{16dv^{-\delta/8}}{\delta r_1}\Expect^{\delta/4}\left[\zeta\right]\mathop{\le}^{\text{\cref{vital_0}}} \mathcal{O}(1)
\end{align*}
We obtain the desired result and complete the proof by substituting the above estimate into \cref{rms_0}. 
\end{proof}

\begin{lem}\label{bounded_v}
Under \cref{ass_g_poi} \ref{ass_g_poi:i}$\sim$\ref{ass_g_poi:i2}, \cref{ass_noise} \ref{ass_noise:i}, \cref{coordinate} \ref{coordinate_i_1}, consider RMSProp. We have
\[\sup_{t\ge 1}\left(\frac{\Sigma_{v_{t}}}{\ln^{2}(t+1)}\right)<+\infty\ \ \text{a.s.},\]
\end{lem}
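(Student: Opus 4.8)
The plan is to reduce the statement to a growth estimate on the accumulated second moment $S_t = \sum_{i=1}^d S_{t,i}$ and then control that sum by a predictable-plus-martingale decomposition. First I would establish the elementary bound $\Sigma_{v_t} \le S_t/t$. Starting from the one-step identity $(t+1)v_{t+1,i} = t\,v_{t,i} + (\nabla_i g(\theta_{t+1},\xi_{t+1}))^2$ (already derived in \cref{jh_100}) and telescoping exactly as in the proof of \cref{property_1}, together with $v_{1,i} = \beta_1 v + (1-\beta_1)(\nabla_i g(\theta_1,\xi_1))^2 \le v + (\nabla_i g(\theta_1,\xi_1))^2$, one obtains $t\,v_{t,i} \le v + \sum_{k=1}^t (\nabla_i g(\theta_k,\xi_k))^2 = S_{t,i}$ for every coordinate. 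Summing over $i$ gives $t\,\Sigma_{v_t} \le S_t$, and since $S_t = vd + \sum_{k=1}^t \|\nabla g(\theta_k,\xi_k)\|^2$, the lemma will follow once I show $\frac{1}{t}\sum_{k=1}^t \|\nabla g(\theta_k,\xi_k)\|^2 = \mathcal{O}(\ln^2(t+1))$ almost surely.

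Next I would decompose $\sum_{k=1}^t \|\nabla g(\theta_k,\xi_k)\|^2 = A_t + M_t$, where $A_t := \sum_{k=1}^t \E[\|\nabla g(\theta_k,\xi_k)\|^2 \mid \mathscr{F}_{k-1}]$ is the predictable part and $M_t := \sum_{k=1}^t \big(\|\nabla g(\theta_k,\xi_k)\|^2 - \E[\|\nabla g(\theta_k,\xi_k)\|^2 \mid \mathscr{F}_{k-1}]\big)$ is a martingale. For $A_t$, the coordinate-wise affine noise variance in \cref{coordinate}~\ref{coordinate_i_1} gives $\E[\|\nabla g(\theta_k,\xi_k)\|^2 \mid \mathscr{F}_{k-1}] \le \sigma_0 \|\nabla g(\theta_k)\|^2 + \sigma_1 d$, and \cref{loss_bound} yields $\|\nabla g(\theta_k)\|^2 \le 2L g(\theta_k)$. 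The stability result \cref{bound''} guarantees $\sup_k g(\theta_k) < +\infty$ a.s., so $A_t \le \big(2L\sigma_0 \sup_k g(\theta_k) + \sigma_1 d\big)\,t = \mathcal{O}(t)$ a.s., with an almost-surely finite (random) constant.

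For the martingale part I would apply \cref{Guo_Lei} to the scalar martingale difference $Y_k := \|\nabla g(\theta_k,\xi_k)\|^2 - \E[\|\nabla g(\theta_k,\xi_k)\|^2 \mid \mathscr{F}_{k-1}]$ with the trivial weights $U_k \equiv 1$, so that $\sum_{k=1}^t |U_k| = t$. Its hypothesis $\sup_n \E[|Y_{n+1}| \mid \mathscr{F}_n] < +\infty$ holds a.s. because $\E[|Y_{n+1}| \mid \mathscr{F}_n] \le 2\big(\sigma_0 \|\nabla g(\theta_{n+1})\|^2 + \sigma_1 d\big) \le 4L\sigma_0 \sup_k g(\theta_k) + 2\sigma_1 d$, using again \cref{loss_bound}, \cref{bound''}, and the fact that $\theta_{n+1}$ is $\mathscr{F}_n$-measurable. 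Choosing the free parameter $\sigma = 1$ in \cref{Guo_Lei} then delivers $M_t = \mathcal{O}\big(t\ln^2(t+e)\big)$ a.s.

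Combining the two estimates gives $S_t = vd + A_t + M_t = \mathcal{O}\big(t\ln^2(t+e)\big)$ a.s., hence $\Sigma_{v_t} \le S_t/t = \mathcal{O}\big(\ln^2(t+e)\big)$ a.s.; since $\ln(t+e)/\ln(t+1)\to 1$ and each individual $\Sigma_{v_t}$ is finite, this forces $\sup_{t\ge1}\Sigma_{v_t}/\ln^2(t+1) < +\infty$ a.s. The main obstacle will be the careful bookkeeping of the almost-surely finite random constants produced by $\sup_k g(\theta_k)$ and the verification of the conditional-moment hypothesis of \cref{Guo_Lei}; the telescoping identity and the martingale/predictable split are otherwise routine. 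The one genuinely load-bearing choice is $\sigma = 1$, which is precisely what converts the generic $\ln^{1+\sigma}$ factor furnished by \cref{Guo_Lei} into the $\ln^2$ appearing in the statement.
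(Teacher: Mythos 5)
Your proof is correct, but it takes a genuinely different route from the paper's. The paper works directly with the one-step recursion $\Sigma_{v_{t+1}} \le \Sigma_{v_t}+\frac{1}{t+1}\|\nabla g(\theta_{t},\xi_{t})\|^{2}$, divides by $\ln^{2}(t+1)$, shows that the conditional expectations of the increments are a.s. summable (via the coordinate-wise affine noise condition, \cref{loss_bound}, and the stability result \cref{bound''}), and then invokes the supermartingale (Robbins--Siegmund) convergence theorem, so it obtains a.s. \emph{convergence} of $\Sigma_{v_t}/\ln^{2}(t+1)$, of which boundedness is a consequence. You instead telescope the exact identity behind \cref{jh_100} to get $t\,\Sigma_{v_t}\le S_t$ (the reverse-direction counterpart of \cref{property_1}), and then control $S_t$ by a predictable-plus-martingale split: the predictable part is $\mathcal{O}(t)$ a.s. by exactly the same three ingredients the paper uses, while the martingale part is $\mathcal{O}(t\ln^{2}(t+e))$ a.s. by \cref{Guo_Lei} with $U_k\equiv 1$ and $\sigma=1$; your verification of its conditional-first-moment hypothesis (using $\mathscr{F}_n$-measurability of $\theta_{n+1}$, \cref{loss_bound}, and \cref{bound''}) is sound, and there is no circularity since \cref{bound''} precedes \cref{bounded_v} in the paper's dependency structure. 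What each approach buys: the paper's argument is self-contained (Robbins--Siegmund is standard) and yields convergence, not just boundedness; yours is more modular, makes transparent where the exponent $2$ comes from, and in fact would give the stronger bound $\Sigma_{v_t}=\mathcal{O}(\ln^{1+\sigma}(t+1))$ for any $\sigma>0$ by choosing $\sigma<1$. One caveat you should flag: you lean on \cref{Guo_Lei}, which the paper states with only a conditional \emph{first}-moment hypothesis and never actually uses elsewhere, so its statement deserves a check against the cited source; for the special case $U_k\equiv 1$ that you need, it can be confirmed by a truncation argument (conditional Borel--Cantelli on the event $|Y_k|>k\ln^{1+\sigma}k$, then a Chow-type estimate on the centered truncated sequence), so your proof does not ultimately rest on anything false.
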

where \(\Sigma_{v_{t}} := \sum_{i=1}^{d}v_{t,i}\).
\begin{proof}
For notational convenience, we define the auxiliary variable  \(\Sigma_{v_{t}} := \sum_{i=1}^{d}v_{t,i}\). By the recursive formula for \(v_{t}\)
\[
v_{t+1,i} = \left(1-\frac{1}{t+1}\right)v_{t,i}+\frac{1}{t+1}(\nabla_{i}g(\theta_{t},\xi_{t}))^{2} < v_{t,i}+\frac{1}{t+1}(\nabla_{i}g(\theta_{t},\xi_{t}))^{2}
\]
we achieve the recursive relation for $\Sigma_{v_{t}}$ 
\begin{align*}
    \Sigma_{v_{t+1}} <  \Sigma_{v_{t}}+\frac{1}{t+1}\|\nabla g(\theta_{t},\xi_{t})\|^{2}.
\end{align*}
Dividing both sides of the above inequality by \(\ln^{2}(t+1)\) and noting that \(\ln^{2}(t+1) > \ln^{2}t\) for any $t \geq 1$, we have
\begin{align*}
    \frac{ \Sigma_{v_{t+1}}}{\ln^{2}(t+1)} < \frac{\Sigma_{v_{t}}}{\ln^{2}t}+\frac{\|\nabla g(\theta_{t},\xi_{t})\|^{2}}{(t+1)\ln^{2}(t+1)}.
\end{align*}
Next, we consider the sum of the series $\sum_{t=1}^{+\infty}\frac{1}{(t+1)\ln^{2}(t+1)}\Expect\left[\|\nabla g(\theta_{t},\xi_{t})\|^{2}|\mathscr{F}_{t-1}\right].$
By the coordinate-wised affine noise variance condition (\cref{coordinate} \ref{coordinate_i_1}), we find
\begin{align*}
\sum_{t=1}^{+\infty}\frac{\Expect\left[\|\nabla g(\theta_{t},\xi_{t})\|^{2}|\mathscr{F}_{t-1}\right]}{(t+1)\ln^{2}(t+1)} &\le \sum_{t=1}^{+\infty}\frac{(\sigma_{0}\|\nabla g(\theta_{t})\|^{2}+\sigma_{1}d)}{(t+1)\ln^{2}(t+1)} \mathop{\le}^{\text{Lemma \ref{loss_bound}}}  \sum_{t=1}^{+\infty}\frac{(2 L\sigma_{0}g(\theta_{t})+\sigma_{1}d)}{(t+1)\ln^{2}(t+1)} \notag \\
& \le \left(2 L\sigma_{0}\sup_{t\ge 1}g(\theta_{t})+\sigma_{1}d\right)\cdot   \sum_{t=1}^{+\infty}\frac{1}{(t+1)\ln^{2}(t+1)} \mathop{<}^{\text{ \cref{bound''}}}+\infty\ \ \text{a.s.},
\end{align*}
where $\sum_{t=1}^{+\infty}\frac{1}{(t+1)\ln^{2}(t+1)} < \int_{2}^{\infty} \ln^{-2}(x)d( \ln x) < + \infty$.
By applying the \emph{Supermartingale Convergence} theorem, we deduce that the sequence \(\{\Sigma_{v_{t+1}}/\ln^{2}(t+1)\}_{t\ge 1}\) converges almost surely, which implies that
$\sup_{t\ge 1}\left(\frac{\Sigma_{v_{t}}}{\ln^{2}(t+1)}\right)<+\infty\ \ \text{a.s.}$
\begin{lem}\label{RMSProp_9}
    Under \cref{ass_g_poi} \ref{ass_g_poi:i}$\sim$\ref{ass_g_poi:i2}, \cref{ass_noise} \ref{ass_noise:i}, \cref{coordinate} \ref{coordinate_i_1}, consider RMSProp. We have
\[\sum_{t=1}^{T}\sum_{i=1}^{d}\frac{(\nabla_{i}g(\theta_{t}))^{2}}{t^{\frac{1}{2}+\delta}\ln(t+1)}<+\infty\ \ \text{a.s.} \,\, \text{where} \,\,  0<\delta\le 1/2. \]
\end{lem}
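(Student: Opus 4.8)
The plan is to reduce the claim to the summability already established in \cref{RMSProp_0}, namely $\sum_{t=1}^{+\infty}\sum_{i=1}^{d}\Expect[\zeta_i(t)/t^\delta]=\mathcal{O}(1)$, by controlling the reciprocal adaptive stepsize $1/\eta_{t-1,i}$. Since $\zeta_i(t)=\eta_{t-1,i}(\nabla_i g(\theta_t))^2$ by definition, the first step is simply to rewrite
\[
\frac{(\nabla_i g(\theta_t))^2}{t^{1/2+\delta}\ln(t+1)}=\frac{1}{\eta_{t-1,i}}\cdot\frac{\zeta_i(t)}{t^{1/2+\delta}\ln(t+1)},
\]
so that the whole estimate hinges on an upper bound for $1/\eta_{t-1,i}=\sqrt{(t-1)v_{t-1,i}}+\sqrt{t-1}\,\epsilon$.

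The second step is to bound $v_{t-1,i}$ using \cref{bounded_v}. Because $v_{t-1,i}\le\Sigma_{v_{t-1}}$ and $\sup_{t\ge1}\Sigma_{v_t}/\ln^2(t+1)<+\infty$ almost surely, there is an almost-surely finite random variable $C:=\sup_{t\ge1}\Sigma_{v_t}/\ln^2(t+1)$ with $v_{t-1,i}\le C\ln^2(t)$ for all $t\ge2$. Hence $1/\eta_{t-1,i}\le\sqrt{t-1}\bigl(\sqrt{C}\ln(t)+\epsilon\bigr)\le C'\sqrt{t}\,\ln(t+1)$ for the finite random constant $C':=\sqrt{C}+\epsilon$ (using $\ln(t+1)\ge\ln3>1$ for $t\ge2$, and absorbing the single $t=1$ term). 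Substituting this in and observing the cancellation $\sqrt{t}/t^{1/2+\delta}=t^{-\delta}$ together with the cancellation of the two $\ln(t+1)$ factors, I obtain the pointwise domination, valid for every $t\ge2$ on a set of full probability,
\[
\frac{(\nabla_i g(\theta_t))^2}{t^{1/2+\delta}\ln(t+1)}\le C'\,\frac{\zeta_i(t)}{t^\delta}.
\]

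The third step is to invoke \cref{RMSProp_0} to turn the expectation bound into almost-sure summability. Since $\zeta_i(t)/t^\delta\ge0$ and $\sum_{t,i}\Expect[\zeta_i(t)/t^\delta]=\mathcal{O}(1)<+\infty$, \cref{lem_summation} gives $\sum_{t=1}^{+\infty}\sum_{i=1}^{d}\zeta_i(t)/t^\delta<+\infty$ almost surely. Multiplying this finite sum by the almost-surely finite constant $C'$ from the previous step yields
\[
\sum_{t=1}^{T}\sum_{i=1}^{d}\frac{(\nabla_i g(\theta_t))^2}{t^{1/2+\delta}\ln(t+1)}\le \mathcal{O}(1)+C'\sum_{t=1}^{+\infty}\sum_{i=1}^{d}\frac{\zeta_i(t)}{t^\delta}<+\infty\quad\text{a.s.},
\]
uniformly in $T$, which is the desired conclusion.

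The main obstacle is precisely that the stepsize lower bound carries a genuinely random, only almost-surely finite, constant $C'$ inherited from \cref{bounded_v}, so one cannot take expectations and apply \cref{RMSProp_0} directly inside an $\Expect[\cdot]$. The resolution is to work almost surely throughout: first convert the expectation bound of \cref{RMSProp_0} into almost-sure summability via \cref{lem_summation}, and only afterwards multiply by $C'$. The remaining work is routine bookkeeping — the index shift $v_{t-1,i}\le C\ln^2 t$ and the separate handling of $t=1$ — neither of which affects finiteness.
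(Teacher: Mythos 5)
Your proposal is correct and follows essentially the same route as the paper's proof: both establish the pointwise domination $(\nabla_i g(\theta_t))^2/(t^{1/2+\delta}\ln(t+1)) \lesssim \zeta_i(t)/t^{\delta}$ by lower-bounding the adaptive stepsize via the almost-sure $\ln^2(t+1)$ growth of $v_t$ from \cref{bounded_v}, and then invoke \cref{RMSProp_0} together with the passage from summable expectations to almost-sure summability. The only cosmetic differences are that you work directly with $1/\eta_{t-1,i}$ (handling the index shift and the $t=1$ term explicitly) while the paper first uses the monotonicity $\eta_{t-1,i}\ge\eta_{t,i}$ from \cref{property_0}, and you are somewhat more careful in noting that the random constant from \cref{bounded_v} forces the argument to be carried out almost surely rather than inside an expectation.
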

\begin{proof}
According to  \cref{RMSProp_0}, for any $0<\delta\le 1/2,$ we have 
\begin{align*}
\sum_{t=1}^{T}\sum_{i=1}^{d}\Expect\left[\frac{\zeta_{i}(t)}{t^{\delta}}\right]=\mathcal{O}\left(\frac{1}{\delta}\right) .  
\end{align*}
Applying the \emph{Lebesgue's Monotone Convergence} theorem, we have
\begin{align*}
\sum_{t=1}^{T}\sum_{i=1}^{d}\frac{\zeta_{i}(t)}{t^{\delta}}<+\infty\ \ \text{a.s.}.  \end{align*}
Recalling that $\zeta_{i}(t) = (\nabla_i g(\theta_t))^2 \eta_{t-1,i} \geq (\nabla_i g(\theta_t))^2 \eta_{t,i}$ (by \cref{property_0}) and $\eta_{t,i} = \alpha_t / (\sqrt{v_{t,i}} + \epsilon)$, we have
\begin{align*}
\sum_{t=1}^{T}\sum_{i=1}^{d}\frac{\zeta_{i}(t)}{t^{\delta}} \geq  \sum_{t=1}^{T}\sum_{i=1}^{d}\frac{1}{t^{\frac{1}{2}+\delta}}\frac{(\nabla_{i}g(\theta_{t}))^{2}}{\sqrt{v_{t,i}}+\epsilon}\mathop{\ge}^{\text{\cref{bounded_v}}}\mathcal{O}\left( \sum_{t=1}^{T}\sum_{i=1}^{d}\frac{(\nabla_{i}g(\theta_{t}))^{2}}{t^{\frac{1}{2}+\delta}\ln(t+1)}\right)  ,
\end{align*}
where by \text{\cref{bounded_v}}, we have $v_{t,i} \leq \Sigma_{v_t} \leq \sup_{t} \Sigma_{v_t} \leq \mathcal{O}(\ln^2(t+1)) $.
\end{proof}
\begin{lem}\label{convergence_v}
   Under \cref{ass_g_poi} \ref{ass_g_poi:i}$\sim$\ref{ass_g_poi:i2}, \cref{ass_noise} \ref{ass_noise:i}, \cref{coordinate} \ref{coordinate_i_1}, consider RMSProp. The vector sequence \(\{v_{n}\}_{n \geq 1}\) converges almost surely.
\end{lem}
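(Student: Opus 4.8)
The plan is to reduce the convergence of the vector sequence $\{v_n\}$ to the coordinatewise convergence of $\{v_{n,i}\}$, and to exploit the exact running-average structure that the near-optimal parameters $\beta_n = 1-1/n$ impose on the recursion. Writing $a_{k,i} := (\nabla_i g(\theta_k,\xi_k))^2$ and using the identity from \cref{property_0}, namely $(t+1)v_{t+1,i} = t v_{t,i} + a_{t+1,i}$, one gets $t v_{t,i} = v_{1,i} + \sum_{k=2}^{t} a_{k,i}$, so that $v_{t,i}$ is, up to a vanishing initial term, the Cesàro average $\tfrac1t\sum_{k=1}^t a_{k,i}$. Since $\{t v_{t,i}\}_t$ is nondecreasing, it has an a.s. limit $w_{\infty,i}\in[0,+\infty]$; on the event $\{w_{\infty,i}<\infty\}$ we immediately get $v_{t,i}\to 0$, so the whole problem localizes to the event $\{w_{\infty,i}=\infty\}$, where we must prove that the running average of $a_{k,i}$ converges.

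First I would split $a_{k,i}=m_{k,i}+d_{k,i}$ into its predictable part $m_{k,i}:=\Expect[a_{k,i}\mid\mathscr{F}_{k-1}]$ and a martingale difference $d_{k,i}$, and treat the averages $\tfrac1t\sum m_{k,i}$ and $\tfrac1t\sum d_{k,i}$ separately via Kronecker's lemma, so that it suffices to control the series $\sum_k m_{k,i}/k$ and $\sum_k d_{k,i}/k$. For the martingale average, the key is the coordinate near-critical bound \cref{coordinate}~\ref{coordinate_i_2}: decompose $a_{k,i}$ according to $\I_{(\nabla_i g(\theta_k))^2<D_0}$ and $\I_{(\nabla_i g(\theta_k))^2\ge D_0}$, both of which are $\mathscr{F}_{k-1}$-measurable. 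On the small-gradient set $a_{k,i}<D_1$ is bounded, so the corresponding increments of $\sum_k d_{k,i}/k$ are square-summable in conditional expectation and the series converges by $L_2$-martingale convergence. On the large-gradient set I would use the $p=1$ criterion of \cref{lem_summation_MDS}, bounding $\Expect[|d_{k,i}|\mid\mathscr{F}_{k-1}]\le 2(\sigma_0+\sigma_1/D_0)(\nabla_i g(\theta_k))^2$; here the decisive input is \cref{RMSProp_9}, which (choosing $\delta<1/2$ and comparing weights, since $\tfrac{\ln(k+1)}{k^{1/2-\delta}}\to0$) yields $\sum_k (\nabla_i g(\theta_k))^2/k<\infty$ a.s. Kronecker's lemma then gives $\tfrac1t\sum_k d_{k,i}\to0$, and the same summability disposes of the gradient contribution $\tfrac1t\sum_k(\nabla_i g(\theta_k))^2\to0$ inside the predictable part.

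This leaves the genuinely delicate piece, which I expect to be the main obstacle: the convergence of the Cesàro average of the predictable conditional noise variance $\tau_{k,i}:=m_{k,i}-(\nabla_i g(\theta_k))^2\ge0$. The affine-variance assumption only controls $\tau_{k,i}$ from above, by $(\sigma_0-1)(\nabla_i g(\theta_k))^2+\sigma_1$, which gives $\limsup_t \tfrac1t\sum_k\tau_{k,i}\le\sigma_1$ but not convergence, and the persistent $\sigma_1$ ``noise floor'' makes $\{v_{t,i}\}$ neither a quasimartingale nor of bounded variation, so Robbins--Siegmund does not apply directly. My plan is to recast the recursion $v_{t+1,i}=v_{t,i}+\gamma_t(a_{t+1,i}-v_{t,i})$ with $\gamma_t=1/(t+1)$ as a Robbins--Monro scheme satisfying $\sum_t\gamma_t=\infty$ and $\sum_t\gamma_t^2<\infty$; the square-summability (already used above) kills the stochastic error and reduces the question to the a.s. convergence of the averaged $\mathscr{F}$-predictable drift. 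To close this I would argue that the trajectory spends only summably controlled weight in the large-gradient regime (again via \cref{RMSProp_9}), so that $\theta_k$ asymptotically localizes near the critical set where \cref{coordinate}~\ref{coordinate_i_2} confines $m_{k,i}$ to a bounded range, and then invoke a supermartingale-convergence argument on a centered functional of $v_{t,i}$ to upgrade the $\limsup/\liminf$ bounds to an actual limit. Establishing this final step rigorously — that the averaged conditional variance settles rather than oscillates — is the crux on which the whole lemma rests.
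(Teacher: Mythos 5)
Your proposal is, up to the crux you flag yourself, a more careful retracing of the paper's own route: the same coordinatewise reduction, the same use of the running-average identity $(t+1)v_{t+1,i}=tv_{t,i}+(\nabla_i g(\theta_{t+1},\xi_{t+1}))^2$ from \cref{property_0}, the same split by the predictable indicator $\I_{[(\nabla_i g(\theta_k))^2<D_0]}$, with the small-gradient piece handled by boundedness ($D_1$) plus square-summability and the large-gradient piece by the coordinate affine-variance bound plus \cref{RMSProp_9} and \cref{lem_summation_MDS}. In one respect you improve on the paper: you take $\delta<1/2$ in \cref{RMSProp_9}, so the weight comparison $\frac{1}{k}\le C\,\frac{1}{k^{1/2+\delta}\ln(k+1)}$ (valid because $\ln(k+1)/k^{1/2-\delta}\to 0$) really does give $\sum_k (\nabla_i g(\theta_k))^2/k<\infty$ a.s.; the paper invokes $\delta=1/2$ and compares $\frac{1}{t+1}$ against $\frac{1}{t\ln(t+1)}$, which is a comparison in the wrong direction.

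The genuine gap is exactly the one you admit: you never prove that the Ces\`aro average $\frac1t\sum_{k\le t}\tau_{k,i}$ of the predictable conditional variances converges, and your sketched remedy cannot close it. Localization near the critical set only yields \emph{boundedness} of $m_{k,i}$ (which you already have, since $\I_{[(\nabla_i g(\theta_k))^2<D_0]}m_{k,i}\le D_1$), and boundedness of a sequence says nothing about convergence of its Ces\`aro averages; likewise no supermartingale argument applies, because after dropping the contraction term the inequality $v_{t+1,i}\le v_{t,i}+w_t$ carries a non-negative predictable drift of order $\sigma_1/(t+1)$, which is square-summable but not summable — precisely what Robbins--Siegmund-type theorems forbid. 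You should know that the paper's own proof does not cross this step either: it verifies the same two summability facts you do and then concludes ``by the martingale convergence theorem,'' but no such theorem covers the non-summable small-gradient predictable part, so you have correctly isolated a hole rather than missed an idea that the paper supplies. The obstruction is not a technicality: under the lemma's stated hypotheses the $\xi_n$ are only independent, not identically distributed, so the conditional variance $\tau_{k,i}$ may oscillate between two constants on geometrically growing blocks of indices while every assumption holds with uniform constants; then $v_{t,i}$, being essentially that running average, oscillates and does not converge. Any complete proof therefore needs structure beyond what either you or the paper invoke — e.g., identically distributed noise (so $\tau_{k,i}$ is a fixed continuous function of $\theta_k$) together with convergence or asymptotic localization of the iterates.
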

\begin{proof}
Recalling the recursive formula for \(v_t\), we have
\begin{align*}
   v_{t+1,i}\le v_{t,i}+\frac{1}{t+1}(\nabla_{i}g(\theta_{t},\xi_{t}))^{2} = v_{t,i}+\frac{\I_{[(\nabla_{i}g(\theta_{t}))^{2}<D_{0}]}}{t+1}(\nabla_{i}g(\theta_{t},\xi_{t}))^{2} +\frac{\I_{[(\nabla_{i}g(\theta_{t}))^{2}\ge D_{0}]}}{t+1}(\nabla_{i}g(\theta_{t},\xi_{t}))^{2}.
\end{align*}
Next, we examine the sum of the two series
\begin{align*}
 \sum_{t=1}^{+\infty}  \frac{\I_{[(\nabla_{i}g(\theta_{t}))^{2}<D_{0}]}}{(t+1)^{2}}\Expect\left[(\nabla_{i}g(\theta_{t},\xi_{t}))^{4} |\mathscr{F}_{t-1}\right],\ \ \text{and}\ \  \sum_{t=1}^{+\infty}  \frac{\I_{[(\nabla_{i}g(\theta_{t}))^{2}\ge D_{0}]}}{t+1}\Expect\left[(\nabla_{i}g(\theta_{t},\xi_{t}))^{2}|\mathscr{F}_{t-1}\right] .
\end{align*}
For the first series, based on \cref{coordinate}~\ref{coordinate_i_2}, it concludes
\begin{align*}
 \sum_{t=1}^{+\infty}  \frac{\I_{[(\nabla_{i}g(\theta_{t}))^{2}<D_{0}]}}{(t+1)^{2}}\Expect\left[(\nabla_{i}g(\theta_{t},\xi_{t}))^{4} |\mathscr{F}_{t-1}\right]&<D_{1}^{2}  \sum_{t=1}^{+\infty}  \frac{1}{(t+1)^{2}}<+\infty\ \ \text{a.s.}
\end{align*}
We apply the coordinate-wise affine noise variance condition when $\nabla_{i}g(\theta_{t}))^{2}\ge D_{0}$  and achieve that $\Expect\left[(\nabla_{i}g(\theta_{t},\xi_{t}))^{2} |\mathscr{F}_{t-1}\right] \leq \left(\sigma_0 \nabla_{i}g(\theta_{t}))^{2}  + \sigma_1 \right) \leq (\sigma_{0}+\frac{\sigma_{1}}{D_{0}})\nabla_{i}g(\theta_{t}))^{2}$ for any $\ i$. For the second series, 
\begin{align*}
 \sum_{t=1}^{+\infty}  \frac{\I_{[(\nabla_{i}g(\theta_{t}))^{2}\ge D_{0}]}}{t+1}\Expect\left[(\nabla_{i}g(\theta_{t},\xi_{t}))^{2} |\mathscr{F}_{t-1}\right]&<\left(\sigma_{0}+\frac{\sigma_{1}}{D_{0}}\right)  \sum_{t=1}^{+\infty}  \frac{\I_{[(\nabla_{i}g(\theta_{t}))^{2}\ge D_{0}]}(\nabla_{i}g(\theta_{t}))^{2}}{(t+1)^{2}}\\& \leq \mathcal{O}\left(\sum_{t=1}^{+\infty}  \sum_{i=1}^{d}\frac{\I_{[(\nabla_{i}g(\theta_{t}))^{2}\ge D_{0}]}(\nabla_{i}g(\theta_{t}))^{2}}{t\ln(t+1)}\right)\\&\mathop{<}^{\text{ \cref{RMSProp_9} with $\delta=1/2$}}+\infty \ \ \text{a.s.}.
\end{align*}
According to the martingale convergence theorem, we have \(\{v_{t,i}\}_{t \geq 1}\) converges almost surely. Repeating the above procedure for each component \(i\), we conclude that all coordinate components converge almost surely which implies that \(\{v_{n}\}_{n \geq 1}\) converges almost surely. 
\end{proof}

\subsection{The Proof of \cref{bound''}}\label{stability:proof:rmsprop}
The main proof of \cref{bound''} for RMSProp is similar to the proof of AdaGrad. To maintain conciseness, we will use \(\mathcal{O}\) to simplify the relevant constant terms and will omit some straightforward calculations. We first present the following lemmas, \cref{lem:adj:ghat''} and \cref{dsad}, for RMSProp. The proofs of these lemmas are omitted, because they are straightforward and follow the same arguments as the corresponding lemmas, \cref{lem:adj:ghat} and \cref{pro_0}, for AdaGrad-Norm.
\begin{lem}\label{lem:adj:ghat''}
For the Lyapunov function $\hat{g}(\theta_n),$ there is a constant $C_0$ such that for any $\hat{g}(\theta_{n}) \geq C_0$, we have
$$\hat{g}(\theta_{n+1})-\hat{g}(\theta_{n})\leq \hat{g}(\theta_{n})/2.
$$ 
\end{lem}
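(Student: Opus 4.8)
The plan is to mirror the argument for \cref{lem:adj:ghat} line by line, replacing the scalar step size $\alpha_0/\sqrt{S_n}$ with the coordinate-wise vector $\eta_t$ and exploiting the monotonicity of $\eta_t$ from \cref{property_0}. The first and most important step is to establish a uniform bound on the per-step displacement. Using \cref{property_1}, namely $t\,v_{t,i}\ge r_1 S_{t,i}\ge r_1 (\nabla_i g(\theta_t,\xi_t))^2$ (the last inequality keeping only the $k=t$ term of $S_{t,i}$), together with the near-optimal schedule $\alpha_t=1/\sqrt{t}$, each coordinate satisfies
\[
|\theta_{t+1,i}-\theta_{t,i}| = \eta_{t,i}\,|\nabla_i g(\theta_t,\xi_t)| = \frac{\alpha_t\,|\nabla_i g(\theta_t,\xi_t)|}{\sqrt{v_{t,i}}+\epsilon} \le \frac{(1/\sqrt{t})\,|\nabla_i g(\theta_t,\xi_t)|}{\sqrt{r_1/t}\,|\nabla_i g(\theta_t,\xi_t)|} = \frac{1}{\sqrt{r_1}},
\]
so that $\|\theta_{t+1}-\theta_t\|\le \sqrt{d/r_1}=:R$, a constant independent of $t$. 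This plays the role of the trivial bound $\|\theta_{n+1}-\theta_n\|\le\alpha_0$ in the AdaGrad-Norm case and is the main technical point, since it is exactly where the coupling between $v_t$ and the current stochastic gradient, and the specific hyperparameter choice, enter.

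Next I would decompose $\hat g(\theta_{n+1})-\hat g(\theta_n)$ into three pieces according to $\hat g(\theta_n)=g(\theta_n)+\sum_{i=1}^d\zeta_i(n)+\frac{\sigma_1}{2}\sum_{i=1}^d\eta_{n-1,i}$. The third piece is nonpositive because each coordinate of $\eta_t$ is nonincreasing (\cref{property_0}), so $\frac{\sigma_1}{2}\sum_i(\eta_{n,i}-\eta_{n-1,i})\le 0$ and may be discarded. For the first piece, $L$-smoothness and \cref{loss_bound} give $g(\theta_{n+1})-g(\theta_n)\le \|\nabla g(\theta_n)\|\,\|\theta_{n+1}-\theta_n\|+\frac{L}{2}\|\theta_{n+1}-\theta_n\|^2\le R\sqrt{2L\hat g(\theta_n)}+\frac{LR^2}{2}$, using $\|\nabla g(\theta_n)\|\le\sqrt{2Lg(\theta_n)}\le\sqrt{2L\hat g(\theta_n)}$. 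For the second piece, monotonicity of $\eta$ and the bound $\eta_{n-1,i}\le 1/\sqrt{v}$ (as in the proof of \cref{sufficient:lem'}) yield $\zeta_i(n+1)-\zeta_i(n)\le \eta_{n-1,i}\big((\nabla_i g(\theta_{n+1}))^2-(\nabla_i g(\theta_n))^2\big)$, whence $\sum_i(\zeta_i(n+1)-\zeta_i(n))\le \frac{1}{\sqrt{v}}\big(\|\nabla g(\theta_{n+1})\|^2-\|\nabla g(\theta_n)\|^2\big)$; the gradient-norm increment is then controlled precisely as in \cref{grad:norm:diff}, giving $\|\nabla g(\theta_{n+1})\|^2-\|\nabla g(\theta_n)\|^2\le 2LR\sqrt{2L\hat g(\theta_n)}+L^2R^2$.

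Collecting the three estimates produces an inequality of the form $\hat g(\theta_{n+1})-\hat g(\theta_n)\le A\sqrt{\hat g(\theta_n)}+B$ for constants $A,B>0$ depending only on $d,L,v,r_1$. Since the right-hand side grows like $\sqrt{x}$ while the target $x/2$ grows linearly, there is a threshold $C_0$ (obtained by solving $A\sqrt{x}+B=x/2$) such that for every $\hat g(\theta_n)\ge C_0$ the claimed inequality $\hat g(\theta_{n+1})-\hat g(\theta_n)\le \hat g(\theta_n)/2$ holds, which completes the proof. I expect no genuine obstacle beyond the displacement bound: once $\|\theta_{n+1}-\theta_n\|\le R$ is established, the remaining computations are the coordinate-wise analogues of those already carried out for AdaGrad-Norm in \cref{lem:adj:ghat}.
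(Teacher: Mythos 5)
Your overall route is exactly the one the paper intends: the paper in fact omits the proof of \cref{lem:adj:ghat''}, stating only that it ``follows the same arguments'' as \cref{lem:adj:ghat}, and your proposal supplies the missing coordinate-wise details. Your identification of the uniform displacement bound as the key new ingredient is correct, and your derivation of it is sound: by \cref{property_1}, $v_{t,i}\ge r_1 S_{t,i}/t\ge r_1(\nabla_i g(\theta_t,\xi_t))^2/t$, so with $\alpha_t=1/\sqrt{t}$ each coordinate moves by at most $1/\sqrt{r_1}$ and $\|\theta_{t+1}-\theta_t\|\le\sqrt{d/r_1}=:R$. The treatment of the term $\frac{\sigma_1}{2}\sum_i\eta_{n-1,i}$ via the monotonicity in \cref{property_0}, the bound on $g(\theta_{n+1})-g(\theta_n)$ via smoothness and \cref{loss_bound}, and the final threshold argument ($A\sqrt{x}+B\le x/2$ for $x\ge C_0$) all go through as you describe.

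There is, however, one intermediate inequality that is false as written. From the valid bound $\zeta_i(n+1)-\zeta_i(n)\le\eta_{n-1,i}\,\delta_i$ with $\delta_i:=(\nabla_i g(\theta_{n+1}))^2-(\nabla_i g(\theta_n))^2$, you conclude $\sum_i\eta_{n-1,i}\delta_i\le\frac{1}{\sqrt{v}}\sum_i\delta_i=\frac{1}{\sqrt{v}}\bigl(\|\nabla g(\theta_{n+1})\|^2-\|\nabla g(\theta_n)\|^2\bigr)$. This step silently assumes every $\delta_i\ge 0$: when $\delta_i<0$ for some coordinate, replacing the weight $\eta_{n-1,i}$ by the larger constant $1/\sqrt{v}$ makes that term \emph{smaller}, not larger (take $d=2$, $\delta_1=1$, $\delta_2=-1$, $\eta_{n-1,1}=1/\sqrt{v}$ and $\eta_{n-1,2}$ tiny; the left side is about $1/\sqrt{v}$ while the right side is $0$). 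This is precisely the point where the coordinate-wise algorithm differs from AdaGrad-Norm: in \cref{lem:adj:ghat} there is a single common weight $1/\sqrt{S_n}$ that can be factored out exactly, whereas here the weights differ across coordinates, so the norm-difference identity used in \cref{grad:norm:diff} cannot be invoked wholesale. The repair is one line: bound $\eta_{n-1,i}\delta_i\le\frac{1}{\sqrt{v}}|\delta_i|$ and then control $\sum_i|\delta_i|\le\sum_i|\Delta_i|\bigl(2|\nabla_i g(\theta_n)|+|\Delta_i|\bigr)\le 2L\|\nabla g(\theta_n)\|\,\|\theta_{n+1}-\theta_n\|+L^2\|\theta_{n+1}-\theta_n\|^2$, where $\Delta_i:=\nabla_i g(\theta_{n+1})-\nabla_i g(\theta_n)$, using Cauchy--Schwarz and $L$-smoothness. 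This recovers exactly the bound $2LR\sqrt{2L\hat{g}(\theta_n)}+L^2R^2$ you wanted, so your concluding inequality $\hat{g}(\theta_{n+1})-\hat{g}(\theta_n)\le A\sqrt{\hat{g}(\theta_n)}+B$ and the choice of $C_0$ are unaffected.
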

\begin{property}\label{dsad}
Under~\cref{coordinate_0,coordinate}, the gradient sublevel set $J_{\eta}:=\bigcup_{i=1}^{d}\{\theta \mid (\nabla_{i} g(\theta))^{2}\le \eta\}$ with $\eta >0$ is a closed bounded set. Then, by~\cref{coordinate_0,coordinate}, there exist a constant $\hat{C}_{g} > 0$  such that the function  $\hat{g}(\theta) < \hat{C}_{g}$ for any $\theta \in J_{\eta}$. 
\end{property}

\begin{proof}(of \cref{bound''}) 
First, we define \(\Delta_{0} := \max\{C_{0}, 2\hat{g}(\theta_{1}), \hat{C}_{g}\}\). Based on the value of \(\hat{g}(\theta_{n})\) with respect to \(\Delta_{0}\), we define the following stopping time sequence $\{\tau_{n}\}_{n\ge 1}$
\begin{align}\label{stopping_time'}
&\tau_{1}:=\min\{k\ge 1:\hat{g}(\theta_{k})>\Delta_0\},\ \tau_{2}:=\min\{k\ge \tau_{1}: \hat{g}(\theta_{k})\le  \Delta_0\ \text{or}\ \hat{g}(\theta_{k})>2\Delta_0\},\notag\\&\tau_{3}:=\min\{k\ge \tau_{2}:\hat{g}(\theta_{k})\le \Delta_0\},...,
\notag\\&\tau_{3j-2}:=\min\{k> \tau_{3j-3}:\hat{g}(\theta_{k})>\Delta_0\},\ \tau_{3j-1}:=\min\{k\ge \tau_{3j-2}:\hat{g}(\theta_{k})\le  \Delta_0\ \text{or}\ \hat{g}(\theta_{k})>2\Delta_0\},\notag\\&\tau_{3j}:=\min\{k\ge  \tau_{3j-1}:\hat{g}(\theta_{k})\le  \Delta_0\}.
\end{align}
By the definition of \(\Delta_{0}\), we have \(\Delta_{0} > \hat{g}(\theta_{1})\), which asserts \(\tau_{1} > 1\). Since \(\Delta_{0} > C_{0}\), for any \(j\), we have \(\hat{g}(\theta_{\tau_{3j-2}}) < \Delta_{0} + \frac{\Delta_{0}}{2} < 2\Delta_{0}\), which asserts \(\tau_{3j-1} > \tau_{3j-2}\). For any \(T\) and \(n\), we define the truncated stopping time \(\tau_{n,T} := \tau_{n} \wedge T\). Then, based on the segments by the stopping time $\tau_{n, T}$, we estimate \(\Expect\left[\sup_{1 \leq n < T} \hat{g}(\theta_{n})\right]\) as follows.
\begin{align}\label{RMSprop_7}
   \Expect\left[\sup_{1\le n<T}\hat{g}(\theta_{n})\right]&\le \Expect\left[\sup_{j\ge 1}\left(\sup_{\tau_{3j-2,T}\le n<\tau_{3j,T}}\hat{g}(\theta_{n})\right)\right] +\Expect\left[\sup_{j\ge 1}\left(\sup_{\tau_{3j,T}\le n<\tau_{3j+1,T}}\hat{g}(\theta_{n})\right)\right]\notag\\&\le \Delta_{0}+ \Expect\left[\sup_{j\ge 1}\left(\sup_{\tau_{3j-2,T}\le n<\tau_{3j,T}}\hat{g}(\theta_{n})\right)\right]\notag\\&\le \Delta_{0}+  \Expect\left[\sup_{j\ge 1}\left(\sup_{\tau_{3j-2,T}\le n<\tau_{3j-1,T}}\hat{g}(\theta_{n})\right)\right]+ \Expect\left[\sup_{j\ge 1}\left(\sup_{\tau_{3j-1,T}\le n<\tau_{3j,T}}\hat{g}(\theta_{n})\right)\right]\notag\\&\le 3\Delta_{0}+ \Expect\left[\sup_{j\ge 1}\left(\sup_{\tau_{3j-1,T}\le n<\tau_{3j,T}}\hat{g}(\theta_{n})\right)\right].
\end{align}
Next, we proceed to estimate \(\Expect\left[\sup_{j\ge 1}\left(\sup_{\tau_{3j-1,T}\le n<\tau_{3j,T}}\hat{g}(\theta_{n})\right)\right]\).
\begin{align}\label{RMSprop_5}
 \Expect\left[\sup_{j\ge 1}\left(\sup_{\tau_{3j-1,T}\le n<\tau_{3j,T}}\hat{g}(\theta_{n})\right)\right]& \mathop{\le}^{\text{\cref{lem:adj:ghat''}}}  3\Delta_{0}+  \Expect\left[\sup_{j\ge 1}\left(\sup_{\tau_{3j-1,T}\le n<\tau_{3j,T}}\left(\hat{g}(\theta_{n})-\hat{g}(\theta_{\tau_{3j-1,T}})\right)\right)\right]\notag\\&\le 3\Delta_{0}+  \Expect\left[\sup_{j\ge 1}\left(\sum_{t=\tau_{3j-1,T}}^{\tau_{3j,T}-1}\left|\hat{g}(\theta_{t+1})-\hat{g}(\theta_{t})\right|\right)\right]\notag\\& \mathop{\le}^{(a)} \mathcal{O}(1)+\mathcal{O}\left(\sum_{j=1}^{+\infty}\Expect\left[\sum_{t=\tau_{3j-1,T}}^{\tau_{3j,T}-1}\sum_{i=1}^{d}\zeta_{i}(t)\right]\right),
\end{align}
where we follow the same procedure as \cref{inqu:diff:g:tau2} to derive the inequality $(a)$. The constant hidden within the \(\mathcal{O}\) notation is independent of \(T\). Applying the sufficient descent inequality in \cref{sufficient:lem'}, the last term of RHS of \cref{RMSprop_5} is bounded by
\begin{align}\label{RMSprop_6}
\quad & \leq \sum_{j=1}^{+\infty}\Expect\left[\hat{g}(\theta_{\tau_{3j-1,T}})-\hat{g}(\theta_{\tau_{3j,T}})\right]+\left(\frac{ L}{2}+\frac{(2\sigma_{0}+1) L^{2}}{\sqrt{v}}\right)\sum_{j=1}^{+\infty}\Expect\left[\sum_{t=\tau_{3j-1},T}^{\tau_{3j,T}-1}\|\eta_t\circ \nabla g(\theta_{t},\xi_{t})\|^{2}\right] \notag \\
& \quad +\sum_{j=1}^{+\infty}\Expect\left[\sum_{t=\tau_{3j-1},T}^{\tau_{3j,T}-1}M_{t}\right]\notag\\&=\mathcal{O}\left(\sum_{j=1}^{+\infty}\Expect\left[\I_{\tau_{3j-1,T}<\tau_{3j,T}}\right]\right)+\mathcal{O}\left(\sum_{j=1}^{+\infty}\Expect\left[\sum_{t=\tau_{3j-1},T}^{\tau_{3j,T}-1}\|\eta_t\circ \nabla g(\theta_{t},\xi_{t})\|^{2}\right]\right)+0\notag\\&\mathop{\le}^{(a)}\mathcal{O}\left(\sum_{j=1}^{+\infty}\Expect\left[\I_{\tau_{3j-1,T}<\tau_{3j,T}}\right]\right)+\mathcal{O}\left(\sum_{j=1}^{+\infty}\Expect\left[\sum_{t=\tau_{3j-1,T}}^{\tau_{3j,T}-1}\sum_{i=1}^{d}\frac{\zeta_{i}(t)}{\sqrt{t}}\right]\right)\notag\\&\mathop{\le}^{\text{ \cref{RMSProp_0}}}\mathcal{O}\left(\sum_{j=1}^{+\infty}\Expect\left[\I_{\tau_{3j-1,T}<\tau_{3j,T}}\right]\right)+\mathcal{O}\left(1\right).   
\end{align}
Similar to the proof of ~\cref{lem:psi:i1}, the following inclusions of the events hold
\begin{align*}
\{\tau_{3j-1,T}<\tau_{3j,T}\}&\subset \{\hat{g}(\theta_{3i-1,T})>2\Delta_{0}\}\subset\left\{\frac{\Delta_{0}}{2}\le \hat{g}(\theta_{\tau_{3j-1,T}})-\hat{g}(\theta_{\tau_{3j-2,T}})\right\}.
\end{align*}
To estimate \(\Expect\left[\I_{\tau_{3j-1,T}<\tau_{3j,T}}\right]\), we evaluate the probability of the event \(W=\left\{\frac{\Delta_{0}}{2}\le \hat{g}(\theta_{\tau_{3j-1,T}})-\hat{g}(\theta_{\tau_{3j-2,T}})\right\}.\) Note that when the event $W$ occurs 
\begin{align*}
    \frac{\Delta_{0}}{2}&\le \hat{g}(\theta_{\tau_{3j-1,T}})-\hat{g}(\theta_{\tau_{3j-2,T}}) \mathop{\le}^{\text{ \cref{sufficient:lem'}}}\left({ L}+\frac{(2\sigma_{0}+1) L^{2}}{\sqrt{v}}\right)\sum_{t=\tau_{3j-2,T}}^{\tau_{3j-1,T}-1}\|\eta_t\circ \nabla g(\theta_{t},\xi_{t})\|^{2}+\sum_{t=\tau_{3j-2,T}}^{\tau_{3j-1,T}-1}M_{t}\\&\mathop{\le}^{\text{\emph{AM-GM} inequality}}\left(\frac{ L}{2}+\frac{(2\sigma_{0}+1) L^{2}}{\sqrt{v}}\right)\sum_{t=\tau_{3j-2,T}}^{\tau_{3j-1,T}-1}\|\eta_t\circ \nabla g(\theta_{t},\xi_{t})\|^{2}+\frac{\Delta_{0}}{4}+\frac{1}{\Delta_{0}}\left(\sum_{t=\tau_{3j-2,T}}^{\tau_{3j-1,T}-1}M_{t}\right)^{2},
\end{align*}
which implies that the following inequality holds
\begin{align}\label{RMSprop_2}
    \frac{\Delta_{0}}{4}&\le\left(\frac{ L}{2}+\frac{(2\sigma_{0}+1) L^{2}}{\sqrt{v}}\right)\sum_{t=\tau_{3j-2,T}}^{\tau_{3j-1,T}-1}\|\eta_t\circ \nabla g(\theta_{t},\xi_{t})\|^{2}+\frac{1}{\Delta_{0}}\left(\sum_{t=\tau_{3j-2,T}}^{\tau_{3j-1,T}-1}M_{t}\right)^{2}.
\end{align}
Combining the above derivations, when the event \(\{\tau_{3j-1,T}<\tau_{3j,T}\}\) occurs, the event \(\{\text{\cref{RMSprop_2} holds}\}\) also occurs, which implies that
\begin{align}\label{RMSProp_2}
&\quad \Expect\left[\I_{\tau_{3j-1,T}<\tau_{3j,T}}\right] \notag \\
& \le\Pro\left[\{\text{\cref{RMSprop_2} holds}\}\right]\notag\\&\mathop{\le}^{\text{\emph{Markov's} inequality}}\frac{4}{\Delta_{0}}\left(\frac{ L}{2}+\frac{(2\sigma_{0}+1) L^{2}}{\sqrt{v}}\right)\Expect\left[\sum_{t=\tau_{3j-2,T}}^{\tau_{3j-1,T}-1}\|\eta_t\circ \nabla g(\theta_{t},\xi_{t})\|^{2}\right]+\frac{4}{\Delta_{0}^{2}}\Expect\left[\sum_{t=\tau_{3j-2,T}}^{\tau_{3j-1,T}-1}M_{t}\right]^{2}\notag\\&\mathop{\le}^{\text{\emph{Doob's Stopped} theorem}}\frac{4}{\Delta_{0}}\left(\frac{ L}{2}+\frac{(2\sigma_{0}+1) L^{2}}{\sqrt{v}}\right)\underbrace{\Expect\left[\sum_{t=\tau_{3j-2,T}}^{\tau_{3j-1,T}-1}\|\eta_t\circ \nabla g(\theta_{t},\xi_{t})\|^{2}\right]}_{A_{j,1}}+\frac{4}{\Delta_{0}^{2}}\underbrace{\Expect\left[\sum_{t=\tau_{3j-2,T}}^{\tau_{3j-1,T}-1}M^{2}_{t}\right]}_{A_{j,2}}.
\end{align}
For \(A_{j,1}\), we further estimate it as follows.
\begin{align*}
A_{j,1}&=\Expect\left[\sum_{t=\tau_{3j-2,T}}^{\tau_{3j-1,T}-1}\|\eta_t\circ \nabla g(\theta_{t},\xi_{t})\|^{2}\right]\mathop{=}^{\text{\emph{Doob's Stopped} theorem}}\Expect\left[\sum_{t=\tau_{3j-2,T}}^{\tau_{3j-1,T}-1}\Expect\left[\|\eta_t\circ \nabla g(\theta_{t},\xi_{t})\|^{2}|\mathscr{F}_{t-1}\right]\right]\\&\le\Expect\left[\sum_{t=\tau_{3j-2,T}}^{\tau_{3j-1,T}-1}\sum_{i=1}^{d}\Expect\left[\eta^{2}_{{t},i}(\nabla_{i}g(\theta_{t},\xi_{t}))^{2}|\mathscr{F}_{t-1}\right]\right] \notag \\
& \mathop{\le}^{\eta_{t,i}\le \frac{1}{\epsilon\sqrt{t}}}\frac{1}{\epsilon}\Expect\left[\sum_{t=\tau_{3j-2,T}}^{\tau_{3j-1,T}-1}\sum_{i=1}^{d}\Expect\left[\frac{\eta_{t,i}(\nabla_{i}g(\theta_{t},\xi_{t}))^{2}}{\sqrt{t}}\bigg|\mathscr{F}_{t-1}\right]\right]\\&\mathop{\le}^{\text{\cref{property_0}}}\frac{1}{\epsilon}\Expect\left[\sum_{t=\tau_{3j-2,T}}^{\tau_{3j-1,T}-1}\sum_{i=1}^{d}\Expect\left[\frac{\eta_{{t-1},i}}{\sqrt{t}}(\nabla_{i}g(\theta_{t},\xi_{t}))^{2}\bigg|\mathscr{F}_{t-1}\right]\right] \notag \\
&\mathop{\le}^{(a)}\frac{1}{\epsilon}\left(\sigma_{0}+\frac{\sigma_{1}}{\eta}\right)\Expect\left[\sum_{t=\tau_{3j-2,T}}^{\tau_{3j-1,T}-1}\sum_{i=1}^{d}\frac{\eta_{{t-1},i}}{\sqrt{t}}(\nabla_{i}g(\theta_{t}))^{2}\right].
\end{align*}
In \((a)\), if the stopping times \(\tau_{3j-2,T} = \tau_{3j-1,T}\), we define the sum \(\sum_{t=\tau_{3j-2,T}}^{\tau_{3j-1,T}-1} = 0\), so it holds trivially. When \(\tau_{3j-2,T} < \tau_{3j-1,T}\), we know $\hat{g}(\theta_t) \in (\Delta_0, 2\Delta_0]$ where $\Delta_0 > \hat{C}_g$ for any \(t \in [\tau_{3j-2,T}, \tau_{3j-1,T})\). By \cref{{dsad}}, we have \((\nabla_{i}g(\theta_{t}))^{2} > \eta\) for any \(t \in [\tau_{3j-2,T}, \tau_{3j-1,T})\)  and $i \in [d]$. By the coordinated affine noise variance condition, we have \[\Expect\left[(\nabla_{i}g(\theta_{t}, \xi_{t}))^{2} \mid \mathscr{F}_{t-1}\right] \leq \sigma_{0}(\nabla_{i}g(\theta_{t}))^{2} + \sigma_{1} \leq \left(\sigma_{0} + \frac{\sigma_{1}}{\eta}\right)(\nabla_{i}g(\theta_{t}))^{2}.\]
We further show that $ \sum_{j=1}^{+\infty}A_{j,1}$ is uniformly bounded. In fact,
\begin{align*}
    \sum_{j=1}^{+\infty}A_{j,1}&\le \frac{1}{\epsilon}\left(\sigma_{0}+\frac{\sigma_{1}}{\eta}\right)\Expect\left[\sum_{j=1}^{+\infty}\sum_{t=\tau_{3j-2,T}}^{\tau_{3j-1,T}-1}\sum_{i=1}^{d}\frac{\eta_{{t-1},i}}{\sqrt{t}}(\nabla_{i}g(\theta_{t}))^{2}\right] \leq \mathcal{O}\left(\sum_{t=1}^{+\infty}\sum_{i=1}^{d}\frac{\eta_{{t-1},i}}{\sqrt{t}}(\nabla_{i}g(\theta_{t}))^{2}\right)\\&\mathop{\le}^{\text{\cref{RMSProp_0} with $\delta=1/2$}}\mathcal{O}(1).
\end{align*}
Then, following the same procedure as $A_{j,1}$ to estimate \(A_{j,2}\), we obtain that
\begin{align*}
 \sum_{j=1}^{+\infty}A_{j,2}\le \mathcal{O}\left(\sum_{t=1}^{+\infty}\sum_{i=1}^{d}\frac{\eta_{{t-1},i}}{\sqrt{t}}(\nabla_{i}g(\theta_{t}))^{2}\right)\mathop{\le}^{\text{\cref{RMSProp_0} with $\delta=1/2$}}\mathcal{O}(1).  
\end{align*}
According to \cref{RMSprop_2}, combining the estimates for $A_{j,1}$ and $A_{j,2}$ gives
\begin{align*}
\sum_{j=1}^{+\infty}\Expect\left[\I_{\tau_{3j-1,T}<\tau_{3j,T}}\right]\le \mathcal{O}\left(\sum_{j=1}^{+\infty}A_{j,1}\right)+\mathcal{O}\left(\sum_{j=1}^{+\infty}A_{j,2}\right)\le \mathcal{O}(1).
\end{align*}
Substituting the above estimate into \cref{RMSprop_6}, and then into \cref{RMSprop_5} and \cref{RMSprop_7}, we obtain
\begin{align*}
\Expect\left[\sup_{1\le n<T}\hat{g}(\theta_{n})\right]  \le \mathcal{O}(1).
\end{align*}
where the constant hidden in \(\mathcal{O}\) is independent of \(T\). Taking \(T \to +\infty\) and applying the \emph{Lebesgue's Monotone Convergence} theorem, we have
$\Expect\left[\sup_{n\ge 1}\hat{g}(\theta_{n})\right] \leq  \mathcal{O}(1) $
which implies
\[\Expect\left[\sup_{n\ge 1}{g}(\theta_{n})\right] \leq  \mathcal{O}(1).\]
\end{proof}

\subsection{The Proof of Theorem \ref{convergence_1.0'}} \label{sec:proof:thm1:rmsprop}
First, we re-write the RMSProp update rule in \cref{RMSProp} to a form of a standard stochastic approximation iteration
\begin{align}\label{SA:rmsprop}
x_{n+1} = x_{n} - \gamma_{n}(g(x_{n})+U_{n}),
\end{align}
where
\[x_{n}:=(\theta_{n},v_{n})^{\top},\ \ \gamma_{n}:=\alpha_{n},\] and
\begin{align*}
    g(x_{n}):=\begin{pmatrix}
\frac{1}{\sqrt{v_{n}}+\epsilon}\circ \nabla g(\theta_{n}) \\
0\end{pmatrix},\ \ U_{n}:=\begin{pmatrix}
\frac{1}{\sqrt{v_{n}}+\epsilon}\circ( \nabla g(\theta_{n},\xi_{n})-\nabla g(\theta_{n}) )\\\frac{1}{\alpha_{n}}(v_{n+1}-v_{n})
\end{pmatrix}.
\end{align*}
Next, we verify that the two conditions in Proposition \ref{SA_p} hold. In fact, based on \cref{bound''} and the coercivity (\cref{extra}~\ref{extra:i1}), we can prove the stability of the iteration sequence $x_n$, which implies that \cref{pros:a1} holds. To verify that \cref{pros:a2} holds, we examine the following term for any $ n\in\mathbb{N}_{+}$
\begin{align*}
    \sup_{m(nT)\le k\le m((n+1)T)}\left\|\sum_{t=m(nT)}^{k}\gamma_{t}U_{t}\right\|&\le  \underbrace{\sup_{m(nT)\le k\le m((n+1)T)}\left\|\sum_{t=m(nT)}^{k}\frac{\alpha_{t}}{\sqrt{v_{t}}+\epsilon}\circ \left(\nabla g(\theta_{t},\xi_{t})-\nabla g(\theta_{t})\right)\right\|}_{B_{n,1}}\\&\quad +\underbrace{\sup_{m(nT)\le t\le k}\left\|v_{k}-v_{m(nT)}\right\|}_{B_{n,2}}.
\end{align*}
First, combining \cref{convergence_v} that \(\{v_{n}\}_{n \geq 1}\) converges almost surely and the \emph{Cauchy's Convergence} principle, we conclude that \(\limsup_{n\rightarrow+\infty}B_{n,2}=\lim_{n\rightarrow+\infty}B_{n,2}=0\ \ \text{a.s.}\)
Then, we adopt a divide-and-conquer strategy and decompose \(B_{n,1}\) by \(B_{n,1,1}\) and \(B_{n,1,2}\) as follows
\begin{align*}
B_{n,1}&\le \underbrace{\sup_{m(nT)\le k\le m((n+1)T)}\left\|\sum_{t=m(nT)}^{k}\sum_{i=1}^{d}\frac{\alpha_{t}\I_{[(\nabla_{i}g(\theta_{t}))^2<D_{0}]}}{\sqrt{v_{t},i}+\epsilon}\cdot\left(\nabla_{i} g(\theta_{t},\xi_{t})-\nabla_{i} g(\theta_{t})\right)\right\|}_{B_{n,1,1}}\\&\quad +\underbrace{\sup_{m(nT)\le k\le m((n+1)T)}\left\|\sum_{t=m(nT)}^{k}\sum_{i=1}^{d}\frac{\alpha_{t}\I_{[(\nabla_{i}g(\theta_{t}))^2 \geq D_0]}}{\sqrt{v_{t},i}+\epsilon}\cdot\left(\nabla_{i} g(\theta_{t},\xi_{t})-\nabla_{i} g(\theta_{t})\right)\right\|}_{B_{n,1,2}}.
\end{align*}
We first investigate $\Expect[B_{n,1,1}^{3}]$ and achieve that by applying \emph{Burkholder's inequality}
\begin{align*}
 \Expect[B_{n,1,1}^{3}]&\le \mathcal{O}(1)\cdot\sum_{t=m(nT)}^{m((n+1)T)}\Expect\left[\left(\sum_{i=1}^{d}\frac{\alpha_{t}\I_{[(\nabla_{i}g(\theta_{t}))^2<D_{0}]}}{\sqrt{v_{t},i}+\epsilon}\cdot\left|\nabla_{i} g(\theta_{t},\xi_{t})-\nabla_{i} g(\theta_{t})\right|\right)^{3}\right] \\&\le \mathcal{O}(1)\cdot\frac{d^{2}}{\epsilon^{3}}\sum_{t=m(nT)}^{m((n+1)T)}\left(\sum_{i=1}^{d}\Expect\left[{\alpha_{t}^3\I_{[(\nabla_{i}g(\theta_{t}))^2<D_{0}]}}\cdot\left|\nabla_{i} g(\theta_{t},\xi_{t})-\nabla_{i} g(\theta_{t})\right|^{3}\right]  \right) \\& \le\mathcal{O}(1)\cdot \frac{4d^{3}(D_{0}^{3/2}+D_{1}^{3/2})}{\epsilon^{3}}\sum_{t=m(nT)}^{m((n+1)T)}\alpha_{t}^{3},
\end{align*}
where $\sqrt{v_{t,i}} + \epsilon > \epsilon$ for all $t \geq 1$ and when $(\nabla_{i}g(\theta_{t}))^2<D_{0}$ we have $(\nabla_{i}g(\theta_{t}; \xi_t))^2<D_{1}$ a.s. (\cref{coordinate}~\ref{coordinate_i_2}).
We set $\alpha_t = O(1/\sqrt{t})$ and conclude $\sum_{n=1}^{+\infty}\Expect[B_{n,1,1}^{3}]<+\infty $.
By the \emph{Lebesgue's Monotone Convergence} theorem, we have
$ \sum_{n=1}^{+\infty}B_{n,1,1}^{3}<+\infty\ \ \text{a.s.},$ which implies that
\begin{align}\label{dasdwqe}
\limsup_{n\rightarrow+\infty}B_{n,1,1}=0\ \ \text{a.s.}
\end{align}
To examine $B_{n,1,2}$, we investigate $\Expect[B_{n,1,2}^{2}].$ Applying \emph{Burkholder's inequality} and using $\eta_{t,i} = \alpha_t / \sqrt{v_{t,i}+\epsilon} \leq \eta_{t-1,i}$ and coordinate the affine noise variance condition when $(\nabla_{i}g(\theta_{t}))^2\ge D_{0}$, we have
\begin{align*}
    \Expect[B_{n,1,2}^{2}]&\le \mathcal{O}(1)\cdot\sum_{t=m(nT)}^{m((n+1)T)}\Expect\left[\left(\sum_{i=1}^{d}\frac{\alpha_{t-1}\I_{[(\nabla_{i}g(\theta_{t}))^2\ge D_{0}]}}{\sqrt{v_{t-1},i}+\epsilon}\cdot\left|\nabla_{i} g(\theta_{t},\xi_{t})-\nabla_{i} g(\theta_{t})\right|\right)^{2}\right] \\&\le \mathcal{O}(1)\cdot \frac{d}{\epsilon}\left(\sigma_{0}+\frac{\sigma_{1}}{D_{0}}\right)\sum_{t=m(nT)}^{m((n+1)T)}\Expect\left[{\frac{1}{\sqrt{t-1}}}\cdot\sum_{i=1}^{d}\frac{1}{\sqrt{v_{t-1,i}}+\epsilon}\left|\nabla_{i} g(\theta_{t})\right|^{2}\right] \\&\le\mathcal{O}\left(\sum_{t=m(nT)}^{m((n+1)T)}\sum_{i=1}^{d}\Expect\left[\frac{\zeta_{i}(t)}{\sqrt{t-1}}\right]\right) \le\mathcal{O}\left(\sum_{t=m(nT)}^{m((n+1)T)}\sum_{i=1}^{d}\Expect\left[\frac{\zeta_{i}(t)}{\sqrt{t}}\right]\right).
\end{align*}
Using \cref{RMSProp_0} with \(\delta = 1/2\), we have  $ \sum_{n=1}^{+\infty}\Expect[B_{n,1,2}^{2}]<+\infty. $
By the \emph{Lebesgue's Monotone Convergence} theorem, we conclude that:
$\sum_{n=1}^{+\infty}B_{n,1,2}^{2}<+\infty\ \ \text{a.s.},
$ which implies that
\begin{align*}
\limsup_{n\rightarrow+\infty}B_{n,1,2}=0\ \ \text{a.s.}
\end{align*}
We combine the above result with \cref{dasdwqe} and get that
$\limsup_{n\rightarrow+\infty}B_{n,1}=0\ \ \text{a.s.} $ Then, because $\limsup_{n\rightarrow+\infty}B_{n,2}=0\ \ \text{a.s.} $, we conclude that \cref{pros:a2} in \cref{SA_p} is satisfied. Moreover, by applying \cref{extra}~\ref{extra:i2},  \cref{pros:a3} in \cref{SA_p} is also satisfied. Thus, using the statement of \cref{SA_p}, we conclude the almost sure convergence of RMSProp, as we desired.  
\end{proof}
\end{document}